\newlength{\leftstackrelawd}
\newlength{\leftstackrelbwd}
\def\leftstackrel#1#2{\settowidth{\leftstackrelawd}%
	{${{}^{#1}}$}\settowidth{\leftstackrelbwd}{$#2$}%
	\addtolength{\leftstackrelawd}{-\leftstackrelbwd}%
	\leavevmode\ifthenelse{\lengthtest{\leftstackrelawd>0pt}}%
	{\kern-.5\leftstackrelawd}{}\mathrel{\mathop{#2}\limits^{#1}}}
\newcommand{\bdd}[1]{ \boldsymbol{#1} }
\newcommand{\unitvec}[1]{\bdd{#1}}
\newcommand{\vertiii}[1]{{\left\vert\kern-0.25ex\left\vert\kern-0.25ex\left\vert 
#1 
		\right\vert\kern-0.25ex\right\vert\kern-0.25ex\right\vert}}
\newcommand{\hrot}{\bdd{H}(\mathrm{rot};\Omega)}
\newcommand{\hrotgamma}{\bdd{H}_{\Gamma}(\mathrm{rot};\Omega)}
\newcommand{\dual}[1]{ {#1}' }
\newcommand{\symgrad}[1]{ \varepsilon(#1) }
\newcommand{\harmonic}[1]{\bdd{\mathfrak{#1}}}
\newtheorem{theorem}{Theorem}[section]
\newtheorem{lemma}{Lemma}[section]
\newtheorem{corollary}{Corollary}[section]
\newdefinition{remark}{Remark}
\crefname{equation}{}{}
\Crefname{equation}{Equation}{Equations}
\crefname{lemma}{Lemma}{Lemmas}
\crefname{theorem}{Theorem}{Theorems}
\crefname{corollary}{Corollary}{Corollaries}
\crefname{figure}{Figure}{Figures}
\Crefname{figure}{Figure}{Figures}
\crefname{table}{Table}{Tables}
\Crefname{table}{Table}{Tables}
\numberwithin{equation}{section}
\let\orgdescriptionlabel\descriptionlabel
\renewcommand*{\descriptionlabel}[1]{%
	\let\orglabel\label
	\let\label\@gobble
	\phantomsection
	\edef\@currentlabel{#1}%
	\let\label\orglabel
	\orgdescriptionlabel{#1}%
}
\DeclareMathOperator{\grad}{\mathbf{grad}}
\DeclareMathOperator{\dive}{div}
\DeclareMathOperator{\rot}{rot}
\DeclareMathOperator{\image}{Im}
\DeclareMathOperator{\circop}{circ}
\DeclareMathOperator*{\argmin}{arg\,min}
\newcommand{\mathbbb}[1]{\pmb{\mathbb{#1}}}
\newcommand{\discrete}[1]{\mathbb{#1}}
\newcommand{\discretev}[1]{\mathbbb{#1}}
\renewcommand{\d}[1]{\,\mathrm{d}{#1}}
\journal{Computer Methods in Applied Mechanics and Engineering}
\begin{document}

\begin{frontmatter}

%% Title, authors and addresses

%% use the tnoteref command within \title for footnotes;
%% use the tnotetext command for theassociated footnote;
%% use the fnref command within \author or \affiliation for footnotes;
%% use the fntext command for theassociated footnote;
%% use the corref command within \author for corresponding author footnotes;
%% use the cortext command for theassociated footnote;
%% use the ead command for the email address,
%% and the form \ead[url] for the home page:
%% \title{Title\tnoteref{label1}}
%% \tnotetext[label1]{}
%% \author{Name\corref{cor1}\fnref{label2}}
%% \ead{email address}
%% \ead[url]{home page}
%% \fntext[label2]{}
%% \cortext[cor1]{}
%% \affiliation{organization={},
%%             addressline={},
%%             city={},
%%             postcode={},
%%             state={},
%%             country={}}
%% \fntext[label3]{}

\title{Do locking-free finite element schemes lock for holey Reissner-Mindlin 
	plates with mixed boundary conditions? \tnoteref{t1}}
\tnotetext[t1]{The second author acknowledge that this material is based upon 
work supported by the National Science Foundation under Award No. DMS-2201487.}
%% use optional labels to link authors explicitly to addresses:
%% \author[label1,label2]{}
%% \affiliation[label1]{organization={},
%%             addressline={},
%%             city={},
%%             postcode={},
%%             state={},
%%             country={}}
%%
%% \affiliation[label2]{organization={},
%%             addressline={},
%%             city={},
%%             postcode={},
%%             state={},
%%             country={}}

\author[1]{Mark Ainsworth\corref{cor1}}
\ead{mark\_ainsworth@brown.edu}

\author[2]{Charles Parker}
\ead{charles.parker@maths.ox.ac.uk}

\cortext[cor1]{Corresponding author}

\affiliation[1]{organization={Division of Applied Mathematics, Brown University},
	addressline={Box F, 182 George Street}, 
	city={Providence},
	postcode={02912}, 
	state={RI},
	country={USA}}

\affiliation[2]{organization={Mathematical Institute, University of Oxford},
	addressline={Andrew Wiles Building, Woodstock Road}, 
	city={Oxford},
	postcode={OX2 6GG}, 
	country={UK}}  
	
%% Abstract
\begin{abstract}
	We revisit finite element discretizations of the Reissner-Mindlin plate
in the case of non-simply connected (holey) domains with mixed boundary 
conditions.	
Guided by the de Rham complex, we develop conditions under which schemes 
deliver locking-free, optimal rates of convergence. We naturally recover the 
typical assumptions arising for clamped, simply supported plates. More 
importantly, we also see \emph{new} conditions arise naturally from the 
presence of holes in the domain or in the case of mixed boundary conditions. 
We show that, fortunately, many of the existing popularly used schemes 
\emph{do}, in fact, satisfy all of the conditions, and thus are locking-free.
\end{abstract}

%% Keywords
\begin{keyword}
%% keywords here, in the form: keyword \sep keyword
 Reissner-Mindlin plate \sep locking \sep finite elements
%% PACS codes here, in the form: \PACS code \sep code

%% MSC codes here, in the form: \MSC code \sep code
%% or \MSC[2008] code \sep code (2000 is the default)

\MSC[2020] 65N30, 65N12, 74K20, 74S10

\end{keyword}

\end{frontmatter}

%% Add \usepackage{lineno} before \begin{document} and uncomment 
%% following line to enable line numbers
%% \linenumbers

%% main text
%%

\section{Introduction}
\label{sec:intro}

Although the connection between differential forms, exterior calculus and de
Rham exact sequences to finite element analysis goes back to Bossavit 
\cite{Bossavit98} in the seting of approximation of Maxwell's equations, the 
recognition that finite element exterior calculus provides powerful tool for 
understanding discretisation schemes is, thanks to the efforts of 
\cite{Arnold2010,Arn18,ArnFalkWin06,Hiptmair02} now widely understood. This 
connection seems particularly powerful in the case where high order finite 
elements are considered and has led to a number of results. As far as our own 
work is concerned, these ideas have been exploited to establish that various 
families of mixed finite elements are uniformly inf-sup stable 
\cite{AinCP19StokesI,AinCP21LE} which, in turn, paves the way for high order, 
uniformly stable schemes for incompressible flow that are pointwise mass 
conserving, and to explain locking of finite element in linear elasticity 
\cite{AinCP21LE}.  On a different tack, ideas from exact sequences were used 
\cite{AinCP23KirchI} to construct $C^1$ (or $H^2$-conforming) finite element 
approximations without having to implement families of elements with built-in 
$C^1$ continuity. The same ideas were shown in \cite{AinCP24KirchII} to lead to 
computational procedures for constructing $C^1$ finite element approximations 
using elements for which it is known that there is no local basis (meaning that 
they cannot be implemented using classical finite element methodology). 

In the current work, we revisit the classical Reissner-Mindlin (RM) plate
model. The literature on finite element approximation for RM is enormous and
one may reasonably question what, if anything, new can be said about finite
element schemes for RM? While RM is of interest in its own right as a
computational model for thin elastic plates, part of the interest in the
problem arises from the fact that it constitutes a coupled system of elliptic
partial differential equations in which there are two distinct scales
corresponding to the $O(1)$ diameter of the plate and its thickness $t\ll 1$.
As such, the model may be viewed as a prototype for more complex multiscale
problems. The presence of two distinct scales leads to difficulties in the
numerical discretization in the form of locking whereby a finite element scheme
converges, if it converges at all, at a sub-optimal rate until the mesh size
$h$ is reduced to $O(t)$. Nowadays, various schemes are available that have
been proved to not lock in this way 
\cite{BoffiBrezziFortin13,Braess07,Brezzi1991MITC}. The analysis and development 
of
many of these schemes makes use of ideas from discrete de Rham sequences but
often predates the widespread adoption of ideas from finite element exterior
calculus. 

Virtually all of the existing literature on finite element methods (conforming
or otherwise) for RM considers, for simplicity, the case of a clamped, simply
connected plate
\cite{Ain02,Arn93,Arn07,Arn05,Arnold89,Arn97RM,%
	BatheBrezzi85,BatheBrezzi87,Behrens11,Bosing10,%
	Bramble98,Brezzi89,Brezzi86,Brezzi1991MITC,%
	Calo14,Carstensen11,Chen25,Chinosi06,%
	Veiga13,DiPietro22,Duran92,Falk2000,Falk2008,%
	Hansbo11,Huang24,Huoyuan18,Lovadina05,%
	Pechstein17,Peisker92,Sky23,StenbergSuri97,%
	Ye20,Zhang23}. 
While this certainly simplifies the analysis,
practical applications where one might wish to use the RM model are more likely
to involve all types of boundary condition including stress-free, simply
supported, and clamped on complicated domains with holes. The mention of domains
with holes raises warning flags for any analysis that is based on exact
sequences, discrete or otherwise. In particular, the structure of the continuous
exact sequence on domains with holes is complicated by the presence of
mathematical structures known as harmonic forms which have an entirely
different structure from those that appear in the simply connected case. These
are discussed in detail later in the manuscript but, for present purposes,
their presence means that arguments valid in the simply connected case are no
longer necessarily valid in the more general case of domains with holes.
Equally well, when mixed boundary conditions are prescribed, similar
considerations apply and again, arguments that are used in the case of pure
clamping and no longer necessarily valid in the case of mixed boundary
conditions. This raises the danger whereby schemes developed and analyzed
in the case of simply connected domains with pure clamping may not perform
as expected on practical problems. 

In the current work, we revisit finite element discretizations of the RM plate
in the case of non-simply connected domains with mixed boundary conditions.
Our approach starts from exact sequences and the de Rham diagram. Firstly, we
discuss the problem of developing schemes for RM at a rather intuitive level
guided by the de Rham sequence. One outcome of this is that one sees the
typical assumptions, which form the starting point for many existing works,
materialize naturally from intuitive arguments based on exact sequences. More
importantly, one also sees \emph{new} conditions arise naturally from the
presence of discrete harmonic forms in the case of domains with holes or in the
case of mixed boundary conditions. Respecting these new conditions is necessary
if one wants to have locking-free schemes or if one wants to be able to use the
existing schemes with confidence for such problems. These conditions are absent
from existing analyses with the result that, as mentioned above, one cannot be
sure that the schemes will be valid for practical problems. 

Having derived the new conditions following heuristic arguments guided by exact
sequences, we then provide rigorous mathematical analyses showing that the
conditions are sufficient to deliver locking-free, optimal rates of convergence
provides that the conditions are satisfied. This naturally leads to the
question of whether existing schemes satisfy the full set of conditions. The
good news, though hardly obvious at a glance, is that many of the existing
popularly used schemes \emph{do}, in fact, satisfy the full set of conditions.
While the analysis needed to arrive at this conclusion is rather involved, the
outcome should be what one expects given that the schemes have certainly been
used by practitioners to approximate problems on domains with holes and mixed
boundary conditions without reports of failure. In a sense, the fact that many
existing schemes do in fact satisfy the full set of conditions may be
attributed to good fortune. However, it is more the case that existing schemes
were constructed to respect the exact sequence property in the special case of
simply connected domains with pure clamping, which focused the choice of method 
to the extent that the resulting schemes do indeed satisfy the full set of 
conditions identified here.

The remainder of the paper is organized as follows. We introduce the RM model and 
discuss locking in \cref{sec:problem-setting}. We then use de Rham complex to 
derive conditions for locking-free schemes, state the quasi-optimal a priori 
error estimate, and illustrate the theory with numerical examples in 
\cref{sec:mitc}. In \cref{sec:demist-harm}, we further simplify the new 
conditions, which we show are satisfied by existing schemes in 
\cref{sec:mitc-examples} and by high-order methods in \cref{sec:high-order}. We 
carry out the error analysis in \cref{sec:error-analysis} and conclude in 
\cref{sec:conclusion}.

\section{Problem setting}
\label{sec:problem-setting}

Let $\Omega \subset \mathbb{R}^2$ denote the midsurface of an isotropic, 
homogeneous plate of thickness $t \in (0, 1]$. We assume that $\Omega$ is a 
polygonal domain with $H$ polygonal holes; i.e. $\Omega_1, \ldots, \Omega_{H} 
\subset \Omega_0 \subset \mathbb{R}^2$ each of which is a disjoint, simply 
connected, polygonal domain such that 
\begin{align*}
	\Omega = \Omega_0 \setminus \bigcup_{i=1}^{H} \Omega_i, \quad \bar{\Omega}_i 
	\subsetneq \Omega_0, \quad \text{and} \quad \bar{\Omega}_i \cap 
	\bar{\Omega}_j = \emptyset, \qquad 1 \leq i \neq j \leq H.
\end{align*}
The domain boundary $\Gamma := \partial \Omega$ is partitioned into disjoint sets 
$\Gamma_c$, $\Gamma_s$, and $\Gamma_f$ corresponding to the hard clamped, hard 
simply-supported, and free portion of the boundary where $\Gamma_c \cup \Gamma_s$ 
is non-empty (see e.g. \cref{fig:domain-example}). The admissible displacements 
$w$ and 
rotations $\bdd{\theta}$ of the plate belong to the following Sobolev spaces:
\begin{align}
	H^1_{\Gamma}(\Omega) &:= \{ v \in H^1(\Omega) : v|_{\Gamma_{c} \cup 
	\Gamma_s}  = 0\}, \\
	\bdd{\Theta}_{\Gamma}(\Omega) &:= \{ \bdd{\psi} \in H^1(\Omega)^2 : 
	\bdd{\psi}|_{\Gamma_c}  = \bdd{0} \text{ and } \unitvec{t} \cdot 
	\bdd{\psi}|_{\Gamma_s} = 0 \},
\end{align}
where $\unitvec{t}$ denotes the unit tangent to $\Gamma$.

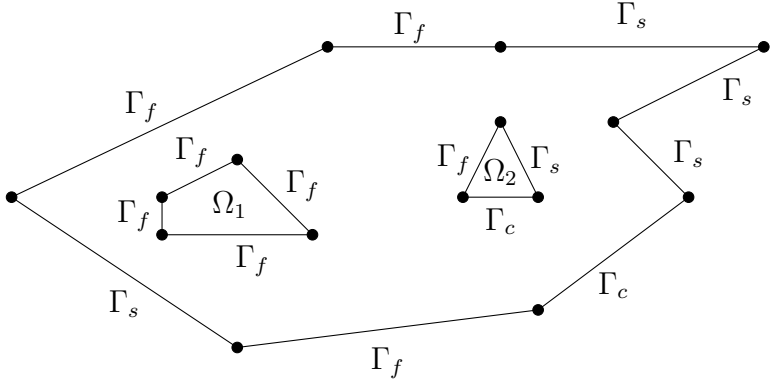
\begin{figure}[htb]
	\centering
	\begin{tikzpicture}[scale=1]
		\coordinate (A1) at (-3, 0);
		\coordinate (A2) at (1, 0.5);
		\coordinate (A3) at (3, 2);
		\coordinate (A4) at (2, 3);
		\coordinate (A5) at (4, 4);
		\coordinate (A6) at (0.5, 4);
		\coordinate (A7) at (-1.8, 4);
		\coordinate (A8) at (-6, 2);
		\coordinate (B1) at (-4, 1.5);
		\coordinate (B2) at (-2, 1.5);
		\coordinate (B3) at (-3, 2.5);
		\coordinate (B4) at (-4, 2);
		\coordinate (C1) at (0, 2);
		\coordinate (C2) at (1, 2);
		\coordinate (C3) at (0.5, 3);
		\coordinate (Omega1) at (-3.1, 1.9);
		\coordinate (Omega2) at (0.5, 7/3);

		\draw (A1) -- (A2) -- (A3) -- (A4) -- (A5) -- (A6) -- (A7) -- (A8) -- 
		(A1);
		\draw (B1) -- (B2) -- (B3) -- (B4) -- (B1);
		\draw (C1) -- (C2) -- (C3) -- (C1);
		
		\filldraw (A1) circle (2pt);
		\filldraw (A2) circle (2pt);    
		\filldraw (A3) circle (2pt);
		\filldraw (A4) circle (2pt);
		\filldraw (A5) circle (2pt);
		\filldraw (A6) circle (2pt);
		\filldraw (A7) circle (2pt);
		\filldraw (A8) circle (2pt);    
		
		\filldraw (B1) circle (2pt);
		\filldraw (B2) circle (2pt);    
		\filldraw (B3) circle (2pt);
		\filldraw (B4) circle (2pt);
		
		\filldraw (C1) circle (2pt);
		\filldraw (C2) circle (2pt);    
		\filldraw (C3) circle (2pt);
		
		% Gamma labels
		\draw ($($(A1)!0.5!(A2)$) + (0, -0.1) $) 
		node[align=center,below]{$\Gamma_f$};
		
		\draw ($($(A2)!0.5!(A3)$) + (0, -0.1) $) 
		node[align=center,below]{$\Gamma_c$};
		
		\draw ($($(A3)!0.5!(A4)$) + (0.15, 0.08) $) 
		node[align=center,right]{$\Gamma_s$};
		
		\draw ($($(A4)!0.5!(A5)$) + (0.3, -0.1) $) 
		node[align=center,right]{$\Gamma_s$};
		
		\draw ($($(A5)!0.5!(A6)$) + (0.0, 0.1) $) 
		node[align=center,above]{$\Gamma_s$};
		
		\draw ($($(A6)!0.5!(A7)$) + (-0.05, 0.25) $) 
		node[align=center]{$\Gamma_f$};
		
		\draw ($($(A7)!0.5!(A8)$) + (0., 0.2) $) 
		node[align=center,left]{$\Gamma_f$};
		
		\draw ($($(A8)!0.5!(A1)$) + (0., -0.1) $) 
		node[align=center,below]{$\Gamma_s$};
		
		\draw ($($(B1)!0.5!(B2)$) + (0.2, 0.) $) 
		node[align=center,below]{$\Gamma_f$};
		
		\draw ($($(B2)!0.5!(B3)$) + (0., 0.15) $) 
		node[align=center,right]{$\Gamma_{f}$};
		
		\draw ($($(B3)!0.5!(B4)$) + (-0.1, 0.) $) 
		node[align=center,above]{$\Gamma_{f}$};
		
		\draw ($($(B4)!0.5!(B1)$) + (0., 0.) $) 
		node[align=center,left]{$\Gamma_{f}$};
		
		\draw ($($(C1)!0.5!(C2)$) + (0., 0.) $) 
		node[align=center,below]{$\Gamma_c$};
		
		\draw ($($(C2)!0.5!(C3)$) + (0., 0.) $) 
		node[align=center,right]{$\Gamma_{s}$};
		
		\draw ($($(C1)!0.5!(C3)$) + (0., 0.) $) 
		node[align=center,left]{$\Gamma_{f}$};
		
		\draw (Omega1) node[align=center]{$\Omega_1$};
		\draw (Omega2) node[align=center]{$\Omega_2$};
	\end{tikzpicture}
	\caption{Example domain.}\label{fig:domain-example}
\end{figure}

In the Reissner-Mindlin model, the deflection $w \in H^1_{\Gamma}(\Omega)$ and 
rotation $\bdd{\theta} \in \bdd{\Theta}_{\Gamma}(\Omega)$ of the plate under the 
applied loads minimize the scaled energy functional defined by
\begin{align}
	\label{eq:rm-energy}
	J_t(w, \bdd{\theta}) \leq J_t(v, \bdd{\psi}) := \frac{1}{2} a(\bdd{\psi}, 
	\bdd{\psi}) + \frac{\lambda}{2t^2} \| \bdd{\Xi}(v, \bdd{\psi})  \|^2 - F(v) - 
	G( \bdd{\psi})
\end{align}
for all $(v, \bdd{\psi}) \in H^1_{\Gamma}(\Omega) \times 
\bdd{\Theta}_{\Gamma}(\Omega)$, 
where $F$, respectively $G$, are linear functionals depending on the particular 
model that take into account the scaled transverse load, respectively the 
in-plane load and applied surface tractions on the top and bottom of the plate 
\cite{Falk2008}. The coefficient $\lambda=Ek/(2(1+\nu))$ where $E > 0$ is the 
Young's modulus, $\nu \in [0, 1/2)$ is the Poisson ratio, and 
$k > 0$ is the shear correction factor. The bilinear form  $a(\cdot, \cdot)$ is 
defined by
\begin{align*}
	a(\bdd{\theta}, \bdd{\psi}) = \frac{E}{12(1-\nu^2)} \int_{\Omega} \left[ 
	(1-\nu) \symgrad{\bdd{\theta}} : \symgrad{ \bdd{\psi} } + \nu \dive 
	\bdd{\theta} \dive \bdd{\psi} \right] \d{\bdd{x}},
\end{align*}
where $\symgrad{\bdd{\psi}} = (\grad \bdd{\psi} + (\grad \bdd{\psi})^T)/2$. The 
operator $\bdd{\Xi}$, which will play a key role throughout this manuscript, is 
defined by the rule
\begin{align}
	\label{eq:b-operator-def}
	\bdd{\Xi}(v, \bdd{\psi}) = \grad v - \bdd{\psi} \qquad \forall (v, 
	\bdd{\psi}) \in H^1_{\Gamma}(\Omega) \times \bdd{\Theta}_{\Gamma}(\Omega).
\end{align}

The minimizers of \cref{eq:rm-energy} satisfy the following variational problem: 
Find $(w, \bdd{\theta}) \in H^1_{\Gamma}(\Omega) \times 
\bdd{\Theta}_{\Gamma}(\Omega)$ such that
\begin{align}
	\label{eq:mr-variational-form}
	B_{t}( w, \bdd{\theta}; v, \bdd{\psi} ) = F(v) + G(\bdd{\psi}) \qquad \forall 
	(v, \bdd{\psi}) \in H^1_{\Gamma}(\Omega) \times \bdd{\Theta}_{\Gamma}(\Omega),
\end{align}
where
\begin{align}
	B_{t}( w, \bdd{\theta}; v, \bdd{\psi} ) := a(\bdd{\theta}, \bdd{\psi}) + 
	\lambda t^{-2}(\bdd{\Xi}(w, \bdd{\theta}), \bdd{\Xi}(v, \bdd{\psi}) ).
\end{align}
Sufficient conditions for the well-posedness of \cref{eq:mr-variational-form}, 
and consequently the existence and uniqueness of minimizers to 
\cref{eq:rm-energy}, are that the following Korn and Poincar\'{e} inequalities 
hold: There exists $C_P > 0$ and $C_K > 0$ such that
\begin{subequations}
	\label{eq:poincare-and-korn}
	\begin{alignat}{2}
		\label{eq:poincare-inequality}
		\|v\|_1 &\leq C_P \| \grad v \| \qquad & &\forall v \in 
		H^1_{\Gamma}(\Omega), \\
		\label{eq:korn-inequality}
		\| \bdd{\psi} \|_1 &\leq C_K \| \symgrad{\bdd{\psi}} \| \qquad & &\forall 
		\bdd{\psi} \in \bdd{\Theta}_{\Gamma}(\Omega).
	\end{alignat}
\end{subequations}
In particular, under these conditions, there holds
\begin{align}
	\label{eq:a-coercive}
	\| \bdd{\psi} \|_1^2 \leq C_K \|\symgrad{\bdd{\psi}}\|_2 \leq C a(\bdd{\psi}, 
	\bdd{\psi}) \qquad \forall \bdd{\psi} \in \bdd{\Theta}_{\Gamma}(\Omega).
\end{align}
Here and in what follows, $C$ is used to denote a generic positive constant that 
may depend on the domain, boundary conditions, $C_P$, $C_K$, and material 
parameters $E$ and $\nu$ and shear correction factor $k$, but unless stated 
otherwise, is independent of other quantities and, in particular, the thickness 
$t$. Thanks to Poincar\'{e}'s inequality \cref{eq:poincare-inequality}, we also 
obtain
\begin{align*}
	\|v\|_1^2 \leq C_P \|\grad v\|^2 \leq C \left(  \| \bdd{\Xi}(v, \bdd{\psi}) 
	\|^2 + \|\bdd{\psi}\|^2 \right) \leq C B_{t}(v, \bdd{\psi}; v, \bdd{\psi}) 
\end{align*}
for all $(v, \bdd{\psi}) \in H^1_{\Gamma}(\Omega) \times 
\bdd{\Theta}_{\Gamma}(\Omega)$. Thus, $B_t(\cdot,\cdot)$ is coercive:
\begin{align}
	\label{eq:bt-coercive}
	B_t(v, \bdd{\psi}; v, \bdd{\psi}) \geq \alpha \left( \| v \|_1 +  
	\|\bdd{\psi}\|_1 \right)^2 \qquad \forall (v, \bdd{\psi}) \in 
	H^1_{\Gamma}(\Omega) \times \bdd{\Theta}_{\Gamma}(\Omega),
\end{align}
where $\alpha > 0$ is independent of $t$. The bilinear form $B_t(\cdot,\cdot)$ is 
also bounded:
\begin{align}
	|B_{t}( w, \bdd{\theta}; v, \bdd{\psi} )| &\leq C \| \symgrad{\bdd{\theta}}\| 
	\| \symgrad{\bdd{\psi}}\| + t^{-2} \left( \|\grad w\| + \|\bdd{\theta}\| 
	\right) \left( \|\grad v\| + \|\bdd{\psi}\| \right)  \notag \\
	\label{eq:bt-bounded}
	&\leq C t^{-2} \left( \|w\|_1 + \|\bdd{\psi}\|_1 \right) \left( \|v\|_1 + 
	\|\bdd{\psi}\|_1 \right) 
\end{align}
for all $w, v \in H^1_{\Gamma}(\Omega)$ and $\bdd{\theta}, \bdd{\psi} \in 
\bdd{\Theta}_{\Gamma}(\Omega)$. Thus, for $F \in \dual{H^1_{\Gamma}(\Omega)}$ and 
$G \in \dual{\bdd{\Theta}_{\Gamma}(\Omega)}$,  problem 
\cref{eq:mr-variational-form} is uniquely solvable thanks to the Lax-Milgram 
theorem.

\subsection{Discretization}

Let $\{ \mathcal{T}_h \}$ denote a sequence of meshes indexed by the mesh size $h 
> 0$ with vertices $\{\mathcal{V}_h\}$ and edges $\{\mathcal{E}_h\}$. Let 
$\discrete{W}^h \subset H^1(\Omega)$ and $\discretev{V}^h \subset 
\bdd{H}^1(\Omega)$ denote finite element spaces defined on the mesh 
$\mathcal{T}_h$ and define the corresponding admissible deflections as follows:
\begin{align}
	\discrete{W}^h_{\Gamma} &:=  \discrete{W}^h \cap H^1_{\Gamma}(\Omega) \quad 
	\text{and} \quad \discretev{V}^h_{\Gamma} :=  \discretev{V}^h \cap 
	\bdd{\Theta}_{\Gamma}(\Omega).
\end{align}
Then, the following finite element approximation of \cref{eq:mr-variational-form} 
consists of seeking $(w_h, \bdd{\theta}_h) \in \discrete{W}_{\Gamma}^h \times 
\discretev{V}_{\Gamma}^h$ such that
\begin{align}
	\label{eq:mr-variational-form-fem-nored}
	B_{t}(w_h, \bdd{\theta}_h; v, \bdd{\psi})
	= F(v) + G(\bdd{\psi}) \qquad \forall (v, \bdd{\psi}) \in 
	\discrete{W}_{\Gamma}^h \times \discretev{V}_{\Gamma}^h.
\end{align}
As usual, the conforming finite element method is automatically well-posed, and 
C\'{e}a's Lemma \cite[Theorem
2.4.1]{Cia02} implies the resulting error satisfies
\begin{align}
	\label{eq:error-est-quasi-bad}
	\| w - w_h\|_1 + \| \bdd{\theta} - \bdd{\theta}_h\|_1 &\leq \frac{C}{t} 
	\left( \inf_{v \in \discrete{W}^h_{\Gamma}} \|w - v\|_1 +  \inf_{\bdd{\psi} 
	\in \discretev{V}^h_{\Gamma}} \|\bdd{\theta} - \bdd{\psi} \|_1 \right).
\end{align}
The presence of the factor $1/t \gg 1$ raises the concern that the error of the 
scheme \cref{eq:mr-variational-form-fem-nored} may be significantly larger than 
the best approximation error appearing in parentheses on the right hand side of 
\cref{eq:error-est-quasi-bad}. This fear proves to be well-founded, and it is 
well-known \cite{Bathe96,BoffiBrezziFortin13,Braess07} that (unless the spaces 
$\discrete{W}^h$ and 
$\discretev{V}^h$ satisfy additional assumptions) the method 
\cref{eq:mr-variational-form-fem-nored} exhibits shear locking where, if $t \ll 
h$, then the error does not reduce as the mesh is refined or if it does, then it 
converges to zero at a suboptimal rate.

\subsection{Heuristic explanation of locking}
\label{sec:locking-explain}

Shear locking arises from the interplay between the variational formulation of 
the problem and the nature of the finite dimensional spaces 
$\discrete{W}^h_{\Gamma}$ and $\discretev{V}^h_{\Gamma}$. More specifically, the 
fact that the solution $(w_h, \bdd{\theta}_h)$ of 
\cref{eq:mr-variational-form-fem-nored} minimizes the energy \cref{eq:rm-energy}
\begin{align*}
	(w_h, \bdd{\theta}_h) = \argmin_{(v, \bdd{\psi}) \in \discrete{W}^h_{\Gamma} 
	\times \discretev{V}^h_{\Gamma}} \frac{1}{2} a(\bdd{\psi}, \bdd{\psi}) + 
	\frac{\lambda}{2t^2} \| \bdd{\Xi}(v, \bdd{\psi})  \|^2 - F(v) - G( \bdd{\psi})
\end{align*}
means that if $t \ll 1$, then the first two terms balance only if  
$\|\bdd{\Xi}(w_h, \bdd{\theta}_h)\| = \mathcal{O}(t)$. In turn, this means that, 
for all practical purposes, when $t \ll 1$ the minimizer must satisfy 
$\bdd{\Xi}(w_h, \bdd{\theta}_h) = \bdd{0}$ or $\bdd{\theta}_h = \grad w_h$. The 
fact that $\grad w_h$ may be discontinuous while $\bdd{\theta}_h$ is continuous 
results in the minimizers becoming more occupied with maintaining compatibility 
than with approximating the true deflections. Consequently, the resulting errors 
are larger than what would be attained from the pure approximation error 
appearing in \cref{eq:error-est-quasi-bad}.

This effect can be exhibited analytically by decomposing the solution into two 
components -- one independent of $t$ and a $t$-dependent remainder as follows. 
Let $(z_h, \bdd{\zeta}_h) \in \discrete{W}^h_{\Gamma} \times 
\discretev{V}^h_{\Gamma}$ denote the solution to the ``$t=0$ limit" problem
\begin{align}
	\label{eq:discrete-in-kernel-limit}
	(z_h, \bdd{\zeta}_h) = \argmin_{(v, \bdd{\psi}) \in \ker \bdd{\Xi}^h} 
	\frac{1}{2} a(\bdd{\psi}, \bdd{\psi}) - F(v) - G( \bdd{\psi}),
\end{align}
where $\ker \bdd{\Xi}^h$ is the restriction of the kernel of $\bdd{\Xi}$ to 
$\discrete{W}^h_{\Gamma} \times \discretev{V}^h_{\Gamma}$; i.e.
\begin{align*}
	\ker \bdd{\Xi} &:= \{ (v, \bdd{\psi}) \in H^1_{\Gamma}(\Omega) \times 
	\bdd{\Theta}_{\Gamma}(\Omega) : \bdd{\Xi}(v,  \bdd{\psi}) \equiv \bdd{0} \},
\end{align*}
and
\begin{align*}
	\ker \bdd{\Xi}^h &:= \ker \bdd{\Xi} \cap \discrete{W}^h_{\Gamma} \times 
	\discretev{V}^h_{\Gamma}.
\end{align*}
Then, $(w_h, \bdd{\theta}_h) = (u_h, \bdd{\phi}_h) + (z_h, \bdd{\zeta}_h)$, where
\begin{align*}
	(u_h, \bdd{\phi}_h) = \argmin_{(v, \bdd{\psi}) \in\discrete{W}^h_{\Gamma} 
	\times \discretev{V}^h_{\Gamma}} \frac{1}{2} a(\bdd{\psi}, \bdd{\psi}) + 
	\frac{\lambda}{2t^2} \| \bdd{\Xi}(v, \bdd{\psi})  \|^2 + a(\bdd{\zeta}_h, 
	\bdd{\psi}) - F(v) - G( \bdd{\psi}), 
\end{align*}
and so $\|\bdd{\Xi}(u_h, \bdd{\phi}_h)\| = \|\bdd{\Xi}(w_h, \bdd{\theta}_h)\| = 
\mathcal{O}(t)$.

The same arguments applied to the true solution $(w, \bdd{\theta})$ of 
\cref{eq:mr-variational-form} show that
\begin{align*}
	(w, \bdd{\theta}) = (u, \bdd{\phi}) + (z, \bdd{\zeta}),
\end{align*}
where $(u, \bdd{\phi}) \in H^1_{\Gamma}(\Omega) \times 
\bdd{\Theta}_{\Gamma}(\Omega)$ satisfies $\|\bdd{\Xi}(u, \bdd{\phi})\| = 
\mathcal{O}(t)$ and $(z, \bdd{\zeta}) \in \ker \bdd{\Xi}$ is independent of $t$. 
Thus, if the finite element solution is to provide uniform in $t$ 
approximability, then the space $\ker \bdd{\Xi}^h$ should provide a quasi-optimal 
approximation to $(z, \bdd{\zeta}) \in \ker \bdd{\Xi}$; i.e. 
\begin{multline}
	\label{eq:error-est-ker}
	\inf_{ (z_h, \bdd{\zeta}_h) \in \ker \bdd{\Xi}^h }  \left( \| z - z_h\|_1 + 
	\| \bdd{\zeta} - \bdd{\zeta}_h\|_1 \right) \\
	\leq C \inf_{ (v_h, \bdd{\psi}_h) \in \discrete{W}_{\Gamma}^{h} \times 
	\discretev{V}_{\Gamma}^h } \left(  \| z - v_h\|_1 + \| \bdd{\zeta} - 
	\bdd{\psi}_h\|_1 \right).
\end{multline}
This property means that the subspace $\ker \bdd{\Xi}^h$ is able to approximate 
functions belonging to $\ker \bdd{\Xi}$ to the same order as the full space 
$\discrete{W}_{\Gamma}^{h} \times \discretev{V}_{\Gamma}^h$.

Unfortunately, estimate \cref{eq:error-est-ker} fails to hold for many common 
choices of finite element spaces. For instance \cite{Falk2008}, suppose that 
$\discrete{W}^h$ and $\discretev{V}^h$ consist of continuous, piecewise linear 
polynomials. Then, the condition $\bdd{\Xi}(z_h, \bdd{\zeta}_h) \equiv \bdd{0}$ 
means that $\bdd{\zeta}_h$ is piecewise constant, and hence constant, since 
$\bdd{\zeta}_h$ is continuous. However, Korn's inequality 
\cref{eq:korn-inequality} and Poincar\'{e}'s inequality 
\cref{eq:poincare-inequality} then show that $\bdd{\zeta}_h \equiv \bdd{0}$ and 
$z_h \equiv 0$. Thus, $\bdd{\Xi}(z_h, \bdd{\zeta}_h) \equiv \bdd{0}$ if and only 
if both $z_h$ and $\bdd{\zeta}_h$ vanish meaning that \cref{eq:error-est-ker} 
fails. In the limit $t \to 0$, these results mean that the resulting finite 
element approximation $(w_h, \bdd{\theta}_h)$ to 
\cref{eq:mr-variational-form-fem-nored} 
also satisfies $w_h \equiv 0$ and $\bdd{\theta}_h \equiv \bdd{0}$ which 
corresponds to the plate remaining undeflected or \textit{locked}. The same 
issues occur with higher order elements to a lesser extent but still manifest 
themselves as suboptimal rates of convergence.

\subsection{Alleviating locking with reduction operators}
\label{sec:intro-reduct}

The standard approach to alleviate shear locking is to relax the constraint 
$\bdd{\Xi}(z_h, \bdd{\zeta}_h) \equiv \bdd{0}$.  One way in which to do this is 
to introduce a linear \textit{reduction} operator $\bdd{R}$ and define a new 
operator $\bdd{\Xi}_{\bdd{R}}$ formally by the rule
\begin{align}
	\label{eq:intro-reduction-operator}
	\bdd{\Xi}_{\bdd{R}}(v, \bdd{\psi}) = \bdd{R}(\grad v -  \bdd{\psi}) \qquad 
	\forall (v, \bdd{\psi}) \in H^1_{\Gamma}(\Omega) \times 
	\bdd{\Theta}_{\Gamma}(\Omega).
\end{align}
Replacing $\bdd{\Xi}$ in the variational problem \cref{eq:rm-energy} with 
$\bdd{\Xi}_{\bdd{R}}$ results in a modified energy 
\begin{align}
	\label{eq:rm-energy-reduced}
	J_{t, \bdd{R}}(w, \bdd{\theta}) := \frac{1}{2} a(\bdd{\theta}, \bdd{\theta}) 
	+ \frac{\lambda}{2t^2} \| \bdd{\Xi}_{\bdd{R}}(w, \bdd{\theta} )\|^2 - F(w) - 
	G(\bdd{\theta})
\end{align}
in which the Kirchhoff-Love term $\| \bdd{\Xi}_{\bdd{R}}(w, \bdd{\theta} )\|^2$ 
is weakened. The goal is to balance the choice of $\bdd{R}$ so that the set 
\begin{align}
	\ker \bdd{\Xi}_{\bdd{R}}^h := \{ (v, \bdd{\psi}) \in \discrete{W}^h_{\Gamma} 
	\times \discretev{V}^h_{\Gamma} : \bdd{\Xi}_{\bdd{R}}(v,  \bdd{\psi}) \equiv 
	\bdd{0} \}
\end{align}
has optimal approximation properties while not compromising the integrity of the 
underlying physical model through replacing $J_t$ with the modified energy $J_{t, 
\bdd{R}}$. The corresponding variational problem reads: Find $(w_h, 
\bdd{\theta}_h) \in \discrete{W}_{\Gamma}^h \times \discretev{V}_{\Gamma}^h$ such 
that
\begin{align}
	\label{eq:mr-variational-form-fem}
	B_{t, \bdd{R}}(w_h, \bdd{\theta}_h; v, \bdd{\psi})
	= F(v) + G(\bdd{\psi}) \qquad \forall (v, \bdd{\psi}) \in 
	\discrete{W}_{\Gamma}^h \times \discretev{V}_{\Gamma}^h,
\end{align}
where the reduced bilinear form is given by
\begin{align}
	B_{t, \bdd{R}}(w, \bdd{\theta}; v, \bdd{\psi}) &:= 	a(\bdd{\theta}, 
	\bdd{\psi}) + \lambda t^{-2} ( \bdd{\Xi}_{\bdd{R}}( w, \bdd{\theta}),  
	\bdd{\Xi}_{\bdd{R}}(v, \bdd{\psi})).
\end{align}

\section{Application of the de Rham complex to Reissner-Mindlin discretization}
\label{sec:mitc}

In this section, we show that the de Rham complex provides a systematic approach 
to the construction of reduction operators $\bdd{R}$. We start by collecting some 
key properties of the de Rham complex that will be used in the sequel.

\subsection{Brief r\'{e}sum\'{e} on the de Rham complex}

The rotation operator $\rot: \hrot \to L^2(\Omega)$ is defined by the rule $\rot 
\bdd{\theta} := \partial_x \theta_2 - \partial_y \theta_1$, where the space
\begin{align*}
	\hrot := \{ \bdd{\gamma} \in \bdd{L}^2(\Omega) : \rot \bdd{\gamma} \in 
	L^2(\Omega) \}
\end{align*}
is equipped with inner product and norm defined by
\begin{align*}
	(\bdd{\gamma}, \bdd{\eta})_{\rot} := (\bdd{\gamma}, \bdd{\eta}) + (\rot 
	\bdd{\gamma}, \rot \bdd{\eta}) \quad \text{and} \quad 
	\|\bdd{\gamma}\|_{\rot}^2 := (\bdd{\gamma}, \bdd{\gamma})_{\rot} \qquad 
	\forall \bdd{\gamma}, \bdd{\eta} \in \hrot.
\end{align*}
Vector fields in $\hrot$ have a well-defined tangential component on the boundary 
in the sense that the tangential trace operator $\hrot \ni \bdd{\gamma} \to 
\unitvec{t} \cdot \bdd{\gamma}|_{\Gamma} \in H^{-1/2}(\Gamma)$ defines a 
continuous mapping \cite[p. 27 Theorem 2.5]{GiraultRaviart86}. We may therefore 
define the subspace
\begin{align*}
	\hrotgamma := \left\{ \bdd{\gamma} \in \hrot : \int_{\Gamma} (\unitvec{t} 
	\cdot 
	\bdd{\gamma}) v \d{s}  = 0 \quad \forall v \in H^1(\Omega) : v|_{\Gamma_f} 
	\equiv 0 \right\}
\end{align*}
consisting of vector fields with vanishing tangential trace on $\Gamma \setminus 
\Gamma_f$, where we use $\int_{\Gamma}$ to also denote the $H^{-1/2}(\Gamma) 
\times H^{1/2}(\Gamma)$ duality pairing. Finally, we define
\begin{align}
	\label{eq:l2gamma-def}
	L^2_{\Gamma}(\Omega) := \{ q \in L^2(\Omega) : (q, 1) = 0 \text{ if } 
	|\Gamma_f| = 0 \}.
\end{align}

These spaces, together with $H^1_{\Gamma}(\Omega)$, form a \textit{complex}
\begin{equation}
	\label{eq:de-rham-bcs}
	\begin{tikzcd}%[row sep=huge,column sep=large]
		0 \arrow{r} & H^1_{\Gamma}(\Omega)  \arrow{r}{\grad} & \hrotgamma 
		\arrow{r}{\rot} & L^2_{\Gamma}(\Omega) \arrow{r}{} & 0
	\end{tikzcd}
\end{equation}	
in which the image of each operator is contained in the kernel of the next 
operator in the sequence; e.g. $\grad H^1_{\Gamma}(\Omega) \subset \hrotgamma$, 
$\rot \grad v \equiv 0$ for all $v \in H^1_{\Gamma}(\Omega)$, and the operator 
$\rot : \hrotgamma \to L^2_{\Gamma}(\Omega)$ is surjective \cite[Propositions 
51.3 \& 51.6]{ErnGuerII21}. Moreover, in the special case of a simply connected 
plate with $\Gamma_f$ connected, the image of each operator is precisely the 
kernel of the following operator (see e.g. \cite[p. 1111-1112 Example 9]{Licht17} 
or \cref{lem:harmonic-complex-cont} below), and the complex \cref{eq:de-rham-bcs} 
is then said to be \textit{exact}. However, if the domain is not simply connected 
or if $\Gamma_f $ is not connected, then $\grad H^1_{\Gamma}(\Omega)$ is a proper 
subspace of $\ker \rot \hrotgamma$ and, as a result, the space 
\begin{align}
	\label{eq:harmonic-forms-def}
	\harmonic{H}_{\Gamma}(\Omega) := \{ \harmonic{h} \in \hrotgamma : \rot 
	\harmonic{h} \equiv 0  \text{ and }  (\harmonic{h}, \grad w) = 0 \ \forall w 
	\in H^1_{\Gamma}(\Omega) \}
\end{align}
is nontrivial. This means that there exist non-zero vector fields, known as 
\textit{harmonic forms}, that have vanishing rotation and which are orthogonal to 
$\grad H^1_{\Gamma}(\Omega)$. 

In order to see the relevance of the de Rham sequence to the discretization of 
the Reissner-Mindlin model, we consider spaces $\discrete{W}_{\Gamma}^h \subset 
H_{\Gamma}^1(\Omega)$, $\discretev{U}_{\Gamma}^h \subset \hrotgamma$, and 
$\discrete{Q}^h_{\Gamma} \subset L^2_{\Gamma}(\Omega)$ for which the following 
discrete analogue of \cref{eq:de-rham-bcs} holds: 
\begin{equation}
	\label{mot:eq:wh-gammah-qh-sequence}
	\begin{tikzcd}%[row sep=huge,column sep=large]
		0 \arrow{r} & \discrete{W}_{\Gamma}^h \arrow{r}{\grad} & 
		\discretev{U}^h_{\Gamma} \arrow{r}{\rot} & \discrete{Q}^h_{\Gamma} 
		\arrow{r} & 0.
	\end{tikzcd}
\end{equation}
The discrete analogue of the harmonic forms is defined to be
\begin{align}
	\label{eq:harmonic-forms-discrete-def}
	\harmonic{H}^h_{\Gamma} := \{ \harmonic{h} \in \discretev{U}^h_{\Gamma} : 
	\rot \harmonic{h} \equiv 0  \text{ and }  (\harmonic{h}, \grad w) = 0 \ 
	\forall w \in \discrete{W}^h_{\Gamma} \},
\end{align}
and the discrete sequence \cref{mot:eq:wh-gammah-qh-sequence} is exact if and 
only if $\harmonic{H}^h_{\Gamma} = \{ \bdd{0} \}$ and $\rot 
\discretev{U}^h_{\Gamma} = \discrete{Q}^h_{\Gamma}$. 

The sequences \cref{eq:de-rham-bcs} and \cref{mot:eq:wh-gammah-qh-sequence} can 
be often related to one another via bounded, linear projection operators
\begin{align*}
	\Pi_{\discrete{W}} : H^1_{\Gamma}(\Omega) \to \discrete{W}^h_{\Gamma}, \quad 
	\bdd{\Pi}_{\discretev{U}} : \hrotgamma \to \discretev{U}^h_{\Gamma}, \quad 
	\text{and} \quad \Pi_{\discrete{Q}} : L^2_{\Gamma}(\Omega) \to 
	\discrete{Q}^h_{\Gamma}
\end{align*}
such that the following diagram commutes:
\begin{equation}
	\label{eq:de-rham-commuting}
	\begin{tikzcd}%[row sep=huge,column sep=large]
		H^1_{\Gamma}(\Omega) \arrow{r}{\grad} \arrow{d}{\Pi_{\discrete{W}}} &
		\hrotgamma \arrow{r}{\rot} \arrow{d}{\bdd{\Pi}_{\discretev{U}}} & 
		L^2_{\Gamma}(\Omega) \arrow{d}{\Pi_{\discrete{Q}}} \\
		\discrete{W}^h_{\Gamma} \arrow{r}{\grad} & \discretev{U}_{\Gamma}^h 
		\arrow{r}{\rot} & \discrete{Q}_{\Gamma}^h,
	\end{tikzcd}
\end{equation}
i.e. $\grad \Pi_{\discrete{W}} = \bdd{\Pi}_{\discretev{U}} \grad$ and $\rot 
\bdd{\Pi}_{\discretev{U}} = \Pi_{\discrete{Q}} \rot$.  Many examples of spaces 
and operators that satisfy \cref{eq:de-rham-commuting} are known in the 
literature (see e.g. \cite{Arn18,BoffiBrezziFortin13,Dem07}).

\subsection{Application to Reissner-Mindlin discretization}
\label{sec:appl-de-rham-plate}

The Galerkin discretization of the Reissner-Mindlin model requires the selection 
of subspaces $\discrete{W}^h_{\Gamma} \subset H^1_{\Gamma}(\Omega)$ and 
$\discretev{V}^h_{\Gamma} \subset \bdd{\Theta}_{\Gamma}(\Omega) \subset 
\hrotgamma$ with which to approximate the displacement $w$ and rotation 
$\bdd{\theta}$. We have already seen that the approximation properties of the 
subspace
\begin{align*}
	\{ (v_h, \bdd{\psi}_h) \in \discrete{W}^h_{\Gamma} \times 
	\discretev{V}^h_{\Gamma} : \bdd{\Xi}(v_h, \bdd{\psi}_h) \equiv \bdd{0} \}	
\end{align*}
is responsible for locking. Re-examining this constraint in light of the discrete 
sequence \cref{mot:eq:wh-gammah-qh-sequence} shows that the equality 
$\bdd{\Xi}(v_h, \bdd{\psi}_h) \equiv \bdd{0} \iff \bdd{\psi}_h = \grad v_h$ 
should be interpreted by viewing $\discretev{V}_{\Gamma}^h$ as a subspace of 
$\hrotgamma$ as distinct from the $H^1$-conforming space 
$\bdd{\Theta}_{\Gamma}(\Omega)$. In other words, the constraint $\bdd{\Xi}(v, 
\bdd{\psi}) \equiv \bdd{0}$ at the continuous level should be viewed at the 
discrete level through the lens of the projection operator 
$\bdd{\Pi}_{\discretev{U}}$ that appears in the commuting diagram 
\cref{eq:de-rham-commuting}, and the constraint $\bdd{\Xi}(v, \bdd{\psi}) = \grad 
v - \bdd{\psi} \equiv \bdd{0}$ should itself be discretized as 
$\bdd{\Pi}_{\discretev{U}}(\grad v_h - \bdd{\psi}_h) \equiv \bdd{0}$. The fact 
that $\discretev{V}^h_{\Gamma}$ is not necessarily a subspace of 
$\discretev{U}^h_{\Gamma}$ means that the condition 
\begin{align*}
	\bdd{0} \equiv \bdd{\Pi}_{\discretev{U}}(\grad v_h - \bdd{\psi}_h) = \grad 
	v_h - \bdd{\Pi}_{\discretev{U}} \bdd{\psi}_h
\end{align*}
is more relaxed than the pointwise condition $\grad v_h - \bdd{\psi}_h \equiv 
\bdd{0}$. This suggests defining an operator $\bdd{\Xi}_{\discretev{U}}$ by the 
rule $\bdd{\Xi}_{\discretev{U}}(v_h, \bdd{\psi}_h) = 
\bdd{\Pi}_{\discretev{U}}(\grad v_h - \bdd{\psi}_h)$ where the commuting 
projection $\bdd{\Pi}_{\discretev{U}}$ naturally takes the role of the reduction 
operator $\bdd{R}$ which we seek.

The only snag with this approach stems from the fact that in order to implement 
\cref{eq:mr-variational-form-fem} using standard finite element sub-assembly 
techniques, one needs to explicitly compute the action of $\bdd{R}$ element by 
element. Unfortunately, the projection operator $\bdd{\Pi}_{\discretev{U}}$, in 
most cases, cannot be locally defined owing to the fact that the traces of 
functions in $\hrotgamma$ are not well-defined on individual edges which 
precludes computing the action of $\bdd{R}$ separately on each element. 

However, a careful examination \cref{eq:mr-variational-form-fem} reveals that the 
action of the reduction operator is in fact only required for functions belonging 
to the smoother space $\discretev{V}_{\Gamma}^h + \grad \discrete{W}_{\Gamma}^h$ 
rather than $\hrotgamma$. Such functions have more regularity than just 
$\hrotgamma$ which means one can define reduction operators in terms of the 
traces on individual edges. As such, the action of 
the operator can, at least in principle, be computed element by element. 
Fortunately, for most choices of spaces forming a complex 
\cref{mot:eq:wh-gammah-qh-sequence}, there exist appropriate locally-defined 
interpolation operators (defined on the smoother spaces). In particular, there 
typically exist bounded linear projection operators
\begin{align}
	\label{mot:interpolation-operator}
	T : H^2_{\Gamma}(\Omega) + \discrete{W}^h_{\Gamma} \to 
	\discrete{W}^h_{\Gamma} \quad \text{and} \quad \bdd{R} : 
	\bdd{\Theta}_{\Gamma}(\Omega) + \discretev{U}^h_{\Gamma} \to 
	\discretev{U}^h_{\Gamma}
\end{align}
such that the following diagram commutes:
\begin{equation}
	\label{eq:mot:mitc:red-commute}
	\begin{tikzcd}%[row sep=huge,column sep=large]
		H^2_{\Gamma}(\Omega) \arrow{r}{\grad} \arrow{d}{T} &
		\bdd{\Theta}_{\Gamma}(\Omega) \arrow{r}{\rot} \arrow{d}{\bdd{R}} & 
		L^2_{\Gamma}(\Omega) \arrow{d}{P} \\
		\discrete{W}^h_{\Gamma} \arrow{r}{\grad} & \discretev{U}_{\Gamma}^h 
		\arrow{r}{\rot} & \discrete{Q}_{\Gamma}^h,
	\end{tikzcd}
\end{equation}
where $P : L^2(\Omega) \to \discrete{Q}_{\Gamma}^h$ is the $L^2$-projection 
operator
\begin{align}
	\label{mot:eq:l2-projection-q}
	( P r, q) = (r, q) \qquad \forall q \in \discrete{Q}^h_{\Gamma}, \ \forall r 
	\in L^2(\Omega)
\end{align}
and
\begin{align*}
	H^2_{\Gamma}(\Omega) := \{ w \in H^2(\Omega) : w|_{\Gamma_{c} \cup \Gamma_s} 
	= 0 \text{ and } \partial_n w|_{\Gamma_c} = 0 \}.
\end{align*}
Strictly speaking, the operator $\bdd{\Xi}_{\bdd{R}}$ defined in 
\cref{eq:intro-reduction-operator} is not valid on all of $H^1_{\Gamma}(\Omega) 
\times \bdd{\Theta}_{\Gamma}(\Omega)$. However, thanks to the discrete de Rham 
complex property together with the projection property in 
\cref{mot:mitc:red-commute}, there holds
\begin{align*}
	\bdd{R} \grad w  = \grad w \qquad \forall w \in \discrete{W}^h_{\Gamma}.
\end{align*}
Consequently, without affecting the discrete scheme 
\cref{eq:mr-variational-form-fem} or the space $\ker \bdd{\Xi}_{\bdd{R}}^h$, we 
may redefine the operator $\bdd{\Xi}_{\bdd{R}}$ as follows:
\begin{align}
	\label{eq:reduction-operator}
	\bdd{\Xi}_{\bdd{R}}(v, \bdd{\psi}) := \grad v - \bdd{R} \bdd{\psi} \qquad 
	\forall (v, \bdd{\psi}) \in H^1_{\Gamma}(\Omega) \times 
	\bdd{\Theta}_{\Gamma}(\Omega).
\end{align}
The operator $\bdd{R}$ is also typically bounded on 
$\bdd{\Theta}_{\Gamma}(\Omega)$: There exits $M_{\bdd{R}} > 0$ independent of $h$ 
such that
\begin{align}
	\label{eq:reduction-bounded}
	\| \bdd{R} \bdd{\psi}\|_{\rot} \leq M_{\bdd{R}}  \| \bdd{\psi}\|_{1}  \qquad 
	\forall \bdd{\psi} \in \bdd{\Theta}_{\Gamma}(\Omega).
\end{align}

In summary, we have the following conditions: 
\begin{align}
	\label{mot:mitc:red-commute}
	&\text{\parbox{0.8\linewidth}{The spaces $\discrete{W}_{\Gamma}^h \subset 
	H_{\Gamma}^1(\Omega)$, $\discretev{U}_{\Gamma}^h \subset \hrotgamma$ and 
	$\discrete{Q}^h_{\Gamma} \subset L^2_{\Gamma}(\Omega)$ and linear projection 
	operators $\bdd{R} : \bdd{\Theta}_{\Gamma}(\Omega) + \discretev{U}^h_{\Gamma} 
	\to \discretev{U}^h_{\Gamma}$ and $T : H^2_{\Gamma}(\Omega) + 
	\discrete{W}_{\Gamma}^{h} \to \discrete{W}^h_{\Gamma}$ are chosen so that 
	\cref{eq:mot:mitc:red-commute} commutes and the bottom row forms a discrete 
	complex.} } \\
	\label{mot:mitc:red-bounded}
	&\text{\parbox{0.8\linewidth}{There exists $M_{\bdd{R}} > 0$ independent of 
	$h$ such that \cref{eq:reduction-bounded} holds.}} 
\end{align}

\begin{remark}
	\label{remark:consequences-sequence-r}
	The identity $\rot \discretev{U}^h_{\Gamma} =  \discrete{Q}^h_{\Gamma}$ is a 
	direct consequence of the commuting diagram property 
	\cref{mot:mitc:red-commute}: For $q \in \discrete{Q}^h_{\Gamma}$, there 
	exists $\bdd{\theta} \in \bdd{\Theta}_{\Gamma}(\Omega)$ such that $\rot 
	\bdd{\theta} = q$ \cite[Lemma A.3]{AinCP23KirchI}. As a result, a vector 
	field 
	of the form $\bdd{\gamma} := \bdd{R} \bdd{\theta} \in 
	\discretev{U}^h_{\Gamma}$ satisfies 
	\begin{align*}
		\rot \bdd{R} \bdd{\theta} = P \rot \bdd{\theta} = P q = q.
	\end{align*}
\end{remark}

\subsection{Choosing the rotation space $\discretev{V}^h_{\Gamma}$}

Earlier we argued that, in order to avoid locking, the space $\ker 
\bdd{\Xi}_{\bdd{R}}^h$ should satisfy the following quasi-optimal approximation 
property: 
\begin{multline}
	\label{eq:error-est-ker-red}
	\inf_{ (z_h, \bdd{\zeta}_h) \in \ker \bdd{\Xi}_{\bdd{R}}^h  } \left(   \| z - 
	z_h\|_1 + \| \bdd{\zeta} - \bdd{\zeta}_h\|_1 \right) \\
	\leq C \inf_{ (v_h, \bdd{\psi}_h) \in \discrete{W}_{\Gamma}^{h} \times 
	\discretev{V}_{\Gamma}^h } \left( \| z - v_h\|_1 + \| \bdd{\zeta} - 
	\bdd{\psi}_h\|_1 \right),
\end{multline}
for all $(z, \bdd{\zeta}) \in \ker \bdd{\Xi}$. It is not always easy to see 
directly whether \cref{eq:error-est-ker-red} is satisfied, so we develop an 
alternative criterion that can be used to ensure this is the case. Firstly, 
notice that if the inf-sup condition
\begin{align}
	\label{eq:invert-xi-inf-sup-u}
	\inf_{ \substack{\bdd{\eta} \in \discretev{U}^h_{\Gamma} \\ \bdd{\eta} \neq 
			\bdd{0} } } \sup_{ \substack{ (v, \bdd{\psi}) \in 
			\discrete{W}^h_{\Gamma} 
			\times \discretev{V}^h_{\Gamma} \\ (v, \bdd{\psi}) \neq (0, \bdd{0}) 
			} } 
	\frac{ ( \bdd{\eta}, \bdd{\Xi}_{\bdd{R}}(v, \bdd{\psi}) )_{\rot} }{ (\|v\|_1 
		+ \|\bdd{\psi}\|_1) \|\bdd{\eta}\|_{\rot} } \geq \beta_{\bdd{R}}
\end{align}
is satisfied with $\beta_{\bdd{R}} > 0$ independent of $h$, then the following 
analogue of \cref{eq:error-est-ker-red} holds for all $(z, \bdd{\zeta}) \in \ker 
\bdd{\Xi}_{\bdd{R}}$:
\begin{multline}
	\label{eq:ker-opt-approx}
	\inf_{ (z_h, \bdd{\zeta}_h) \in \ker \bdd{\Xi}_{\bdd{R}}^h} \left( \| z - z_h 
	\|_1 + \|\bdd{\zeta} - \bdd{\zeta}_h\|_1 \right) \\ 
	\leq \frac{C M_{\bdd{R}} }{\beta_{\bdd{R}}} \inf_{ (v_h, \bdd{\psi}_h) \in 
	\discrete{W}^h_{\Gamma} \times \discretev{V}^h_{\Gamma} } \left( \|z - 
	v_h\|_1 + \|\bdd{\zeta} - \bdd{\psi}_h \|_1 \right). 
\end{multline}
Here, we used \cite[Proposition 5.1.3]{BoffiBrezziFortin13} along with the 
estimate
\begin{align*}
	|(\bdd{\eta}, \bdd{\Xi}_{\bdd{R}}(v, \bdd{\psi}))_{\rot}| \leq 
	\|\bdd{\eta}\|_{\rot} \left( \| \grad v \| + \| \bdd{R} \bdd{\psi} \|_{\rot} 
	\right) \leq C M_{\bdd{R}} \|\bdd{\eta} \|_{\rot} \left( \|v\|_1 + 
	\|\bdd{\psi}\| \right)	
\end{align*}
which holds for all $\bdd{\eta} \in \hrotgamma$, $v \in H^1_{\Gamma}(\Omega)$, 
and $\bdd{\psi} \in \bdd{\Theta}_{\Gamma}(\Omega)$. As it stands, 
\cref{eq:ker-opt-approx} applies to the approximation of $(z, \bdd{\zeta}) \in 
\ker \bdd{\Xi}_{\bdd{R}}$, whereas \cref{eq:error-est-ker-red} relates to $(z, 
\bdd{\zeta}) \in \ker \bdd{\Xi}$. Nevertheless \cref{eq:ker-opt-approx} implies 
that a variant of \cref{eq:error-est-ker-red} holds provided the right hand side 
is augmented with an additional term accounting for the error incurred through 
using the reduction operator in place of the identity:
\begin{lemma}
	\label{lem:ker-opt-approx-true-kerxi}
	Suppose that \cref{mot:mitc:red-commute}, \cref{mot:mitc:red-bounded}, and 
	the 
	inf-sup condition \cref{eq:invert-xi-inf-sup-u} hold. Then, for all $(z, 
	\bdd{\zeta}) \in \ker \bdd{\Xi}$, there holds
	\begin{multline}
		\label{eq:ker-opt-approx-true-kerxi}
		\inf_{ (z_h, \bdd{\zeta}_h) \in \ker \bdd{\Xi}_{\bdd{R}}^h } \left( \| z 
		- z_h \|_1 + \|\bdd{\zeta} - \bdd{\zeta}_h\|_1 \right) \\
		\leq  \frac{C M_{\bdd{R}} }{ \beta_{\bdd{R}} } \inf_{ (v_h, \bdd{\psi}_h) 
		\in \discrete{W}^h_{\Gamma} \times \discretev{V}^h_{\Gamma} } \left( \|z 
		- v_h\|_1 + \|\bdd{\zeta} - \bdd{\psi}_h \|_1 \right) 
		+ 2C_P \|\bdd{\zeta} - \bdd{R} \bdd{\zeta}\|.  
	\end{multline} 
\end{lemma}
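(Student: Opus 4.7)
The plan is to reduce the statement to the already-established estimate \cref{eq:ker-opt-approx} by constructing a suitable surrogate for $(z,\bdd{\zeta})$ that lies in the continuous kernel $\ker \bdd{\Xi}_{\bdd{R}}$ (not merely $\ker \bdd{\Xi}$), then pass back to $(z,\bdd{\zeta})$ via the triangle inequality and Poincar\'{e}'s inequality.

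First, I would observe that since $(z,\bdd{\zeta}) \in \ker \bdd{\Xi}$, we have $\bdd{\zeta} = \grad z$, and since $\bdd{\zeta} \in \bdd{\Theta}_{\Gamma}(\Omega) \subset \bdd{H}^1(\Omega)$, it follows that $z \in H^2_{\Gamma}(\Omega)$, so the interpolation operator $T$ may be applied to $z$. The commuting property \cref{eq:mot:mitc:red-commute} then gives $\bdd{R}\bdd{\zeta} = \bdd{R}\grad z = \grad Tz$, so that
\begin{align*}
\bdd{\Xi}_{\bdd{R}}(Tz, \bdd{\zeta}) = \grad Tz - \bdd{R} \bdd{\zeta} = 0,
\end{align*}
and the surrogate pair $(Tz, \bdd{\zeta})$ belongs to $\ker \bdd{\Xi}_{\bdd{R}}$ (in the continuous sense). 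I can therefore apply \cref{eq:ker-opt-approx} to $(Tz, \bdd{\zeta})$ to obtain a quasi-optimal bound for its best approximation in $\ker \bdd{\Xi}_{\bdd{R}}^h$ by the best approximation in the full product space $\discrete{W}^h_{\Gamma} \times \discretev{V}^h_{\Gamma}$.

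Second, I would bridge between $(Tz,\bdd{\zeta})$ and $(z,\bdd{\zeta})$ using the triangle inequality on both sides of the bound from step one:
\begin{align*}
\inf_{(z_h, \bdd{\zeta}_h) \in \ker \bdd{\Xi}_{\bdd{R}}^h} \left( \| z - z_h \|_1 + \|\bdd{\zeta} - \bdd{\zeta}_h\|_1 \right) \leq \|z - Tz\|_1 + \inf_{(z_h, \bdd{\zeta}_h)} \left(\|Tz - z_h\|_1 + \|\bdd{\zeta}-\bdd{\zeta}_h\|_1\right),
\end{align*}
and similarly $\|Tz - v_h\|_1 \leq \|z - Tz\|_1 + \|z - v_h\|_1$ on the right-hand side of \cref{eq:ker-opt-approx}. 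The crucial point is then that $z - Tz \in H^1_{\Gamma}(\Omega)$, so Poincar\'{e}'s inequality \cref{eq:poincare-inequality} gives $\|z - Tz\|_1 \leq C_P \|\grad(z - Tz)\| = C_P \|\bdd{\zeta} - \bdd{R}\bdd{\zeta}\|$, which is precisely the additional consistency term appearing in the statement.

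Collecting the contributions from the two triangle-inequality steps produces a factor of the form $(1 + CM_{\bdd{R}}/\beta_{\bdd{R}}) C_P$ in front of $\|\bdd{\zeta} - \bdd{R}\bdd{\zeta}\|$, which can be absorbed into the stated constant $2C_P$ by letting $C$ in the first term on the right-hand side be appropriately large. I do not expect any genuine obstacle: the only delicate point is verifying that the surrogate $(Tz,\bdd{\zeta})$ really lies in $\ker \bdd{\Xi}_{\bdd{R}}$, which rests squarely on the commuting diagram \cref{eq:mot:mitc:red-commute} and on the regularity $z \in H^2_{\Gamma}(\Omega)$ inherited from $\bdd{\zeta} \in \bdd{H}^1$.
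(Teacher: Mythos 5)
Your proposal takes essentially the same route as the paper: use the commuting diagram to show that the surrogate pair $(Tz, \bdd{\zeta})$ lies in $\ker \bdd{\Xi}_{\bdd{R}}$, invoke \cref{eq:ker-opt-approx} for that pair, and bridge back to $(z, \bdd{\zeta})$ with the triangle inequality and Poincar\'{e}'s inequality. The one weak step is your closing remark about constants: you compute a coefficient $(1 + C M_{\bdd{R}}/\beta_{\bdd{R}}) C_P$ in front of $\|\bdd{\zeta} - \bdd{R}\bdd{\zeta}\|$ and propose to absorb the excess ``by letting $C$ in the first term on the right-hand side be appropriately large.'' That absorption does not work, because $\|\bdd{\zeta} - \bdd{R}\bdd{\zeta}\|$ is not controlled by the best-approximation error over $\discrete{W}^h_{\Gamma} \times \discretev{V}^h_{\Gamma}$, so no choice of the generic constant in the first term can cover it. The fix is cheap: since $Tz \in \discrete{W}^h_{\Gamma}$, one may simply take $v_h = Tz$ in the infimum from \cref{eq:ker-opt-approx}, which removes the $\|Tz - v_h\|_1$ contribution entirely and leaves a coefficient $C_P$ (hence $\leq 2C_P$) on $\|\bdd{\zeta} - \bdd{R}\bdd{\zeta}\|$, matching the stated bound.
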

\begin{proof}
	Let $(z, \bdd{\zeta}) \in \ker \bdd{\Xi}$ be given. Then, $\grad z = 
	\bdd{\zeta}$ and so $z \in H^2_{\Gamma}(\Omega)$. Thanks to 
	\cref{mot:mitc:red-commute}, $\grad T z = \bdd{R} \grad z$, and so $(Tz, 
	\bdd{\zeta}) \in \ker \bdd{\Xi}_{\bdd{R}}$. Thus, the triangle inequality and 
	\cref{eq:ker-opt-approx} give
	\begin{align*}
		&\inf_{ (z_h, \bdd{\zeta}_h) \in \ker \bdd{\Xi}_{\bdd{R}} } \left( \| z - 
		z_h \|_1 + \|\bdd{\zeta} - \bdd{\zeta}_h\|_1 \right) \\
		&\qquad  \leq \| z - T z\|_1 + \inf_{ (z_h, \bdd{\zeta}_h) \in \ker 
		\bdd{\Xi}_{\bdd{R}} } \left( \|Tz - z_h\|_1 + \| \bdd{\zeta} - 
		\bdd{\zeta}_h\|_1 \right) \\
		&\qquad \leq \| z - T z\|_1 +  \frac{C  M_{\bdd{R}} }{ \beta_{\bdd{R}} } 
		\inf_{ (v_h, \bdd{\psi}_h) \in \discrete{W}^h_{\Gamma} \times 
		\discretev{V}^h_{\Gamma} } \left( \|Tz - v_h\|_1 + \|\bdd{\zeta} - 
		\bdd{\psi}_h \|_1 \right) \\
		&\qquad \leq 2 \| z - T z\|_1 +  \frac{C M_{\bdd{R}} }{ \beta_{\bdd{R}} } 
		\inf_{ (v_h, \bdd{\psi}_h) \in \discrete{W}^h_{\Gamma} \times 
		\discretev{V}^h_{\Gamma} } \left( \|z - v_h\|_1 + \|\bdd{\zeta} - 
		\bdd{\psi}_h \|_1 \right). 
	\end{align*}
	Poincar\'{e}'s inequality \cref{eq:poincare-inequality} and 
	\cref{mot:mitc:red-commute} gives
	\begin{align*}
		\| z - T z\|_1 \leq C_P \| \grad z - T \grad z\| = C_P \| \bdd{\zeta} - 
		\bdd{R} \bdd{\zeta}\|.
	\end{align*} 
\end{proof}

The above results show that, in order for the scheme to satisfy the 
quasi-optimality approximation property \cref{eq:ker-opt-approx-true-kerxi}, it 
is sufficient that the inf-sup condition \cref{eq:invert-xi-inf-sup-u} holds. It 
may not be immediately clear that \cref{eq:invert-xi-inf-sup-u} constitutes an 
improvement over \cref{eq:ker-opt-approx-true-kerxi}. However, examining the 
inf-sup condition more closely, we see that it requires that the spaces 
$\discrete{W}^h_{\Gamma}$, $\discretev{V}^h_{\Gamma}$, and 
$\discretev{U}^h_{\Gamma}$ and the operator $\bdd{R}$ be chosen so that
\begin{align}
	\label{eq:image-xi-u}
	\image \bdd{\Xi}_{\bdd{R}}^h := \{ \bdd{\Xi}_{\bdd{R}}(v, \bdd{\psi}) : (v, 
	\bdd{\psi}) \in \discrete{W}^h_{\Gamma} \times \discretev{V}^h_{\Gamma} \} = 
	\discretev{U}^h_{\Gamma},
\end{align}
which, in turn, can be reduced to more familiar conditions as follows. One 
necessary condition for \cref{eq:image-xi-u} is that $\rot \image 
\bdd{\Xi}_{\bdd{R}}^h = \rot \discretev{U}^h_{\Gamma} = \discrete{Q}^h_{\Gamma}$, 
where the last equality follows from \cref{remark:consequences-sequence-r}. In 
turn, using the commuting diagram \cref{eq:mot:mitc:red-commute}, we require that
\begin{align*}
	\discrete{Q}^h_{\Gamma} = \rot \image \bdd{\Xi}_{\bdd{R}}^h = \rot \grad 
	\discrete{W}^h_{\Gamma} + \rot \bdd{R} \discretev{V}^h_{\Gamma} = P \rot 
	\discretev{V}^h_{\Gamma},
\end{align*}
or equivalently, the following inf-sup condition must hold: 
\begin{align}
	\label{mot:mitc:stokes-inf-sup-discrete}
	\begin{aligned}
		&\text{\parbox{0.8\linewidth}{There exists $\beta_{\rot} > 0$ independent 
				of $h$ such that}} \\
		&\qquad \qquad \inf_{ 0 \neq q \in \discrete{Q}^h_{\Gamma} } 
		\sup_{\bdd{0} \neq 
			\bdd{\theta} \in \discretev{V}^h_{\Gamma} } \frac{ (\rot 
			\bdd{\theta}, q) 
		}{ \|\bdd{\theta}\|_1 \|q\| } > \beta_{\rot}.
	\end{aligned}	
\end{align}

Condition \cref{mot:mitc:stokes-inf-sup-discrete} is nothing more (up to a 
rotation on $\bdd{\theta}$) than the familiar inf-sup, or Babu\v{s}ka-Brezzi, 
condition commonly seen in the mixed discretization of the Stokes equations. 
Here, as we have just shown, the condition is a natural consequence of the 
quasi-optimal approximation condition \cref{eq:ker-opt-approx-true-kerxi}. A 
second necessary condition for \cref{eq:image-xi-u} to hold is that the rot-free 
functions in $\image \bdd{\Xi}_{\bdd{R}}^h$ and $\discretev{U}^h_{\Gamma}$ should 
coincide. Thanks to the complex property \cref{mot:mitc:red-commute}, there holds
\begin{align*}
	\{ \bdd{\gamma} \in  \image \bdd{\Xi}_{\bdd{R}}^h : \rot \bdd{\gamma} \equiv 
	0 \} &= \grad \discrete{W}^{h}_{\Gamma} + \{ \bdd{R} \bdd{\theta} : 
	\bdd{\theta} \in \discretev{V}^h_{\Gamma}, \ \rot \bdd{R} \bdd{\theta} \equiv 
	0 \}, \\
	\{ \bdd{\gamma} \in  \discretev{U}^h_{\Gamma} : \rot \bdd{\gamma} \equiv 0 \} 
	&= \grad \discrete{W}^{h}_{\Gamma} \oplus^{\perp} \harmonic{H}^h_{\Gamma},
\end{align*}
where $\harmonic{H}^h_{\Gamma}$ is the space of discrete harmonic forms 
\cref{eq:harmonic-forms-discrete-def} and $\oplus^{\perp}$ indicates that the 
direct sum is orthogonal in $\bdd{L}^2(\Omega)$. If we define $\bdd{H} : 
\bdd{L}^2(\Omega) \to \harmonic{H}^h_{\Gamma}$ to be the $L^2$-projection operator
\begin{align}
	\label{eq:projection-harmonics-def}
	( \bdd{H} \bdd{\gamma}, \harmonic{g} ) = (\bdd{\gamma}, \harmonic{g}) \qquad 
	\forall \harmonic{g} \in \harmonic{H}^h_{\Gamma}, \ \forall \bdd{\gamma} \in 
	\bdd{L}^2(\Omega),
\end{align}
then the rot-free functions in $\image \bdd{\Xi}_{\bdd{R}}$ and 
$\discretev{U}^h_{\Gamma}$ coincide if and only if
\begin{align}
	\bdd{H} \{ \bdd{R} \bdd{\theta} : \bdd{\theta} \in \discretev{V}^h_{\Gamma}, 
	\ \rot \bdd{R} \bdd{\theta} \equiv 0 \} = \harmonic{H}^h_{\Gamma},
\end{align}
or, equivalently, a second inf-sup condition must hold: 
\begin{align}
	\label{mot:mitc:harmonic-inf-sup-discrete}
	\begin{aligned}
		& \text{\parbox{0.8\linewidth}{There exists $\beta_{\harmonic{H}} > 0$ 
				independent of $h$ such that}} \\
		&\qquad \qquad \inf_{ 0 \neq \harmonic{h} \in \harmonic{H}^h_{\Gamma} } 
		\sup_{\substack{ \bdd{0} \neq \bdd{\theta} \in \discretev{V}^h_{\Gamma} 
				\\ \rot \bdd{R} \bdd{\theta} \equiv 0 } } \frac{ (\bdd{R} 
				\bdd{\theta}, 
			\harmonic{h}) }{ \|\bdd{\theta}\|_1 \|\harmonic{h}\| } > 	
		\beta_{\harmonic{H}}.
	\end{aligned}	
\end{align}

\Cref{lem:inf-sup-equivalences} below confirms that conditions 
\cref{mot:mitc:stokes-inf-sup-discrete} and 
\cref{mot:mitc:harmonic-inf-sup-discrete} are equivalent to 
\cref{eq:invert-xi-inf-sup-u}, but as we shall see shortly, are easier to handle. 
The inf-sup condition \cref{mot:mitc:stokes-inf-sup-discrete} has appeared 
previously in the literature associated with locking-free methods for 
Reissner-Mindlin plates \cite{Brezzi89,Brezzi1991MITC,Peisker92} and has been the 
subject of 
investigation in the setting of incompressible flow. By way of contrast, despite 
the fact that \cref{mot:mitc:harmonic-inf-sup-discrete} also arises naturally 
from the quasi-optimal approximation property 
\cref{eq:ker-opt-approx-true-kerxi}, it seems to have not been recognized in the 
literature hitherto. One reason for this discrepancy may be that the majority of 
articles dealing with Reissner-Mindlin plates only concern themselves with simply 
connected domains and pure clamping conditions for which there are no nontrivial 
harmonic forms (meaning that \cref{mot:mitc:harmonic-inf-sup-discrete} is 
vacuous). 

The next result shows that the above conditions are equivalent to the inf-sup 
condition \cref{eq:invert-xi-inf-sup-u}:
\begin{lemma}
	\label{lem:inf-sup-equivalences}
	Suppose that \cref{mot:mitc:red-commute,mot:mitc:red-bounded} hold. Then, the 
	inf-sup condition \cref{eq:invert-xi-inf-sup-u} holds if and only if 
	conditions 
	\cref{mot:mitc:stokes-inf-sup-discrete,mot:mitc:harmonic-inf-sup-discrete} 
	hold. Moreover, the inf-sup constants satisfy
	\begin{align}
		\label{eq:inf-sup-equivalences}
		\beta_{\bdd{R}} \geq \frac{ C \beta_{\rot} \beta_{\harmonic{H}} }{ 
		M_{\bdd{R}}^2 }, \quad \beta_{\rot} \geq \frac{C \beta_{\bdd{R}}}{ 
		M_{\bdd{R}}^2 }, \quad \text{and} \quad \beta_{\harmonic{H}} \geq 
		\beta_{\bdd{R}}.
	\end{align}
\end{lemma}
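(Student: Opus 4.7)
The plan is to recast each inf-sup condition as the surjectivity of a suitable linear map with a bounded right inverse, and then construct these inverses from each other using the discrete Hodge decomposition
\begin{align*}
\{\bdd{\gamma} \in \discretev{U}^h_{\Gamma} : \rot \bdd{\gamma} \equiv 0\} = \grad \discrete{W}^h_{\Gamma} \oplus^{\perp} \harmonic{H}^h_{\Gamma}.
\end{align*}
By the standard finite-dimensional duality between inf-sup and bounded right inverse, \cref{eq:invert-xi-inf-sup-u} is equivalent to: for every $\bdd{\eta} \in \discretev{U}^h_{\Gamma}$ there exist $(v, \bdd{\psi}) \in \discrete{W}^h_{\Gamma} \times \discretev{V}^h_{\Gamma}$ with $\bdd{\Xi}_{\bdd{R}}(v, \bdd{\psi}) = \bdd{\eta}$ and $\|v\|_1 + \|\bdd{\psi}\|_1 \leq C \beta_{\bdd{R}}^{-1} \|\bdd{\eta}\|_{\rot}$. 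Using the commuting relation $\rot \bdd{R} = P \rot$ on $\discretev{V}^h_{\Gamma}$ from \cref{eq:mot:mitc:red-commute}, the condition \cref{mot:mitc:stokes-inf-sup-discrete} is equivalent to surjectivity of $\rot \bdd{R}: \discretev{V}^h_{\Gamma} \to \discrete{Q}^h_{\Gamma}$ with right inverse of norm $\leq C \beta_{\rot}^{-1}$, and \cref{mot:mitc:harmonic-inf-sup-discrete} is equivalent to surjectivity of $\bdd{H}\bdd{R}$ restricted to $\{\bdd{\theta} \in \discretev{V}^h_{\Gamma} : \rot \bdd{R}\bdd{\theta} = 0\}$ onto $\harmonic{H}^h_{\Gamma}$ with right inverse of norm $\leq C \beta_{\harmonic{H}}^{-1}$.

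For the implication \cref{mot:mitc:stokes-inf-sup-discrete} + \cref{mot:mitc:harmonic-inf-sup-discrete} $\Rightarrow$ \cref{eq:invert-xi-inf-sup-u}, I would invert $\bdd{\Xi}_{\bdd{R}}$ in two steps. Given $\bdd{\eta} \in \discretev{U}^h_{\Gamma}$, first apply the Stokes right inverse to produce $\bdd{\psi}_1 \in \discretev{V}^h_{\Gamma}$ with $\rot \bdd{R}\bdd{\psi}_1 = -\rot \bdd{\eta}$; then $\bdd{\eta} + \bdd{R}\bdd{\psi}_1$ is rot-free and so decomposes via the Hodge decomposition as $\grad v_1 + \harmonic{h}_1$. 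Second, apply the harmonic right inverse to produce $\bdd{\psi}_2$ with $\rot \bdd{R}\bdd{\psi}_2 = 0$ and $\bdd{H}(\bdd{R}\bdd{\psi}_2) = \harmonic{h}_1$; these two conditions force $\bdd{R}\bdd{\psi}_2 - \harmonic{h}_1 \in \grad \discrete{W}^h_{\Gamma}$, say $\bdd{R}\bdd{\psi}_2 - \harmonic{h}_1 = \grad v_2$. Setting $v := v_1 - v_2$ and $\bdd{\psi} := \bdd{\psi}_1 - \bdd{\psi}_2$, direct substitution gives $\grad v - \bdd{R}\bdd{\psi} = \bdd{\eta}$. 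Chaining the norm bounds $\|\bdd{\psi}_1\|_1 \leq C \beta_{\rot}^{-1}\|\bdd{\eta}\|_{\rot}$, $\|\harmonic{h}_1\| \leq C M_{\bdd{R}}\beta_{\rot}^{-1}\|\bdd{\eta}\|_{\rot}$, and $\|\bdd{\psi}_2\|_1 \leq C \beta_{\harmonic{H}}^{-1}\|\harmonic{h}_1\|$, together with Poincar\'{e}'s inequality \cref{eq:poincare-inequality} on $v$, yields $\|v\|_1 + \|\bdd{\psi}\|_1 \leq C M_{\bdd{R}}^2 \beta_{\rot}^{-1}\beta_{\harmonic{H}}^{-1}\|\bdd{\eta}\|_{\rot}$, whence $\beta_{\bdd{R}} \geq C \beta_{\rot}\beta_{\harmonic{H}}/M_{\bdd{R}}^2$.

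For the converse \cref{eq:invert-xi-inf-sup-u} $\Rightarrow$ \cref{mot:mitc:stokes-inf-sup-discrete} + \cref{mot:mitc:harmonic-inf-sup-discrete}, I treat the Stokes estimate by lifting: for $q \in \discrete{Q}^h_{\Gamma}$, the continuous surjectivity cited in \cref{remark:consequences-sequence-r} gives $\bdd{\theta} \in \bdd{\Theta}_{\Gamma}(\Omega)$ with $\rot \bdd{\theta} = q$ and $\|\bdd{\theta}\|_1 \leq C \|q\|$; then $\bdd{\eta} := \bdd{R}\bdd{\theta}$ satisfies $\rot \bdd{\eta} = q$ and $\|\bdd{\eta}\|_{\rot} \leq C M_{\bdd{R}}\|q\|$ by \cref{eq:reduction-bounded}. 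Applying the $\bdd{\Xi}_{\bdd{R}}$-right inverse at $\bdd{\eta}$ produces $(v, \bdd{\psi})$ with $\grad v - \bdd{R}\bdd{\psi} = \bdd{\eta}$; taking $\rot$ gives $\rot \bdd{R}\bdd{\psi} = -q$, and hence $(\rot \bdd{\psi}, q) = (\rot \bdd{R}\bdd{\psi}, q) = -\|q\|^2$ (the first equality using $q \in \discrete{Q}^h_{\Gamma}$ together with $\rot \bdd{R} = P\rot$). Combining with the bound on $\|\bdd{\psi}\|_1$ yields $\beta_{\rot} \geq C \beta_{\bdd{R}}/M_{\bdd{R}}^2$. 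For the harmonic estimate, apply the $\bdd{\Xi}_{\bdd{R}}$-right inverse at $\bdd{\eta} = \harmonic{h} \in \harmonic{H}^h_{\Gamma}$ to obtain $(v, \bdd{\psi})$ with $\grad v - \bdd{R}\bdd{\psi} = \harmonic{h}$; taking $\rot$ shows $\rot \bdd{R}\bdd{\psi} = 0$, and $L^2$-pairing against any $\harmonic{h}' \in \harmonic{H}^h_{\Gamma}$ (which is orthogonal to $\grad \discrete{W}^h_{\Gamma}$ by \cref{eq:harmonic-forms-discrete-def}) gives $(\bdd{R}(-\bdd{\psi}), \harmonic{h}') = (\harmonic{h}, \harmonic{h}')$. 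Testing the sup in \cref{mot:mitc:harmonic-inf-sup-discrete} with $-\bdd{\psi}$ at $\harmonic{h}' = \harmonic{h}$ then gives $\beta_{\harmonic{H}} \geq \beta_{\bdd{R}}$.

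The principal obstacle is the bookkeeping in the forward construction: the Stokes and harmonic right inverses must be sequenced correctly so that, after the first step, the residue lies in $\ker \rot \cap \discretev{U}^h_{\Gamma}$ and can be isolated by the Hodge decomposition, and so that, after the second step, the harmonic component is annihilated and only a gradient remains. A secondary care point is tracking how the factor $M_{\bdd{R}}$ enters each lifting step and each norm bound so as to produce the explicit dependence on $M_{\bdd{R}}$ displayed in \cref{eq:inf-sup-equivalences}.
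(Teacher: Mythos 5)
Your proposal is correct and structurally very close to the paper's proof: the forward direction (conditions imply the $\bdd{\Xi}_{\bdd{R}}$ inf-sup) is the same two-step construction used in the paper's \cref{lem:invert-xi-discrete} --- a Stokes lift, then a harmonic lift, then a gradient remainder identified via the Hodge decomposition $\{\bdd{\gamma}\in\discretev{U}^h_{\Gamma}:\rot\bdd{\gamma}\equiv 0\}=\grad\discrete{W}^h_{\Gamma}\oplus^{\perp}\harmonic{H}^h_{\Gamma}$ --- and your bound $\beta_{\harmonic{H}}\geq\beta_{\bdd{R}}$ is obtained identically. The one genuine departure is in the converse Stokes estimate: the paper constructs a Riesz representative $\bdd{\gamma}\in\discretev{U}^h_{\Gamma}$ of the functional $\bdd{\eta}\mapsto(q,\rot\bdd{\eta})$ and applies the $\bdd{\Xi}_{\bdd{R}}$ right inverse to $\bdd{\gamma}$, arriving at $\beta_{\rot}\geq C\beta_{\bdd{R}}M_{\bdd{R}}^{-2}$; you instead lift $q$ to a continuous $\bdd{\theta}$ via \cref{remark:consequences-sequence-r}, set $\bdd{\eta}=\bdd{R}\bdd{\theta}$ so that $\rot\bdd{\eta}=q$ with $\|\bdd{\eta}\|_{\rot}\leq CM_{\bdd{R}}\|q\|$, and invert $\bdd{\Xi}_{\bdd{R}}$ directly at $\bdd{\eta}$. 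This is a cleaner route, and if you track the constants it in fact yields the sharper bound $\beta_{\rot}\geq C\beta_{\bdd{R}}M_{\bdd{R}}^{-1}$ (only one factor of $M_{\bdd{R}}$ enters, through $\|\bdd{\eta}\|_{\rot}\leq M_{\bdd{R}}\|\bdd{\theta}\|_1$), so your argument is marginally stronger than the $M_{\bdd{R}}^{-2}$ power you quote and than the lemma requires.
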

The proof of \cref{lem:inf-sup-equivalences} appears in  
\cref{sec:proof-inf-sup-equivalences}.

\subsection{A priori error estimates}

Our quest for locking-free schemes has led to four conditions 
\cref{mot:mitc:red-commute,mot:mitc:red-bounded,mot:mitc:stokes-inf-sup-discrete,mot:mitc:harmonic-inf-sup-discrete}.
In fact, in the ensuing analysis, it will turn out that we shall not require the 
operator $T$ in the diagram \cref{eq:mot:mitc:red-commute}. Likewise, we shall 
only need for $\bdd{R}$ to be $\bdd{L}^2$-bounded from $\discretev{V}^h_{\Gamma}$ 
to $\discretev{U}^h_{\Gamma}$. Consequently, the first two conditions can be 
relaxed resulting in the conditions 
\ref{mitc:red-commute}--\ref{mitc:harmonic-inf-sup-discrete} displayed in 
\cref{fig:mitc-conditions}. 

\begin{remark}
	\label{rem:reduction-bounded}
	It is worth noting that, thanks to the commuting projection property 
	\ref{mitc:red-commute}, $\bdd{L}^2$-boundedness \ref{mitc:red-bounded}, and 
	the triangle inequality, there holds:
	\begin{align*}
		\| \bdd{R} \bdd{\psi} \|_{\rot} &= \left( \|\bdd{R} \bdd{\psi}\|^2 + 
		\|\rot \bdd{R} \bdd{\psi} \|^2 \right)^{\frac{1}{2}} 
		\leq \left( C_{\bdd{R}}^2 \| \bdd{\psi}\|^2 +  \| \Pi_{\discrete{Q}} \rot 
		\bdd{\psi} \|^2 \right)^{\frac{1}{2}} 
		\leq C C_{\bdd{R}} \|\bdd{\psi} \|_{1},
	\end{align*}
	so that \ref{mitc:red-commute}--\ref{mitc:red-bounded} automatically imply 
	that $\bdd{R} : \discretev{V}^h_{\Gamma} \to \discretev{U}^h_{\Gamma}$ is 
	bounded in $\hrot$.
\end{remark}

\noindent With these preparations in place, we can finally state the main result:
\begin{theorem}
	\label{lem:apriori-take-0}
	Suppose that \ref{mitc:red-commute}--\ref{mitc:harmonic-inf-sup-discrete} 
	hold. Let $(w, \bdd{\theta}) \in H^1_{\Gamma}(\Omega) \times 
	\bdd{\Theta}_{\Gamma}(\Omega)$ denote the solution to 
	\cref{eq:mr-variational-form} with $\bdd{\gamma} = \lambda t^{-2} 
	\bdd{\Xi}(w, \bdd{\theta})$ and let $(w_h, \bdd{\theta}_h) \in 
	\discrete{W}^h_{\Gamma} \times \discretev{V}^h_{\Gamma}$ denote the finite 
	element solution \cref{eq:mr-variational-form-fem} with $\bdd{\gamma}_h = 
	\lambda t^{-2} \bdd{\Xi}_{\bdd{R}}(w_h, \bdd{\theta}_h)$. Then, there holds
	\begin{align}
		\begin{aligned}
			\label{eq:apriori-take-0}
			&\| w - w_h \|_1 + \| \bdd{\theta} - \bdd{\theta}_{h} \|_1  + t \| 
			\bdd{\gamma} - \bdd{\gamma}_h\| \\
			&\quad \leq  \frac{ C C_{\bdd{R}} }{ \beta_{\bdd{R}} + t } \bigg( 
			\inf_{v \in \discrete{W}^h_{\Gamma}} \| w - v \|_1 + t \inf_{ 
			\bdd{\eta} \in \discretev{U}^h_{\Gamma}}  \|\bdd{\gamma} - 
			\bdd{\eta}\|  \\ 
			&\qquad \qquad \qquad \qquad  + \inf_{ \bdd{\psi} \in 
			\discretev{V}^h_{\Gamma}} \left( \|\bdd{\theta} - \bdd{\psi}\|_1 + 
			\|\bdd{\psi} - \bdd{R} \bdd{\psi}\|_{\rot} \right) 
			+\sup_{ \bdd{\psi} \in \discretev{V}^h_{\Gamma} } \frac{( \bdd{P} 
				\bdd{\gamma}, \bdd{\psi} - \bdd{R} \bdd{\psi} ) }{ \| \bdd{\psi} 
				\|_1 
			} \bigg).
		\end{aligned}
	\end{align}
	where $\beta_{\bdd{R}}$, defined in \cref{eq:invert-xi-inf-sup-u}, satisfies 
	\begin{align}
		\label{eq:inf-sup-equivalences-reduced}
		\beta_{\bdd{R}} \geq \frac{ C \beta_{\rot} \beta_{\harmonic{H}} }{ 
		C_{\bdd{R}}^2 }
	\end{align}
	and $\bdd{P} : \bdd{L}^2(\Omega) \to \discretev{U}^h_{\Gamma}$ is the 
	following $L^2$-projection operator:
	\begin{align}
		\label{eq:pi-x-p-def}
		( \bdd{P} \bdd{\eta},  \bdd{\gamma} ) = ( \bdd{\eta}, \bdd{\gamma} ) 
		\qquad \forall \bdd{\gamma} \in \discretev{U}^h_{\Gamma}, \ \forall 
		\bdd{\eta} \in \bdd{L}^2(\Omega).
	\end{align}
\end{theorem}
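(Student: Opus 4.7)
The plan is to recast the problem in mixed form with the shear as a Lagrange multiplier and then carry out a Strang-type error analysis tailored to the $t$-dependent saddle-point structure. At the continuous level, problem \cref{eq:mr-variational-form} is equivalent to seeking $(w,\bdd{\theta},\bdd{\gamma}) \in H^1_\Gamma(\Omega) \times \bdd{\Theta}_\Gamma(\Omega) \times \bdd{L}^2(\Omega)$ such that
\begin{align*}
a(\bdd{\theta},\bdd{\psi}) + (\bdd{\gamma},\bdd{\Xi}(v,\bdd{\psi})) &= F(v) + G(\bdd{\psi}), \\
(\bdd{\Xi}(w,\bdd{\theta}),\bdd{\eta}) - \lambda^{-1} t^2 (\bdd{\gamma},\bdd{\eta}) &= 0,
\end{align*}
for all admissible test functions, and likewise the discrete scheme \cref{eq:mr-variational-form-fem} corresponds to the analogous system with $\bdd{\Xi}$ replaced by $\bdd{\Xi}_{\bdd{R}}$, with $\bdd{\gamma}_h \in \discretev{U}^h_\Gamma$, and with test functions drawn from the finite element spaces. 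Coercivity of $a(\cdot,\cdot)$ on all of $\discretev{V}^h_\Gamma$ is inherited from \cref{eq:a-coercive}, so the Brezzi kernel condition is automatic, while the inf-sup condition \cref{eq:invert-xi-inf-sup-u} with constant $\beta_{\bdd{R}}$ is delivered by \ref{mitc:red-commute}--\ref{mitc:harmonic-inf-sup-discrete} through \Cref{lem:inf-sup-equivalences}, which simultaneously supplies the lower bound \cref{eq:inf-sup-equivalences-reduced}.

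Subtracting the discrete and continuous equations tested at $(v_h,\bdd{\psi}_h,\bdd{\eta}_h) \in \discrete{W}^h_\Gamma \times \discretev{V}^h_\Gamma \times \discretev{U}^h_\Gamma$ yields error equations with consistency residuals arising from the discrepancy between $\bdd{\Xi}$ and $\bdd{\Xi}_{\bdd{R}}$; the essential new contribution has the form $(\bdd{\gamma},\bdd{\psi}_h - \bdd{R}\bdd{\psi}_h)$, and an analogous one-sided term $(\bdd{R}\bdd{\theta} - \bdd{\theta},\bdd{\eta}_h)$ appears in the constraint equation. The key algebraic step is to split the former using the $L^2$-projection $\bdd{P}$ from \cref{eq:pi-x-p-def}: because $\bdd{R}\bdd{\psi}_h \in \discretev{U}^h_\Gamma$, orthogonality gives $(\bdd{\gamma} - \bdd{P}\bdd{\gamma},\bdd{R}\bdd{\psi}_h) = 0$, so
\begin{align*}
(\bdd{\gamma},\bdd{\psi}_h - \bdd{R}\bdd{\psi}_h) = (\bdd{\gamma} - \bdd{P}\bdd{\gamma},\bdd{\psi}_h) + (\bdd{P}\bdd{\gamma},\bdd{\psi}_h - \bdd{R}\bdd{\psi}_h).
\end{align*}
After the $t$-scaling inherited from the scaled shear, the first piece collapses to $t\inf_{\bdd{\eta} \in \discretev{U}^h_\Gamma}\|\bdd{\gamma} - \bdd{\eta}\|$, while the second is exactly the dual-norm residual $\sup_{\bdd{\psi}}(\bdd{P}\bdd{\gamma},\bdd{\psi} - \bdd{R}\bdd{\psi})/\|\bdd{\psi}\|_1$ that appears in \cref{eq:apriori-take-0}.

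Combining inf-sup stability with the error equations in the classical Brezzi fashion (e.g., \cite[Proposition 5.5.6]{BoffiBrezziFortin13}) then yields the quasi-optimal estimate. The denominator $\beta_{\bdd{R}} + t$ arises because for small $t$ the inf-sup governs the shear component at rate $1/\beta_{\bdd{R}}$, whereas for larger $t$ the term $\lambda^{-1}t^2(\bdd{\gamma}_h,\bdd{\gamma}_h)$ is directly coercive and contributes an $O(t)$ piece; the two regimes combine via the usual Brezzi interpolation. The extra contribution $\|\bdd{\psi} - \bdd{R}\bdd{\psi}\|_{\rot}$ inside the best-approximation infimum for $\bdd{\theta}$ stems from the requirement that the test pair selected via inf-sup be compatible with $\bdd{\gamma}_h$ through $\bdd{\Xi}_{\bdd{R}}$, which forces one to bridge $\bdd{\psi}$ and $\bdd{R}\bdd{\psi}$ via \cref{eq:reduction-bounded} together with \ref{mitc:red-commute}. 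The main obstacle I anticipate is the bookkeeping required to decouple the $t$-independent (kernel) and $O(t)$ (shear) parts of the error so that they combine cleanly into the single prefactor $1/(\beta_{\bdd{R}} + t)$ without introducing a spurious $t^{-1}$: a naive inf-sup argument alone would only yield a $1/\beta_{\bdd{R}}$ bound, and matching the stated estimate relies on careful exploitation of the mass term in the constraint equation in tandem with the consistency splitting above.
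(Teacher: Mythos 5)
Your overall plan — pass to the mixed formulation with $\bdd{\gamma}_h$ as Lagrange multiplier, invoke the inf-sup stability furnished by \ref{mitc:red-commute}--\ref{mitc:harmonic-inf-sup-discrete} via \Cref{lem:inf-sup-equivalences}, and bound the consistency residuals coming from $\bdd{\Xi}$ vs.\ $\bdd{\Xi}_{\bdd{R}}$ — is essentially the paper's strategy (\Cref{thm:apriori-error-gen} combined with a singular-perturbation stability result, Appendix C). You also correctly anticipate that the $1/(\beta_{\bdd{R}}+t)$ prefactor needs more than naive inf-sup and must exploit the $t^2$ mass term.

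However, there is a concrete gap in the handling of the consistency term. You subtract the continuous and discrete equations \emph{first} and obtain the residual $(\bdd{\gamma},\bdd{\psi}_h-\bdd{R}\bdd{\psi}_h)$ with the \emph{exact} shear $\bdd{\gamma}$, then split
\[
(\bdd{\gamma},\bdd{\psi}_h-\bdd{R}\bdd{\psi}_h)=(\bdd{\gamma}-\bdd{P}\bdd{\gamma},\bdd{\psi}_h)+(\bdd{P}\bdd{\gamma},\bdd{\psi}_h-\bdd{R}\bdd{\psi}_h),
\]
and claim the first piece ``collapses to $t\inf_{\bdd{\eta}}\|\bdd{\gamma}-\bdd{\eta}\|$ after $t$-scaling.'' This is not justified: $\bdd{\psi}_h\in\discretev{V}^h_\Gamma$ is a rotation test function, not an element of $\discretev{U}^h_\Gamma$, so the orthogonality $(\bdd{\gamma}-\bdd{P}\bdd{\gamma},\cdot)\perp\discretev{U}^h_\Gamma$ is of no use and no $t$ factor materializes — the term $(\bdd{\gamma}-\bdd{P}\bdd{\gamma},\bdd{\psi}_h)$ contributes $\inf_{\bdd{\eta}}\|\bdd{\gamma}-\bdd{\eta}\|\,\|\bdd{\psi}_h\|$ with \emph{no} $t$ weight, which would spoil the estimate. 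The paper sidesteps this by subtracting arbitrary discrete interpolants $(w_I,\bdd{\theta}_I,\bdd{\gamma}_I)$ \emph{before} identifying the residuals, so that the first-equation residual is $\langle\bdd{\gamma}_I,\bdd{\psi}-\bdd{R}\bdd{\psi}\rangle$ with a \emph{discrete} $\bdd{\gamma}_I\in\discretev{U}^h_\Gamma$; the constraint equation then carries $-t^2(\bdd{\gamma}-\bdd{\gamma}_I,\bdd{\eta})$ and $\langle\bdd{\eta},\bdd{\theta}_I-\bdd{R}\bdd{\theta}_I\rangle$. Applying the singular-perturbation stability bound and then choosing $\bdd{\gamma}_I=\bdd{P}\bdd{\gamma}$ makes $\|\bdd{\gamma}-\bdd{\gamma}_I\|_{\dual{(\discretev{U}^h_\Gamma)}}$ vanish exactly, while the $t\inf_{\bdd{\eta}}\|\bdd{\gamma}-\bdd{\eta}\|$ term emerges solely from the $t^{-1}\|\bdd{g}_2\| = t\|\bdd{\gamma}-\bdd{\gamma}_I\|$ weighting of the mass-term data — not from the first equation. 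You would need to reorganize your error equations around discrete interpolants in this way, and replace the generic Brezzi citation with the refined stability estimate tracking $\beta_{\bdd{R}}+t$ explicitly.

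A smaller point: condition \ref{mitc:red-bounded} only asserts $L^2$-boundedness of $\bdd{R}$ on $\discretev{V}^h_\Gamma$, not on all of $\bdd{\Theta}_\Gamma(\Omega)$, so a residual of the form $(\bdd{R}\bdd{\theta}-\bdd{\theta},\bdd{\eta}_h)$ with the \emph{continuous} $\bdd{\theta}$ cannot be bounded under the stated hypotheses; this is another reason the paper works with $\bdd{\theta}_I-\bdd{R}\bdd{\theta}_I$ for $\bdd{\theta}_I\in\discretev{V}^h_\Gamma$.
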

\noindent The proof of \cref{lem:apriori-take-0} appears in 
\cref{sec:proof-apriori-0}.

\begin{figure}[]
	\begin{tcolorbox}
		\begin{description}
			\item[(C1)\label{mitc:red-commute}] The spaces 
			$\discrete{W}^h_{\Gamma} \subset H^1_{\Gamma}(\Omega)$, 
			$\discretev{U}_{\Gamma}^h \subset \hrotgamma$ and 
			$\discrete{Q}^h_{\Gamma} \subset L^2_{\Gamma}(\Omega)$ and linear 
			projection operator $\bdd{R} : \bdd{\Theta}_{\Gamma}(\Omega) + 
			\discretev{U}^h_{\Gamma} \to \discretev{U}^h_{\Gamma}$ are chosen so 
			that the following diagram commutes and the bottom row forms a 
			discrete complex:
			\begin{equation}
				\label{eq:R-L2-project-commute}
				\begin{tikzcd}%[row sep=huge,column sep=large]
					& \bdd{\Theta}_{\Gamma}(\Omega) \arrow{r}{\rot} 
					\arrow{d}{\bdd{R}} & L^2_{\Gamma}(\Omega) \arrow{d}{P} \\
					\discrete{W}^h_{\Gamma} \arrow{r}{\grad} & 
					\discretev{U}_{\Gamma}^h \arrow{r}{\rot} & 
					\discrete{Q}_{\Gamma}^h,
				\end{tikzcd}
			\end{equation}
			where $P : L^2(\Omega) \to \discrete{Q}_{\Gamma}^h$ is the 
			$L^2$-projection operator \cref{mot:eq:l2-projection-q}.
			
			\vspace{1em}
			
			\item[(C2)\label{mitc:red-bounded}] $\discretev{V}^h_{\Gamma} \subset 
			\bdd{\Theta}_{\Gamma}(\Omega)$ and there exists a constant 
			$C_{\bdd{R}} \geq 1$ independent of $h$ such that
			\begin{align}
				\label{eq:reduction-l2-approx}
				\|\bdd{R} \bdd{\psi} \| \leq C_{\bdd{R}} \|\bdd{\psi}\|
				\qquad \forall \bdd{\psi} \in \discretev{V}^h_{\Gamma}.
			\end{align}

			\vspace{1em}

			\item[(C3)\label{mitc:stokes-inf-sup-discrete}] There exists 
			$\beta_{\rot} > 0$ independent of $h$ such that 
			\begin{align}
				\label{eq:stokes-inf-sup-discrete}
				\inf_{ 0 \neq q \in \discrete{Q}^h_{\Gamma} } \sup_{\bdd{0} \neq 
				\bdd{\theta} \in \discretev{V}^h_{\Gamma} } \frac{ (\rot 
				\bdd{\theta}, q) }{ \|\bdd{\theta}\|_1 \|q\| } > \beta_{\rot}.
			\end{align}

			\item[(C4)\label{mitc:harmonic-inf-sup-discrete}] There exists 
			$\beta_{\harmonic{H}} > 0$ independent of $h$ such that 
			\begin{align}
				\label{eq:harmonic-inf-sup-discrete}
				\inf_{ 0 \neq \harmonic{h} \in \harmonic{H}^h_{\Gamma} } 
				\sup_{\substack{ \bdd{0} \neq \bdd{\theta} \in 
				\discretev{V}^h_{\Gamma} \\ \rot \bdd{R} \bdd{\theta} \equiv 0 } 
				} \frac{ (\bdd{R} \bdd{\theta}, \harmonic{h}) }{ 
				\|\bdd{\theta}\|_1 \|\harmonic{h}\| } > 	\beta_{\harmonic{H}}.
			\end{align}
		\end{description}
	\end{tcolorbox}
	\caption{Summary of our final set of sufficient conditions for a finite 
		element discretization of the Reissner-Mindlin model to be locking-free}
	\label{fig:mitc-conditions}
\end{figure}

Note that the first two terms on the RHS of \cref{eq:apriori-take-0} relate to 
approximation properties of the spaces that typically arise 
from an application of C\'{e}a's Lemma to conforming Galerkin schemes. The 
remaining two terms relate to the introduction of the reduction operator 
$\bdd{R}$ in place of the identity and are typical of terms seen in the analysis 
of non-conforming schemes.
The first of these additional terms can be bounded using approximation properties 
of $\discretev{U}^h_{\Gamma}$ and $\discrete{Q}^h_{\Gamma}$ as follows:
For any $\bdd{\psi} \in \discretev{V}^h_{\Gamma}$,  
\ref{mitc:red-commute}--\ref{mitc:red-bounded} show that $\bdd{R}$ is 
$\bdd{L}^2$-bounded on $\discretev{V}^h_{\Gamma} + \discretev{U}^h_{\Gamma}$, and 
so 
\begin{align*}
	\| \bdd{\psi} - \bdd{R} \bdd{\psi}\| = \inf_{\bdd{\eta} \in 
	\discretev{U}^h_{\Gamma}} \| \bdd{\psi} - \bdd{\eta} - \bdd{R}(\bdd{\psi} - 
	\bdd{\eta})\| 
	&\leq  C C_{\bdd{R}} \inf_{ \bdd{\eta} \in \discretev{U}^h_{\Gamma}} 
	\|\bdd{\psi} - \bdd{\eta}\| \\
	&\leq C C_{\bdd{R}} \left(  \|\bdd{\theta} - \bdd{\psi}\|_1 + \inf_{ 
	\bdd{\eta} \in \discretev{U}^h_{\Gamma}} \| \bdd{\theta} - \bdd{\eta}\| 
	\right),
\end{align*}
and
\begin{align*}
	\| \rot \bdd{\psi} - \rot \bdd{R} \bdd{\psi}\| &=  \|\rot  \bdd{\psi} - P 
	\rot \bdd{\psi}\|  \\
	&\leq \| \rot \bdd{\theta} - \rot \bdd{\psi}\| + \|\rot \bdd{\theta}  - P 
	\rot \bdd{\theta}\|  + \|P(\rot \bdd{\theta} - \rot \bdd{\psi})\| \\
	&\leq C \|\bdd{\theta} - \bdd{\psi}\|_1 + \inf_{q \in 
	\discrete{Q}^h_{\Gamma}} \|\rot \bdd{\theta}  - q\|.
\end{align*}
Consequently, there holds
\begin{multline}
	\label{eq:reduction-rot-error}
	\inf_{ \bdd{\psi} \in \discretev{V}^h_{\Gamma}} \left( \|\bdd{\theta} - 
	\bdd{\psi}\|_1 + \|\bdd{\psi} - \bdd{R} \bdd{\psi}\|_{\rot} \right) \\
	\leq C C_{\bdd{R}} \left( \inf_{ \bdd{\psi} \in \discretev{V}^h_{\Gamma}}  
	\|\bdd{\theta} - \bdd{\psi}\|_1 + \inf_{q \in \discrete{Q}^h_{\Gamma}} \|\rot 
	\bdd{\theta}  - q\| + \inf_{ \bdd{\eta} \in \discretev{U}^h_{\Gamma}} \| 
	\bdd{\theta} - \bdd{\eta}\|  \right).	
	%	\end{aligned}
	\end{multline}
	The final term in \cref{eq:apriori-take-0} is not amenable to a similar 
	general treatment and instead must be treated on a case-by-case basis 
	depending on the particular discretization.

	Of course, all of this hinges on whether or not there are any families of 
	elements satisfying 
	conditions  \ref{mitc:red-commute}--\ref{mitc:harmonic-inf-sup-discrete}? 
	While 
	families that satisfy 
	\ref{mitc:red-commute}--\ref{mitc:stokes-inf-sup-discrete} 
	are plentiful \cite{Brezzi89,Brezzi1991MITC,Falk2008,Peisker92}, 
	whether or not they satisfy the new 
	condition \ref{mitc:harmonic-inf-sup-discrete} is not advertised in the 
	literature. Nevertheless, the absence of reports in the engineering 
	literature of 
	locking arising in the case of more general topology and boundary conditions 
	for 
	which \ref{mitc:harmonic-inf-sup-discrete} is not vacuous suggests that such 
	element families may indeed satisfy \ref{mitc:harmonic-inf-sup-discrete}. 
	This 
	question is pursued \cref{sec:demist-harm}.
	
	\subsection{Illustrative Examples}
	\label{sec:numerics}

	The norm appearing on the LHS of the a priori estimate 
	\cref{eq:apriori-take-0} is weighted by the thickness $t$, which means that 
	the 
	$\bdd{L}^2$ shear stress error, in 
	isolation, need not remain uniformly bounded as $t \to 0$, whereas 
	\cref{eq:apriori-take-0} shows that both the displacement and rotation error 
	are bounded as $t \to 0$ provided that the RHS is also bounded as $t \to 
	0$. We illustrate these observation in a simple numerical example.
	
	Let $\Omega = \Omega_0 \setminus (\bar{\Omega}_1 \cup \bar{\Omega}_2)$ be 
	the domain pictured in \cref{fig:box-mesh}, where $\Omega_0 = (0, 2) \times 
	(0, 
	1)$, $\Omega_1 = (1/4, 3/4)^2$, and $\Omega_2 = (5/4, 7/4) \times (1/4, 
	3/4)$. We 
	clamp the left edge 
	$\{x = 0\}$ and simply-support the
	right edge $\{x = 2\}$, while the remaining edges of $\Gamma$ are free. We 
	choose 
	the  linear functionals $F$ and $G$ in \cref{eq:mr-variational-form} so that 
	the 
	exact displacement, rotation, and shear stress are given by
	\begin{align*}
w &= \sin^3(4\pi x) \sin^3(4\pi y), \quad \bdd{\theta} = \left(1 - 100 t^2 
\lambda^{-1}\right) \grad w, \quad \text{and} \quad \bdd{\gamma} = 100 \grad 
w.
\end{align*}
It is easily checked that each of the holes is stress-free.

\begin{minipage}{0.95\textwidth}
\vspace{1em}
\begin{minipage}[b]{0.6\textwidth}
	\centering
	\includegraphics[height=6em]%
	%{figures/RM-BoxTwoHoleMesh.png}
	{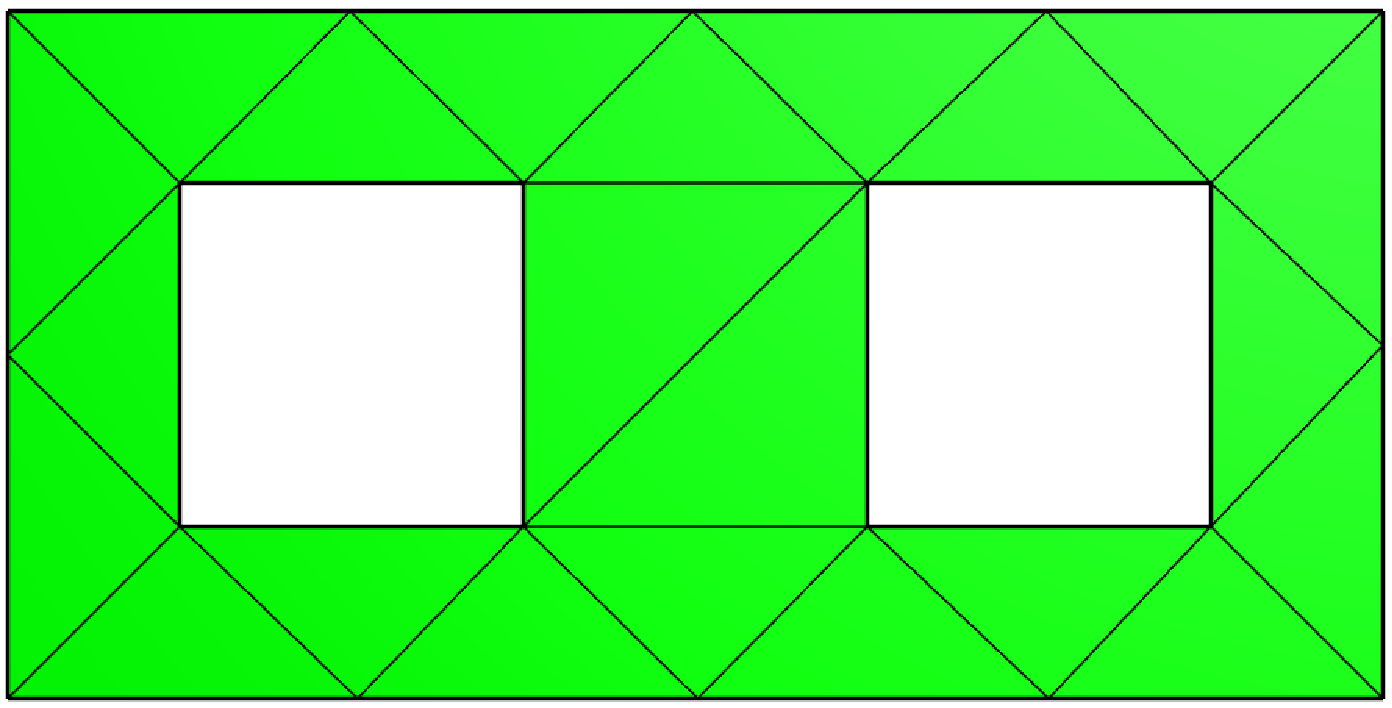}
	\captionof{figure}{Domain and initial mesh for examples}
	\label{fig:box-mesh}
\end{minipage}
\hfill
\begin{minipage}[b]{0.3\textwidth}
	\centering
	\begin{tabular}{c|c}
		$t$ & Marker \\
		\hline
		$1$ & \raisebox{-1mm}{\scalebox{2}{$\circ$}} \\
		$10^{-1}$ & $|$ \\
		$10^{-2}$ & $\square$ \\
		$10^{-3}$ & $\times$
	\end{tabular}
	\captionof{table}{Plot legends}
	\label{tab:plot-legend}
\end{minipage}
\\
\end{minipage}

We discretize \cref{eq:mr-variational-form-fem} with the lowest-order ($p=2$) 
Raviart-Thomas (RT) family described below in \cref{sec:rt-family} and the 
lowest-order 
($p=2$) Brezzi-Douglas-Marini (BDM) family described below in 
\cref{sec:bdm-family}. We consider a sequence of meshes obtained by progressively 
subdividing every triangle into four congruent subtriangles. For each method, we 
display the total relative error
\begin{align}
\label{eq:combined-error}
\frac{ \|w - w_h\|_1 + \|\bdd{\theta} - \bdd{\theta}_h\|_1 + t \|\bdd{\gamma} 
	- \bdd{\gamma}_h\| }{ \|w \|_1 + \|\bdd{\theta}\|_1 + t \|\bdd{\gamma}\| }
	\end{align}
	in \cref{fig:rt2-total-error} and \cref{fig:bdm2-total-error}, where
	the discrete shear stress $\bdd{\gamma}_h = \lambda t^{-2} 
	\bdd{\Xi}_{\bdd{R}}(w_h, \bdd{\theta}_h)$.  We will show in 
	\cref{sec:mitc-examples} that both of these 
	families satisfy conditions  
	\ref{mitc:red-commute}--\ref{mitc:harmonic-inf-sup-discrete}, and as 
	predicted by 
	\cref{thm:rt-family-apriori,thm:bdm-family-satisfies-conds} below, the error 
	decays as 
	$\mathcal{O}(h^2)$ for $t \in \{1,10^{-1},10^{-2},10^{-3}\}$ and
	remains bounded as $t \to 0$. 
	
	In the remaining subplots in \cref{fig:rt-errors,fig:bdm-errors}, we display 
	the 
	relative $H^1$ errors for $w_h$ and $\bdd{\theta}_h$ and relative $L^2$ error 
	for 
	$\bdd{\gamma}_h$. As suggested above, the displacement 
	and rotation errors are bounded as $t \to 0$, while the 
	convergence of the discrete shear stress $\bdd{\gamma}_h$ does indeed 
	deteriorate 
	as $t \to 0$. We also observe that the 
	displacement and rotation each converge at the best approximation rates
	\begin{align*}
\inf_{v \in \discrete{W}^h_{\Gamma}} \|w - v\|_1 
\quad \text{and} \quad 
\inf_{\bdd{\psi} \in \discretev{V}^h_{\Gamma}} \|\bdd{\theta} - \bdd{\psi}\|_1
\end{align*}
for all values of $t$. That is to say, the displacement converges as 
$\mathcal{O}(h^2)$ for 
the Raviart-Thomas family and $\mathcal{O}(h^3)$ for the BDM family, while the 
rotation converges as $\mathcal{O}(h^2)$ for both families.

\begin{figure}[htb]
\centering
\begin{subfigure}[b]{0.45\linewidth}
	\centering
	\includegraphics[width=\linewidth]%
	{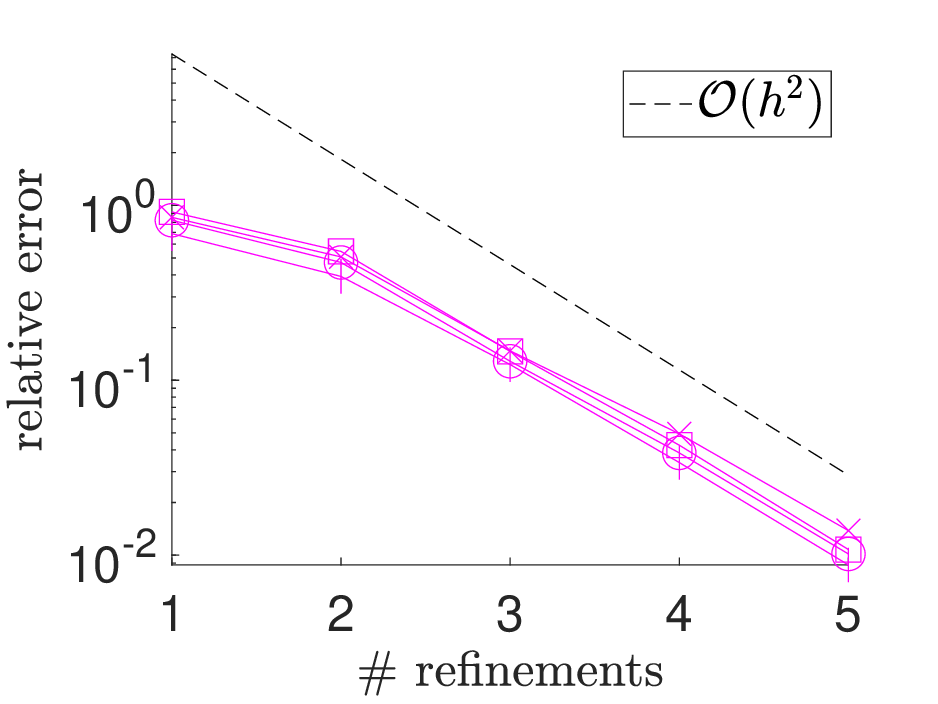}
	\caption{}
	\label{fig:rt2-total-error}
\end{subfigure}
\hfill
\begin{subfigure}[b]{0.45\linewidth}
	\centering
	\includegraphics[width=\linewidth]%
	{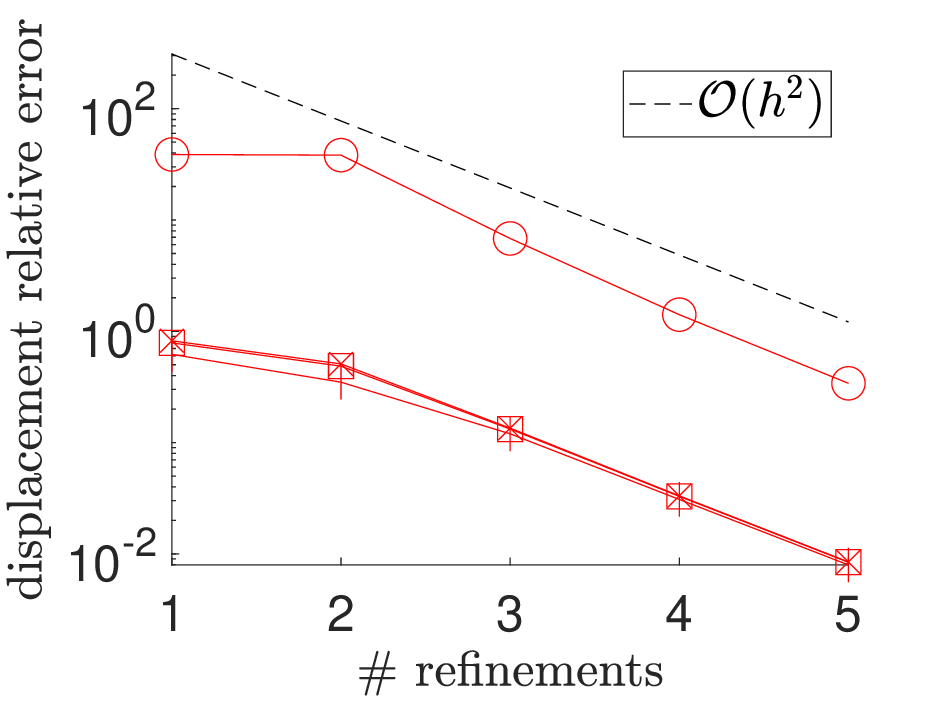}
	\caption{}
	\label{fig:rt2-w-error}
\end{subfigure}
\\
\begin{subfigure}[b]{0.45\linewidth}
	\centering
	\includegraphics[width=\linewidth]%
	{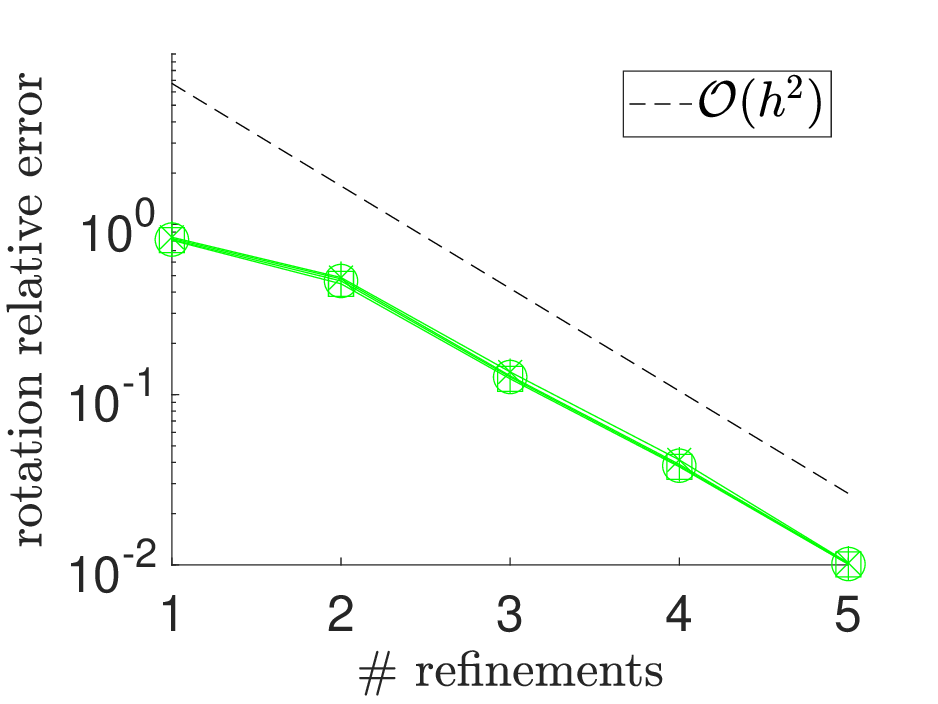}
	\caption{}
	\label{fig:rt2-theta-error}
\end{subfigure}
\hfill
\begin{subfigure}[b]{0.45\linewidth}
	\centering
	\includegraphics[width=\linewidth]%
	{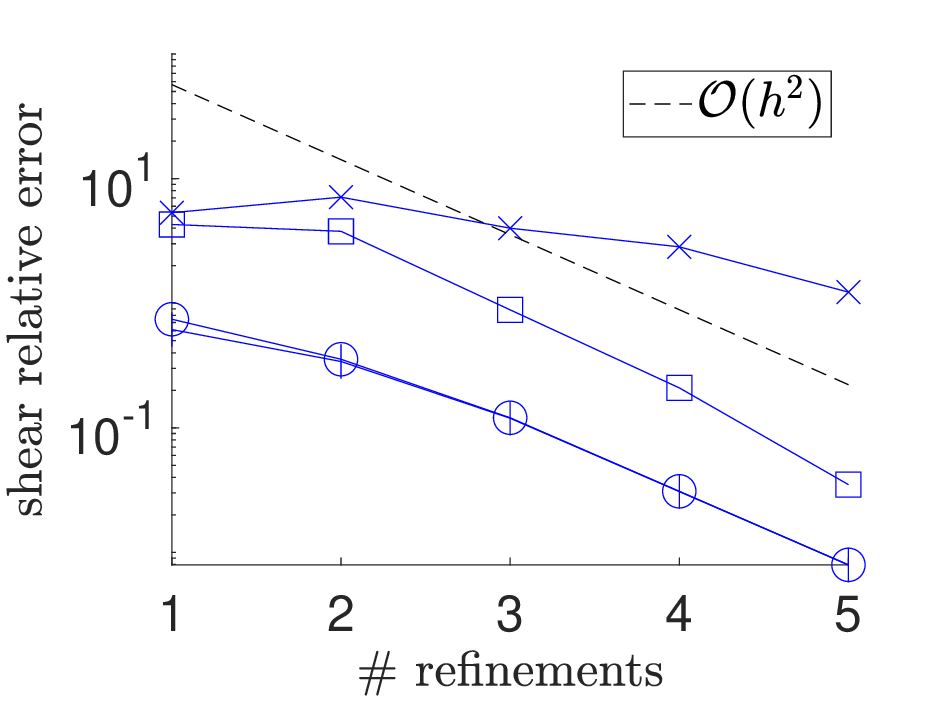}
	\caption{}
	\label{fig:rt2-shear-error}
\end{subfigure}
\caption{Relative errors for the lowest order ($p=2$) RT family 
	in \cref{sec:rt-family} for $t \in \{ 1, 10^{-1}, 10^{-2}, 10^{-3} \}$. 
	See \cref{tab:plot-legend} for the legend.}
\label{fig:rt-errors}
\end{figure}

\begin{figure}[htb]
\centering
\begin{subfigure}[b]{0.45\linewidth}
	\centering
	\includegraphics[width=\linewidth]%
	{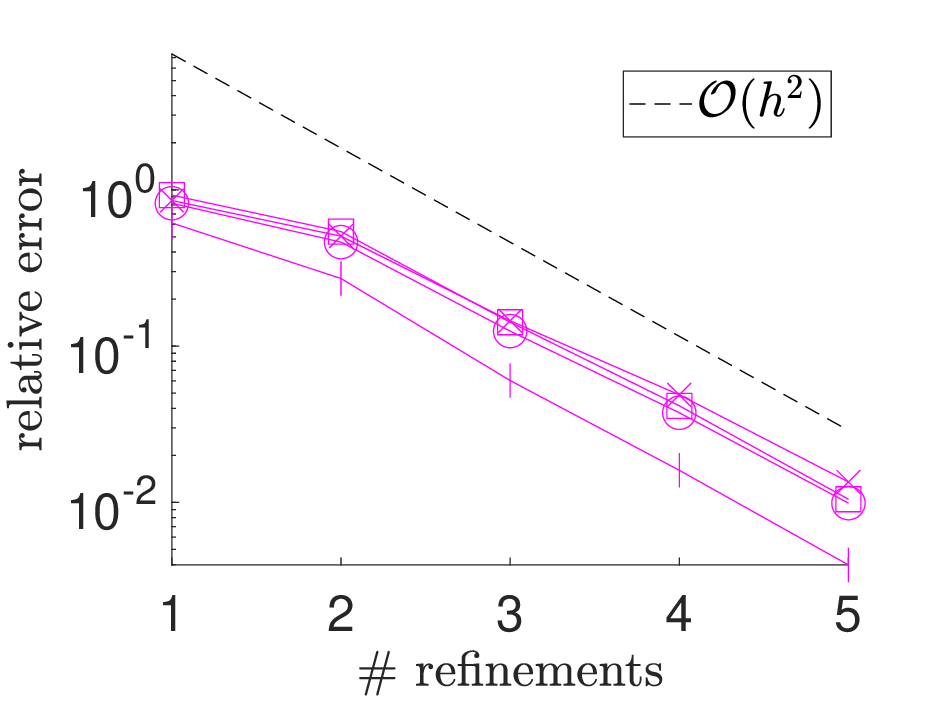}
	\caption{}
	\label{fig:bdm2-total-error}
\end{subfigure}
\hfill
\begin{subfigure}[b]{0.45\linewidth}
	\centering
	\includegraphics[width=\linewidth]%
	{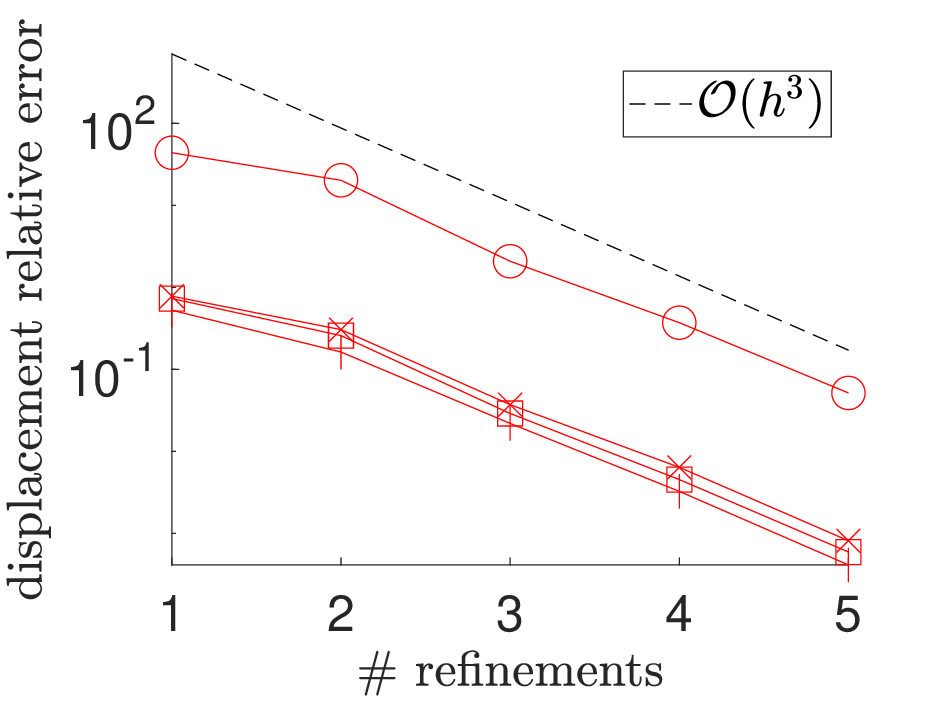}
	\caption{}
	\label{fig:bdm2-w-error}
\end{subfigure}
\\
\begin{subfigure}[b]{0.45\linewidth}
	\centering
	\includegraphics[width=\linewidth]%
	{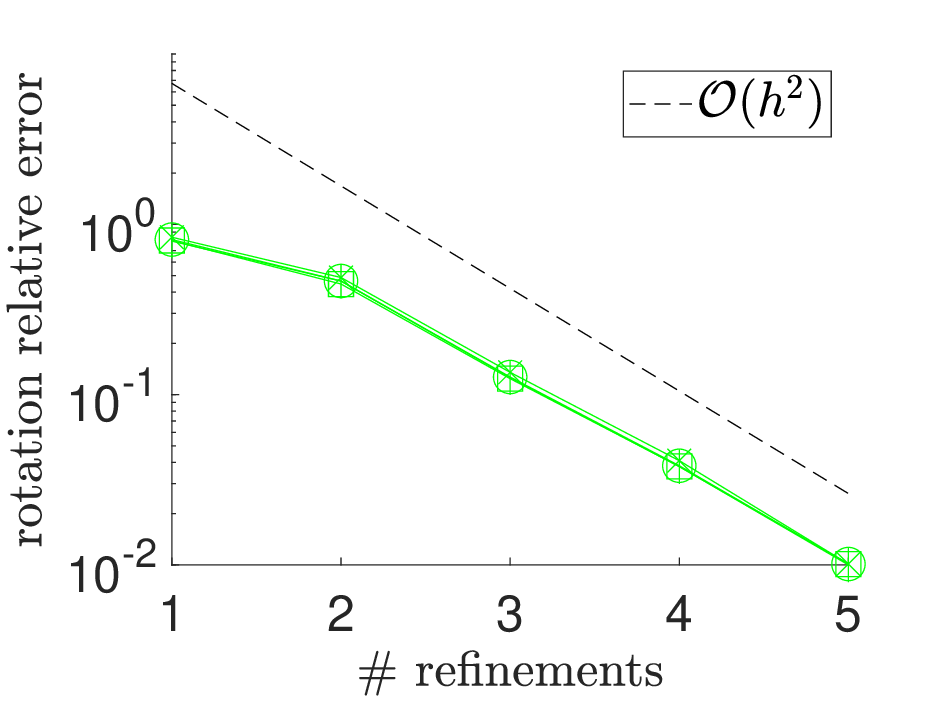}
	\caption{}
	\label{fig:bdm2-theta-error}
\end{subfigure}
\hfill
\begin{subfigure}[b]{0.45\linewidth}
	\centering
	\includegraphics[width=\linewidth]%
	{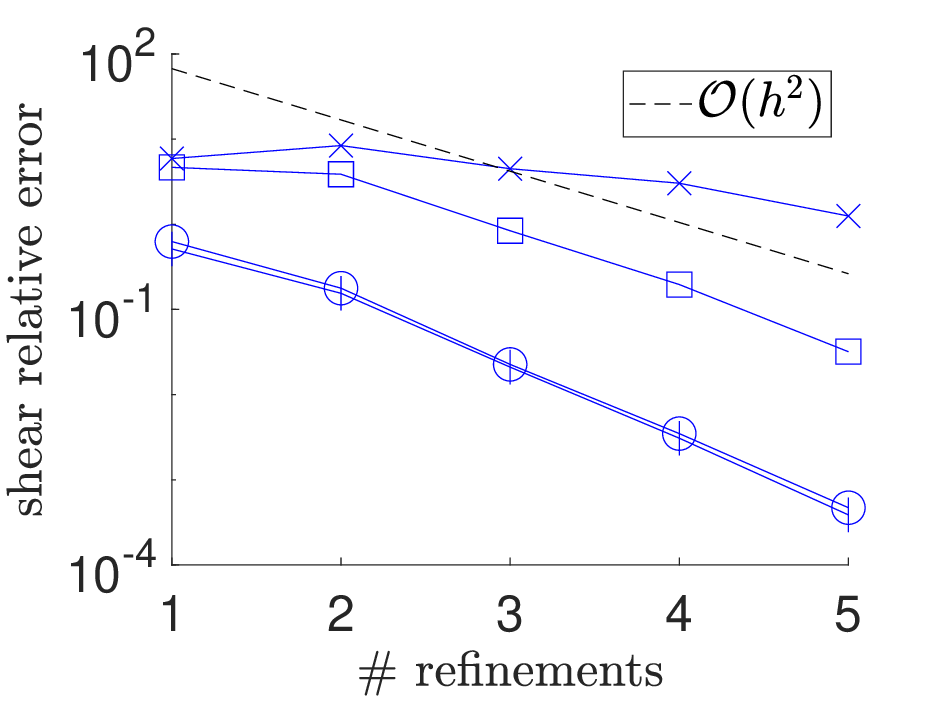}
	\caption{}
	\label{fig:bdm2-shear-error}
\end{subfigure}
\caption{Relative errors for the lowest order ($p=2$) BDM family in 
	\cref{sec:bdm-family} for $t \in \{ 1, 10^{-1}, 10^{-2}, 10^{-3} \}$. See 
	\cref{tab:plot-legend} for the legend.}
\label{fig:bdm-errors}
\end{figure}

We also discretize \cref{eq:mr-variational-form-fem-nored} with the lowest-order 
($p=4$) standard continuous finite element spaces given in 
\cref{eq:standard-spaces} and 
compute the discrete shear stress by $\bdd{\gamma}_h = \lambda t^{-2} 
\bdd{\Xi}(w_h, \bdd{\theta}_h)$.  We observe in \cref{fig:cg4-total-error} that 
the total error \cref{eq:combined-error} is bounded uniformly in $t$ and decays 
as $\mathcal{O}(h^4)$, which is rigorously justified in 
\cref{rem:high-order-convergence} below.  As with the 
RT and BDM families, the displacement (\cref{fig:cg4-w-error}) and 
rotation (\cref{fig:cg4-theta-error}) appear to converge at the best 
approximation rates:  $\mathcal{O}(h^5)$ and $\mathcal{O}(h^4)$, respectively. 
Moreover, the shear stress errors in \cref{fig:cg4-shear-error} do not appear to 
be bounded with respect to $t$, although the best approximation rate 
$\mathcal{O}(h^4)$ is observed after 3 refinements for the considered values of 
$t$.

\begin{figure}[htb]
\centering
\begin{subfigure}[b]{0.45\linewidth}
	\centering
	\includegraphics[width=\linewidth]%
	{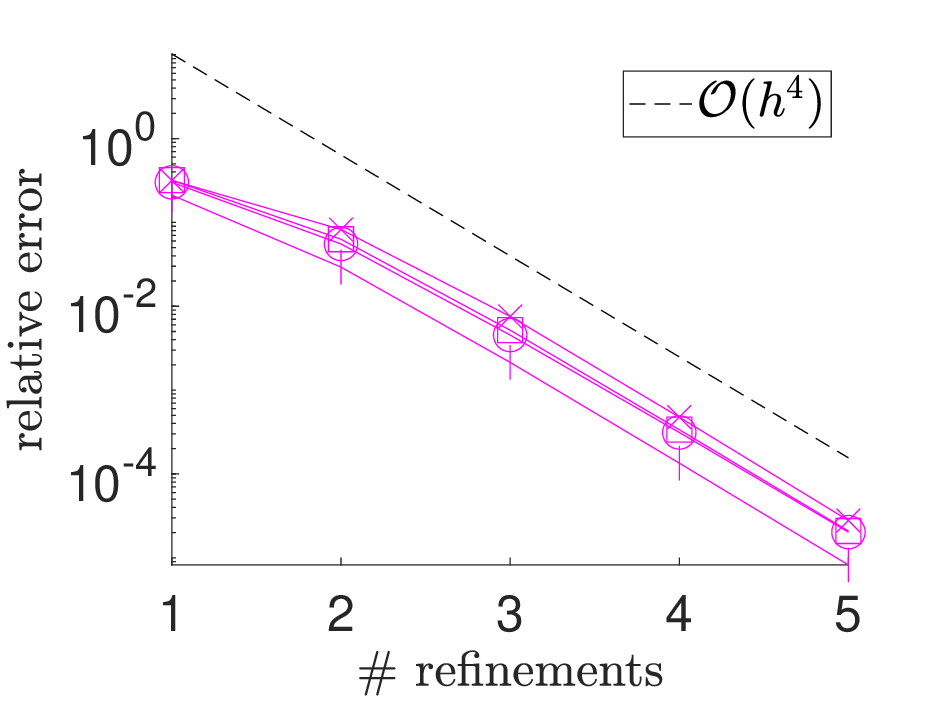}
	\caption{}
	\label{fig:cg4-total-error}
\end{subfigure}
\hfill
\begin{subfigure}[b]{0.45\linewidth}
	\centering
	\includegraphics[width=\linewidth]%
	{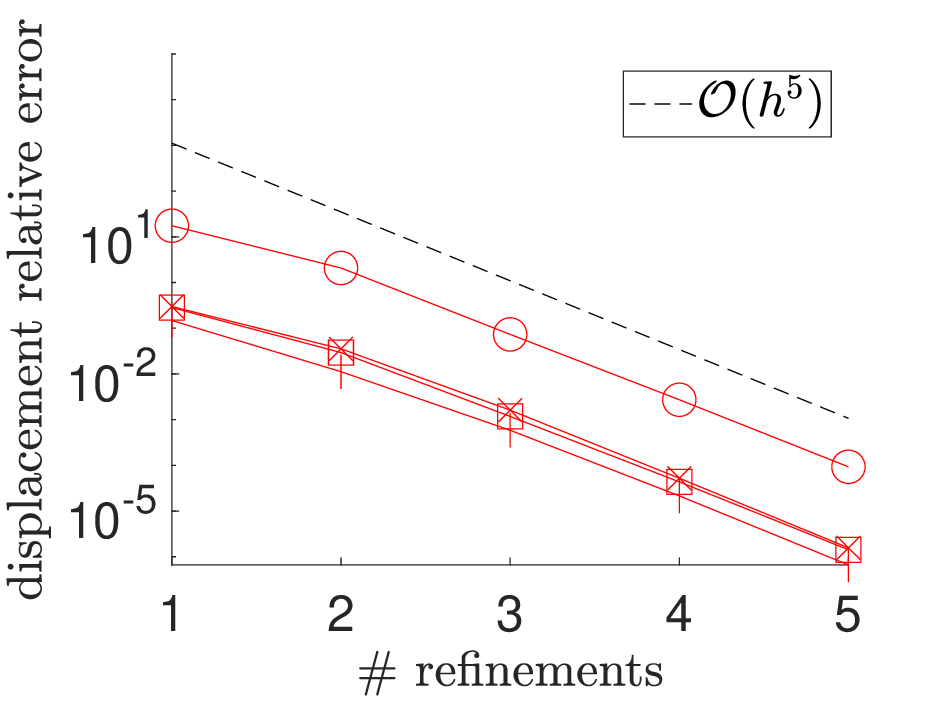}
	\caption{}
	\label{fig:cg4-w-error}
\end{subfigure}
\\
\begin{subfigure}[b]{0.45\linewidth}
	\centering
	\includegraphics[width=\linewidth]%
	{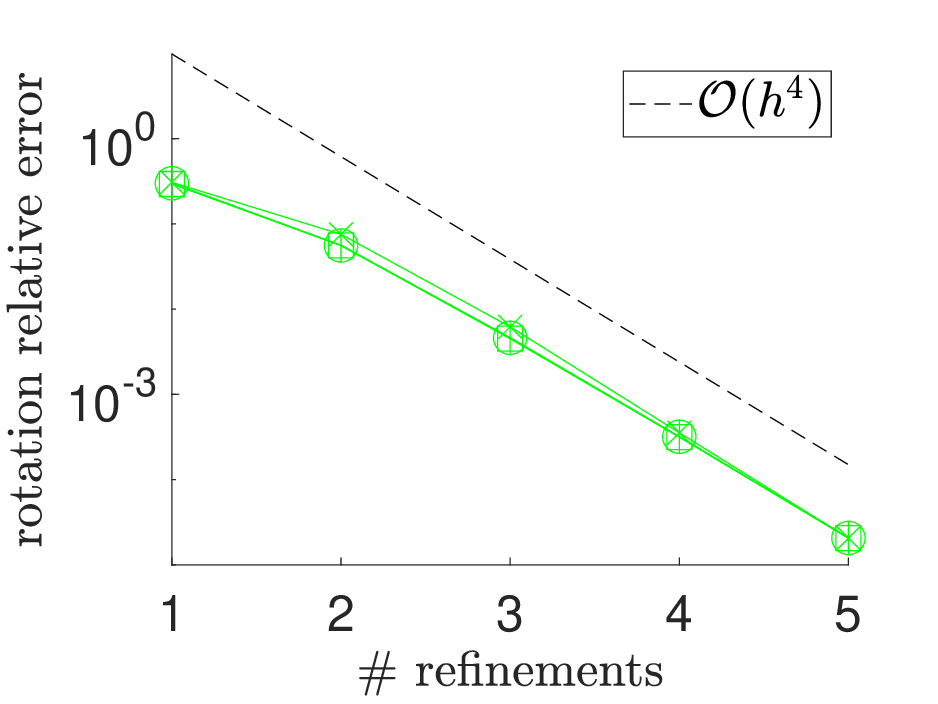}
	\caption{}
	\label{fig:cg4-theta-error}
\end{subfigure}
\hfill
\begin{subfigure}[b]{0.45\linewidth}
	\centering
	\includegraphics[width=\linewidth]%
	{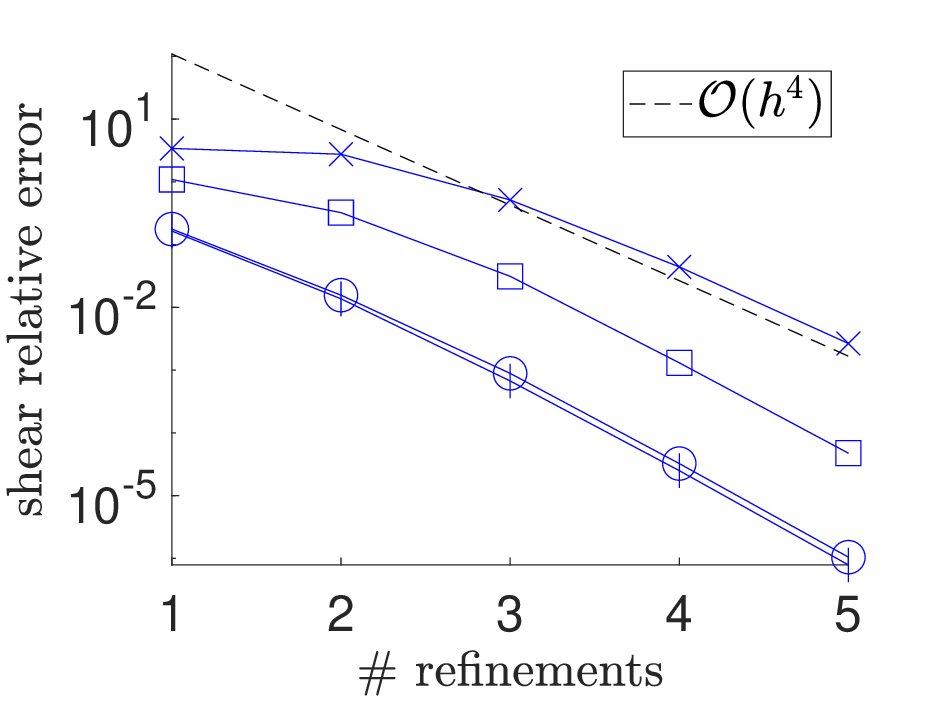}
	\caption{}
	\label{fig:cg4-shear-error}
\end{subfigure}
\caption{Relative errors for the lowest order ($p=4$) standard continuous 
	finite element spaces \cref{eq:standard-cg} for $t \in \{ 1, 10^{-1}, 
	10^{-2}, 10^{-3} \}$. See \cref{tab:plot-legend} 
	for 
	the legend.}
	\end{figure}

While conditions \ref{mitc:red-commute}--\ref{mitc:stokes-inf-sup-discrete} 
in the foregoing theory are more or less well-known, our informal approach to 
arriving at the conditions seems to be more intuitive than alternative 
derivations and, in addition, naturally leads to a new condition 
\ref{mitc:harmonic-inf-sup-discrete} that is needed in order for bounds of 
the type presented in \cref{lem:apriori-take-0} to hold for mixed boundary 
conditions and nontrivial topologies. While the new condition 
\ref{mitc:harmonic-inf-sup-discrete} may seem natural, the form in which it 
currently appears does not lend itself to standard analysis. Therefore, in 
the next section, we explore the condition more deeply with the aim of developing 
a set of sufficient conditions for \ref{mitc:harmonic-inf-sup-discrete} that 
are amenable to more familiar techniques.

\section{Demystifying condition \ref{mitc:harmonic-inf-sup-discrete}}
\label{sec:demist-harm}

Condition \ref{mitc:harmonic-inf-sup-discrete} involves the discrete harmonic 
forms. We start by giving a more explicit characterization of the 
continuous harmonic forms $\harmonic{H}_{\Gamma}(\Omega)$ 
\cref{eq:harmonic-forms-def} before discussing their discrete counterparts. 

\subsection{Structure of continuous harmonic forms}

As mentioned earlier, harmonic forms consist of non-trivial vector fields with 
vanishing rotation that are orthogonal to $\grad H^1_{\Gamma}(\Omega)$.
Harmonic forms fall into two categories. The first type can be written as the 
gradient of a function belonging to the space
\begin{align}
	\label{eq:w-gamma-def}
	\mathcal{W}_{\Gamma}(\Omega) := \{ w \in H^1(\Omega) : w|_{\Gamma_{cs}^{(1)}} 
	= 0 \text{ and } w|_{\Gamma_{cs}^{(i)}} \in \mathbb{R}, \ 2 \leq i \leq 
	N_{cs} \},
\end{align}
where $\{ \Gamma_{cs}^{{(i)}} \}_{i=1}^{N_{cs}}$ denotes the $N_{cs}$ connected 
components of $\Gamma_{cs} := \Gamma \setminus \Gamma_f$. More precisely, the
subspace 
\begin{align}
	\label{eq:w-gamma-harmonic-def}
	\mathfrak{W}_{\Gamma}(\Omega) := \{ w \in \mathcal{W}_{\Gamma}(\Omega) : 
	(\grad w, \grad v) = 0 \  \forall v \in H^1_{\Gamma}(\Omega) \},
\end{align}
has dimension $N_{cs} - 1$ and $\grad \mathfrak{W}_{\Gamma}(\Omega) \subseteq 
\harmonic{H}_{\Gamma}(\Omega)$ corresponds to harmonic forms in the first 
category.

The remaining harmonic forms are associated with circulations around the holes in 
the domain. Specifically, for each connected component of the boundary $\partial 
\Omega_i$, $i \in \{0,\ldots, H\}$, the circulation of a vector field 
$\bdd{\gamma}$ around $\Omega_i$ is a bounded linear functional $\circop_i : 
\hrot \to \mathbb{R}$ given by
\begin{align}
	\label{eq:circulation-def}
	\circop_i \bdd{\gamma} := \int_{\partial \Omega_i} \unitvec{t} \cdot 
	\bdd{\gamma} \d{s}  \qquad \forall \bdd{\gamma} \in \hrot.
\end{align}
Note that if $\bdd{\gamma} \in \hrotgamma$, then $\circop_i \bdd{\gamma} = 0$ if 
$\partial \Omega_i \subset \Gamma_{cs}$. Let $\mathfrak{I}$ denote the set of 
indices $i \in \{0,\ldots, H\}$ of the domains $\Omega_i$ for which $|\partial 
\Omega_i \cap \Gamma_f| > 0$. It is convenient to let $\mathfrak{I}^*$ denote the 
subset of $\mathfrak{I}$ obtained when an (arbitrary) single index is omitted 
from $\mathfrak{I}$. For definiteness, we choose the smallest such index so that, 
in summary,
\begin{align}
	\label{eq:index-sets-def}
	\mathfrak{I} := \{ i \in \{1,2,\ldots, H\} : |\partial \Omega_i \cap 
	\Gamma_f| > 0 \} \quad \text{and} \quad  	\mathfrak{I}^* := \begin{cases}
		\mathfrak{I} \setminus \{ \min \mathfrak{I} \} & \text{if } 
		|\mathfrak{I}| > 0, \\
		\emptyset & \text{otherwise}.
	\end{cases}
\end{align}
Associated with $\mathfrak{I}^*$ is an operator $\circop_{\mathfrak{I}^*} : \hrot 
\to \mathbb{R}^{|\mathfrak{I}^*|}$ that maps $\bdd{\gamma} \in \hrot$ to the 
vector whose entries are $\circop_i \bdd{\gamma}$ for $i \in \mathfrak{I}^*$. In 
the case where $\mathfrak{I}^* = \emptyset$, we use the convention 
$\circop_{\mathfrak{I}^*} \bdd{\gamma} := 0$ and $\mathbb{R}^0 := 0$.

The next result shows that the foregoing discussion accounts for all of the 
harmonic forms:
\begin{lemma}
	\label{lem:harmonic-complex-cont}
	The complex 
	\begin{equation}
		\label{eq:harmonic-complex-cont}
		\begin{tikzcd}[column sep=large]
			0 \arrow{r} & \mathfrak{W}_{\Gamma}(\Omega) \arrow{r}{\grad} & 
			\harmonic{H}_{\Gamma}(\Omega) \arrow{r}{ \circop_{\mathfrak{I}^*} } & 
			\mathbb{R}^{|\mathfrak{I}^*|} \arrow{r} & 0
		\end{tikzcd}
	\end{equation}
	is exact. Moreover,
	\begin{align}
		\label{eq:dim-harmonic-forms}
		\dim \harmonic{H}_{\Gamma}(\Omega) = |\mathfrak{I}^*| + N_{cs} - 1.
	\end{align}
\end{lemma}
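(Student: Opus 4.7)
The plan is to establish exactness of the complex at each of the three internal positions; the dimension formula then follows by rank--nullity.

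First I would dispatch the easy positions. Injectivity of $\grad$ on $\mathfrak{W}_{\Gamma}(\Omega)$: if $\grad w = 0$ then $w$ is constant on the connected set $\Omega$, and the condition $w|_{\Gamma_{cs}^{(1)}} = 0$ forces $w \equiv 0$. The map does land in $\harmonic{H}_{\Gamma}(\Omega)$: for $w \in \mathfrak{W}_{\Gamma}(\Omega)$ one has $\grad w \in \bdd{L}^2(\Omega)$, $\rot \grad w = 0$, and the distributional tangential trace $\unitvec{t} \cdot \grad w = \partial_t w$ vanishes on each $\Gamma_{cs}^{(i)}$ because $w$ is constant there, so $\grad w \in \hrotgamma$; the orthogonality $(\grad w, \grad v) = 0$ for $v \in H^1_{\Gamma}(\Omega)$ is built into the definition of $\mathfrak{W}_{\Gamma}(\Omega)$. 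That $\circop_{\mathfrak{I}^*} \grad w = 0$ follows from $\int_{\partial \Omega_i} \partial_t w \d{s} = 0$, which is just the fundamental theorem of calculus applied to the single-valued $w$ on the closed curve $\partial \Omega_i$.

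The central step is exactness at $\harmonic{H}_{\Gamma}(\Omega)$. Given $\harmonic{h} \in \harmonic{H}_{\Gamma}(\Omega)$ with $\circop_{\mathfrak{I}^*} \harmonic{h} = 0$, the goal is to produce $w \in \mathfrak{W}_{\Gamma}(\Omega)$ with $\harmonic{h} = \grad w$. I would first argue that every circulation $\circop_i \harmonic{h}$ for $0 \le i \le H$ vanishes: for each $i$ with $\partial \Omega_i \subseteq \Gamma_{cs}$ the tangential trace of $\harmonic{h}$ is zero on the entire component $\partial \Omega_i$, so $\circop_i \harmonic{h} = 0$ immediately; combined with the hypothesis on $\mathfrak{I}^*$, the Stokes identity $\int_\Omega \rot \harmonic{h} \d{x} = \circop_0 \harmonic{h} - \sum_{i=1}^H \circop_i \harmonic{h} = 0$ supplies the one additional linear relation needed to eliminate the remaining circulations, since omitting a single index from $\mathfrak{I}$ in the definition of $\mathfrak{I}^*$ is tuned precisely to account for this Stokes relation. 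Since $\rot \harmonic{h} = 0$ and every monodromy around a hole vanishes, the classical Poincar\'e lemma on the multiply connected $\Omega$ yields a single-valued $\phi \in H^1(\Omega)$ with $\grad \phi = \harmonic{h}$. The vanishing tangential trace on $\Gamma_{cs}$ then means $\phi$ is constant on each $\Gamma_{cs}^{(i)}$; after subtracting the constant on $\Gamma_{cs}^{(1)}$, $\phi \in \mathcal{W}_{\Gamma}(\Omega)$, and the orthogonality inherited from $\harmonic{h}$ places $\phi \in \mathfrak{W}_{\Gamma}(\Omega)$.

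For surjectivity of $\circop_{\mathfrak{I}^*}$, for each $i \in \mathfrak{I}^*$ I would build a harmonic form with unit circulation around $\partial \Omega_i$ and vanishing circulation around the remaining indices in $\mathfrak{I}^*$. A natural construction combines a multi-valued ``angular'' potential around $\Omega_i$ (whose gradient is a single-valued $\bdd{L}^2$ vector field carrying the required circulation) with the gradient of a single-valued correction $\psi_i$ that solves a mixed Dirichlet--Neumann problem enforcing vanishing tangential trace on $\Gamma_{cs}$ and orthogonality to $\grad H^1_{\Gamma}(\Omega)$; solvability follows from standard elliptic theory since $\Gamma_c \cup \Gamma_s$ is nonempty. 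Finally, the constants $\{w|_{\Gamma_{cs}^{(i)}}\}_{i=2}^{N_{cs}}$ parametrize $\mathfrak{W}_{\Gamma}(\Omega)$ bijectively (each choice has a unique harmonic representative satisfying the Neumann condition on $\Gamma_f$), so $\dim \mathfrak{W}_{\Gamma}(\Omega) = N_{cs} - 1$, and the claimed dimension follows by rank--nullity applied to the exact sequence. The main obstacle will be the circulation/Stokes bookkeeping in the central step: its correctness depends delicately on whether $\partial \Omega_0$ intersects $\Gamma_f$ and on which index is omitted in forming $\mathfrak{I}^*$, and verifying that the omission of exactly one index is compatible with the single Stokes relation in every configuration is the crux of the whole proof.
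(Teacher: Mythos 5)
Your Step~2 argument (exactness at $\harmonic{H}_{\Gamma}(\Omega)$) and your dimension count are essentially the paper's own proof: the same Stokes identity eliminates the one remaining circulation, then \cite[Theorem 3.1, p.\,37]{GiraultRaviart86} supplies a single-valued potential, and one normalizes on $\Gamma_{cs}^{(1)}$; the paper concludes via the splitting lemma, which is the same rank--nullity bookkeeping you propose. Where you genuinely diverge is in the surjectivity of $\circop_{\mathfrak{I}^*}$. The paper invokes a ready-made result, \cite[Theorem 8.2]{AinCP24KirchII}, to construct a $\rot$-free $\bdd{\theta}\in\bdd{\Theta}_{\Gamma}(\Omega)$ (which is automatically in $\hrotgamma$, so no boundary correction is required) with prescribed moments $\int_{\Gamma_f^{(j)}}\unitvec{t}\cdot\bdd{\theta}\,\d{s}=\omega_j$, chosen so that $\circop_{\mathfrak{I}^*}\bdd{\theta}=\vec{\kappa}$ and $\sum_j\omega_j=0$, and then simply $L^2$-projects onto $\harmonic{H}_{\Gamma}(\Omega)$. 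Your route --- angular potentials around each hole corrected by a single-valued harmonic $\psi_i$ solving a mixed Dirichlet--Neumann problem --- is more elementary and self-contained, at the cost of having to handle the boundary compatibility by hand.

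There is a concrete gap in that construction, and it is essentially the one you flag at the end, though you locate it in the ``central step'' rather than where it actually bites, which is surjectivity. An angular potential centered at an interior point of $\Omega_i$ necessarily carries unit circulation around $\partial\Omega_0$ as well as around $\partial\Omega_i$. A single-valued correction $\grad\psi_i$ cannot alter any circulation. In the configuration $\partial\Omega_0\subset\Gamma_{cs}$ (so $0\notin\mathfrak{I}$), the target harmonic form must satisfy $\circop_0=0$, since the tangential trace vanishes identically on $\Gamma_{cs}$, and indeed Stokes forces $\circop_{\min\mathfrak{I}}=-1$. The bare angular potential plus single-valued correction therefore cannot produce the required field: you would need a \emph{combination}, e.g.\ $(\grad\theta_i-\grad\theta_{\min\mathfrak{I}})/(2\pi)$, so that the circulations around both $\partial\Omega_0$ and $\partial\Omega_{\min\mathfrak{I}}$ come out right before correcting. (When $\partial\Omega_0$ does meet $\Gamma_f$ the paper's convention $\mathfrak{I}^*=\mathfrak{I}\setminus\{\min\mathfrak{I}\}$ omits index $0$, so the stray circulation around the outer boundary needs no control and the bare potential suffices.) This is a fixable bookkeeping issue rather than a wrong idea, but your proposal as written would not compile into a correct proof without it; the paper sidesteps the whole matter by working with the free-boundary moments $\omega_j$ directly and letting the cited theorem absorb the topology.
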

The proof of \cref{lem:harmonic-complex-cont} appears in 
\cref{sec:proof-harmonic-complex-cont}.

\subsection{Sufficient conditions for \ref{mitc:harmonic-inf-sup-discrete}}

If the discrete complex in the bottom row of \cref{eq:R-L2-project-commute} 
completely captures the structure of the continuous complex 
\cref{eq:de-rham-bcs}, then we would expect that the discrete harmonic forms 
$\harmonic{H}_{\Gamma}^h$ \cref{eq:harmonic-forms-discrete-def} also form an 
exact complex analogous to the continuous case \cref{eq:harmonic-complex-cont}. 
That is, 
$\grad (\mathbb{W}^h \cap \mathcal{W}_{\Gamma}(\Omega)) \subset 
\discretev{U}^h_{\Gamma}$ and the following complex would be exact:
\begin{equation}
	\label{eq:harmonic-complex-discrete}
	\begin{tikzcd}[column sep=large]
		0 \arrow{r} & \mathfrak{W}^h_{\Gamma} \arrow{r}{\grad} & 
		\harmonic{H}^h_{\Gamma} \arrow{r}{ 
			\circop_{\mathfrak{I}^*} } & \mathbb{R}^{|\mathfrak{I}^*|} \arrow{r} 
			& 0,
	\end{tikzcd}
\end{equation}
where 
\begin{align}
	\mathfrak{W}^h_{\Gamma} := \{ v \in \discrete{W}^h \cap 
	\mathcal{W}_{\Gamma}(\Omega) : (\grad v, \grad u) = 0 \ \forall u \in 
	\discrete{W}^h_{\Gamma} \}.
\end{align}	

One requirement for exactness is that if $\harmonic{h} \in 
\harmonic{H}^h_{\Gamma}$ satisfies $\circop_{\mathfrak{I}^*} \harmonic{h} = 
\vec{0}$, then $\harmonic{h} = \grad w$ for some $w \in \mathfrak{W}^h_{\Gamma}$. 
This statement is seen to be equivalent to the following condition on recalling 
that $\harmonic{H}^h_{\Gamma} \oplus \grad \discrete{W}^h_{\Gamma}$ is an 
orthogonal decomposition of $\rot$-free functions in $\discretev{U}^h_{\Gamma}$:

\vspace{1em}

\begin{description}
	\item[(H1)\label{harm:gradient-cond}] $\grad (\mathbb{W}^h \cap 
	\mathcal{W}_{\Gamma}(\Omega)) \subset \discretev{U}^h_{\Gamma}$ and if 
	$\bdd{\eta} \in \discretev{U}^h_{\Gamma}$ satisfies $\rot \bdd{\eta} \equiv 
	0$ and $\circop_{\mathfrak{I}^*} \bdd{\eta} = \vec{0}$, then $\bdd{\eta} = 
	\grad w$ for some $w \in \discrete{W}^h \cap \mathcal{W}_{\Gamma}(\Omega)$.	
\end{description}

\vspace{1em}

The remaining property needed for exactness is that $\circop_{\mathfrak{I}^*} : 
\harmonic{H}^h_{\Gamma} \to \mathbb{R}^{|\mathfrak{I}^*|}$ is surjective. To 
reformulate this property in more familiar terms, we assume that a Fortin 
operator $\bdd{\Pi}_F : \bdd{\Theta}_{\Gamma}(\Omega) \to 
\discretev{V}^h_{\Gamma}$ exists for the pair 
$\discretev{V}^h_{\Gamma} \times \discrete{Q}^h_{\Gamma}$. In general, the 
existence of a Fortin operator follows from \ref{mitc:stokes-inf-sup-discrete}  
\cite[Remark 5.4.3]{BoffiBrezziFortin13}, but here we additionally require that 
the Fortin operator preserves the edge tangential moments on individual edges of 
the mesh:
\vspace{1em}

\begin{description}
	\item[(H2)\label{harm:v-fortin}] There exists a linear operator 
	$\bdd{\Pi}_{F} : \bdd{\Theta}_{\Gamma}(\Omega) \to \discretev{V}_{\Gamma}^h$ 
	and a constant $C_F \geq 1$ independent of $h$ satisfying the following for 
	all $\bdd{\psi} \in \bdd{\Theta}_{\Gamma}(\Omega)$:
	\begin{subequations}
		\begin{alignat}{2}
			\int_{e} \unitvec{t} \cdot \bdd{\Pi}_F \bdd{\psi} \d{s}
			&= \int_{e}\unitvec{t} \cdot \bdd{\psi} \d{s} \qquad & &\forall e 
			\in \mathcal{E}_h, \\
			P \rot \bdd{\Pi}_F \bdd{\psi} &= P \rot \bdd{\psi}, \qquad & & \\
			\|\bdd{\Pi}_F \bdd{\psi} \|_1 &\leq C_F \|\bdd{\psi}\|_{1}.
		\end{alignat}
	\end{subequations} 
\end{description}
\vspace{1em}

\noindent In a similar vein, we require:
\vspace{1em}
\begin{description}
	\item[(H3)\label{harm:r-interp-edge}] The reduction operator $\bdd{R}$ 
	preserves edge tangential moments:
	\begin{align}
		\label{eq:r-interp-circ}
		\int_{e} \unitvec{t} \cdot \bdd{R} \bdd{\theta} \d{s} = \int_{e} 
		\unitvec{t} \cdot \bdd{\theta}  \d{s} \qquad \forall e \in \mathcal{E}_h, 
		\ 
		\forall \bdd{\theta} \in  \discretev{V}^h_{\Gamma}.
	\end{align}
\end{description}
\vspace{1em}

The following result shows that we may replace conditions 
\ref{mitc:stokes-inf-sup-discrete}--\ref{mitc:harmonic-inf-sup-discrete} with 
conditions \ref{harm:gradient-cond}--\ref{harm:r-interp-edge} :
\begin{theorem}
	\label{thm:harm-conds-harm-inf-sup}
	Suppose that \ref{mitc:red-commute}--\ref{mitc:red-bounded} and 
	\ref{harm:gradient-cond}--\ref{harm:r-interp-edge} hold. Then, the discrete 
	complex \cref{eq:harmonic-complex-discrete} is exact and 
	\ref{mitc:stokes-inf-sup-discrete}--\ref{mitc:harmonic-inf-sup-discrete} hold 
	with
	\begin{align}
		\beta_{\rot} \geq C C_F^{-1} \quad \text{and} \quad 
		\beta_{\harmonic{H}} \geq C (C_F C_{\bdd{R}})^{-1}.
	\end{align}
\end{theorem}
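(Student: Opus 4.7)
I would prove the three conclusions in the order (a) exactness of \cref{eq:harmonic-complex-discrete}, (b) \ref{mitc:stokes-inf-sup-discrete}, and (c) \ref{mitc:harmonic-inf-sup-discrete}, because the structural description of $\harmonic{H}^h_{\Gamma}$ produced by exactness is the main input for the last step. Throughout, the Fortin operator $\bdd{\Pi}_F$ from \ref{harm:v-fortin} together with the edge-moment preservation of $\bdd{R}$ in \ref{harm:r-interp-edge} acts as the transfer tool from the continuous to the discrete complex.

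\textbf{Exactness of \cref{eq:harmonic-complex-discrete}.} The inclusion $\grad \mathfrak{W}^h_{\Gamma} \subset \harmonic{H}^h_{\Gamma}$ follows directly from \ref{harm:gradient-cond} and the orthogonality built into $\mathfrak{W}^h_{\Gamma}$; injectivity of $\grad$ on $\mathfrak{W}^h_{\Gamma}$ uses $w|_{\Gamma_{cs}^{(1)}} = 0$, and middle exactness is exactly \ref{harm:gradient-cond} once the orthogonality to $\grad \discrete{W}^h_{\Gamma}$ is extracted. For surjectivity of $\circop_{\mathfrak{I}^*}$, I would construct, for each $i \in \mathfrak{I}^*$, a smooth rot-free cohomology generator $\bdd{g}_i \in \bdd{\Theta}_{\Gamma}(\Omega)$ with $\circop_{\mathfrak{I}^*} \bdd{g}_i = \mathbf{e}_i$ (a $d\theta$-type form supported on an annular cutoff around $\Omega_i$, which exists thanks to $|\partial \Omega_i \cap \Gamma_f| > 0$ providing room to meet the tangential conditions on $\Gamma_{cs}$). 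Then $\bdd{R}\bdd{\Pi}_F \bdd{g}_i \in \discretev{U}^h_{\Gamma}$ inherits the circulations $\mathbf{e}_i$ from \ref{harm:v-fortin}--\ref{harm:r-interp-edge}, and $\rot \bdd{R}\bdd{\Pi}_F \bdd{g}_i = P \rot \bdd{\Pi}_F \bdd{g}_i = P \rot \bdd{g}_i = 0$; subtracting its $\bdd{L}^2$-projection onto $\grad \discrete{W}^h_{\Gamma}$ produces the desired preimage in $\harmonic{H}^h_{\Gamma}$.

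\textbf{Proof of \ref{mitc:stokes-inf-sup-discrete} and \ref{mitc:harmonic-inf-sup-discrete}.} Condition \ref{mitc:stokes-inf-sup-discrete} is the standard Fortin argument: given $q \in \discrete{Q}^h_{\Gamma}$, continuous surjectivity of $\rot$ (Lemma A.3 of \cite{AinCP23KirchI}, cited in \cref{remark:consequences-sequence-r}) furnishes $\bdd{\tilde\theta} \in \bdd{\Theta}_{\Gamma}(\Omega)$ with $\rot \bdd{\tilde\theta} = q$ and $\|\bdd{\tilde\theta}\|_1 \leq C\|q\|$, whence $(\rot \bdd{\Pi}_F \bdd{\tilde\theta}, q) = (P q, q) = \|q\|^2$ by \ref{harm:v-fortin}, giving $\beta_{\rot} \geq C C_F^{-1}$. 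For \ref{mitc:harmonic-inf-sup-discrete}, given $\harmonic{h} \in \harmonic{H}^h_{\Gamma}$, my strategy is to produce $\bdd{\theta} \in \discretev{V}^h_{\Gamma}$ with $P \rot \bdd{\theta} = 0$, $\bdd{R}\bdd{\theta} - \harmonic{h} \in \grad \discrete{W}^h_{\Gamma}$, and $\|\bdd{\theta}\|_1 \leq C C_F C_{\bdd{R}} \|\harmonic{h}\|$. Such $\bdd{\theta}$ forces
\begin{align*}
	(\bdd{R}\bdd{\theta}, \harmonic{h}) = \|\harmonic{h}\|^2 + (\bdd{R}\bdd{\theta} - \harmonic{h}, \harmonic{h}) = \|\harmonic{h}\|^2
\end{align*}
since $\harmonic{h}$ is $\bdd{L}^2$-orthogonal to $\grad \discrete{W}^h_{\Gamma}$, delivering $\beta_{\harmonic{H}} \geq C (C_F C_{\bdd{R}})^{-1}$. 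I would set $\bdd{\theta} := \bdd{\Pi}_F \bdd{\tilde h}$ for a stable lift $\bdd{\tilde h} \in \bdd{\Theta}_{\Gamma}(\Omega)$ built as a linear combination of the cohomology generators from part (a) (with coefficients $\circop_{\mathfrak{I}^*} \harmonic{h}$) plus a smooth $H^2$-lift $\grad \widetilde{W}$ of the $\grad \mathfrak{W}^h_{\Gamma}$-component of $\harmonic{h}$ (with $\widetilde{W}|_{\Gamma_{cs}^{(i)}}$ matching the discrete boundary values and $\partial_n \widetilde{W}|_{\Gamma_c} = 0$ so that $\grad \widetilde{W} \in \bdd{\Theta}_{\Gamma}(\Omega)$). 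By \ref{harm:v-fortin}, $P \rot \bdd{\theta} = P \rot \bdd{\tilde h} = 0$ and $\bdd{\theta}$ shares its edge tangential moments with $\bdd{\tilde h}$; \ref{harm:r-interp-edge} then propagates these moments through $\bdd{R}$, so $\bdd{R}\bdd{\theta} - \harmonic{h}$ is rot-free with vanishing circulations and vanishing $\mathfrak{W}$-component, and \ref{harm:gradient-cond} together with the exactness just established places this residual in $\grad \discrete{W}^h_{\Gamma}$.

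\textbf{Main obstacle.} The technical heart is the construction of the continuous lift $\bdd{\tilde h}$ in part (c): the cohomology part is cleanly supplied by \cref{lem:harmonic-complex-cont}, but the $\mathfrak{W}$-gradient part requires producing a smooth, $H^2$-regular representative with prescribed boundary values on each $\Gamma_{cs}^{(i)}$ and vanishing normal derivative on $\Gamma_c$, stable in $\|\cdot\|_1$ with constant independent of $h$. The delicate point is to arrange the two parts so that the combined lift produces a residual $\bdd{R}\bdd{\theta} - \harmonic{h}$ falling in $\grad \discrete{W}^h_{\Gamma}$ rather than merely in $\grad (\discrete{W}^h \cap \mathcal{W}_{\Gamma}(\Omega))$; equivalently, one must exhibit a bounded right inverse to $\bdd{\theta} \mapsto \bdd{H} \bdd{R} \bdd{\theta}$ on $\{\bdd{\theta} \in \discretev{V}^h_{\Gamma} : P\rot \bdd{\theta} = 0\}$. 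Once this lift is in hand, the rest of the proof is a diagram chase through \ref{harm:v-fortin}, \ref{harm:r-interp-edge}, and \ref{harm:gradient-cond}.
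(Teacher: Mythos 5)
Your outline follows the same three-part structure as the paper's proof (exactness, then the two inf-sup conditions), and your Step (b) is identical to the paper's Step 1. The difficulty is concentrated in Step (c), and you identify the crux correctly — controlling the residual $\bdd{R}\bdd{\theta} - \harmonic{h}$ so that it lands in $\grad\discrete{W}^h_{\Gamma}$ rather than merely $\grad(\discrete{W}^h\cap\mathcal{W}_{\Gamma}(\Omega))$ — but you leave it unresolved, and this is precisely the nontrivial content. Edge-moment preservation by $\bdd{\Pi}_F$ (property \ref{harm:v-fortin}) and by $\bdd{R}$ (property \ref{harm:r-interp-edge}) gives you vanishing circulations and vanishing rot for the residual, but it does not by itself control pointwise vertex values of the discrete potential $w$ on $\Gamma_{cs}^{(i)}$, $i\geq 2$. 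The paper closes this gap in \cref{lem:invert-trace} with a path/telescoping argument over the mesh graph: fixing a vertex $a\in\Gamma_{cs}^{(1)}$ and choosing a chain of edges $\mathcal{E}_h^{ab}$ to any vertex $b$, moment preservation on each edge propagates
\begin{align*}
w(b) = w(a) + \sum_{e\in\mathcal{E}_h^{ab}}\int_e \unitvec{t}\cdot\grad w\,\mathrm{d}s
= \tilde w(a) + \sum_{e\in\mathcal{E}_h^{ab}}\int_e \unitvec{t}\cdot\grad\tilde w\,\mathrm{d}s = \tilde w(b),
\end{align*}
so the discrete potential $w$ inherits the prescribed continuous traces $c_i$ exactly. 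Without this combinatorial step, the identity $(\bdd{R}\bdd{\theta},\harmonic{h})=\|\harmonic{h}\|^2$ is not justified. You phrase the need as ``a bounded right inverse to $\bdd\theta\mapsto\bdd{H}\bdd{R}\bdd\theta$,'' which is a fair restatement of \ref{mitc:harmonic-inf-sup-discrete} itself, not a proof of it.

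There is also a sequencing difference worth flagging. The paper builds the circulation lift $\bdd{\theta}_{\circop}$ first, subtracts $\bdd{R}\bdd{\theta}_{\circop}$ from $\harmonic{h}$ to discover the boundary constants $c_i = v|_{\Gamma_{cs}^{(i)}}$, and only then invokes the trace lemma. Your plan assembles a single continuous lift $\tilde{\bdd h}$ in one shot, which presupposes that the ``discrete boundary values'' of the $\mathfrak{W}$-component of $\harmonic{h}$ are known a priori — this requires decomposing $\harmonic{h}$ via exactness before you know exactness holds quantitatively, and it still does not obviate the telescoping step. Finally, in Step (a) your ad-hoc ``annular $d\theta$ cutoff'' construction of continuous cohomology generators glosses over compatibility with the tangential condition on $\Gamma_c\cup\Gamma_s$ (especially when part of $\partial\Omega_i$ is clamped) and the required $h$-independent stability; the paper outsources this to a dedicated result ([AinCP24KirchII, Theorem 8.2]) for exactly this reason.
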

The proof of \cref{thm:harm-conds-harm-inf-sup} appears in 
\cref{sec:proof-harm-conds-harm-inf-sup}.

In summary, we have replaced the inf-sup condition 
\ref{mitc:stokes-inf-sup-discrete} involving the pair $\discretev{V}^h_{\Gamma} 
\times \discrete{Q}^h_{\Gamma}$ and the inf-sup condition 
\ref{mitc:harmonic-inf-sup-discrete} concerning discrete harmonic forms with a 
property involving gradients of discrete functions \ref{harm:gradient-cond}, the 
existence of a particular Fortin operator for the pair $\discretev{V}^h_{\Gamma} 
\times \discrete{Q}^h_{\Gamma}$ \ref{harm:v-fortin}, and an algebraic condition 
on the reduction operator \ref{harm:r-interp-edge}. The resulting three 
conditions, summarized in \cref{fig:pmitc-conditions}, give the following 
corollary of \cref{lem:apriori-take-0}:
\begin{corollary}
	\label{cor:apriori-pmitc}
	Suppose that
	\ref{mitc:red-commute}--\ref{mitc:red-bounded} and 
	\ref{pmitc:gradient-cond}--\ref{pmitc:r-interp-edge} hold. Then, 
	$\beta_{\bdd{R}}$ defined in \cref{eq:invert-xi-inf-sup-u} satisfies
	\begin{align}
		\beta_{\bdd{R}} \geq \frac{C }{  C_F^2 C_{\bdd{R}}^3}. 
	\end{align}	
\end{corollary}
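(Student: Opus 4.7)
The plan is to obtain the bound on $\beta_{\bdd{R}}$ by chaining together two results already established earlier in the manuscript: the inf-sup equivalence encoded in \cref{eq:inf-sup-equivalences-reduced} from \cref{lem:apriori-take-0} (which itself comes from \cref{lem:inf-sup-equivalences}), and the explicit lower bounds on $\beta_{\rot}$ and $\beta_{\harmonic{H}}$ supplied by \cref{thm:harm-conds-harm-inf-sup}. No new analytical machinery is needed; the corollary is a direct composition of these results.

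First, I would invoke \cref{thm:harm-conds-harm-inf-sup}. The hypotheses of that theorem are \ref{mitc:red-commute}--\ref{mitc:red-bounded} together with \ref{harm:gradient-cond}--\ref{harm:r-interp-edge}, which are precisely the standing assumptions of the corollary (under the naming \ref{pmitc:gradient-cond}--\ref{pmitc:r-interp-edge}). The theorem then yields both \ref{mitc:stokes-inf-sup-discrete} and \ref{mitc:harmonic-inf-sup-discrete} with the explicit constants
\begin{align*}
\beta_{\rot} \geq C C_F^{-1}, \qquad \beta_{\harmonic{H}} \geq C (C_F C_{\bdd{R}})^{-1}.
\end{align*}

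Second, since \ref{mitc:red-commute}--\ref{mitc:harmonic-inf-sup-discrete} are now in force, the bound \cref{eq:inf-sup-equivalences-reduced} applies, giving
\begin{align*}
\beta_{\bdd{R}} \geq \frac{C \beta_{\rot} \beta_{\harmonic{H}}}{C_{\bdd{R}}^2}.
\end{align*}
Substituting the two lower bounds above and absorbing all positive, $h$-independent constants into a single generic constant $C$ produces
\begin{align*}
\beta_{\bdd{R}} \geq \frac{C}{C_F^2 C_{\bdd{R}}^3},
\end{align*}
as claimed.

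The only point requiring mild attention is bookkeeping of the generic constants across the two invocations, since the constant $C$ varies between \cref{thm:harm-conds-harm-inf-sup} and \cref{lem:inf-sup-equivalences}; as all such constants depend only on the domain, boundary data, and material parameters, the convention set out after \cref{eq:a-coercive} lets us collapse them into one symbol. There is no substantive obstacle here: the genuine work has already been carried out in the proofs of \cref{lem:inf-sup-equivalences} (where the inf-sup equivalence is established) and of \cref{thm:harm-conds-harm-inf-sup} (where the Fortin operator \ref{pmitc:v-fortin} and the edge-preservation property \ref{pmitc:r-interp-edge} are used to construct a surjection onto $\discrete{Q}^h_{\Gamma}$ and to realize prescribed circulations on the components indexed by $\mathfrak{I}^*$). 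The corollary is thus essentially a clean restatement of the combined estimate in terms of the three conditions summarized in \cref{fig:pmitc-conditions}.
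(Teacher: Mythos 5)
Your proposal is correct and follows exactly the paper's route: invoke \cref{thm:harm-conds-harm-inf-sup} to get $\beta_{\rot}\geq CC_F^{-1}$ and $\beta_{\harmonic{H}}\geq C(C_FC_{\bdd{R}})^{-1}$, then substitute into \cref{eq:inf-sup-equivalences-reduced} from \cref{lem:apriori-take-0}. One small attribution slip: the $C_{\bdd{R}}^2$-denominator bound \cref{eq:inf-sup-equivalences-reduced} is established in \cref{lem:invert-xi-discrete} (via \cref{thm:apriori-error-gen}), not in \cref{lem:inf-sup-equivalences}, which carries the weaker constant $M_{\bdd{R}}^2$ instead; this does not affect the conclusion since you use the correct constant.
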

\begin{proof}
	The result follows from \cref{lem:apriori-take-0,thm:harm-conds-harm-inf-sup}.
\end{proof}

\begin{figure}[htb]
	\begin{tcolorbox}
		\begin{description}		
			
			\item[(H1)\label{pmitc:gradient-cond}] $\grad (\mathbb{W}^h \cap 
			\mathcal{W}_{\Gamma}(\Omega)) \subset \discretev{U}^h_{\Gamma}$ and 
			if $\bdd{\eta} \in \discretev{U}^h_{\Gamma}$ satisfies $\rot 
			\bdd{\eta} \equiv 0$ and $\circop_{\mathfrak{I}^*} \bdd{\eta} = 
			\vec{0}$, then $\bdd{\eta} = \grad w$ for some $w \in \discrete{W}^h 
			\cap \mathcal{W}_{\Gamma}(\Omega)$.	
			
			\vspace{1em}
			
			\item[(H2)\label{pmitc:v-fortin}] There exists a linear operator 
			$\bdd{\Pi}_{F} : \bdd{\Theta}_{\Gamma}(\Omega) \to 
			\discretev{V}_{\Gamma}^h$ and a constant $C_F \geq 1$ independent of 
			$h$ such that for all $\bdd{\psi} \in \bdd{\Theta}_{\Gamma}(\Omega)$
			\begin{subequations}
				\label{peq:v-fortin}
				\begin{alignat}{2}
					\int_{e} \unitvec{t} \cdot \bdd{\Pi}_F \bdd{\psi} \d{s}
					&= \int_{e}\unitvec{t} \cdot \bdd{\psi} \d{s} \qquad & 
					&\forall e \in \mathcal{E}_h, \\
					P \rot \bdd{\Pi}_F \bdd{\psi} &= P \rot \bdd{\psi}, \qquad & 
					& \\
					\|\bdd{\Pi}_F \bdd{\psi} \|_1 &\leq C_F \|\bdd{\psi}\|_{1}.
				\end{alignat}
			\end{subequations}   
			
			\item[(H3)\label{pmitc:r-interp-edge}] The reduction operator 
			$\bdd{R}$ preserves edge tangential moments:
			\begin{align}
				\label{peq:r-interp-circ}
				\int_{e} \unitvec{t} \cdot \bdd{R} \bdd{\theta} \d{s} = \int_{e} 
				\unitvec{t} \cdot \bdd{\theta} \d{s}  \qquad \forall e \in 
				\mathcal{E}_h, \ \forall \bdd{\gamma} \in  
				\discretev{V}^h_{\Gamma}.
			\end{align}	
			
		\end{description}
	\end{tcolorbox}
	\caption{Set of sufficient conditions for 
	\ref{mitc:stokes-inf-sup-discrete}--\ref{mitc:harmonic-inf-sup-discrete} 
	assuming \ref{mitc:red-commute}--\ref{mitc:red-bounded}.}
	\label{fig:pmitc-conditions}
\end{figure}

\section{Examples of finite element families satisfying the conditions}
\label{sec:mitc-examples}

Let $\{\mathcal{T}_h\}$ be a family of shape-regular triangulations of 
$\Omega$ satisfying the usual assumptions \cite[p. 38]{Cia02}, and recall that  
$\mathcal{V}_h$ and $\mathcal{E}_h$ denote the set of vertices and edges of 
$\mathcal{T}_h$. We give two examples of families of elements from 
\cite{Brezzi89,Brezzi1991MITC,Peisker92}
satisfying 
conditions \ref{mitc:red-commute}--\ref{mitc:red-bounded} and 
\ref{pmitc:gradient-cond}--\ref{pmitc:r-interp-edge}. In each case, we define 
discrete spaces $\discrete{W}^h$, $\discretev{V}^h$, $\discretev{U}^h$, and 
$\discrete{Q}^h$ with the corresponding spaces satisfying boundary conditions 
chosen in the usual way:
\begin{subequations}
	\label{eq:example-families-bcs}
	\begin{alignat}{2}
		\discrete{W}^h_{\Gamma} &:= \discrete{W}^h \cap H^1_{\Gamma}(\Omega), 
		\qquad &  \discretev{V}^h_{\Gamma} &:= \discretev{V}^h \cap 
		\bdd{\Theta}_{\Gamma}(\Omega), \\
		\discretev{U}^h_{\Gamma} &:= \discretev{U}^h \cap \hrotgamma, \qquad & 
		\discrete{Q}_{\Gamma}^h &:= \discrete{Q}^h \cap L^2_{\Gamma}(\Omega).
	\end{alignat}	
\end{subequations}

\subsection{Raviart-Thomas MITC family}
\label{sec:rt-family}

For $p \in \mathbb{N}$, choose
\begin{align}
	\label{eq:rt-family-w}
	\discrete{W}^h &:= \{ v \in C(\Omega) : v|_{K} \in \mathcal{P}_p(K) \ \forall 
	K \in \mathcal{T}_h \}, \\
	\label{eq:rt-family-u}
	\discretev{U}^h &:= \{ \bdd{\gamma} \in \hrot : \bdd{\gamma}|_{K} \in 
	\mathcal{P}_{p-1}(K)^2 + \bdd{x}^{\perp} \mathcal{P}_{p-1}(K) \ \forall K \in 
	\mathcal{T}_h \}, \\
	\label{eq:rt-family-q}
	\discrete{Q}^h &:= \{ q \in L^2(\Omega) : q|_{K} \in \mathcal{P}_{p-1}(K) \ 
	\forall K \in \mathcal{T}_h \},
\end{align}
where $\bdd{x}^{\perp} := (-x_2, x_1)^T$ for $\bdd{x} \in \mathbb{R}^2$.
Evidently, $\discrete{W}^h$ is the space of continuous piecewise polynomials of 
degree $p$, $\discretev{U}^h$ is the space of (rotated) Raviart-Thomas (RT) 
vector 
fields of degree $p$, and $\discrete{Q}^h$ is the space of discontinuous 
piecewise polynomials of degree $p-1$. We take 
$\bdd{R} : \bdd{H}^1(\Omega) + \discretev{U}^h \to \discretev{U}^h$ to be the 
standard RT finite element interpolant defined by the conditions:
\begin{subequations}
	\label{eq:rt-r-operator}
	\begin{alignat}{2}
		\label{eq:rt-r-operator-edge}
		\int_{e} \unitvec{t} \cdot (\bdd{R} \bdd{\gamma} - \bdd{\gamma}) r \d{s} 
		&= 0 \qquad & &\forall r \in \mathcal{P}_{p-1}(e), \ \forall e \in 
		\mathcal{E}_h, \\
		\label{eq:rt-r-operator-interior}
		\int_{K} (\bdd{R} \bdd{\gamma} - \bdd{\gamma}) \cdot \bdd{\eta} 
		\d{\bdd{x}} &= 0 \qquad & &\forall \bdd{\eta} \in \mathcal{P}_{p-2}(K)^2, 
		\ \forall K \in \mathcal{T}_h.
	\end{alignat}
\end{subequations}
Finally,  take the rotation space $\discretev{V}^h$ to be
\begin{align}
	\label{eq:rt-family-v}
	\discretev{V}^h &:= \left\{ \bdd{\psi} \in \bdd{C}(\Omega) : \bdd{\psi}|_{K} 
	\in \mathcal{P}_{p+1}(K)^2 \ \forall K \in \mathcal{T}_h \text{ and } 
	\bdd{\psi}|_{e} \in \mathcal{P}_{p}(K)^2 \ \forall e \in \mathcal{E}_h 
	\right\}, 
\end{align}
so that $\discretev{V}^h$ is the space of continuous piecewise polynomials of 
degree $p$ augmented with interior degree $p+1$ bubble functions. 

Conditions \ref{mitc:red-commute}--\ref{mitc:red-bounded} and 
\ref{pmitc:gradient-cond}--\ref{pmitc:r-interp-edge} are readily verified by 
simply collecting well-known results from the literature:
\begin{theorem}
	\label{thm:rt-family-satisfies-conds}
	Let $\discrete{W}^h_{\Gamma}$, $\discretev{V}^h_{\Gamma}$, 
	$\discretev{U}^h_{\Gamma}$, and $\discrete{Q}^h_{\Gamma}$ be chosen as in 
	\cref{eq:rt-family-w}, \cref{eq:rt-family-v}, \cref{eq:rt-family-u}, and 
	\cref{eq:rt-family-q}  for some $p \geq 2$, and let $\bdd{R}$ be defined as 
	in \cref{eq:rt-r-operator}. Then, conditions 
	\ref{mitc:red-commute}-\ref{mitc:red-bounded} and 
	\ref{harm:gradient-cond}--\ref{harm:r-interp-edge} are satisfied. 
\end{theorem}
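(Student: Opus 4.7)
The plan is to verify each condition in turn, leveraging classical results for the Lagrange--Raviart-Thomas--DG de Rham complex (see e.g.\ \cite{Arn18,BoffiBrezziFortin13}) together with the specific structure of the bubble-enriched rotation space $\discretev{V}^h$. Conditions \ref{mitc:red-commute}, \ref{pmitc:gradient-cond}, and \ref{pmitc:r-interp-edge} follow from well-documented properties of the RT interpolant, while \ref{mitc:red-bounded} is a routine scaling argument. The real work lies in exhibiting a Fortin operator satisfying \ref{pmitc:v-fortin}.

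For \ref{mitc:red-commute}: $\grad\discrete{W}^h\subset\discretev{U}^h$ since gradients of continuous piecewise-$\mathcal{P}_p$ functions have tangential continuity and lie in $\mathcal{P}_{p-1}^2\subset \mathcal{P}_{p-1}^2+\bdd{x}^\perp\mathcal{P}_{p-1}$. The identity $\rot\bdd{R}=P\rot$ follows elementwise by integration by parts, using \cref{eq:rt-r-operator-edge} with $r=\unitvec{t}\cdot(\mathrm{piecewise\ polynomial})$ and \cref{eq:rt-r-operator-interior} to cancel the remaining interior terms; this is a standard computation. The boundary conditions are preserved because the tangential-moment defining conditions vanish on edges in $\Gamma_{cs}$. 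For \ref{pmitc:r-interp-edge}, taking $r\equiv 1\in\mathcal{P}_0(e)\subset\mathcal{P}_{p-1}(e)$ in \cref{eq:rt-r-operator-edge} yields the claim immediately. For \ref{mitc:red-bounded}, since $\discretev{V}^h|_K$ is finite-dimensional on the reference element and the moments defining $\bdd{R}$ are continuous linear functionals on this space, equivalence of norms together with a standard Piola-type scaling gives $\|\bdd{R}\bdd{\psi}\|_{L^2(K)}\le C\|\bdd{\psi}\|_{L^2(K)}$ with a constant independent of $h$.

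Condition \ref{pmitc:gradient-cond} has two parts. The inclusion $\grad(\discrete{W}^h\cap\mathcal{W}_\Gamma(\Omega))\subset\discretev{U}^h_\Gamma$ is checked as in \ref{mitc:red-commute}, noting that $\partial_t v=0$ on any boundary component where $v$ is constant. The second part is a discrete analogue of \cref{lem:harmonic-complex-cont}: if $\bdd{\eta}\in\discretev{U}^h_\Gamma$ is rot-free with $\circop_{\mathfrak{I}^*}\bdd{\eta}=\vec{0}$, then $\bdd{\eta}=\grad w$ for some $w\in\discrete{W}^h\cap\mathcal{W}_\Gamma(\Omega)$. I would prove this by using the RT interpolant to transfer the analogous continuous statement (\cref{lem:harmonic-complex-cont}) to the discrete setting: given such $\bdd{\eta}$, continuous exactness yields a continuous preimage $w\in\mathcal{W}_\Gamma(\Omega)$ with $\grad w=\bdd{\eta}$, and then the commuting property with the standard Lagrange interpolant places $w$ in $\discrete{W}^h$, since $\bdd{\eta}$ is already discrete.

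The main obstacle is \ref{pmitc:v-fortin}. I would construct $\bdd{\Pi}_F$ in three stages, following the template used in \cite{Brezzi89,Brezzi1991MITC,Peisker92}: (i) a Scott-Zhang-type quasi-interpolation into the continuous $\mathcal{P}_p$ part of $\discretev{V}^h_\Gamma$ giving $H^1$-stability and preservation of the Dirichlet conditions on $\Gamma_c$ and of the normal-component condition on $\Gamma_s$; (ii) an edge correction, using edge-supported functions of degree $p$ in $\discretev{V}^h$, chosen to enforce $\int_e\unitvec{t}\cdot\bdd{\Pi}_F\bdd{\psi}=\int_e\unitvec{t}\cdot\bdd{\psi}$ on every edge; (iii) an interior correction using the internal $\mathcal{P}_{p+1}$ bubbles, whose images under $\rot$ span $\mathcal{P}_{p-1}(K)$ modulo constants on each $K$, enabling the matching of $P\rot\bdd{\Pi}_F\bdd{\psi}=P\rot\bdd{\psi}$ elementwise without disturbing the already-corrected edge moments. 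The critical technical ingredient is the local surjectivity of $\rot$ from the interior bubble space onto $\mathcal{P}_{p-1}(K)/\mathbb{R}$, which is the well-known local inf-sup property for MITC elements. Combining this with the $H^1$-stability of each stage (via inverse inequalities on edge and interior bubbles) yields the desired bound $\|\bdd{\Pi}_F\bdd{\psi}\|_1\le C_F\|\bdd{\psi}\|_1$, completing the verification.
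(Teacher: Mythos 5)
Your proposal is essentially correct and follows the same overall strategy as the paper: conditions \ref{mitc:red-commute}, \ref{mitc:red-bounded} and \ref{pmitc:r-interp-edge} are dispatched by standard properties of the Raviart--Thomas interpolant, and \ref{pmitc:v-fortin} is obtained by the quasi-interpolant $+$ edge-correction $+$ interior-bubble template, which is exactly the construction of Propositions 8.4.3 and 8.5.9 of Boffi--Brezzi--Fortin that the paper cites and reproduces in \cref{lem:v-fortin-p2,lem:v-fortin-bubble}. Two sub-arguments differ in route. For \ref{mitc:red-bounded} you use finite-dimensionality and scaling on the reference element, whereas the paper uses the local approximation estimate $\|\bdd{\psi}-\bdd{R}\bdd{\psi}\|_K \le C h_K |\bdd{\psi}|_{1,K}$ followed by an inverse inequality; both are routine and equivalent in strength. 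For the second half of \ref{pmitc:gradient-cond} you pass to a continuous potential $w$ with $\grad w=\bdd{\eta}$ and then argue via the Lagrange--RT commuting diagram that $w\in\discrete{W}^h$; the paper instead invokes the \emph{local} exactness of the polynomial de Rham complex on each triangle (\cite[Lemma 3.8]{ArnFalkWin06}) to conclude directly that $w|_K\in\mathcal{P}_p(K)$. Your route works but needs one sentence of justification that the canonical Lagrange interpolant is actually defined on $w$ and that the commutativity holds at that regularity — this is fine here because $\grad w=\bdd{\eta}$ is a bounded piecewise polynomial, so $w$ is Lipschitz and its edge tangential traces are polynomial — whereas the paper's local argument avoids the issue entirely. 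You should also make explicit, as the paper does, that $\circop_{\mathfrak{I}^*}\bdd{\eta}=\vec{0}$ forces \emph{all} circulations $\circop_i\bdd{\eta}$, $i\in\{0,\dots,H\}$, to vanish (those on components of $\Gamma_{cs}$ vanish automatically, and the omitted index is recovered from $\int_\Omega\rot\bdd{\eta}=0$) before the gradient representation of \cite[Theorem 3.1]{GiraultRaviart86} can be applied.

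Two minor corrections. First, the constraint on $\Gamma_s$ for $\bdd{\Theta}_{\Gamma}(\Omega)$ is on the \emph{tangential} component, $\unitvec{t}\cdot\bdd{\psi}|_{\Gamma_s}=0$, not the normal component; preserving this pointwise (not merely in the mean) under the Scott--Zhang stage and the edge corrections is precisely the point of property \cref{eq:v-fortin-p2-4} in the paper's \cref{lem:v-fortin-p2}, so your stage (i)--(ii) should be phrased to respect it. Second, in stage (iii) note that the interior bubbles have mean-zero rotation on each element, so they can only match the mean-zero part of $P\rot\bdd{\psi}$; the elementwise mean is already matched after stage (ii) by Stokes' theorem, which closes the argument.
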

\begin{proof}
	\ref{mitc:red-commute} is standard, see e.g. \cite[Theorem 
	5.2]{ArnFalkWin06}. \ref{mitc:red-bounded} is also standard, although not 
	typically stated in the above form, so we provide a short proof. Let 
	$\bdd{\psi} \in \discretev{V}^h_{\Gamma}$. Then, \cite[eq 
	(5.4)]{ArnFalkWin06} shows that
	\begin{align}
		\label{proof:eq:r-approx-v}
		\| \bdd{\psi} - \bdd{R} \bdd{\psi} \|_{K} \leq C h_K |\bdd{\psi}|_{1, K} 
		\qquad \forall K \in \mathcal{T}_h, %\implies \|\bdd{\psi} - \bdd{R} 
		%\bdd{\psi}\| \leq C h \|\bdd{\psi}\|_1,
	\end{align}
	where $h_K = \mathrm{diam}(K)$, 
	and so applying a standard inverse inequality then gives
	\begin{align*}
		\|\bdd{R} \bdd{\psi}\|_K \leq \|\bdd{\psi}\|_K + \|\bdd{\psi} - \bdd{R} 
		\bdd{\psi}\|_K \leq C \|\bdd{\psi}\|_K \qquad \forall K \in \mathcal{T}_h.
	\end{align*}
	\ref{mitc:red-bounded} now follows from squaring the above inequality and 
	summing over the elements.
	
	The inclusion $\grad (\mathbb{W}^h \cap \mathcal{W}_{\Gamma}(\Omega)) \subset 
	\discretev{U}^h_{\Gamma}$ follows by definition. Suppose that $\bdd{\eta} 
	\in \discretev{U}^h_{\Gamma}$ satisfies $\rot \bdd{\eta} \equiv 0$ and 
	$\circop_{\mathfrak{I}^*} \bdd{\eta} = \vec{0}$. Then, $\circop_i \bdd{\eta} 
	= 0$ for all $i \in \{0, 1,2,\ldots, H\}$, and so \cite[p. 37 Theorem 
	3.1]{GiraultRaviart86} shows that $\bdd{\eta} = \grad w$ for some $w \in 
	H^1(\Omega)$.
	
	Let $K \in \mathcal{T}_h$. Then, $\grad w|_{K} = \bdd{\eta}|_{K} \in 
	\mathcal{P}_{p-1}(K)^2 + \bdd{x}^{\perp} \mathcal{P}_{p-1}(K)$ is rot-free 
	(i.e. $\rot (\grad w|_{K}) \equiv 0$) and so \cite[Lemma 3.8]{ArnFalkWin06} 
	shows that $\grad w|_{K} = \grad v$ for some $v \in \mathcal{P}_p(K)$. 
	Consequently, $w|_{K} \in \mathcal{P}_p(K)$ which means that $w \in 
	\discrete{W}^h$. Since $\bdd{\gamma} \in \discretev{U}^h_{\Gamma}$, we have 
	$\unitvec{t} \cdot \grad w|_{\Gamma_{cs}} = 0$, and so 
	$w|_{\Gamma_{cs}^{(i)}} \in \mathbb{R}$. The function $u = w - 
	w|_{\Gamma_{cs}^{(1)}}$ satisfies $u \in \discrete{W}^h \cap 
	\mathcal{W}_{\Gamma}(\Omega)$ and $\grad u = \bdd{\gamma}$. 
	\ref{pmitc:gradient-cond} now follows.
	
	The operator $\bdd{\Pi}_F$ constructed in the proofs of Proposition 8.4.3, 
	Proposition 8.5.9, and Corollary 8.5.2 of \cite{BoffiBrezziFortin13} satisfies
	\ref{pmitc:v-fortin}. For completeness, we provide a full construction in 
	\cref{lem:v-fortin-bubble}. Condition \ref{pmitc:r-interp-edge} follows by 
	construction \cref{eq:rt-r-operator-edge}.
\end{proof}

We also have the following error estimate:
\begin{corollary}
	\label{thm:rt-family-apriori}
	Let $\discrete{W}^h_{\Gamma}$, $\discretev{V}^h_{\Gamma}$, 
	$\discretev{U}^h_{\Gamma}$, $\discrete{Q}^h_{\Gamma}$, and $\bdd{R}$ be 
	chosen as in \cref{thm:rt-family-satisfies-conds}. Let $(w, \bdd{\theta}) \in 
	H^1_{\Gamma}(\Omega) \times \bdd{\Theta}_{\Gamma}(\Omega)$ denote the 
	solution to \cref{eq:mr-variational-form} with $\bdd{\gamma} = \lambda t^{-2} 
	\bdd{\Xi}(w, \bdd{\theta})$ and let $(w_h, \bdd{\theta}_h) \in 
	\discrete{W}^h_{\Gamma} \times \discretev{V}^h_{\Gamma}$ denote the finite 
	element solution \cref{eq:mr-variational-form-fem} with $\bdd{\gamma}_h = 
	\lambda t^{-2} \bdd{\Xi}_{\bdd{R}}(w_h, \bdd{\theta}_h)$. Then, there holds
	\begin{align}
		\begin{aligned}
			\label{eq:apriori-rt}
			&\| w - w_h \|_1 + \| \bdd{\theta} - \bdd{\theta}_h \|_1 + t \| 
			\bdd{\gamma} - \bdd{\gamma}_h \| \\
			&\ \leq C \left( \inf_{v \in \discrete{W}^h_{\Gamma}} \| w - v \|_1 
			+ \inf_{ \bdd{\psi} \in \discretev{V}^h_{\Gamma}} \|\bdd{\theta} - 
			\bdd{\psi}\|_1 
			+ t \inf_{ \bdd{\eta} \in \discretev{U}^h_{\Gamma}}  \|\bdd{\gamma} - 
			\bdd{\eta}\| 
			+  \inf_{q \in \discrete{Q}^h_{\Gamma}} \|\rot \bdd{\theta} - q\| 
			\right. \\ 
			&\qquad \qquad \left. + \inf_{ \bdd{\eta} \in 
			\discretev{U}^h_{\Gamma}} \| \bdd{\theta} - \bdd{\eta}\|   
			+ h  \inf_{ \bdd{\eta} \in \discretev{U}^h_{\Gamma}}  \|\bdd{\gamma} 
			- \bdd{\eta}\| + h \inf_{ \bdd{\rho} \in 
			\bm{\mathcal{P}}_{p-2}(\mathcal{T}_h) } \|\bdd{\gamma} - \bdd{\rho} 
			\|  \right),
		\end{aligned}
	\end{align}
	where
	\begin{align}
		\bm{\mathcal{P}}_{p-2}(\mathcal{T}_h) := \{ \bdd{\rho} \in 
		\bdd{L}^2(\Omega) : \bdd{\rho}|_{K} \in \mathcal{P}_{p-2}(K)^2 \ \forall 
		K \in \mathcal{T}_h \}.
	\end{align}
	Additionally, if $w \in H^{p+1}(\Omega)$, $\bdd{\theta} \in 
	\bdd{H}^{p+1}(\Omega)$, and $\bdd{\gamma} \in \bdd{H}^{p}(\Omega)$, then 
	there holds
	\begin{multline}
		\label{eq:apriori-rt-smooth}
		\| w - w_h \|_1 + \| \bdd{\theta} - \bdd{\theta}_h \|_1 + t \| 
		\bdd{\gamma} - \bdd{\gamma}_h \| \\ \leq C h^{p} \left( \|w\|_{p+1} + 
		\|\bdd{\theta}\|_{p+1} + \|\bdd{\gamma}\|_{p-1} + t\|\bdd{\gamma}\|_{p} 
		\right),
	\end{multline}
	where the constants $C$ in \cref{eq:apriori-rt,eq:apriori-rt-smooth} are 
	independent of $h$.
\end{corollary}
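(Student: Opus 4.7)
The plan is to apply the general a priori bound \cref{eq:apriori-take-0} of \cref{lem:apriori-take-0} and to simplify each of the four terms on its right-hand side using the specific structure of the Raviart--Thomas family. Since \cref{thm:rt-family-satisfies-conds} verifies conditions \ref{mitc:red-commute}--\ref{mitc:red-bounded} and \ref{harm:gradient-cond}--\ref{harm:r-interp-edge}, \cref{cor:apriori-pmitc} supplies a lower bound $\beta_{\bdd{R}} \geq C/(C_F^2 C_{\bdd{R}}^3)$ independent of $h$ and $t$, so the prefactor $CC_{\bdd{R}}/(\beta_{\bdd{R}} + t)$ in \cref{eq:apriori-take-0} is uniformly bounded. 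The first and second terms on the right-hand side of \cref{eq:apriori-take-0} already match those in \cref{eq:apriori-rt}, and the general bound \cref{eq:reduction-rot-error} converts $\inf_{\bdd{\psi}}(\|\bdd{\theta} - \bdd{\psi}\|_1 + \|\bdd{\psi} - \bdd{R}\bdd{\psi}\|_{\rot})$ into the sum of the three terms $\inf_{\bdd{\psi}} \|\bdd{\theta} - \bdd{\psi}\|_1$, $\inf_{q} \|\rot\bdd{\theta} - q\|$, and $\inf_{\bdd{\eta}}\|\bdd{\theta} - \bdd{\eta}\|$ that appear in \cref{eq:apriori-rt}.

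The main work lies in controlling the consistency supremum $\sup_{\bdd{\psi}} (\bdd{P}\bdd{\gamma}, \bdd{\psi} - \bdd{R}\bdd{\psi})/\|\bdd{\psi}\|_1$, where the specific form of the Raviart--Thomas interpolant \cref{eq:rt-r-operator} plays a crucial role. For any $\bdd{\rho} \in \bm{\mathcal{P}}_{p-2}(\mathcal{T}_h)$, the interior-moment identity \cref{eq:rt-r-operator-interior} yields the element-wise orthogonality $(\bdd{\psi} - \bdd{R}\bdd{\psi}, \bdd{\rho})_K = 0$, so that
\begin{align*}
(\bdd{P}\bdd{\gamma}, \bdd{\psi} - \bdd{R}\bdd{\psi}) = (\bdd{P}\bdd{\gamma} - \bdd{\rho}, \bdd{\psi} - \bdd{R}\bdd{\psi}).
\end{align*}
The plan is to apply Cauchy--Schwarz together with the Raviart--Thomas approximation estimate $\|\bdd{\psi} - \bdd{R}\bdd{\psi}\| \leq Ch\|\bdd{\psi}\|_1$ derived element-wise in the proof of \cref{thm:rt-family-satisfies-conds}, and then take the infimum over $\bdd{\rho}$. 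Combined with the triangle inequality
\begin{align*}
\|\bdd{P}\bdd{\gamma} - \bdd{\rho}\| \leq \|\bdd{\gamma} - \bdd{P}\bdd{\gamma}\| + \|\bdd{\gamma} - \bdd{\rho}\|
\end{align*}
and the best-approximation identity $\|\bdd{\gamma} - \bdd{P}\bdd{\gamma}\| = \inf_{\bdd{\eta} \in \discretev{U}^h_{\Gamma}}\|\bdd{\gamma} - \bdd{\eta}\|$ (since $\bdd{P}$ is the $\bdd{L}^2$-projection onto $\discretev{U}^h_{\Gamma}$), this produces precisely the final two terms $h\inf_{\bdd{\eta}}\|\bdd{\gamma} - \bdd{\eta}\|$ and $h\inf_{\bdd{\rho}}\|\bdd{\gamma} - \bdd{\rho}\|$ of \cref{eq:apriori-rt}.

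The bound \cref{eq:apriori-rt-smooth} then follows by invoking standard polynomial approximation estimates in each infimum: continuous $\mathcal{P}_p$ approximation yields $h^p\|w\|_{p+1}$ and $h^p\|\bdd{\theta}\|_{p+1}$; the order-$p$ Raviart--Thomas space delivers $h^p\|\bdd{\gamma}\|_p$; discontinuous $\mathcal{P}_{p-1}$ approximation of $\rot\bdd{\theta}$ gives $h^p\|\rot\bdd{\theta}\|_p \leq h^p\|\bdd{\theta}\|_{p+1}$; and piecewise $\mathcal{P}_{p-2}$ approximation of $\bdd{\gamma}$ produces $h^{p-1}\|\bdd{\gamma}\|_{p-1}$, which after multiplication by the prefactor $h$ contributes $h^p\|\bdd{\gamma}\|_{p-1}$. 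The main obstacle in the analysis is the identification and careful exploitation of the $\bm{\mathcal{P}}_{p-2}$-orthogonality of the Raviart--Thomas interior moments in the consistency supremum; this is precisely what produces the extra factor of $h$ required to preserve optimal rates of convergence.
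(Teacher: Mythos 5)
Your proposal is correct and follows essentially the same route as the paper's own proof: invoke \cref{cor:apriori-pmitc} to render the prefactor $CC_{\bdd{R}}/(\beta_{\bdd{R}}+t)$ uniformly bounded, use \cref{eq:reduction-rot-error} for the reduction-operator terms, and then exploit the interior Raviart--Thomas moments \cref{eq:rt-r-operator-interior} to gain the extra factor of $h$ in the consistency term via orthogonality to $\bm{\mathcal{P}}_{p-2}(\mathcal{T}_h)$, Cauchy--Schwarz, the element-wise estimate \cref{proof:eq:r-approx-v}, and a triangle inequality through $\bdd{P}\bdd{\gamma}$. The only cosmetic difference is that the paper works directly with the $L^2$-projection $\bdd{P}_{p-2}$ whereas you insert an arbitrary $\bdd{\rho}\in\bm{\mathcal{P}}_{p-2}(\mathcal{T}_h)$ and take the infimum afterwards; these are equivalent.
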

\begin{proof}
	Thanks to \cref{cor:apriori-pmitc} and \cref{eq:reduction-rot-error}, we need 
	only consider the final term in \cref{eq:apriori-take-0}. To this end, let 
	$\bdd{P}_{p-2} : \bdd{L}^2(\Omega) \to \bm{\mathcal{P}}(\mathcal{T}_h)$ be 
	the following projection:	
	\begin{align*}
		(\bdd{P}_{p-2} \bdd{\eta}, \bdd{\rho} ) = (\bdd{\eta}, \bdd{\rho}) \qquad 
		\forall \bdd{\rho} \in \bm{\mathcal{P}}_{p-2}(\mathcal{T}_h), \ \forall 
		\bdd{\eta} \in \bdd{L}^2(\Omega).
	\end{align*}
	Then, thanks to \cref{eq:rt-r-operator-interior,proof:eq:r-approx-v}, there 
	holds	
	\begin{align*}
		\sup_{ \bdd{\psi} \in \discretev{V}^h_{\Gamma} } \frac{(\bdd{P} 
			\bdd{\gamma}, \bdd{\psi} - \bdd{R} \bdd{\psi} ) }{ \| \bdd{\psi} \|_1 
			} 
		&= \sup_{ \bdd{\psi} \in \discretev{V}^h_{\Gamma} } \frac{(\bdd{P} 
			\bdd{\gamma} - \bdd{P}_{p-2} \bdd{\gamma}, \bdd{\psi} - \bdd{R} 
			\bdd{\psi} ) }{ \| \bdd{\psi} \|_1 } \\
		&\leq C_{\bdd{R}} h \| \bdd{P} \bdd{\gamma} - \bdd{P}_{p-2} \bdd{\gamma} 
		\|  \\
		&\leq C_{\bdd{R}} h \left( \inf_{ \bdd{\eta} \in 
		\discretev{U}^h_{\Gamma}}  \|\bdd{\gamma} - \bdd{\eta}\| + \inf_{ 
		\bdd{\rho} \in \bm{\mathcal{P}}_{p-2}(\mathcal{T}_h) } \|\bdd{\gamma} - 
		\bdd{\rho} \| \right),
	\end{align*}
	which completes the proof of \cref{eq:apriori-rt}. Inequality 
	\cref{eq:apriori-rt-smooth}  then follows from standard approximation results.
\end{proof}

\subsection{Brezzi-Douglas-Marini MITC family}
\label{sec:bdm-family}

Another family of triangular elements arises from choosing $\discretev{U}^h$ to 
be the (rotated) Brezzi-Douglas-Marini (BDM) element defined for $p \geq 2$ by
\begin{align}
	\label{eq:bdm-family-w}
	\discrete{W}^h &:= \{ v \in C(\Omega) : v|_{K} \in \mathcal{P}_{p+1}(K) \ 
	\forall K \in \mathcal{T}_h \}, \\
	\label{eq:bdm-family-u}
	\discretev{U}^h &:= \{ \bdd{\gamma} \in \hrot : \bdd{\gamma}|_{K} \in 
	\mathcal{P}_{p}(K)^2 \ \forall K \in \mathcal{T}_h \},
\end{align}
and define $\bdd{R} : H^1(\Omega) + \discretev{U}^h \to \discretev{U}^h$ to be 
the canonical interpolant (see e.g. \cite{ArnFalkWin06}):
\begin{subequations}
	\label{eq:bdm-r-operator}
	\begin{alignat}{2}
		\int_{e} \unitvec{t} \cdot (\bdd{R} \bdd{\gamma} - \bdd{\gamma}) r \d{s} 
		&= 0 \qquad & &\forall r \in \mathcal{P}_{p}(e), \ \forall e \in 
		\mathcal{E}_h, \\
		\int_{K} (\bdd{R} \bdd{\gamma} - \bdd{\gamma}) \cdot \bdd{\eta} 
		\d{\bdd{x}} &= 0 \qquad & &\forall \bdd{\eta} \in \mathcal{P}_{p-2}(K)^2 
		+ \bdd{x}^{\perp} \mathcal{P}_{p-2}(K), \ \forall K \in \mathcal{T}_h.
	\end{alignat}
\end{subequations}
The rotation space $\discretev{V}^h$ and auxiliary space $\discrete{Q}^h$ are 
chosen as in \cref{eq:rt-family-v} and \cref{eq:rt-family-q}. All of the results 
for the Raviart-Thomas family carry over to this family again by collecting known 
results:
\begin{theorem}
	\label{thm:bdm-family-satisfies-conds}
	Let $\discrete{W}^h_{\Gamma}$, $\discretev{V}^h_{\Gamma}$, 
	$\discretev{U}^h_{\Gamma}$, and $\discrete{Q}^h_{\Gamma}$ be chosen as in
	\cref{eq:bdm-family-w}, \cref{eq:rt-family-v}, \cref{eq:bdm-family-u}, and 
	\cref{eq:rt-family-q} for $p \geq 2$ and let $\bdd{R}$ is defined as in 
	\cref{eq:bdm-r-operator}. Then, conditions 
	\ref{mitc:red-commute}--\ref{mitc:red-bounded} and 
	\ref{harm:gradient-cond}--\ref{harm:r-interp-edge} are satisfied.
	
	Moreover, let $(w, \bdd{\theta}) \in H^1_{\Gamma}(\Omega) \times 
	\bdd{\Theta}_{\Gamma}(\Omega)$ denote the solution to 
	\cref{eq:mr-variational-form} with $\bdd{\gamma} = \lambda t^{-2} 
	\bdd{\Xi}(w, \bdd{\theta})$ and $(w_h, \bdd{\theta}_h) \in 
	\discrete{W}^h_{\Gamma} \times \discretev{V}^h_{\Gamma}$  denote the finite 
	element solution \cref{eq:mr-variational-form-fem} with $\bdd{\gamma}_h = 
	\lambda t^{-2} \bdd{\Xi}_{\bdd{R}}(w_h, \bdd{\theta}_h)$. Then, the a priori 
	estimate \cref{eq:apriori-rt} holds and if $w \in H^{p+1}(\Omega)$, 
	$\bdd{\theta} \in \bdd{H}^{p+1}(\Omega)$, and $\bdd{\gamma} \in 
	\bdd{H}^{p}(\Omega)$, then \cref{eq:apriori-rt-smooth} holds, where the 
	constants are again independent of $h$ but may depend on $p$.
\end{theorem}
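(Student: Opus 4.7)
The plan is to mirror the proof of Theorem \ref{thm:rt-family-satisfies-conds}, substituting BDM-specific facts for their RT counterparts. First, \ref{mitc:red-commute} is the well-known commuting diagram for the BDM / Lagrange / DG complex (see e.g.\ Theorem 5.3 of \cite{ArnFalkWin06} or \cite{BoffiBrezziFortin13}): the Lagrange interpolant, the BDM canonical interpolant \cref{eq:bdm-r-operator}, and the $L^2$-projection onto piecewise polynomials of degree $p-1$ intertwine with $\grad$ and $\rot$, and the bottom-row complex property follows from $\rot \grad \equiv 0$ combined with $\rot \mathcal{P}_p(K)^2 \subseteq \mathcal{P}_{p-1}(K)$. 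For \ref{mitc:red-bounded}, the canonical BDM interpolant obeys the elementwise estimate $\|\bdd{\psi} - \bdd{R}\bdd{\psi}\|_K \leq C h_K |\bdd{\psi}|_{1,K}$ for $\bdd{\psi} \in \bdd{H}^1(K)$, so the inverse-inequality argument used in the proof of Theorem \ref{thm:rt-family-satisfies-conds} transfers verbatim.

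Next, for \ref{pmitc:gradient-cond}, the inclusion $\grad(\discrete{W}^h \cap \mathcal{W}_\Gamma(\Omega)) \subset \discretev{U}^h_\Gamma$ is immediate since $\grad \mathcal{P}_{p+1}(K) \subset \mathcal{P}_p(K)^2$ and the tangential trace vanishes on $\Gamma_{cs}$ by the boundary restrictions built into $\mathcal{W}_\Gamma(\Omega)$. For the converse, suppose $\bdd{\eta} \in \discretev{U}^h_\Gamma$ satisfies $\rot \bdd{\eta} \equiv 0$ and $\circop_{\mathfrak{I}^*} \bdd{\eta} = \vec{0}$. As in the RT case, $\circop_i \bdd{\eta} = 0$ for every connected component of $\partial\Omega$, so \cite[p.~37 Theorem 3.1]{GiraultRaviart86} furnishes a potential $w \in H^1(\Omega)$ with $\grad w = \bdd{\eta}$. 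Since $\grad w|_K \in \mathcal{P}_p(K)^2$ we immediately have $w|_K \in \mathcal{P}_{p+1}(K)$, so $w \in \discrete{W}^h$; subtracting its value on $\Gamma_{cs}^{(1)}$ yields the desired element of $\discrete{W}^h \cap \mathcal{W}_\Gamma(\Omega)$. This step is in fact slightly simpler than in the RT case, since no appeal to \cite[Lemma 3.8]{ArnFalkWin06} is needed.

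For \ref{pmitc:v-fortin} and \ref{pmitc:r-interp-edge}, observe that $\discretev{V}^h$ and $\discrete{Q}^h$ are identical to the spaces used for the RT family, so the Fortin operator of Lemma \ref{lem:v-fortin-bubble} applies without modification. Condition \ref{pmitc:r-interp-edge} is immediate from the first line of \cref{eq:bdm-r-operator}, since constants on each edge lie in $\mathcal{P}_p(e)$.

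Finally, the a priori estimates follow by invoking Corollary \ref{cor:apriori-pmitc} and retracing the argument of Corollary \ref{thm:rt-family-apriori}. The only non-generic term is $\sup_{\bdd{\psi} \in \discretev{V}^h_\Gamma}(\bdd{P}\bdd{\gamma}, \bdd{\psi} - \bdd{R}\bdd{\psi})/\|\bdd{\psi}\|_1$, which I would handle by inserting the $\bdd{L}^2$-projection onto $\bm{\mathcal{P}}_{p-2}(\mathcal{T}_h)$ and exploiting the BDM interior moment conditions against $\mathcal{P}_{p-2}(K)^2 + \bdd{x}^\perp \mathcal{P}_{p-2}(K) \supseteq \mathcal{P}_{p-2}(K)^2$. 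Standard approximation estimates for the Lagrange, BDM, bubble-augmented, and DG spaces then furnish \cref{eq:apriori-rt-smooth} at the advertised rate $\mathcal{O}(h^p)$. The only potential obstacle is confirming that the Fortin operator and the interior-moment cancellation engineered in the RT case really do transfer to BDM; both do, because the rotation and pressure spaces are unchanged and the BDM interior moments are if anything richer than the RT ones.
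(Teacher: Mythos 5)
Your proposal is correct and takes essentially the same approach as the paper, which (for this theorem) simply states that the RT arguments ``carry over'' to the BDM family without spelling them out; you have filled in those details accurately, including the genuine simplification in \ref{pmitc:gradient-cond} where rot-freeness of a $\mathcal{P}_p(K)^2$ field directly gives a degree-$(p+1)$ potential without needing \cite[Lemma~3.8]{ArnFalkWin06}. The observation that the BDM interior moment conditions contain $\mathcal{P}_{p-2}(K)^2$ so the final consistency term is handled verbatim as in Corollary~\ref{thm:rt-family-apriori} is exactly the point the paper elides.
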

Despite the fact that everything needed to verify conditions 
\ref{mitc:red-commute}--\ref{mitc:red-bounded} and 
\ref{harm:gradient-cond}--\ref{harm:r-interp-edge} is readily available in the 
literature, \cref{thm:bdm-family-satisfies-conds} seems to be new. In particular, 
it covers the case of mixed boundary conditions and domains that may contain 
holes.

\begin{remark}
	The corresponding result for the families of quadrilateral elements in 
	\cite{StenbergSuri97}
	also holds. In particular, conditions 
	\ref{mitc:red-commute}--\ref{mitc:red-bounded} and 
	\ref{harm:gradient-cond}--\ref{harm:r-interp-edge} may be verified with the 
	same arguments in the proof of \cref{thm:rt-family-satisfies-conds} using the 
	corresponding results in \cite{StenbergSuri97}.	
\end{remark}

\section{High order schemes} 
\label{sec:high-order}

Thus far, we have considered methods based on polynomials of arbitrary, but 
fixed, order for which it is largely irrelevant whether or not the constants 
$C_{\bdd{R}}$ and $C_F$ depend on the polynomial degree $p$. Here, we focus on 
schemes where the order $p$ is 
increased, and it will be important to track the $p$-dependence of the constants.
In order to reflect this priority, we replace the ``$h$" with ``$p$" in our 
notation.
One benefit of using high order elements is that the reduction operator can be 
reduced to the identity operator on $\discretev{V}^p_{\Gamma}$. Of course, this 
does not necessarily mean that $\bdd{R}$ is the identity on the full space 
$\bdd{\Theta}_{\Gamma}(\Omega)$, but it does mean that
the scheme with the reduction operator \cref{eq:mr-variational-form-fem} and the 
original scheme \cref{eq:mr-variational-form-fem-nored} are equivalent: Find 
$(w_p, \bdd{\theta}_p) \in \discrete{W}^p_{\Gamma} \times 
\discretev{V}^p_{\Gamma}$ such that
\begin{align}
	\label{eq:mr-variational-high-order}
	a(\bdd{\theta}_p, \bdd{\psi}) + \lambda t^{-2} ( \bdd{\Xi}(w_p, 
	\bdd{\theta}_p), \bdd{\Xi}(v, \bdd{\psi}) ) = F(v) + G(\bdd{\psi}) \qquad 
	\forall (v, \bdd{\psi}) \in \discrete{W}^p_{\Gamma} \times 
	\discretev{V}^p_{\Gamma}.
\end{align}

The first observation is 
that if $\bdd{R} \discretev{V}^p_{\Gamma} = \discretev{V}^p_{\Gamma}$, then we 
may choose 
\begin{align}
	\label{eq:image-xi}
	\discretev{U}^p_{\Gamma} = \image \bdd{\Xi}^p := \grad 
	\discrete{W}^p_{\Gamma} +  \discretev{V}^p_{\Gamma} = \{ \bdd{\Xi}(v, 
	\bdd{\psi}) : (v, \bdd{\psi}) \in \discrete{W}^p_{\Gamma} \times 
	\discretev{V}^p_{\Gamma} \}.
\end{align}
Moreover, \cref{remark:consequences-sequence-r} shows that if such an operator 
$\bdd{R}$ exists, then we must have
\begin{align}
	\label{eq:q-nored-choice}
	\discrete{Q}^p_{\Gamma} = \rot \discretev{U}^p_{\Gamma} = \rot 
	\discretev{V}^p_{\Gamma}.
\end{align} 
With these spaces in hand, we now construct an operator $\bdd{R}$ that satisfies 
\ref{mitc:red-commute}. The first requirement is that the following diagram 
commutes
\begin{equation}
	\label{eq:R-L2-project-commute-nor}
	\begin{tikzcd}%[row sep=huge,column sep=large]
		& \bdd{\Theta}_{\Gamma}(\Omega) \arrow{r}{\rot} \arrow{d}{\bdd{R}} & 
		L^2_{\Gamma}(\Omega) \arrow{d}{P} \\
		\discrete{W}^p_{\Gamma} \arrow{r}{\grad} & \image \bdd{\Xi}^p 
		\arrow{r}{\rot} & \rot \discretev{V}^p_{\Gamma},
	\end{tikzcd}
\end{equation}
where $P : L^2_{\Gamma}(\Omega) \to \rot 
\discretev{V}^p_{\Gamma}$ is the $L^2$-projection onto $\rot 
\discretev{V}^p_{\Gamma}$. To this end, let $\discretev{N}^p_{\Gamma}$ denote the
subspace of $\image \bdd{\Xi}^p$ that is $\bdd{L}^2$-orthogonal to $\rot$-free 
vector fields:
\begin{align}
	\discretev{N}^p_{\Gamma} := \{ \bdd{\gamma} \in \image \bdd{\Xi}^p : 
	(\bdd{\gamma}, \bdd{\zeta}) = 0 \ \forall \bdd{\zeta} \in \image \bdd{\Xi}^p 
	\text{ with } \rot \bdd{\zeta} \equiv 0 \}
\end{align}
and let $\bdd{R} : \hrotgamma \to \image \bdd{\Xi}^p$ satisfy the following 
conditions where $\bdd{\gamma} \in \hrotgamma$: 
\begin{subequations}
	\label{eq:no-r-r-def}
	\begin{alignat}{2}
		\label{eq:no-r-r-def-1}
		(\bdd{R} \bdd{\gamma}, \bdd{\eta} ) &= (\bdd{\gamma}, \bdd{\eta}) \qquad 
		& &\forall \bdd{\eta} \in \discretev{N}^p_{\Gamma}, \\
		\label{eq:no-r-r-def-2}
		(\rot \bdd{R} \bdd{\gamma}, q) &= (\rot \bdd{\gamma}, q) \qquad & 
		&\forall q \in \rot \discretev{V}^p_{\Gamma}.
	\end{alignat}
\end{subequations}
The following lemma shows that \cref{eq:no-r-r-def} defines a projection operator 
$\bdd{R}$ satisfying the commuting diagram property 
\cref{eq:R-L2-project-commute-nor}:
\begin{lemma}
	Conditions \cref{eq:no-r-r-def} define a projection operator $\bdd{R} : 
	\hrotgamma \to \image \bdd{\Xi}^p$ such that the diagram 
	\cref{eq:R-L2-project-commute-nor} commutes. 
\end{lemma}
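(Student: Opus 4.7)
The plan is to establish three things in sequence: (i) that conditions \cref{eq:no-r-r-def} uniquely specify an element $\bdd{R}\bdd{\gamma}\in\image\bdd{\Xi}^p$ for every $\bdd{\gamma}\in\hrotgamma$, so that $\bdd{R}$ is a well-defined linear operator; (ii) that $\bdd{R}$ is a projection; and (iii) that the diagram \cref{eq:R-L2-project-commute-nor} commutes.

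The structural tool for the well-posedness in (i) is the $\bdd{L}^2$-orthogonal decomposition
\begin{equation*}
\image \bdd{\Xi}^p = \discretev{N}^p_{\Gamma} \oplus^{\perp} \left( \ker \rot \cap \image \bdd{\Xi}^p \right),
\end{equation*}
which is baked into the definition of $\discretev{N}^p_\Gamma$, combined with the observation that $\rot|_{\discretev{N}^p_\Gamma}:\discretev{N}^p_\Gamma\to\rot\discretev{V}^p_\Gamma$ is a linear bijection. Injectivity follows because any rot-free element of $\discretev{N}^p_\Gamma$ lies in $\discretev{N}^p_\Gamma \cap (\ker\rot\cap\image\bdd{\Xi}^p)=\{\bdd{0}\}$ by orthogonality, while surjectivity follows from $\rot\image\bdd{\Xi}^p = \rot\discretev{V}^p_\Gamma$ (since $\rot\grad\equiv 0$). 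Splitting $\bdd{R}\bdd{\gamma}=\bdd{R}_N+\bdd{R}_F$ according to this decomposition, each of the two conditions in \cref{eq:no-r-r-def} reduces to a well-posed equation on the appropriate component, yielding existence, uniqueness, and linearity of $\bdd{R}$.

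For the projection property (ii), if $\bdd{\gamma}=\bdd{\rho}\in\image\bdd{\Xi}^p$, then $\bdd{\rho}$ itself trivially satisfies both equations of \cref{eq:no-r-r-def}, so uniqueness forces $\bdd{R}\bdd{\rho}=\bdd{\rho}$ and hence $\bdd{R}^2=\bdd{R}$. The left square of \cref{eq:R-L2-project-commute-nor}, namely $\bdd{R}\grad v=\grad v$ for $v\in\discrete{W}^p_\Gamma$, then follows immediately because $\grad v\in\image\bdd{\Xi}^p$. The right square, $\rot\bdd{R}\bdd{\gamma}=P\rot\bdd{\gamma}$, follows from \cref{eq:no-r-r-def-2} together with the containment $\rot\bdd{R}\bdd{\gamma}\in\rot\image\bdd{\Xi}^p=\rot\discretev{V}^p_\Gamma$, which together characterize $\rot\bdd{R}\bdd{\gamma}$ as the $L^2$-projection of $\rot\bdd{\gamma}$ onto $\rot\discretev{V}^p_\Gamma$.

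The main obstacle is step (i): one must verify carefully that the two conditions \cref{eq:no-r-r-def-1}--\cref{eq:no-r-r-def-2} are simultaneously consistent and sufficient to pin down $\bdd{R}\bdd{\gamma}$ uniquely. The delicate point is that the two conditions constrain different components of $\bdd{R}\bdd{\gamma}$ in the splitting above, and the interplay between the rotation bijection $\rot|_{\discretev{N}^p_\Gamma}$ and the $\bdd{L}^2$-orthogonality defining $\discretev{N}^p_\Gamma$ must be carefully accounted for so that the resulting $\bdd{R}$ is both fully determined and linear.
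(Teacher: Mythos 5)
Your overall strategy coincides with the paper's: split $\image\bdd{\Xi}^p$ $L^2$-orthogonally into $\discretev{N}^p_{\Gamma}$ and the rot-free part, note that $\rot$ restricted to $\discretev{N}^p_{\Gamma}$ is a bijection onto $\rot\discretev{V}^p_{\Gamma}$, and let each condition in \cref{eq:no-r-r-def} fix one component; the projection property and the commutativity of \cref{eq:R-L2-project-commute-nor} then follow exactly as you say. You even spell out the injectivity/surjectivity of $\rot|_{\discretev{N}^p_{\Gamma}}$, which the paper dismisses as ``a straightforward exercise.''

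However, the step you defer --- that ``each of the two conditions reduces to a well-posed equation on the appropriate component'' --- is precisely where the argument breaks if \cref{eq:no-r-r-def-1} is taken at face value. Write $\bdd{R}\bdd{\gamma}=\bdd{n}+\bdd{z}$ with $\bdd{n}\in\discretev{N}^p_{\Gamma}$ and $\bdd{z}$ rot-free. Since $\bdd{z}$ is, by the very definition of $\discretev{N}^p_{\Gamma}$, $L^2$-orthogonal to every $\bdd{\eta}\in\discretev{N}^p_{\Gamma}$, condition \cref{eq:no-r-r-def-1} as printed reads $(\bdd{n},\bdd{\eta})=(\bdd{\gamma},\bdd{\eta})$ and therefore determines $\bdd{n}$, not $\bdd{z}$; condition \cref{eq:no-r-r-def-2} also sees only $\bdd{n}$ (because $\rot\bdd{z}=0$) and prescribes it a second time via $\rot\bdd{n}=P\rot\bdd{\gamma}$. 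So, as written, the rot-free component $\bdd{z}$ is left entirely free (hence $\bdd{R}$ is not well defined and $\bdd{R}\grad v=\grad v$ cannot be deduced), while $\bdd{n}$ is prescribed twice, and the two prescriptions need not agree for general $\bdd{\gamma}\in\hrotgamma$ (the $L^2$-projection onto $\discretev{N}^p_{\Gamma}$ does not commute with $P\rot$). Your argument --- and the lemma --- is rescued by reading \cref{eq:no-r-r-def-1} with test functions ranging over the rot-free fields $(\discretev{N}^p_{\Gamma})^{\perp}=\{\bdd{\zeta}\in\image\bdd{\Xi}^p:\rot\bdd{\zeta}\equiv 0\}$ rather than over $\discretev{N}^p_{\Gamma}$: then \cref{eq:no-r-r-def-1} fixes $\bdd{z}$ as the $L^2$-projection of $\bdd{\gamma}$ onto the rot-free part and \cref{eq:no-r-r-def-2} fixes $\bdd{n}$, after which your verification of the projection property and of both squares of the diagram goes through verbatim. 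The paper's own proof is no more careful on this point (it asserts that the functionals $(\rot\cdot,q)$ determine an element of $(\discretev{N}^p_{\Gamma})^{\perp}$, on which $\rot$ vanishes identically), so you have correctly reconstructed the intended argument; but having flagged this as ``the delicate point,'' you needed to carry out the check rather than assert its conclusion, since doing so exposes the mismatch between the printed conditions and the claimed decoupling.
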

\begin{proof}
	Since $\image \bdd{\Xi}^p = \discretev{N}^p_{\Gamma} \oplus 
	(\discretev{N}^p_{\Gamma})^{\perp}$, where 
	$(\discretev{N}^p_{\Gamma})^{\perp} := \{ \bdd{\eta} \in \image \bdd{\Xi}^p : 
	(\bdd{\eta}, \bdd{\gamma}) = 0 \ \forall \bdd{\gamma} \in 
	\discretev{N}^p_{\Gamma} \}$, it is a straightforward exercise to verify that 
	the values of the linear functionals
	\begin{align*}
		(\bdd{\gamma}, \bdd{\zeta}) \quad \forall \bdd{\zeta} \in 
		\discretev{N}^p_{\Gamma} \quad \text{and} \quad (\rot \bdd{\eta}, q) 
		\quad \forall q \in \rot \discretev{V}^p_{\Gamma}
	\end{align*}
	uniquely determine elements $\bdd{\gamma} \in \discretev{N}^p_{\Gamma}$ and 
	$\bdd{\eta} \in (\discretev{N}^p_{\Gamma})^{\perp}$, respectively. Moreover 
	the conditions in \cref{eq:no-r-r-def} are well-defined for $\bdd{\gamma} \in 
	\hrotgamma$, so $\bdd{R}$ is a well-defined projection. Additionally, 
	\cref{eq:no-r-r-def-2} shows that $\rot \bdd{R} \bdd{\gamma} = P \rot 
	\bdd{\gamma}$ for all $\bdd{\gamma} \in \hrotgamma \supset 
	\bdd{\Theta}_{\Gamma}(\Omega)$, and so the diagram 
	\cref{eq:R-L2-project-commute-nor} commutes.
\end{proof}

\begin{figure}[htb]
	\begin{tcolorbox}
		\begin{description}		
			
			\item[(N1)\label{pmitc-nored:gradient-cond}] If $\bdd{\theta} \in 
			\discretev{V}^p_{\Gamma}$ satisfies $\rot \bdd{\theta} \equiv 0$ and 
			$\circop_{\mathfrak{I}^*} \bdd{\theta} = \vec{0}$, then $\bdd{\theta} 
			= \grad w$ for some $w \in \discrete{W}^p \cap 
			\mathcal{W}_{\Gamma}(\Omega)$.	
			
			\vspace{1em}
			
			\item[(N2)\label{pmitc-nored:v-fortin}] There exists a linear 
			operator 
			$\bdd{\Pi}_{F} : \bdd{\Theta}_{\Gamma}(\Omega) \to 
			\discretev{V}_{\Gamma}^p$ and a constant $C_F \geq 1$ satisfying the 
			following for all $\bdd{\psi} \in 
			\bdd{\Theta}_{\Gamma}(\Omega)$
			\begin{subequations}
				\label{peq-nored:v-fortin}
				\begin{alignat}{2}
					\int_{e} \unitvec{t} \cdot \bdd{\Pi}_F \bdd{\psi} \d{s} &= 
					\int_{e} \unitvec{t} \cdot \bdd{\psi} \d{s} \qquad & &\forall 
					e \in \mathcal{E}_h, \\
					\rot \bdd{\Pi}_F \bdd{\psi} &= P \rot 
					\bdd{\psi}, \qquad & & \\
					\|\bdd{\Pi}_F \bdd{\psi} \|_1 &\leq C_F \|\bdd{\psi}\|_{1},
				\end{alignat}
				where $P : L^2(\Omega) \to \rot \discretev{V}^p_{\Gamma}$ is the 
				$L^2$-projection onto $\rot \discretev{V}^p_{\Gamma}$.
			\end{subequations}   
			
		\end{description}
	\end{tcolorbox}
	\caption{Sufficient conditions for 
		\ref{mitc:red-commute}--\ref{mitc:harmonic-inf-sup-discrete} for the 
		particular choices of $\discretev{U}^p_{\Gamma}$ \cref{eq:image-xi}, 
		$\discrete{Q}^p_{\Gamma}$ \cref{eq:q-nored-choice}, and $\bdd{R}$ 
		\cref{eq:no-r-r-def}, where $\bdd{R}|_{\discretev{V}^p_{\Gamma}} = I$.}
	\label{fig:pmitc-nored-conditions}
\end{figure}

For any choice of $\discrete{W}^p_{\Gamma}$ and $\discretev{V}^p_{\Gamma}$, 
selecting $\discretev{U}^p_{\Gamma}$ as in \cref{eq:image-xi} and
$\discrete{Q}^p_{\Gamma}$ as in \cref{eq:q-nored-choice} means that the operator 
$\bdd{R}$ defined in \cref{eq:no-r-r-def} satisfies conditions 
\ref{mitc:red-commute}, \ref{mitc:red-bounded} with $C_{\bdd{R}} = 1$, and 
\ref{harm:r-interp-edge}. The remaining conditions 
\ref{harm:gradient-cond}--\ref{harm:v-fortin} can be simplified for these 
choices. Since $\circop_{\mathfrak{I}^*} \image \bdd{\Xi}^p = 
\circop_{\mathfrak{I}^*} \discretev{V}^p_{\Gamma}$, we may replace 
$\discretev{U}^p_{\Gamma}$ with $\discretev{V}^p_{\Gamma}$ in 
\ref{harm:gradient-cond}. Similarly, we may replace 
$\discrete{Q}^p_{\Gamma}$ with our particular choice $\rot 
\discretev{V}^p_{\Gamma}$ in \ref{harm:v-fortin}. With these simplified 
conditions in \cref{fig:pmitc-nored-conditions}, 
we arrive at the following error estimate:
\begin{theorem}
	\label{lem:apriori-take-0-nored}
	Let $(w, \bdd{\theta}) \in H^1_{\Gamma}(\Omega) \times 
	\bdd{\Theta}_{\Gamma}(\Omega)$ denote the solution to 
	\cref{eq:mr-variational-form} with $\bdd{\gamma} = \lambda t^{-2} 
	\bdd{\Xi}(w, \bdd{\theta})$ and let $(w_p, \bdd{\theta}_p) \in 
	\discrete{W}^p_{\Gamma} \times \discretev{V}^p_{\Gamma}$ denote the finite 
	element solution \cref{eq:mr-variational-high-order} with $\bdd{\gamma}_p 
	= \lambda t^{-2} \bdd{\Xi}(w_p, \bdd{\theta}_p)$. Then, there holds
	\begin{align}
		\begin{aligned}
			\label{eq:apriori-take-0-nored}
			&\| w - w_p \|_1 + \| \bdd{\theta} - \bdd{\theta}_p \|_1 + t \| 
			\bdd{\gamma} - \bdd{\gamma}_p \| \\
			&\qquad \leq \frac{ C  }{ \beta_{\bdd{\Xi}} + t } \left(  \inf_{v \in 
				\discrete{W}^p_{\Gamma}} \| w - v \|_1 + \inf_{ \bdd{\psi} \in 
				\discretev{V}^p_{\Gamma}} \|\bdd{\theta} - \bdd{\psi}\|_1 + t 
				\inf_{ 
				\bdd{\eta} \in \image \bdd{\Xi}^p }  \|\bdd{\gamma} - 
				\bdd{\eta}\| 
			\right),
		\end{aligned}
	\end{align}
	where
	\begin{align}
		\label{eq:inf-sup-xi}
		\beta_{\bdd{\Xi}} := \inf_{ \substack{\bdd{\eta} \in \image \bdd{\Xi}^p 
				\\ \bdd{\eta} \neq \bdd{0} } } \sup_{ \substack{ (v, \bdd{\psi}) 
				\in 
				\discrete{W}^p_{\Gamma} \times \discretev{V}^p_{\Gamma} \\ (v, 
				\bdd{\psi}) \neq (0, \bdd{0}) } } \frac{ ( \bdd{\eta}, 
				\bdd{\Xi}(v, 
			\bdd{\psi}) )_{\rot} }{ (\|v\|_1 + \|\bdd{\psi}\|_1) 
			\|\bdd{\eta}\|_{\rot} }.
	\end{align}
	Moreover, if the spaces $\discrete{W}^p_{\Gamma}$ and  
	$\discretev{V}^p_{\Gamma}$ satisfy 
	\ref{pmitc-nored:gradient-cond}--\ref{pmitc-nored:v-fortin}, then 
	$\beta_{\bdd{\Xi}} \geq C C_F^{-2}$.
\end{theorem}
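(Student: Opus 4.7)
The plan is to obtain \cref{lem:apriori-take-0-nored} as a direct specialization of \cref{lem:apriori-take-0} corresponding to the choices $\discretev{U}^p_{\Gamma} = \image \bdd{\Xi}^p$, $\discrete{Q}^p_{\Gamma} = \rot \discretev{V}^p_{\Gamma}$, and $\bdd{R}$ from \cref{eq:no-r-r-def}. The pivotal observation, already noted in the section, is that $\bdd{R}|_{\discretev{V}^p_{\Gamma}} = I$: since $\discretev{V}^p_{\Gamma} \subset \image \bdd{\Xi}^p$ and $\bdd{R}$ is a projection onto $\image \bdd{\Xi}^p$, every $\bdd{\psi} \in \discretev{V}^p_{\Gamma}$ satisfies $\bdd{R}\bdd{\psi} = \bdd{\psi}$. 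Hence $\bdd{\Xi}_{\bdd{R}}$ coincides with $\bdd{\Xi}$ on $\discrete{W}^p_{\Gamma} \times \discretev{V}^p_{\Gamma}$, the reduced scheme \cref{eq:mr-variational-form-fem} is identical to \cref{eq:mr-variational-high-order}, one may take $C_{\bdd{R}} = 1$ in \ref{mitc:red-bounded}, and condition \ref{harm:r-interp-edge} is trivially satisfied.

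For the error estimate, substituting into \cref{eq:apriori-take-0}, the terms $\|\bdd{\psi} - \bdd{R}\bdd{\psi}\|_{\rot}$ and $(\bdd{P}\bdd{\gamma}, \bdd{\psi} - \bdd{R}\bdd{\psi})$ vanish pointwise for $\bdd{\psi} \in \discretev{V}^p_{\Gamma}$, and with $C_{\bdd{R}} = 1$ the bound collapses immediately to \cref{eq:apriori-take-0-nored}, noting that the inf-sup constant $\beta_{\bdd{R}}$ defined by \cref{eq:invert-xi-inf-sup-u} coincides in this setting with $\beta_{\bdd{\Xi}}$ of \cref{eq:inf-sup-xi} since $\bdd{\Xi}_{\bdd{R}}$ and $\bdd{\Xi}$ agree on the discrete product space.

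For the lower bound $\beta_{\bdd{\Xi}} \geq C C_F^{-2}$, the strategy is to deduce the full conditions \ref{harm:gradient-cond}--\ref{harm:r-interp-edge} from the simplified \ref{pmitc-nored:gradient-cond}--\ref{pmitc-nored:v-fortin} and then invoke \cref{thm:harm-conds-harm-inf-sup} together with \cref{lem:inf-sup-equivalences}. Condition \ref{harm:v-fortin} follows from \ref{pmitc-nored:v-fortin} because $\rot \bdd{\Pi}_F \bdd{\psi} \in \rot \discretev{V}^p_{\Gamma} = \discrete{Q}^p_{\Gamma}$, so the two $L^2$-projections appearing in the respective conditions agree and the identity $\rot \bdd{\Pi}_F \bdd{\psi} = P \rot \bdd{\psi}$ automatically implies $P \rot \bdd{\Pi}_F \bdd{\psi} = P \rot \bdd{\psi}$. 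For \ref{harm:gradient-cond}, any $\bdd{\eta} \in \image \bdd{\Xi}^p$ with $\rot \bdd{\eta} \equiv 0$ and $\circop_{\mathfrak{I}^*}\bdd{\eta} = \vec{0}$ admits a decomposition $\bdd{\eta} = \grad v - \bdd{\psi}$ with $v \in \discrete{W}^p_{\Gamma}$ and $\bdd{\psi} \in \discretev{V}^p_{\Gamma}$; the vanishing of $\circop_{\mathfrak{I}^*} \grad v$ for single-valued $v \in H^1(\Omega)$ forces $\rot \bdd{\psi} \equiv 0$ and $\circop_{\mathfrak{I}^*} \bdd{\psi} = \vec{0}$, so \ref{pmitc-nored:gradient-cond} furnishes $u \in \discrete{W}^p \cap \mathcal{W}_{\Gamma}(\Omega)$ with $\bdd{\psi} = \grad u$, giving $\bdd{\eta} = \grad(v - u)$ with $v - u \in \discrete{W}^p \cap \mathcal{W}_{\Gamma}(\Omega)$.

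Applying \cref{thm:harm-conds-harm-inf-sup} then yields $\beta_{\rot} \geq C C_F^{-1}$ and $\beta_{\harmonic{H}} \geq C (C_F C_{\bdd{R}})^{-1} = C C_F^{-1}$, while \cref{rem:reduction-bounded} bounds $M_{\bdd{R}}$ by a universal constant because $C_{\bdd{R}} = 1$; \cref{lem:inf-sup-equivalences} then produces $\beta_{\bdd{\Xi}} = \beta_{\bdd{R}} \geq C \beta_{\rot} \beta_{\harmonic{H}}/M_{\bdd{R}}^{2} \geq C C_F^{-2}$. The most delicate point will be the first clause of \ref{harm:gradient-cond}, namely the inclusion $\grad(\discrete{W}^p \cap \mathcal{W}_{\Gamma}(\Omega)) \subset \image \bdd{\Xi}^p$, which is not immediate in the high-order setting because gradients of piecewise polynomials are generally discontinuous. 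I anticipate sidestepping it by observing that in the proof of \cref{thm:harm-conds-harm-inf-sup} the inclusion is used only to produce preimages of discrete harmonic forms, a role that the decomposition argument above can fulfill directly; equivalently, \ref{mitc:harmonic-inf-sup-discrete} can be verified ``by hand'' for $\bdd{h} \in \harmonic{H}^p_{\Gamma}$ by decomposing $\bdd{h} = \grad v - \bdd{\psi}$ and exhibiting $\bdd{\theta} = \bdd{\psi} \in \discretev{V}^p_{\Gamma}$ as the test function, whose stability follows from the Fortin operator of \ref{pmitc-nored:v-fortin}.
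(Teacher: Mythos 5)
Your proposal follows the same route as the paper: the proof in the text is essentially a one-line appeal to \cref{thm:apriori-error-gen} with the specific choices $\discretev{U}^p_{\Gamma} = \image \bdd{\Xi}^p$, $\discrete{Q}^p_{\Gamma} = \rot \discretev{V}^p_{\Gamma}$, $\bdd{R}$ from \cref{eq:no-r-r-def}, and the observations that $\bdd{R}|_{\discretev{V}^p_{\Gamma}} = I$, $C_{\bdd{R}} = 1$, and \ref{harm:r-interp-edge} is vacuous. Your elaboration of how \ref{pmitc-nored:gradient-cond}--\ref{pmitc-nored:v-fortin} feed into the machinery is sound, and in particular your identification of the forward inclusion $\grad(\discrete{W}^p \cap \mathcal{W}_{\Gamma}(\Omega)) \subset \image\bdd{\Xi}^p$ as the delicate point is astute: the paper's remark that ``we may replace $\discretev{U}^p_{\Gamma}$ with $\discretev{V}^p_{\Gamma}$ in \ref{harm:gradient-cond}'' addresses only the reverse implication, and the proposed sidestep (that the proof of \cref{thm:harm-conds-harm-inf-sup} only ever invokes \ref{harm:gradient-cond} to produce preimages of $\rot$-free, circulation-free fields in $\discretev{U}^h_{\Gamma}$, which your decomposition argument supplies directly from \ref{pmitc-nored:gradient-cond}) is exactly what is needed.

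One slip: in the final paragraph you invoke \cref{lem:inf-sup-equivalences} to convert $\beta_{\rot}$ and $\beta_{\harmonic{H}}$ into a bound on $\beta_{\bdd{\Xi}}$, citing \cref{rem:reduction-bounded} to control $M_{\bdd{R}}$. But \cref{lem:inf-sup-equivalences} is stated under the stronger hypothesis \cref{mot:mitc:red-bounded}, namely $\hrot$-boundedness of $\bdd{R}$ on the \emph{whole} of $\bdd{\Theta}_{\Gamma}(\Omega)$, whereas \cref{rem:reduction-bounded} only gives boundedness of $\bdd{R}$ on $\discretev{V}^h_{\Gamma} + \discretev{U}^h_{\Gamma}$ (this is what \ref{mitc:red-bounded} furnishes). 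For the projector \cref{eq:no-r-r-def} there is no obvious $h$- and $p$-uniform $\hrot$-bound on all of $\bdd{\Theta}_{\Gamma}(\Omega)$. The correct route is the one the paper actually uses: \cref{lem:apriori-take-0} already states $\beta_{\bdd{R}} \geq C\beta_{\rot}\beta_{\harmonic{H}}/C_{\bdd{R}}^2$ via \cref{eq:inf-sup-equivalences-reduced} (proved in \cref{lem:invert-xi-discrete} using only the discrete-space boundedness \ref{mitc:red-bounded}), which is then packaged as \cref{cor:apriori-pmitc}; with $C_{\bdd{R}} = 1$ and \cref{thm:harm-conds-harm-inf-sup} this yields $\beta_{\bdd{\Xi}} = \beta_{\bdd{R}} \geq C C_F^{-2}$ without ever needing $M_{\bdd{R}}$.
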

The proof of \cref{lem:apriori-take-0-nored} appears in 
\cref{sec:proof-apriori-0}.

\subsection{Macro-elements}
\label{sec:macro}

Given a mesh $\mathcal{T}_h$, let $\mathcal{T}_h^{*}$ denote the Alfeld 
refinement of $\mathcal{T}_h$ obtained by subdividing every triangle of 
$\mathcal{T}_h$ into 3 subtriangles by connecting the barycenter (or other 
interior point) of each element to the vertices of the element. For $p \geq 2$, 
let
\begin{subequations}
	\label{eq:macro-spaces}
	\begin{align}
		\label{eq:macro-cg}
		\discrete{W}^p &:= \{ v \in C(\Omega) : v|_{K} \in \mathcal{P}_{p+1}(K) \ 
		\forall K \in \mathcal{T}_h^{*} \}, \\
		\label{eq:macro-cg-vector}
		\discretev{V}^p &:= \{ \bdd{\psi} \in \bdd{C}(\Omega) : \bdd{\psi}|_{K} 
		\in \mathcal{P}_{p}(K)^2 \ \forall K \in \mathcal{T}_h^{*} \}. 
	\end{align}
\end{subequations}

\noindent The following result, proved in 
\cref{sec:proof-macro-high-order-conds}, shows that this choice satisfies 
conditions \ref{pmitc-nored:gradient-cond}--\ref{pmitc-nored:v-fortin}:
\begin{theorem}
	\label{thm:macro-high-order-conds}
	Let $\discrete{W}^p$ and $\discretev{V}^p$ be chosen as in 
	\cref{eq:macro-cg,eq:macro-cg-vector} with $p \geq 2$. Then, conditions 
	\ref{pmitc-nored:gradient-cond}--\ref{pmitc-nored:v-fortin} are satisfied 
	with $C_F$ independent of $h$ and $p$, and	
	\begin{align}
		\label{eq:macro-high-order-im-xi}
		\image \bdd{\Xi}^p = \{ \bdd{\gamma} \in \hrotgamma : \bdd{\gamma}|_{K} 
		\in 
		\mathcal{P}_{p}(K)^2 \ \forall K \in \mathcal{T}_h^{*} \}.
	\end{align}
\end{theorem}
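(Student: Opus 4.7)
The theorem bundles three claims: the gradient condition \ref{pmitc-nored:gradient-cond}, the characterization \cref{eq:macro-high-order-im-xi} of $\image \bdd{\Xi}^p$, and the existence of a $(h,p)$-uniform Fortin operator \ref{pmitc-nored:v-fortin}. My plan is to dispatch the first and second by classical potential arguments together with the Scott--Vogelius surjectivity of $\rot$ on the Alfeld split, and then to construct the Fortin operator in two stages.

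For \ref{pmitc-nored:gradient-cond}, suppose $\bdd{\theta} \in \discretev{V}^p_{\Gamma}$ with $\rot \bdd{\theta} \equiv 0$ and $\circop_{\mathfrak{I}^*} \bdd{\theta} = \vec{0}$. I would first show that \emph{every} boundary-component circulation of $\bdd{\theta}$ vanishes: for $i \notin \mathfrak{I}$ the whole boundary $\partial \Omega_i \subset \Gamma_{cs}$ forces $\unitvec{t} \cdot \bdd{\theta}|_{\partial \Omega_i} \equiv 0$; for $i \in \mathfrak{I}^*$ it is the hypothesis; and the single remaining circulation indexed by $\mathfrak{I} \setminus \mathfrak{I}^*$ is pinned down by Green's theorem applied to $\rot \bdd{\theta} \equiv 0$. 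A standard result (e.g.\ \cite[p.\ 37 Theorem 3.1]{GiraultRaviart86}) then yields $\bdd{\theta} = \grad w$ for some $w \in H^1(\Omega)$. Since $\bdd{\theta}$ is continuous and piecewise $\mathcal{P}_p$ on $\mathcal{T}_h^{*}$, $w$ is continuous and piecewise $\mathcal{P}_{p+1}$ on $\mathcal{T}_h^{*}$, hence in $\discrete{W}^p$; the vanishing tangential traces of $\bdd{\theta}$ on $\Gamma_{cs}$ force $w$ to be constant on each $\Gamma_{cs}^{(i)}$, and shifting by the value on $\Gamma_{cs}^{(1)}$ places $w$ in $\discrete{W}^p \cap \mathcal{W}_{\Gamma}(\Omega)$.

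For \cref{eq:macro-high-order-im-xi}, write $\discretev{M}^p_{\Gamma}$ for the right-hand side. The inclusion $\image \bdd{\Xi}^p \subset \discretev{M}^p_{\Gamma}$ is immediate. For the reverse inclusion, given $\bdd{\gamma} \in \discretev{M}^p_{\Gamma}$, the rotation $\rot \bdd{\gamma}$ lies in $L^2_{\Gamma}(\Omega)$ and is piecewise $\mathcal{P}_{p-1}$ on $\mathcal{T}_h^{*}$. The $(h,p)$-uniform surjectivity of $\rot$ from $\discretev{V}^p_{\Gamma}$ onto $L^2_{\Gamma}(\Omega) \cap \mathcal{P}_{p-1}(\mathcal{T}_h^{*})$ -- i.e.\ the Scott--Vogelius inf-sup on Alfeld splits from \cite{AinCP19StokesI} -- produces $\bdd{\psi}_0 \in \discretev{V}^p_{\Gamma}$ with $\rot \bdd{\psi}_0 = \rot \bdd{\gamma}$. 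I then add a small $\bdd{\zeta} \in \discretev{V}^p_{\Gamma}$ supported near the holes so that $\circop_i(\bdd{\gamma} - \bdd{\psi}_0 - \bdd{\zeta}) = 0$ for every $i \in \mathfrak{I}^*$; the other circulations vanish automatically as in Part 1. The rot-free, circulation-free remainder equals $\grad w$ for some $w \in H^1_{\Gamma}(\Omega)$, and the smoothness argument of Part 1 places $w \in \discrete{W}^p_{\Gamma}$, giving $\bdd{\gamma} = \grad w + (\bdd{\psi}_0 + \bdd{\zeta}) \in \image \bdd{\Xi}^p$.

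For \ref{pmitc-nored:v-fortin} I construct $\bdd{\Pi}_F$ in two stages. Stage A is a Scott--Zhang style quasi-interpolant $\bdd{\Pi}_0 : \bdd{\Theta}_{\Gamma}(\Omega) \to \discretev{V}^p_{\Gamma}$ that matches the tangential moments $\int_e \unitvec{t} \cdot \bdd{\psi}\,\mathrm{d}s$ on every $e \in \mathcal{E}_h$ and is $H^1$-stable with constants independent of $h$ and $p$; the boundary conditions are handled by the usual edge-patch modification. Stage B corrects via macro-element bubbles: on each $K \in \mathcal{T}_h$ I solve the local problem of finding $\bdd{b}_K \in \discretev{V}^p_{\Gamma}$ vanishing on $\partial K$ with $\rot \bdd{b}_K = P \rot(\bdd{\psi} - \bdd{\Pi}_0 \bdd{\psi})|_K$ and $\|\bdd{b}_K\|_{1,K} \leq C \|\rot(\bdd{\psi} - \bdd{\Pi}_0 \bdd{\psi})\|_{0,K}$. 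Setting $\bdd{\Pi}_F \bdd{\psi} := \bdd{\Pi}_0 \bdd{\psi} + \sum_{K} \bdd{b}_K$ preserves edge moments (bubbles vanish on $\partial K$) and achieves $P \rot \bdd{\Pi}_F \bdd{\psi} = P \rot \bdd{\psi}$ macro-element by macro-element. The hard part is the $(h,p)$-uniform solvability in Stage B: it reduces to a local Scott--Vogelius inf-sup on an Alfeld-split triangle with bubble boundary conditions, which is exactly the reason the Alfeld refinement enters and is not a routine adaptation of classical Scott--Zhang machinery. This step leans crucially on the estimates in \cite{AinCP19StokesI}; everything else in the argument is a standard assembly around it.
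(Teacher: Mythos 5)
Your treatment of \ref{pmitc-nored:gradient-cond} and the two-stage Fortin construction for \ref{pmitc-nored:v-fortin} mirror the paper's approach: the paper's \cref{lem:v-fortin-macro} also assembles $\bdd{\Pi}_F$ from a fixed-degree Scott--Zhang-type quasi-interpolant on $\mathcal{T}_h$ (so that $h$- and $p$-uniformity of Stage A is automatic) plus Alfeld-split macro-element bubble corrections whose $p$-uniform solvability comes from Arnold--Qin/Qin for $p=2,3$ and \cite[Lemma A.1]{AinCP23KirchI} for $p \geq 4$. That part is fine, modulo your being slightly vaguer than the paper about why Stage A is $p$-independent (the paper uses degree-2 interpolation on the coarse mesh, not degree-$p$).

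The gap is in your constructive proof of \cref{eq:macro-high-order-im-xi}. After subtracting $\bdd{\psi}_0$ (matching the rotation) and $\bdd{\zeta}$ (matching circulations around holes), you claim the remainder is $\grad w$ with $w \in H^1_\Gamma(\Omega)$, but that does not follow. A rot-free field in $\hrotgamma$ with vanishing circulations is $\grad w$ for some $w$ whose trace is an \emph{undetermined constant} $c_i$ on each connected component $\Gamma_{cs}^{(i)}$ of $\Gamma_c \cup \Gamma_s$; i.e.\ $w \in \mathcal{W}_\Gamma(\Omega)$, not $H^1_\Gamma(\Omega)$. When $N_{cs} > 1$ (precisely the mixed-boundary-condition case the paper is concerned with), nothing forces the $c_i$ to vanish, and $\grad w$ for such $w$ is \emph{not} obviously in $\image \bdd{\Xi}^p = \grad \discrete{W}^p_\Gamma + \discretev{V}^p_\Gamma$ — it is a nontrivial discrete harmonic form of the gradient type, the span $\grad\mathfrak{W}^h_\Gamma$. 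Surjecting onto these requires an additional correction, which is exactly the content of the paper's \cref{lem:invert-trace} and depends on the Fortin operator plus property \ref{harm:r-interp-edge}. The paper instead establishes \cref{eq:macro-high-order-im-xi} by dimension counting: it computes $\dim \image \bdd{\Xi}^p$ and $\dim$ of the right-hand side via the exact discrete harmonic complex (using \cref{thm:harm-conds-harm-inf-sup} for both the macro family and the BDM family), shows they agree because $\rot\discretev{V}^p_\Gamma = \rot$ of the BDM space, and concludes from the obvious inclusion. That route sidesteps the trace-constant issue entirely; your constructive route would need an explicit extra term from $\discretev{V}^p_\Gamma$ to kill the trace constants $c_2,\ldots,c_{N_{cs}}$ before the argument closes.
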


\noindent Consequently, this family satisfies the quasi-optimal error estimate 
\cref{eq:apriori-take-0-nored}.

\subsection{Standard elements}
\label{sec:standard-family}

For $p \geq 4$, consider the standard continuous finite element spaces:
\begin{subequations}
	\label{eq:standard-spaces}
	\begin{align}
		\label{eq:standard-cg}
		\discrete{W}^p &:= \{ v \in C(\Omega) : v|_{K} \in \mathcal{P}_{p+1}(K) \ 
		\forall K \in \mathcal{T}_h \}, \\
		\label{eq:standard-cg-vector}
		\discretev{V}^p &:= \{ \bdd{\psi} \in \bdd{C}(\Omega) : \bdd{\psi}|_{K} 
		\in \mathcal{P}_{p}(K)^2 \ \forall K \in \mathcal{T}_h \}. 
	\end{align}
\end{subequations}
In previous work \cite[Theorem 8.1]{AinCP24KirchII}, we showed that the 
inf-sup constant $\beta_{\bdd{\Xi}}$ satisfies
\begin{align}
	\label{eq:standard-cg-inf-sup}
	\beta_{\bdd{\Xi}} \geq \beta_0 \xi_{\mathcal{T}},
\end{align}
where $\beta_0 > 0$ depends only on shape regularity, $\Omega$, and the boundary 
conditions, while $\xi_{\mathcal{T}}$ depends only on the topology of the mesh.
In particular, under mild conditions on the mesh topology, $\xi_{\mathcal{T}}$ is 
bounded away from zero. An explicit 
formula for $\xi_{\mathcal{T}}$ and additional details may be found in 
\cite[Appendix A]{AinCP23KirchI}. Moreover, \cite[Theorem 8.3]{AinCP24KirchII} 
shows that
\begin{align*}
	\image \bdd{\Xi}^p = \{ \bdd{\gamma} \in \hrot : \bdd{\gamma}|_{K} \in 
	\mathcal{P}_{p}(K)^2 \ \forall K \in \mathcal{T}_h \text{ and } \rot 
	\bdd{\gamma} \in \rot \discretev{V}^p_{\Gamma} \},
\end{align*}
which means that this family also satisfies \cref{eq:apriori-take-0-nored}.

\begin{remark}
	\label{rem:high-order-convergence}
	For the sequence of meshes obtained by refining the mesh in 
	\cref{fig:box-mesh}, Theorem 4.1 and Lemma 4.6 of \cite{AinCP21LE} show that 
	$\rot 
	\discretev{V}^p_{\Gamma}$ coincides with discontinuous piecewise polynomials 
	of 
	degree $p-1$ and $\xi_{\mathcal{T}}$ in \cref{eq:standard-cg-inf-sup} is  
	bounded away from zero independent of the number of levels of mesh 
	refinements. Consequently, $\image \bdd{\Xi}^p$ coincides with the BDM space 
	\cref{eq:bdm-family-u} of degree $p$ and $\beta_{\bdd{\Xi}}$ is uniformly 
	bounded away from zero. Applying \cref{lem:apriori-take-0-nored} shows that 
	the total error \cref{eq:combined-error} is bounded uniformly in $t$ and 
	decays as $\mathcal{O}(h^p)$.
\end{remark}

\begin{remark}
	In the literature, it is sometimes stated that schemes without a reduction 
	operator will suffer from locking: ``In practice,
	it is well known that if we use `any reasonable conforming approximation of 
	[$H^1_{\Gamma}(\Omega) \times \bdd{\Theta}_{\Gamma}(\Omega) $]', we will get 
	pretty bad answers for small $t$" \cite[p. 599]{BoffiBrezziFortin13}; and 
	``For 
	conventional finite element 
	discretizations one obverses that [$\nabla \discrete{W}_{\Gamma}^h  
	\not\subset
	\discretev{V}^h_{\Gamma}$], thus the Kirchhoff constraint [$\nabla w - 
	\bdd{\theta} = 0$] cannot be satisfied by the discrete solution, the 
	formulation 
	locks" \cite[p. 720]{Pechstein17}, which is repeated in \cite[p. 6]{Sky23}. 
	However, the above discussion shows that the families 
	\cref{eq:macro-spaces,eq:standard-spaces} are locking-free.
\end{remark}

\section{Error analysis}
\label{sec:error-analysis}

The first step in the analysis of the finite element approximation 
\cref{eq:mr-variational-form-fem} is to introduce the discrete shear stress 
$\bdd{\gamma}_h := t^{-2}\bdd{\Xi}_{\bdd{R}}(w_h, \bdd{\theta}_h) \in \image 
\bdd{\Xi}_{\bdd{R}}^h \subseteq \discretev{U}_{\Gamma}$ as an auxiliary variable. 
Then, $w_h$, $\bdd{\theta}_{h}$, and $\bdd{\gamma}_{h}$ satisfy the following 
mixed formulation: Find $(w_h, \bdd{\theta}_h) \in  \discrete{W}^h_{\Gamma} 
\times \discretev{V}_{\Gamma}^h$ and $\bdd{\gamma}_h \in \image 
\bdd{\Xi}_{\bdd{R}}^h$ such that 
\begin{subequations}
	\label{eq:mr-mixed-form-fem}
	\begin{alignat}{2}
		\label{eq:mr-mixed-form-fem-1}
		a(\bdd{\theta}_{h}, \bdd{\psi}) + \langle \bdd{\gamma}_h, 
		\bdd{\Xi}_{\bdd{R}}(v, \bdd{\psi}) \rangle &= F(v) + G(\bdd{\psi}) \qquad 
		& &\forall (v, \bdd{\psi}) \in  \discrete{W}_{\Gamma}^h \times 
		\discretev{V}_{\Gamma}^h, \\
		\label{eq:mr-mixed-form-fem-2}
		\langle \bdd{\eta}, \bdd{\Xi}_{\bdd{R}}(w_h, \bdd{\theta}_{h}) \rangle - 
		t^2 (\bdd{\gamma}_h, \bdd{\eta}) &= 0 \qquad & &\forall \bdd{\eta} \in 
		\image \bdd{\Xi}_{\bdd{R}}^h,
	\end{alignat}
\end{subequations}
where $\langle \cdot, \cdot \rangle$ denotes the extension of the $L^2$ inner 
product to $\dual{\hrotgamma} \times \hrotgamma$. The converse also holds:
\begin{lemma}
	\label{lem:mr-primal-mixed-equiv-fem}
	If  $(w_h, \bdd{\theta}_h) \in  \discrete{W}^h_{\Gamma} \times 
	\discretev{V}^h_{\Gamma}$ satisfies \cref{eq:mr-variational-form-fem}, then 
	$w_h$, $\bdd{\theta}_h$, and $\bdd{\gamma}_h = t^{-2}\bdd{\Xi}_{\bdd{R}}(w_h, 
	\bdd{\theta}_h) \in \image \bdd{\Xi}_{\bdd{R}}^h$ satisfy 
	\cref{eq:mr-mixed-form-fem}. Additionally, if $(w_h, \bdd{\theta}_{h}, 
	\bdd{\gamma}_h) \in  \discrete{W}^h_{\Gamma} \times \discretev{V}^h_{\Gamma} 
	\times \image \bdd{\Xi}_{\bdd{R}}^h$ satisfies \cref{eq:mr-mixed-form-fem}, 
	then $\bdd{\gamma}_h = t^{-2} \bdd{\Xi}_{\bdd{R}}(w_h, \bdd{\theta}_h)$ and 
	$w_h$ and $\bdd{\theta}_h$ satisfy \cref{eq:mr-variational-form-fem}.
\end{lemma}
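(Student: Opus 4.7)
The plan is to prove the two implications separately; each boils down to using that, at the discrete level, all relevant objects lie in $\bdd{L}^2(\Omega)$ so the duality pairing $\langle \cdot, \cdot \rangle$ collapses to an $L^2$ inner product, and that the saddle-point equation \cref{eq:mr-mixed-form-fem-2} pins down $\bdd{\gamma}_h$ uniquely on $\image \bdd{\Xi}_{\bdd{R}}^h$.

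\textbf{Forward direction.} Suppose $(w_h, \bdd{\theta}_h) \in \discrete{W}^h_{\Gamma} \times \discretev{V}^h_{\Gamma}$ satisfies \cref{eq:mr-variational-form-fem}. I would simply \emph{define} $\bdd{\gamma}_h := t^{-2} \bdd{\Xi}_{\bdd{R}}(w_h, \bdd{\theta}_h)$, which automatically belongs to $\image \bdd{\Xi}_{\bdd{R}}^h \subseteq \discretev{U}_{\Gamma}^h \subset \bdd{L}^2(\Omega)$. With this choice, \cref{eq:mr-mixed-form-fem-2} reduces for every $\bdd{\eta} \in \image \bdd{\Xi}_{\bdd{R}}^h$ to the tautology
\begin{align*}
	\langle \bdd{\eta}, \bdd{\Xi}_{\bdd{R}}(w_h, \bdd{\theta}_h) \rangle - t^2 ( \bdd{\gamma}_h, \bdd{\eta}) = ( \bdd{\eta}, \bdd{\Xi}_{\bdd{R}}(w_h, \bdd{\theta}_h)) - (\bdd{\Xi}_{\bdd{R}}(w_h, \bdd{\theta}_h), \bdd{\eta}) = 0,
\end{align*}
where I used that for finite element arguments the duality pairing coincides with the $L^2$ inner product. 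Substituting $\langle \bdd{\gamma}_h, \bdd{\Xi}_{\bdd{R}}(v, \bdd{\psi}) \rangle = t^{-2}(\bdd{\Xi}_{\bdd{R}}(w_h, \bdd{\theta}_h), \bdd{\Xi}_{\bdd{R}}(v, \bdd{\psi}))$ into \cref{eq:mr-mixed-form-fem-1} recovers exactly \cref{eq:mr-variational-form-fem} (up to the multiplicative constant $\lambda$ absorbed into the definition of $\bdd{\gamma}_h$).

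\textbf{Backward direction.} Now suppose $(w_h, \bdd{\theta}_h, \bdd{\gamma}_h) \in \discrete{W}^h_{\Gamma} \times \discretev{V}^h_{\Gamma} \times \image \bdd{\Xi}_{\bdd{R}}^h$ satisfies \cref{eq:mr-mixed-form-fem}. The key observation is that \cref{eq:mr-mixed-form-fem-2} can be rewritten (again using that both arguments are in $\bdd{L}^2(\Omega)$) as
\begin{align*}
	( \bdd{\Xi}_{\bdd{R}}(w_h, \bdd{\theta}_h) - t^2 \bdd{\gamma}_h, \bdd{\eta}) = 0 \qquad \forall \bdd{\eta} \in \image \bdd{\Xi}_{\bdd{R}}^h.
\end{align*}
Since both $\bdd{\Xi}_{\bdd{R}}(w_h, \bdd{\theta}_h)$ and $\bdd{\gamma}_h$ lie in $\image \bdd{\Xi}_{\bdd{R}}^h$, so does their combination $\bdd{\eta}^* := \bdd{\Xi}_{\bdd{R}}(w_h, \bdd{\theta}_h) - t^2 \bdd{\gamma}_h$, and testing with $\bdd{\eta} = \bdd{\eta}^*$ gives $\|\bdd{\eta}^*\|^2 = 0$, i.e. $\bdd{\gamma}_h = t^{-2} \bdd{\Xi}_{\bdd{R}}(w_h, \bdd{\theta}_h)$. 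Plugging this identity back into \cref{eq:mr-mixed-form-fem-1} yields \cref{eq:mr-variational-form-fem} verbatim.

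\textbf{Main obstacle.} There is essentially no serious obstacle: the lemma is a standard primal/mixed equivalence. The only point one has to be mildly careful about is identifying the duality pairing $\langle \cdot, \cdot\rangle$ with the $L^2$ inner product, which is legitimate here because all factors appearing in the relevant bilinear terms sit inside $\bdd{L}^2(\Omega)$. A quick sentence stating this at the outset of the proof suffices.
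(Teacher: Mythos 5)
Your proof is correct and follows essentially the same route as the paper's: define $\bdd{\gamma}_h$ by the stated formula and check both equations for the forward direction, and use \cref{eq:mr-mixed-form-fem-2} to identify $\bdd{\gamma}_h$ as $t^{-2}\bdd{\Xi}_{\bdd{R}}(w_h,\bdd{\theta}_h)$ for the converse (the paper states this without the explicit test-function argument, but what you wrote is exactly the implicit justification, using that $\image \bdd{\Xi}_{\bdd{R}}^h$ is a subspace so $\bdd{\Xi}_{\bdd{R}}(w_h,\bdd{\theta}_h) - t^2\bdd{\gamma}_h$ is an admissible test function). Your parenthetical remark about the factor $\lambda$ is a fair observation: the paper drops $\lambda$ in Section 6 relative to the definition in Theorem~\ref{lem:apriori-take-0}, which is a minor notational inconsistency of the paper rather than a flaw in your argument.
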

\begin{proof}
	The first statement follows from the above discussion. Now suppose that 
	$(w_h, \bdd{\theta}_{h}, \bdd{\gamma}_h) \in  \discrete{W}^h_{\Gamma} \times 
	\discretev{V}^h_{\Gamma} \times \image \bdd{\Xi}_{\bdd{R}}^h$ satisfies 
	\cref{eq:mr-mixed-form-fem}. \Cref{eq:mr-mixed-form-fem-2} then gives 
	$\bdd{\gamma}_h = t^{-2} \bdd{\Xi}_{\bdd{R}}(w_h, \bdd{\theta}_h)$, and 
	\cref{eq:mr-variational-form-fem} now follows from 
	\cref{eq:mr-mixed-form-fem-1}.
\end{proof}

\noindent We will now analyze the mixed formulation \cref{eq:mr-mixed-form-fem} 
in order to obtain improved error estimates for the finite element scheme 
\cref{eq:mr-variational-form-fem}.

\subsection{Characterization of \texorpdfstring{$\image 
\bdd{\Xi}_{\bdd{R}}^h$}{$\image \Xi_R^h$}}
\label{sec:image-xi-x}

We first show that a consequence of conditions 
\ref{mitc:red-commute}--\ref{mitc:harmonic-inf-sup-discrete} is that $\image 
\bdd{\Xi}_{\bdd{R}}^h = \discretev{U}^h_{\Gamma}$, and we then construct a right 
inverse of $\bdd{\Xi}_{\bdd{R}} : \discrete{W}^h_{\Gamma} \times 
\discretev{V}^h_{\Gamma} \to \discretev{U}^h_{\Gamma}$:
\begin{lemma}
	\label{lem:invert-xi-discrete}
	Suppose that \ref{mitc:red-commute}--\ref{mitc:harmonic-inf-sup-discrete} 
	hold. For every $\bdd{\gamma} \in \discretev{U}^h_{\Gamma}$, there exists $w 
	\in \discrete{W}^h_{\Gamma}$ and $\bdd{\theta} \in \discretev{V}^h_{\Gamma}$ 
	such that
	\begin{align}
		\label{eq:invert-xi-discrete}
		\bdd{\Xi}_{\bdd{R}}(w, \bdd{\theta}) = \bdd{\gamma} \quad \text{and} 
		\quad \| w \|_1 + \| \bdd{\theta} \|_1 \leq C (\beta_{\rot} 
		\beta_{\harmonic{H}})^{-1} C_{\bdd{R}}^2    \| \bdd{\gamma}\|_{\rot},
	\end{align}
	Consequently, $\image \bdd{\Xi}_{\bdd{R}}^h = \discretev{U}^h_{\Gamma}$ and 
	$\beta_{\bdd{R}}$ in \cref{eq:invert-xi-inf-sup-u} satisfies 
	\cref{eq:inf-sup-equivalences-reduced}.
\end{lemma}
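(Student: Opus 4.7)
The plan is to invert $\bdd{\Xi}_{\bdd{R}}$ on $\discretev{U}^h_{\Gamma}$ by exploiting the discrete Hodge-type splitting
\[
\{\bdd{\eta} \in \discretev{U}^h_{\Gamma} : \rot \bdd{\eta} \equiv 0\}
= \grad \discrete{W}^h_{\Gamma} \oplus^{\perp} \harmonic{H}^h_{\Gamma}
\]
implicit in the bottom row of \cref{eq:R-L2-project-commute}. Given $\bdd{\gamma} \in \discretev{U}^h_{\Gamma}$, I will build $(w, \bdd{\theta})$ in three moves: kill the $\rot$ of $\bdd{\gamma}$ using the Stokes inf-sup \ref{mitc:stokes-inf-sup-discrete}; represent the harmonic remainder using \ref{mitc:harmonic-inf-sup-discrete}; and absorb the leftover gradient piece into $\grad w$.

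\emph{Step 1 (rotation part).} Since $\rot \bdd{\gamma} \in \discrete{Q}^h_{\Gamma}$, the Stokes inf-sup \ref{mitc:stokes-inf-sup-discrete} combined with the standard Babu\v{s}ka--Brezzi right-inverse construction produces $\bdd{\theta}_1 \in \discretev{V}^h_{\Gamma}$ with $P \rot \bdd{\theta}_1 = \rot \bdd{\gamma}$ and $\|\bdd{\theta}_1\|_1 \leq C \beta_{\rot}^{-1} \|\rot \bdd{\gamma}\|$. The commuting property \ref{mitc:red-commute} then gives $\rot \bdd{R} \bdd{\theta}_1 = P \rot \bdd{\theta}_1 = \rot \bdd{\gamma}$, so $\bdd{\gamma} - \bdd{R} \bdd{\theta}_1$ is a $\rot$-free element of $\discretev{U}^h_{\Gamma}$ and decomposes orthogonally as $\bdd{\gamma} - \bdd{R} \bdd{\theta}_1 = \grad w_1 + \harmonic{h}$ for some $w_1 \in \discrete{W}^h_{\Gamma}$ and $\harmonic{h} \in \harmonic{H}^h_{\Gamma}$. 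Combining \ref{mitc:red-bounded}, Pythagoras, and Poincar\'{e}'s inequality yields $\|w_1\|_1 + \|\harmonic{h}\| \leq C C_{\bdd{R}} \beta_{\rot}^{-1} \|\bdd{\gamma}\|_{\rot}$.

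\emph{Step 2 (harmonic part).} To realize $\harmonic{h}$ through the reduction operator, I interpret \ref{mitc:harmonic-inf-sup-discrete} as a Brezzi-style lower bound on the transpose of the linear map $\bdd{\theta} \mapsto \bdd{H} \bdd{R} \bdd{\theta}$ defined on $\{ \bdd{\theta} \in \discretev{V}^h_{\Gamma} : \rot \bdd{R} \bdd{\theta} \equiv 0 \}$ with values in $\harmonic{H}^h_{\Gamma}$. This yields $\bdd{\theta}_2 \in \discretev{V}^h_{\Gamma}$ with $\rot \bdd{R} \bdd{\theta}_2 \equiv 0$, $\bdd{H} \bdd{R} \bdd{\theta}_2 = \harmonic{h}$, and $\|\bdd{\theta}_2\|_1 \leq C \beta_{\harmonic{H}}^{-1} \|\harmonic{h}\|$. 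Since $\bdd{R} \bdd{\theta}_2 \in \discretev{U}^h_{\Gamma}$ is $\rot$-free, a second application of the Hodge splitting delivers $w_2 \in \discrete{W}^h_{\Gamma}$ with $\bdd{R} \bdd{\theta}_2 = \grad w_2 + \harmonic{h}$; orthogonality together with \ref{mitc:red-bounded} then gives $\|w_2\|_1 \leq C C_{\bdd{R}}^2 (\beta_{\rot} \beta_{\harmonic{H}})^{-1} \|\bdd{\gamma}\|_{\rot}$.

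\emph{Step 3 (assembly and consequences).} Substituting $\harmonic{h} = \bdd{R} \bdd{\theta}_2 - \grad w_2$ into the Step 1 identity gives
\[
\bdd{\gamma} = \bdd{R} \bdd{\theta}_1 + \grad w_1 + \harmonic{h}
= \grad(w_1 - w_2) + \bdd{R}(\bdd{\theta}_1 + \bdd{\theta}_2),
\]
so $w := w_1 - w_2$ and $\bdd{\theta} := -(\bdd{\theta}_1 + \bdd{\theta}_2)$ satisfy $\bdd{\Xi}_{\bdd{R}}(w, \bdd{\theta}) = \bdd{\gamma}$, and the norm bound in \cref{eq:invert-xi-discrete} follows by summing the estimates from Steps 1 and 2. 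The identity $\image \bdd{\Xi}_{\bdd{R}}^h = \discretev{U}^h_{\Gamma}$ is then immediate. For the inf-sup constant \cref{eq:invert-xi-inf-sup-u}, given $\bdd{\eta} \in \discretev{U}^h_{\Gamma}$ I apply \cref{eq:invert-xi-discrete} with $\bdd{\gamma} := \bdd{\eta}$ and test the inner supremum against the resulting $(w, \bdd{\theta})$; the identity $(\bdd{\eta}, \bdd{\Xi}_{\bdd{R}}(w, \bdd{\theta}))_{\rot} = \|\bdd{\eta}\|_{\rot}^2$ together with the norm estimate yield $\beta_{\bdd{R}} \geq C \beta_{\rot} \beta_{\harmonic{H}} / C_{\bdd{R}}^2$, which is \cref{eq:inf-sup-equivalences-reduced}. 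The main obstacle is Step 2: converting \ref{mitc:harmonic-inf-sup-discrete} into a bounded right inverse of $\bdd{H} \bdd{R}$ on the constrained subspace $\{ \bdd{\theta} : \rot \bdd{R} \bdd{\theta} \equiv 0 \}$, since this side constraint is exactly what allows the harmonic correction from Step 2 to be grafted onto Step 1 without spoiling the $\rot$-free structure already achieved.
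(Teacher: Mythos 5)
Your proof is correct and follows the paper's proof along essentially the same route: invert $\rot$ via \ref{mitc:stokes-inf-sup-discrete}, realize the harmonic remainder via \ref{mitc:harmonic-inf-sup-discrete} on the constrained subspace $\{\bdd{\theta}:\rot\bdd{R}\bdd{\theta}\equiv 0\}$, and recover the displacement from the orthogonal decomposition $\grad\discrete{W}^h_{\Gamma}\oplus^{\perp}\harmonic{H}^h_{\Gamma}$ of the rot-free residual. The only difference is bookkeeping -- the paper builds $\bdd{\theta}=\bdd{\theta}_{\rot}+\bdd{\theta}_{\harmonic{H}}$ first and invokes the decomposition once on $\bdd{\gamma}+\bdd{R}\bdd{\theta}$, whereas you unfold the splitting twice and recombine -- but the resulting $(w,\bdd{\theta})$ and norm estimates coincide.
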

\begin{proof}
	Let $\bdd{\gamma} \in \discretev{U}^h_{\Gamma}$ be given. Thanks to 
	\cref{eq:stokes-inf-sup-discrete}, there exists $\bdd{\theta}_{\rot} \in 
	\discretev{V}^h_{\Gamma}$ such that
	\begin{align*}
		(\rot \bdd{\theta}_{\rot}, q) = -(\rot \bdd{\gamma}, q) \qquad \forall q 
		\in \discrete{Q}^h_{\Gamma} \quad \text{with} \quad \| 
		\bdd{\theta}_{\rot} \|_1 \leq  \beta_{\rot}^{-1} \| \rot \bdd{\gamma} \|.
	\end{align*}
	Then, \cref{eq:R-L2-project-commute} gives
	\begin{align*}
		(\rot \bdd{R} \bdd{\theta}_{\rot}, q) = (P \rot \bdd{\theta}_{\rot}, q) = 
		(\rot \bdd{\theta}, q) = -(\rot \bdd{\gamma}, q) \qquad \forall q \in 
		\discrete{Q}^h_{\Gamma},
	\end{align*}
	and so
	\begin{align*}
		\rot \bdd{R} \bdd{\theta}_{\rot} = -\rot \bdd{\gamma} \quad \text{and} 
		\quad \| \bdd{\theta}_{\rot} \|_1 \leq \beta_{\rot}^{-1} \| \rot 
		\bdd{\gamma} \|.
	\end{align*}
	Additionally, thanks to \ref{mitc:harmonic-inf-sup-discrete}, there exists 
	$\bdd{\theta}_{\harmonic{H}} \in \discretev{V}^h_{\Gamma}$ such that $\rot 
	\bdd{R} \bdd{\theta}_{\harmonic{H}} \equiv 0$,
	\begin{align*}
		(\bdd{R} \bdd{\theta}_{\harmonic{H}}, \harmonic{h}) = -(\bdd{\gamma} + 
		\bdd{R} \bdd{\theta}_{\rot}, \harmonic{h}) \qquad \forall \harmonic{h} 
		\in \harmonic{H}^h_{\Gamma}, \quad \text{and} \quad \| 
		\bdd{\theta}_{\harmonic{H}} \|_{1} \leq \beta_{\harmonic{H}}^{-1} \| 
		\bdd{\gamma} + \bdd{R} \bdd{\theta}_{\rot}\|,
	\end{align*}
	and we define $\bdd{\theta} := \bdd{\theta}_{\rot} + 
	\bdd{\theta}_{\harmonic{H}}$ so that $\|\bdd{\theta}\|_{1} \leq C C_{\bdd{R}} 
	(\beta_{\rot} \beta_{\harmonic{H}} )^{-1} \|\bdd{\gamma}\|_{\rot}$ thanks to 
	\ref{mitc:red-bounded}. 
	
	The vector field $\bdd{\eta} := \bdd{\gamma} + \bdd{R}\bdd{\theta}$ satisfies 
	$\rot \bdd{\eta} \equiv 0$ and $(\bdd{\eta}, \harmonic{h}) = 0$ for all 
	$\harmonic{h} \in \harmonic{H}^h_{\Gamma}$. Thus, there exists $w \in 
	\discrete{W}^h_{\Gamma}$ such that $\grad w = \bdd{\eta}$, and so 
	$\bdd{\Xi}_{\bdd{R}}(w, \bdd{\theta}) = \bdd{\gamma}$. Thanks to 
	Poincar\'{e}'s inequality and \cref{eq:reduction-l2-approx}, we have
	\begin{align*}
		\| w \|_1 \leq C_P \| \grad w \| \leq C_P ( \|\bdd{\gamma}\| + \|\bdd{R} 
		\bdd{\theta}\|) \leq C C_{\bdd{R}}^2 (\beta_{\rot} \beta_{\harmonic{H}} 
		)^{-1} \|\bdd{\gamma}\|_{\rot},
	\end{align*}
	which completes the proof.	 
\end{proof}

\subsection{Stability and well-posedness}
\label{sec:three-field-well-posedness-discrete}

With a concrete characterization of $\image \bdd{\Xi}_{\bdd{R}}^h$ in hand, we 
now consider the stability of \cref{eq:mr-mixed-form-fem}. To this end, define 
the following discrete dual norm:
\begin{align}
	\label{eq:dual-rot-x-norm}
	\| \bdd{\eta} \|_{\dual{(\discretev{U}^h_{\Gamma})}} := \sup_{ \bdd{0} \neq 
		\bdd{\gamma} \in \discretev{U}^h_{\Gamma} } \frac{ \langle \bdd{\eta}, 
		\bdd{\gamma} \rangle }{ \|\bdd{\gamma}\|_{\rot} }  = \sup_{ \bdd{0} \neq 
		\bdd{\gamma} \in \discretev{U}^h_{\Gamma} } \frac{ \langle \bdd{P} 
		\bdd{\eta}, \bdd{\gamma} \rangle }{ \|\bdd{\gamma}\|_{\rot} } 
	\qquad \forall \bdd{\eta} \in \dual{\hrotgamma}.
\end{align}
Then, we see that the following inf-sup constant for the mixed system 
\cref{eq:mr-mixed-form-fem} is bounded below by $\beta_{\bdd{R}}$:
\begin{align}
	\label{eq:inf-sup-discrete-dual}
	\begin{aligned}
		\beta_{\mathrm{saddle}} &:= \inf_{ \substack{\bdd{\eta} \in 
		\discretev{U}^h_{\Gamma} \\ \bdd{\eta} \neq \bdd{0} } } \sup_{ \substack{ 
		(v, \bdd{\psi}) \in \discrete{W}^h_{\Gamma} \times 
		\discretev{V}^h_{\Gamma} \\ (v, \bdd{\psi}) \neq (0, \bdd{0}) } } \frac{ 
		\langle \bdd{\eta}, \bdd{\Xi}_{\bdd{R}}(v, \bdd{\psi}) \rangle }{ 
		(\|v\|_1 + \|\bdd{\psi}\|_1) 
		\|\bdd{\eta}\|_{\dual{(\discretev{U}^h_{\Gamma})}} } \\
		&\geq  \beta_{\bdd{R}} \inf_{ \substack{\bdd{\eta} \in 
				\discretev{U}^h_{\Gamma}  \\ \bdd{\eta} \neq \bdd{0} } } \sup_{ 
			\substack{ \bdd{\gamma} \in \discretev{U}^h_{\Gamma} \\ \bdd{\gamma} 
			\neq 
				\bdd{0} } } \frac{ \langle \bdd{\eta}, \bdd{\gamma} \rangle }{ 
			\|\bdd{\gamma}\|_{\rot} 
			\|\bdd{\eta}\|_{\dual{(\discretev{U}^h_{\Gamma})}} } =  
		\beta_{\bdd{R}}.
	\end{aligned}
\end{align}
We then have the following well-posedness result for the mixed system 
\cref{eq:mr-mixed-form-fem} with general data: 
\begin{theorem}
	\label{thm:mr-mixed-form-gen-fem-bad}
	Suppose that \ref{mitc:red-bounded} holds and let $F \in 
	\dual{(\discrete{W}^h_{\Gamma})}$, $G \in \dual{(\discretev{V}^h_{\Gamma})}$, 
	$\bdd{g}_1 \in \hrotgamma$, and $\bdd{g}_2 \in \bdd{L}^2(\Omega)$ be given. 
	Then, there exists a unique solution to the following mixed problem: Find 
	$w_h \in \discrete{W}^h_{\Gamma}$, $\bdd{\theta}_h \in 
	\discretev{V}^h_{\Gamma}$, and $\bdd{\gamma}_h \in \discretev{U}^h_{\Gamma}$ 
	such that
	\begin{subequations}
		\label{eq:mr-mixed-form-gen-fem-bad}
		\begin{alignat}{2}
			\label{eq:mr-mixed-form-gen-fem-bad-1}
			a(\bdd{\theta}_{h}, \bdd{\psi}) + \langle \bdd{\gamma}_h, 
			\bdd{\Xi}_{\bdd{R}}(v,\bdd{\psi}) \rangle &= F(v) + G(\bdd{\psi}) 
			\qquad & &\forall (v, \bdd{\psi}) \in \discrete{W}^h_{\Gamma} \times 
			\discretev{V}^h_{\Gamma}, \\
			\label{eq:mr-mixed-form-gen-fem-bad-2}
			\langle \bdd{\eta}, \bdd{\Xi}_{\bdd{R}}(w_h, \bdd{\theta}_h) \rangle 
			- t^2(\bdd{\gamma}_h, \bdd{\eta}) &= \langle \bdd{\eta}, \bdd{g}_1 
			\rangle + (\bdd{\eta}, \bdd{g}_2)  \qquad & &\forall \bdd{\eta} \in 
			\discretev{U}^h_{\Gamma}.
		\end{alignat}
	\end{subequations}
	Additionally, the solution satisfies
	\begin{multline}
		\label{eq:mr-mixed-form-gen-fem-bad-stability}
		\|w_h \|_1 + \|\bdd{\theta}_{h}\|_1 + (\beta_{\bdd{R}} + t) \| 
		\bdd{\gamma}_h \|_{\dual{(\discretev{U}^h_{\Gamma})}} + t 
		\|\bdd{\gamma}_h\| \\ \leq C C_{\bdd{R}} \left( 
		\|F\|_{\dual{(\discrete{W}^h_{\Gamma})}} + 
		\|G\|_{\dual{(\discretev{V}^h_{\Gamma})}} + (\beta_{\bdd{R}} + t)^{-1} \| 
		\bdd{g}_1 \|_{\rot} + t^{-1} \|\bdd{g}_2\| \right),
	\end{multline}
	where $\beta_{\bdd{R}}$ is defined in \cref{eq:invert-xi-inf-sup-u},
	\begin{align}
		\label{eq:w-v-dual-norm-def}
		\|F\|_{\dual{(\discrete{W}^h_{\Gamma})}} := \sup_{ 0 \neq v \in 
		\discrete{W}^h_{\Gamma} } \frac{F(v)}{\|v\|_1}, \quad \text{and} \quad 
		\|G\|_{\dual{(\discretev{V}^h_{\Gamma})}} := \sup_{ \bdd{0} \neq 
		\bdd{\psi} \in \discretev{V}^h_{\Gamma} } \frac{G(\bdd{\psi})}{ 
		\|\bdd{\psi}\|_1 }. 
	\end{align}
\end{theorem}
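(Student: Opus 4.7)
The plan is to prove uniqueness (hence existence in finite dimensions) of \cref{eq:mr-mixed-form-gen-fem-bad} and then to assemble the stability estimate \cref{eq:mr-mixed-form-gen-fem-bad-stability} from three test-function identities combined with \cref{lem:invert-xi-discrete}.

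For uniqueness, I would set all data to zero, test \cref{eq:mr-mixed-form-gen-fem-bad-1} with $(v, \bdd{\psi}) = (w_h, \bdd{\theta}_h)$ and \cref{eq:mr-mixed-form-gen-fem-bad-2} with $\bdd{\eta} = \bdd{\gamma}_h$, and subtract to obtain $a(\bdd{\theta}_h, \bdd{\theta}_h) + t^2\|\bdd{\gamma}_h\|^2 = 0$. Coercivity \cref{eq:a-coercive} combined with $t > 0$ then forces $\bdd{\theta}_h = \bdd{0}$ and $\bdd{\gamma}_h = \bdd{0}$, and reinserting these into \cref{eq:mr-mixed-form-gen-fem-bad-2} and choosing $\bdd{\eta} = \grad w_h \in \discretev{U}^h_{\Gamma}$ (admissible by \ref{mitc:red-commute}) yields $\grad w_h = \bdd{0}$, hence $w_h \equiv 0$ by Poincar\'{e}. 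Existence then follows from the finite dimensionality of the problem.

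For the stability estimate, I would combine three ingredients. First, the same subtraction as above but with non-zero data, together with \cref{eq:a-coercive}, gives the energy bound
\begin{align*}
\|\bdd{\theta}_h\|_1^2 + t^2\|\bdd{\gamma}_h\|^2 \leq C\bigl(F(w_h) + G(\bdd{\theta}_h) - \langle \bdd{\gamma}_h, \bdd{g}_1\rangle - (\bdd{\gamma}_h, \bdd{g}_2)\bigr).
\end{align*}
Second, for any $\bdd{\eta} \in \discretev{U}^h_{\Gamma}$, \cref{lem:invert-xi-discrete} supplies $(v, \bdd{\psi}) \in \discrete{W}^h_{\Gamma} \times \discretev{V}^h_{\Gamma}$ with $\bdd{\Xi}_{\bdd{R}}(v, \bdd{\psi}) = \bdd{\eta}$ and $\|v\|_1 + \|\bdd{\psi}\|_1 \leq C\beta_{\bdd{R}}^{-1}\|\bdd{\eta}\|_{\rot}$, so that testing \cref{eq:mr-mixed-form-gen-fem-bad-1} with this $(v, \bdd{\psi})$ and dividing through by $\|\bdd{\eta}\|_{\rot}$ yields
\begin{align*}
\beta_{\bdd{R}} \|\bdd{\gamma}_h\|_{\dual{(\discretev{U}^h_{\Gamma})}} \leq C\bigl( \|F\|_{\dual{(\discrete{W}^h_{\Gamma})}} + \|G\|_{\dual{(\discretev{V}^h_{\Gamma})}} + \|\bdd{\theta}_h\|_1 \bigr).
\end{align*}
Third, taking $\bdd{\eta} = \grad w_h$ in \cref{eq:mr-mixed-form-gen-fem-bad-2} and using \ref{mitc:red-bounded} and Poincar\'{e} yields
\begin{align*}
\|w_h\|_1 \leq CC_{\bdd{R}} \bigl( \|\bdd{\theta}_h\| + t^2 \|\bdd{\gamma}_h\| + \|\bdd{g}_1\| + \|\bdd{g}_2\| \bigr).
\end{align*}
I would then feed the third estimate into the $F(w_h)$ term of the first, use Young's inequality to absorb $\|\bdd{\theta}_h\|_1^2$ and $t^2\|\bdd{\gamma}_h\|^2$ on the left, and invoke $\|\bdd{\gamma}_h\|_{\dual{(\discretev{U}^h_{\Gamma})}} \leq \|\bdd{\gamma}_h\|$ to split $(\beta_{\bdd{R}} + t)\|\bdd{\gamma}_h\|_{\dual{(\discretev{U}^h_{\Gamma})}} \leq \beta_{\bdd{R}}\|\bdd{\gamma}_h\|_{\dual{(\discretev{U}^h_{\Gamma})}} + t\|\bdd{\gamma}_h\|$, so that all four terms appearing in \cref{eq:mr-mixed-form-gen-fem-bad-stability} are controlled by the data.

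The main obstacle is extracting the sharp weight $(\beta_{\bdd{R}} + t)^{-1}\|\bdd{g}_1\|_{\rot}$ on the right-hand side rather than the looser $t^{-1}\|\bdd{g}_1\|_{\rot}$ or $\beta_{\bdd{R}}^{-1}\|\bdd{g}_1\|_{\rot}$ produced by naive bounds. The idea is to rewrite $\langle \bdd{\gamma}_h, \bdd{g}_1\rangle = (\bdd{\gamma}_h, \bdd{P}\bdd{g}_1)$, where $\bdd{P}$ is the $L^2$-projection \cref{eq:pi-x-p-def} onto $\discretev{U}^h_{\Gamma}$, and bound this quantity in two complementary ways: crudely as $\|\bdd{\gamma}_h\|\|\bdd{g}_1\|$ absorbed by $t^2\|\bdd{\gamma}_h\|^2$ on the left (giving the $t^{-1}$-scaling), or via $\|\bdd{\gamma}_h\|_{\dual{(\discretev{U}^h_{\Gamma})}}\|\bdd{g}_1\|_{\rot}$-type quantities routed through the second ingredient (giving the $\beta_{\bdd{R}}^{-1}$-scaling). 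A convex combination of these two bounds with weights $t/(\beta_{\bdd{R}} + t)$ and $\beta_{\bdd{R}}/(\beta_{\bdd{R}} + t)$ then yields the claimed $(\beta_{\bdd{R}} + t)^{-1}$ weight. The $(\bdd{\gamma}_h, \bdd{g}_2)$ term is handled directly by Cauchy-Schwarz and Young's inequality against $t^2\|\bdd{\gamma}_h\|^2$ to produce the $t^{-1}\|\bdd{g}_2\|$ contribution.
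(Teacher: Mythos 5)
Your proposal is a correct-in-spirit but genuinely different route from the paper's. The paper proves \cref{thm:mr-mixed-form-gen-fem-bad} in three lines by instantiating the abstract singularly-perturbed saddle point result \cref{thm:sing-perturb-saddle-stable} with $V = \discrete{W}^h_{\Gamma} \times \discretev{V}^h_{\Gamma}$, $Q = W = \discretev{U}^h_{\Gamma}$ equipped with the two different norms $\|\cdot\|_{\dual{(\discretev{U}^h_{\Gamma})}}$ and $\|\cdot\|$, and then verifying the two conditions in \cref{eq:generic-sing-perturb-coercive-infsup} (the coercivity-on-the-kernel bound supplies the $C_{\bdd{R}}$-dependence, and \cref{eq:inf-sup-discrete-dual} supplies $\beta_{\mathrm{saddle}} \geq \beta_{\bdd{R}}$). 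What you propose instead is a from-scratch test-function derivation, which is essentially what the abstract theorem's proof does but inlined, so your version is more elementary and self-contained while the paper's factors the delicate $(\beta_{\bdd{R}}+t)$-balancing into a reusable lemma. Your ingredient 3 (testing \cref{eq:mr-mixed-form-gen-fem-bad-2} with $\bdd{\eta}=\grad w_h$) plays the role that the coercivity condition $\alpha\|v\|_V^2 \leq a(v,v) + \|Bv\|^2_{\dual{W}}$ plays in the paper, and ingredient 2 corresponds to the inf-sup verification; both are correct modulo two points discussed below.

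Two technical caveats. First, you cite \cref{lem:invert-xi-discrete} to construct the right inverse of $\bdd{\Xi}_{\bdd{R}}$, but that lemma presupposes \ref{mitc:red-commute}--\ref{mitc:harmonic-inf-sup-discrete}, whereas the theorem statement only assumes \ref{mitc:red-bounded}. The estimate \cref{eq:mr-mixed-form-gen-fem-bad-stability} is meant to hold with whatever value $\beta_{\bdd{R}}$ takes according to its definition \cref{eq:invert-xi-inf-sup-u} (including $\beta_{\bdd{R}}=0$, in which case the dual-norm term degenerates); you should therefore appeal directly to the inf-sup constant $\beta_{\bdd{R}}$ and the standard equivalence between inf-sup and the existence of a right inverse, rather than to \cref{lem:invert-xi-discrete}. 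Second, and more substantively: the second branch of your convex-combination argument bounds $\langle\bdd{\gamma}_h, \bdd{g}_1\rangle = (\bdd{\gamma}_h, \bdd{P}\bdd{g}_1)$ by $\|\bdd{\gamma}_h\|_{\dual{(\discretev{U}^h_{\Gamma})}}\|\bdd{P}\bdd{g}_1\|_{\rot}$, which is a valid duality bound since $\bdd{P}\bdd{g}_1 \in \discretev{U}^h_{\Gamma}$; but to close at $\|\bdd{g}_1\|_{\rot}$ on the right-hand side you then need $\|\bdd{P}\bdd{g}_1\|_{\rot} \leq C\|\bdd{g}_1\|_{\rot}$, i.e.\ an $\hrot$-stability estimate for the $L^2$-projection onto $\discretev{U}^h_{\Gamma}$, which is not automatic and is not supplied by the stated hypotheses. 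You should be aware that the same issue is latent in the paper's own reduction (the abstract estimate involves $\|G_1\|_{\dual{Q}}$, which in this instantiation is precisely $\|\bdd{P}\bdd{g}_1\|_{\rot}$), and that the paper sidesteps it in the downstream application by choosing $\bdd{\gamma}_I = \bdd{P}\bdd{\gamma}$ so that the problematic contribution vanishes. If you want your stand-alone proof to be water-tight as stated, you would either state the conclusion with $\|\bdd{P}\bdd{g}_1\|_{\rot}$ in place of $\|\bdd{g}_1\|_{\rot}$, or add an explicit $\hrot$-stability hypothesis on $\bdd{P}$.
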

\begin{proof}
	We verify the conditions of \cref{thm:sing-perturb-saddle-stable}. We set $V 
	= \discrete{W}^h_{\Gamma} \times \discretev{V}^h_{\Gamma}$ equipped with the 
	norm $\|(w, \bdd{\theta})\|_{V} := \|v\|_1 + \|\bdd{\theta}\|_1$, $Q = 
	\discretev{U}^h_{\Gamma}$ equipped with the norm $\|\bdd{\eta}\|_{Q} := \| 
	\bdd{\eta} \|_{\dual{(\discretev{U}^h_{\Gamma})}}$, and $W = 
	\discretev{U}^h_{\Gamma}$ equipped with the norm $\|\bdd{\eta}\|_{W} := 
	\|\bdd{\eta}\|$. With an abuse of notation, we also set for $w, v \in 
	\discretev{W}^h_{\Gamma}$, $\bdd{\theta}, \bdd{\psi} \in 
	\discretev{V}^h_{\Gamma}$, and $\bdd{\eta} \in \discretev{U}^h_{\Gamma}$
	\begin{align*}
		a(w, \bdd{\theta}; v, \bdd{\psi}) := a(\bdd{\theta}, 
		\tilde{\bdd{\theta}}) \quad \text{and} \quad b(v, \bdd{\psi}; \bdd{\eta}) 
		:= \langle \bdd{\eta}, \bdd{\Xi}_{\bdd{R}}(v, \bdd{\psi}) \rangle.
	\end{align*}
	
	First note that for all $\bdd{\eta} \in \discretev{U}^h_{\Gamma}$, there holds
	\begin{align*}
		\| \bdd{\eta} \|_W = \sup_{ \bdd{0} \neq \bdd{\gamma} \in 
		\discretev{U}^h_{\Gamma} } \frac{ \langle \bdd{\eta}, \bdd{\gamma} 
		\rangle }{ \|\bdd{\gamma}\|_{\rot} } \leq \sup_{ \bdd{0} \neq 
		\bdd{\gamma} \in \discretev{U}^h_{\Gamma} } \frac{ \| \bdd{\eta} \| \| 
		\bdd{\gamma} \| }{ \|\bdd{\gamma}\|_{\rot} } \leq \|\bdd{\eta}\|_{Q},
	\end{align*}
	and so $C_w = 1$ in \cref{thm:sing-perturb-saddle-stable}. 
	Moreover, for all $\bdd{\theta} \in \discretev{V}^h_{\Gamma}$, there holds 
	$\| \bdd{\theta} \|_1^2 \leq C a(\bdd{\theta}, \bdd{\theta})$ and
	\begin{align*}
		\|w\|_1 \leq C_P \|\grad w\| \leq C_P (\|\grad w - \bdd{R} \bdd{\theta}\| 
		+ \|\bdd{R} \bdd{\theta}\|) \leq C C_{\bdd{R}} \left( \| 
		\bdd{\Xi}_{\bdd{R}}(w, \bdd{\theta})\| + \|\bdd{\theta}\|_1 \right),
	\end{align*}
	and so \cref{eq:a-coercive} then gives $\alpha = C C_{\bdd{R}}^{-2}$ in 
	\cref{thm:sing-perturb-saddle-stable}. The result now follows from 
	\cref{eq:generic-sing-perturb-saddle-stability,eq:inf-sup-discrete-dual}.
\end{proof}

\subsection{A priori error estimate}
\label{sec:apriori-error-red}

At last, we arrive at the following a priori error estimate:
\begin{theorem}
	\label{thm:apriori-error-gen}
	Suppose that \ref{mitc:red-bounded} holds. Let $(w, \bdd{\theta}) \in 
	H^1_{\Gamma}(\Omega) \times \bdd{\Theta}_{\Gamma}(\Omega)$ denote the 
	solution to \cref{eq:mr-variational-form} with $\bdd{\gamma} = t^{-2} 
	\bdd{\Xi}(w, \bdd{\theta})$ and let $(w_h, \bdd{\theta}_h) \in 
	\discrete{W}^h_{\Gamma} \times \discretev{V}^h_{\Gamma}$ denote the finite 
	element solution \cref{eq:mr-variational-form-fem} with $\bdd{\gamma}_h = 
	t^{-2} \bdd{\Xi}_{\bdd{R}}(w_h, \bdd{\theta}_h)$. Then, there holds
	\begin{align}
		\begin{aligned}
			\label{eq:apriori-take-1}
			&\| w - w_h \|_1 + \| \bdd{\theta} - \bdd{\theta}_{h} \|_1 + 
			(\beta_{\bdd{R}} + t) \| \bdd{\gamma} - \bdd{\gamma}_h 
			\|_{\dual{(\discretev{U}^h_{\Gamma})}} + t \| \bdd{\gamma} - 
			\bdd{\gamma}_h\| \\
			&\quad \leq  \frac{ C C_{\bdd{R}} }{ \beta_{\bdd{R}} + t } \bigg( 
			\inf_{v \in \discrete{W}^h_{\Gamma}} \| w - v \|_1 + t \inf_{ 
			\bdd{\eta} \in \discretev{U}^h_{\Gamma}}  \|\bdd{\gamma} - 
			\bdd{\eta}\|  \\ 
			&\qquad \qquad \qquad \qquad  \inf_{ \bdd{\psi} \in 
			\discretev{V}^h_{\Gamma}} \left( \|\bdd{\theta} - \bdd{\psi}\|_1 + 
			\|\bdd{\psi} - \bdd{R} \bdd{\psi}\|_{\rot} \right)
			+\sup_{ \bdd{\psi} \in \discretev{V}^h_{\Gamma} } \frac{( \bdd{P} 
				\bdd{\gamma}, \bdd{\psi} - \bdd{R} \bdd{\psi} ) }{ \| \bdd{\psi} 
				\|_1 
			} \bigg).
		\end{aligned}
	\end{align}
	Moreover, if \ref{mitc:red-commute} and 
	\ref{mitc:stokes-inf-sup-discrete}--\ref{mitc:harmonic-inf-sup-discrete} 
	holds, then $\beta_{\bdd{R}}$ defined in \cref{eq:invert-xi-inf-sup-u} 
	satisfies \cref{eq:inf-sup-equivalences-reduced}.
\end{theorem}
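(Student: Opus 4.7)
The plan is to work with the mixed reformulation \cref{eq:mr-mixed-form-fem} of the discrete scheme and compare it to the analogous continuous mixed system satisfied by $(w, \bdd{\theta}, \bdd{\gamma})$, which has the original operator $\bdd{\Xi}$ in place of $\bdd{\Xi}_{\bdd{R}}$. The main engine will be the general stability bound \cref{eq:mr-mixed-form-gen-fem-bad-stability} of \cref{thm:mr-mixed-form-gen-fem-bad}, which is already available from \ref{mitc:red-bounded} alone. The task is therefore to cast the discrete error as a solution to a perturbed system of the form \cref{eq:mr-mixed-form-gen-fem-bad} whose data are controlled by the approximation and consistency quantities appearing on the right hand side of \cref{eq:apriori-take-1}.

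The starting move is to select arbitrary $\tilde{w} \in \discrete{W}^h_{\Gamma}$ and $\tilde{\bdd{\theta}} \in \discretev{V}^h_{\Gamma}$, and to fix $\tilde{\bdd{\gamma}} := \bdd{P}\bdd{\gamma}$ as the natural $\bdd{L}^2$-projection onto $\discretev{U}^h_{\Gamma}$. The key identity $\bdd{\Xi}(v, \bdd{\psi}) - \bdd{\Xi}_{\bdd{R}}(v, \bdd{\psi}) = \bdd{R}\bdd{\psi} - \bdd{\psi}$, together with the observation that $\bdd{\Xi}_{\bdd{R}}(v, \bdd{\psi}) \in \discretev{U}^h_{\Gamma}$ (which forces $\langle \bdd{\gamma}, \bdd{\Xi}_{\bdd{R}}(v,\bdd{\psi})\rangle = (\bdd{P}\bdd{\gamma}, \bdd{\Xi}_{\bdd{R}}(v,\bdd{\psi}))$ and $(\bdd{\gamma}-\bdd{P}\bdd{\gamma}, \bdd{\eta}) = 0$ for every $\bdd{\eta} \in \discretev{U}^h_{\Gamma}$), allows one to rewrite the continuous mixed system restricted to discrete test spaces. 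Subtracting the discrete system \cref{eq:mr-mixed-form-fem} then shows that $(\tilde{w} - w_h, \tilde{\bdd{\theta}} - \bdd{\theta}_h, \tilde{\bdd{\gamma}} - \bdd{\gamma}_h)$ satisfies \cref{eq:mr-mixed-form-gen-fem-bad} whose data comprise the interpolation residuals $a(\tilde{\bdd{\theta}} - \bdd{\theta}, \cdot)$ and the consistency residual $\langle \bdd{\gamma}, \bdd{\psi} - \bdd{R}\bdd{\psi}\rangle$ in the first equation, together with $\bdd{g}_1 = -\grad(w - \tilde{w}) + (\bdd{\theta} - \tilde{\bdd{\theta}}) + (\tilde{\bdd{\theta}} - \bdd{R}\tilde{\bdd{\theta}})$ and $\bdd{g}_2 = -t^2(\bdd{\gamma} - \bdd{P}\bdd{\gamma})$ in the second.

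The delicate step will be controlling the consistency term $\langle \bdd{\gamma}, \bdd{\psi} - \bdd{R}\bdd{\psi}\rangle$ in the dual norm of $\discretev{V}^h_{\Gamma}$. I would split $\bdd{\gamma} = \bdd{P}\bdd{\gamma} + (\bdd{\gamma} - \bdd{P}\bdd{\gamma})$: the first piece yields exactly the supremum $\sup_{\bdd{\psi}}(\bdd{P}\bdd{\gamma}, \bdd{\psi} - \bdd{R}\bdd{\psi})/\|\bdd{\psi}\|_1$ featuring in \cref{eq:apriori-take-1}, while for the second piece the orthogonality $(\bdd{\gamma} - \bdd{P}\bdd{\gamma}, \bdd{R}\bdd{\psi}) = 0$ gives the Cauchy--Schwarz bound $\|\bdd{\gamma} - \bdd{P}\bdd{\gamma}\| \cdot \|\bdd{\psi} - \bdd{R}\bdd{\psi}\|$, which can be absorbed into the $\inf \|\bdd{\psi} - \bdd{R}\bdd{\psi}\|_{\rot}$ factor. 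The remaining $\bdd{g}_2$ contribution $t^{-1}\|\bdd{g}_2\| = t\|\bdd{\gamma} - \bdd{P}\bdd{\gamma}\| = t\inf_{\bdd{\eta}}\|\bdd{\gamma} - \bdd{\eta}\|$ accounts for the $t$-weighted $\bdd{\gamma}$-approximation term, and $\|\bdd{g}_1\|_{\rot}$ is controlled by the standard approximation quantities via \cref{rem:reduction-bounded} and \cref{eq:reduction-rot-error}. Applying \cref{eq:mr-mixed-form-gen-fem-bad-stability}, invoking the triangle inequality to pass from the difference to the true error, and taking the infimum over $\tilde{w}$ and $\tilde{\bdd{\theta}}$ then produces \cref{eq:apriori-take-1}.

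Finally, the bound \cref{eq:inf-sup-equivalences-reduced} on $\beta_{\bdd{R}}$ under the additional hypotheses \ref{mitc:red-commute} and \ref{mitc:stokes-inf-sup-discrete}--\ref{mitc:harmonic-inf-sup-discrete} is precisely the conclusion of \cref{lem:invert-xi-discrete}, so this half of the theorem is immediate.
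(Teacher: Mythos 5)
Your overall strategy---reformulating as the mixed system \cref{eq:mr-mixed-form-fem}, invoking the stability estimate of \cref{thm:mr-mixed-form-gen-fem-bad}, and passing to the infimum via the triangle inequality---is exactly the paper's route, and the concluding appeal to \cref{lem:invert-xi-discrete} for the bound on $\beta_{\bdd{R}}$ is immediate and correct. The main structural difference is that you fix $\tilde{\bdd{\gamma}}=\bdd{P}\bdd{\gamma}$ from the outset, whereas the paper carries an arbitrary $\bdd{\gamma}_I\in\discretev{U}^h_{\Gamma}$ through the perturbed system and only sets $\bdd{\gamma}_I=\bdd{P}\bdd{\gamma}$ at the very end. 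As a consequence, your consistency residual comes out as $\langle\bdd{\gamma},\,\bdd{\psi}-\bdd{R}\bdd{\psi}\rangle$ with the \emph{continuous} shear $\bdd{\gamma}$, whereas the paper's first residual identity produces $\langle\bdd{\gamma}_I,\,\bdd{\psi}-\bdd{R}\bdd{\psi}\rangle$ with the \emph{interpolant} $\bdd{\gamma}_I$, which after the choice $\bdd{\gamma}_I=\bdd{P}\bdd{\gamma}$ is literally the supremum term appearing in \cref{eq:apriori-take-1}.

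Because of this change in bookkeeping, a genuine gap opens up in your handling of the consistency term. Splitting $\bdd{\gamma}=\bdd{P}\bdd{\gamma}+(\bdd{\gamma}-\bdd{P}\bdd{\gamma})$ leaves you with the extra contribution $(\bdd{\gamma}-\bdd{P}\bdd{\gamma},\,\bdd{\psi}-\bdd{R}\bdd{\psi})$, which you claim ``can be absorbed into the $\inf\|\bdd{\psi}-\bdd{R}\bdd{\psi}\|_{\rot}$ factor.'' This conflates two different $\bdd{\psi}$'s. In the quantity $\inf_{\bdd{\psi}\in\discretev{V}^h_{\Gamma}}(\|\bdd{\theta}-\bdd{\psi}\|_1+\|\bdd{\psi}-\bdd{R}\bdd{\psi}\|_{\rot})$, the argument $\bdd{\psi}$ is a \emph{freely chosen} approximant of $\bdd{\theta}$, so $\|\bdd{\psi}-\bdd{R}\bdd{\psi}\|$ may indeed be made small. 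But when computing the dual norm of $G_{\mathrm{err}}$ over $\discretev{V}^h_{\Gamma}$, the $\bdd{\psi}$ is a \emph{test function} ranging over all of $\discretev{V}^h_{\Gamma}$, and the supremum of $\|\bdd{\psi}-\bdd{R}\bdd{\psi}\|/\|\bdd{\psi}\|_1$ is bounded only by a constant of order $1+C_{\bdd{R}}$ under the hypothesis \ref{mitc:red-bounded}: for the families of \cref{sec:rt-family,sec:bdm-family}, $\discretev{V}^h_{\Gamma}$ contains degree-$(p+1)$ bubbles that are not in $\discretev{U}^h_{\Gamma}$, so this supremum is not small. The Cauchy--Schwarz bound therefore yields a contribution of size $\|\bdd{\gamma}-\bdd{P}\bdd{\gamma}\|=\inf_{\bdd{\eta}\in\discretev{U}^h_{\Gamma}}\|\bdd{\gamma}-\bdd{\eta}\|$ with \emph{no} $t$-weight, which does not appear among the terms on the right of \cref{eq:apriori-take-1} and cannot be controlled by the $t\inf_{\bdd{\eta}}\|\bdd{\gamma}-\bdd{\eta}\|$ term when $t$ is small. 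To reproduce the stated bound you should follow the paper's accounting: retain a free $\bdd{\gamma}_I$, isolate the residual so that the consistency term carries $\bdd{\gamma}_I$ rather than $\bdd{\gamma}$, and only afterward take $\bdd{\gamma}_I=\bdd{P}\bdd{\gamma}$; then the supremum term emerges cleanly without the spurious unweighted $\inf_{\bdd{\eta}}\|\bdd{\gamma}-\bdd{\eta}\|$.
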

\begin{proof}
	Let $w_I \in \discrete{W}_{\Gamma}^h$, $\bdd{\theta}_I \in 
	\discretev{V}_{\Gamma}^h$, and $\bdd{\gamma}_I \in \discretev{U}_{\Gamma}^h$ 
	be arbitrary. Then, there holds
	\begin{multline*}
		a(\bdd{\theta}_{h} - \bdd{\theta}_I, \bdd{\psi}) + \langle \bdd{\gamma}_h 
		- \bdd{\gamma}_I, \bdd{\Xi}_{\bdd{R}}(v, \bdd{\psi}) \rangle \\
		= a(\bdd{\theta} - \bdd{\theta}_I, \bdd{\psi}) + \langle \bdd{\gamma} - 
		\bdd{\gamma}_I, \bdd{\Xi}(v, \bdd{\psi}) \rangle + \langle 
		\bdd{\gamma}_I, \bdd{\psi} - \bdd{R} \bdd{\psi} \rangle  \qquad \forall 
		(v, \bdd{\psi}) \in \discrete{W}^h_{\Gamma} \times 
		\discretev{V}^h_{\Gamma} 
	\end{multline*}	
	and
	\begin{multline*}
		\langle \bdd{\eta}, \bdd{\Xi}_{\bdd{R}}(w_h - w_I, \bdd{\theta}_{h} - 
		\bdd{\theta}_I) \rangle - t^2( \bdd{\gamma}_h -  \bdd{\gamma}_I, 
		\bdd{\eta} ) \\
		= \langle \bdd{\eta}, \bdd{\Xi}(w - w_I, \bdd{\theta} - \bdd{\theta}_I) 
		\rangle - t^2( \bdd{\gamma} -  \bdd{\gamma}_I, \bdd{\eta} ) + \langle 
		\bdd{\eta},  \bdd{\theta}_I - \bdd{R} \bdd{\theta}_I \rangle \qquad 
		\forall \bdd{\eta} \in \discretev{U}^h_{\Gamma}.
	\end{multline*}
	Applying \cref{eq:mr-mixed-form-gen-fem-bad-stability} then gives
	\begin{multline*}
		\| w_h - w_I \|_1 + \| \bdd{\theta}_{h} - \bdd{\theta}_I \|_1 + 
		(\beta_{\bdd{R}} + t) \| \bdd{\gamma}_h - \bdd{\gamma}_I 
		\|_{\dual{(\discretev{U}^h_{\Gamma})}} + t \| \bdd{\gamma}_h - 
		\bdd{\gamma}_I\| \\
		\leq \frac{ C C_{\bdd{R}} }{ \beta_{\bdd{R}} + t } \bigg( \| w - w_I\|_1 
		+ \|\bdd{\theta} - \bdd{\theta}_I\|_1 + \| \bdd{\gamma} - 
		\bdd{\gamma}_I\|_{\dual{(\discretev{U}^h_{\Gamma})}} + t \|\bdd{\gamma} - 
		\bdd{\gamma}_I\| \\ 
		+ \|\bdd{\theta}_I - \bdd{R} \bdd{\theta}_I\|_{\rot} + \sup_{ \bdd{\psi} 
			\in \discretev{V}^h_{\Gamma} } \frac{\langle \bdd{\gamma}_I, 
			\bdd{\psi} - 
			\bdd{R} \bdd{\psi} \rangle }{ \| \bdd{\psi} \|_1 } \bigg).
	\end{multline*}
	The triangle inequality then gives
	\begin{multline*}
		\| w - w_h \|_1 + \| \bdd{\theta} - \bdd{\theta}_{h} \|_1 + 
		(\beta_{\bdd{R}} + t) \| \bdd{\gamma} - \bdd{\gamma}_h 
		\|_{\dual{(\discretev{U}^h_{\Gamma})}} + t \| \bdd{\gamma} - 
		\bdd{\gamma}_h\| \\
		\leq \frac{ C C_{\bdd{R}} }{ \beta_{\bdd{R}} + t } \bigg( \| w - w_I\|_1 
		+ \|\bdd{\theta} - \bdd{\theta}_I\|_1 + \| \bdd{\gamma} - 
		\bdd{\gamma}_I\|_{\dual{(\discretev{U}^h_{\Gamma})}} + t \|\bdd{\gamma} - 
		\bdd{\gamma}_I\| \\ 
		+  \|\bdd{\theta}_I - \bdd{R} \bdd{\theta}_I\|_{\rot} + \sup_{ \bdd{\psi} 
		\in \discretev{V}^h_{\Gamma} } \frac{\langle \bdd{\gamma}_I, \bdd{\psi} - 
		\bdd{R} \bdd{\psi} \rangle }{ \| \bdd{\psi} \|_1 }  \bigg).
	\end{multline*}
	We now choose $\bdd{\gamma}_I = \bdd{P} \bdd{\gamma}$ so that $\| 
	\bdd{\gamma} - \bdd{P} \bdd{\gamma}\|_{\dual{(\discretev{U}^h_{\Gamma})}} = 
	0$ by 
	\cref{eq:dual-rot-x-norm}, and inequality \cref{eq:apriori-take-1} then 
	follows from taking the infimum over all $(w_I, \bdd{\theta}_I) \in 
	\discrete{W}^h_{\Gamma} \times \discretev{V}^h_{\Gamma}$.
	
	If \ref{mitc:red-commute}--\ref{mitc:harmonic-inf-sup-discrete} holds, then 
	the lower bound for $\beta_{\bdd{R}}$ follows from 
	\cref{lem:invert-xi-discrete}.
\end{proof}

\subsection{Proofs of \cref{lem:apriori-take-0} and 
	\cref{lem:apriori-take-0-nored}}
\label{sec:proof-apriori-0} 

\cref{lem:apriori-take-0} follows from \cref{thm:apriori-error-gen}. Similarly,
\cref{lem:apriori-take-0-nored} follows from  \cref{thm:apriori-error-gen} on 
choosing $\discretev{U}^h_{\Gamma}$ as in \cref{eq:image-xi}, 
$\discrete{Q}^h_{\Gamma}$ as in \cref{eq:q-nored-choice} and $\bdd{R}$ as in 
\cref{eq:no-r-r-def}. \hfill \qedsymbol

\section{Summary}
\label{sec:conclusion}

We have shown that the de Rham complex provides a road map for developing schemes 
for the RM plate in the case of non-simply connected domains with mixed boundary 
conditions. In particular, conditions 
\ref{mitc:red-commute}--\ref{mitc:harmonic-inf-sup-discrete} materialized 
naturally by considering the quasi-optimal approximation properties 
\cref{eq:ker-opt-approx-true-kerxi}. 

Interestingly, adopting the de Rham complex perspective highlights the need for a 
new condition \ref{mitc:harmonic-inf-sup-discrete} in the case of problems on 
non-simply connected domains or with mixed boundary conditions. The absence of 
condition \ref{mitc:harmonic-inf-sup-discrete} in the existing literature may be 
attributed to the assumption that the plate is clamped and simply supported in 
nearly all of the existing literature 
\cite{Ain02,Arn93,Arn07,Arn05,Arnold89,Arn97RM,%
	BatheBrezzi85,BatheBrezzi87,Behrens11,Bosing10,%
	Bramble98,Brezzi89,Brezzi86,Brezzi1991MITC,%
	Calo14,Carstensen11,Chen25,Chinosi06,%
	Veiga13,DiPietro22,Duran92,Falk2000,Falk2008,%
	Hansbo11,Huang24,Huoyuan18,Lovadina05,%
	Pechstein17,Peisker92,Sky23,StenbergSuri97,%
	Ye20,Zhang23}, 
that the plate is simply supported with mixed boundary conditions 
\cite{Amara02RM,Veiga19,Veiga04,Veiga12,Fuhrer23RM,%
	Pitkaranta96,Pitkaranta00,SuriBabSch95},
or that the solution has high regularity \cite{Veiga15,Hughes88}.
Verifying \ref{mitc:harmonic-inf-sup-discrete} theoretically brings new 
challenges and is not amenable to standard techniques. We derive an alternative 
set of sufficient conditions \ref{pmitc:gradient-cond}--\ref{pmitc:r-interp-edge} 
that are more easily verified and, in fact, satisfied by common families of 
locking-free elements for clamped and simply connected plates.

\appendix
\renewcommand{\thesection}{\Alph{section}} % same as above
\makeatletter
\def\@seccntformat#1{\@ifundefined{#1@cntformat}%
	{\csname the#1\endcsname.\hspace{0.5em}}%    default
	{\csname #1@cntformat\endcsname}}%  enable individual control
\newcommand\section@cntformat{\appendixname\ \thesection.\hspace{0.5em}}
\makeatother

\section{Proof of \cref{lem:inf-sup-equivalences}}
\label{sec:proof-inf-sup-equivalences}

Note that 
\cref{mot:mitc:red-commute,mot:mitc:stokes-inf-sup-discrete,mot:mitc:harmonic-inf-sup-discrete}
 imply that \ref{mitc:red-commute}, 
\ref{mitc:stokes-inf-sup-discrete}--\ref{mitc:harmonic-inf-sup-discrete} hold. 
Then, one can repeat the proof of \cref{lem:invert-xi-discrete} replacing 
condition \ref{mitc:red-bounded} with \cref{mot:mitc:red-bounded} to show that 
$\beta_{\bdd{R}} \geq C \beta_{\rot} \beta_{\harmonic{H}} M_{\bdd{R}}^{-2}$.  \\

\noindent Now suppose that the inf-sup condition \cref{eq:invert-xi-inf-sup-u} 
holds. 

\noindent \textbf{Step 1: Inverting $\rot : \discretev{U}^h_{\Gamma} \to 
\discrete{Q}^h_{\Gamma}$. } Let $q \in \discrete{Q}^h_{\Gamma}$ be given. We now 
show that there exists $\bdd{\eta} \in \discretev{U}^h_{\Gamma}$ such that
\begin{align}
	\label{eq:proof:invert-rot-u}
	\rot \bdd{\eta} = q \quad \text{and} \quad \|\bdd{\eta}\|_{\rot} \leq C 
	M_{\bdd{R}} \|q\|.
\end{align}
Thanks to \cite[Lemma A.3]{AinCP23KirchI}, there exists $\bdd{\theta} \in 
\bdd{\Theta}_{\Gamma}(\Omega)$ such that
\begin{align*}
	(\rot \bdd{\theta}, r) = (q, r) \qquad \forall q \in \discrete{Q}^h_{\Gamma} 
	\quad \text{and} \quad \| \bdd{\theta} \|_1 \leq  C \| q \|.
\end{align*}
Then, $\bdd{\eta} := \bdd{R} \bdd{\theta}$ satisfies  $\rot \bdd{\eta} = q$ since
\begin{align*}
	(\rot \bdd{R} \bdd{\theta}, r) = (P \rot \bdd{\theta}, r) = (\rot 
	\bdd{\theta}, r) = (q, r) \qquad \forall r \in \discrete{Q}^h_{\Gamma},
\end{align*}
where we used \cref{eq:R-L2-project-commute}. Thus,
\begin{align*}
	\| \bdd{\eta}\|_{\rot} \leq M_{\bdd{R}} \|\bdd{\theta}\|_{1} \leq C 
	M_{\bdd{R}}  \|q\|.
\end{align*}

\noindent \textbf{Step 2: Inverting $\bdd{\Xi}_{\bdd{R}}$. } Since $\image 
\bdd{\Xi}_{\bdd{R}}^h \subset \discretev{U}^h_{\Gamma}$ and the inf-sup condition 
\cref{eq:invert-xi-inf-sup-u} holds, we have that $\image \bdd{\Xi}_{\bdd{R}}^h 
\subset \discretev{U}^h_{\Gamma}$ and for every $\bdd{\gamma} \in 
\discretev{U}^h_{\Gamma}$, there exists $w \in \discrete{W}^h_{\Gamma}$ and 
$\bdd{\theta} \in \discretev{V}^h_{\Gamma}$ such that 
\begin{align}
	\label{eq:proof:invert-xi-discrete-inf-sup-equiv}
	\bdd{\Xi}_{\bdd{R}}(w, \bdd{\theta}) = \bdd{\gamma} \quad \text{and} \quad \| 
	w \|_1 + \| \bdd{\theta} \|_1 \leq \beta_{\bdd{R}}^{-1}   \| 
	\bdd{\gamma}\|_{\rot}.
\end{align}	

\noindent \textbf{Step 3: \cref{mot:mitc:stokes-inf-sup-discrete}. } Let $q \in 
\discrete{Q}^h_{\Gamma}$. Thanks to the Riesz representation theorem, there 
exists $\bdd{\gamma} \in \discretev{U}^h_{\Gamma}$ such that
\begin{align*}
	(\bdd{\gamma}, \bdd{\eta})_{\rot} = (q, \rot \bdd{\eta}) \qquad \forall 
	\bdd{\eta} \in \discretev{U}^h_{\Gamma} \quad \text{and} \quad 
	\|\bdd{\gamma}\|_{\rot} \leq \|q\|.
\end{align*}
Let $\bdd{\eta} \in \discretev{U}^h_{\Gamma}$ satisfying 
\cref{eq:proof:invert-rot-u} be as in Step 1. Then, there holds
\begin{align*}
	\|q\| = \frac{(q, \rot \bdd{\eta})}{\|q\|} = \frac{(\bdd{\gamma}, 
	\bdd{\eta})_{\rot} }{\|q\|} \leq \frac{\|\bdd{\gamma}\|_{\rot} \| 
	\bdd{\eta}\|_{\rot} }{\|q\|} \leq C M_{\bdd{R}} \|\bdd{\gamma}\|_{\rot}. 
\end{align*}
Similarly, let $w \in \discrete{W}^h_{\Gamma}$ and $\bdd{\theta} \in 
\discretev{V}^h_{\Gamma}$ satisfying 
\cref{eq:proof:invert-xi-discrete-inf-sup-equiv} be as in Step 2. Then,
\begin{align*}
	(\rot \bdd{R} \bdd{\theta}, q) = (\rot \bdd{\gamma}, q) = \| 
	\bdd{\gamma}\|_{\rot}^2 \geq C M_{\bdd{R}}^2 \|q\|^2 \quad \text{and} \quad 
	\|\bdd{\theta}\|_1 \leq \beta_{\bdd{R}}^{-1} \|\bdd{\gamma}\|_{\rot} \leq 
	\beta_{\bdd{R}}^{-1}  \|q\|,
\end{align*}
and so \cref{mot:mitc:stokes-inf-sup-discrete} holds with $\beta_{\rot} \geq C 
\beta_{\bdd{R}} M_{\bdd{R}}^{-2}$ as
\begin{align*}
	\sup_{\bdd{0} \neq \bdd{\psi} \in \discretev{V}^h_{\Gamma}} \frac{ (\rot 
	\bdd{\psi}, q) }{ \|\bdd{\psi}\|_1} \geq  \frac{ (\rot \bdd{\theta}, q) }{ 
	\|\bdd{\theta}\|_1} \geq \frac{C \beta_{\bdd{R}}}{M_{\bdd{R}}^2} \|q\|.
\end{align*}

\noindent \textbf{Step 4: \cref{mot:mitc:harmonic-inf-sup-discrete}. } Let 
$\bdd{\gamma} \in \harmonic{H}^h_{\Gamma}$, and let  $w \in 
\discrete{W}^h_{\Gamma}$ and $\bdd{\theta} \in \discretev{V}^h_{\Gamma}$ 
satisfying \cref{eq:proof:invert-xi-discrete-inf-sup-equiv} be as in Step 2. 
Then, $\rot \bdd{R} \bdd{\theta} = \rot \bdd{\Xi}_{\bdd{R}}(w, \bdd{\theta}) = 
\rot \bdd{\gamma} = 0$,
\begin{align*}
	(\bdd{R} \bdd{\theta}, \bdd{\gamma}) = (\bdd{\Xi}_{\bdd{R}}(w, \bdd{\theta}), 
	\bdd{\gamma}) = \|\bdd{\gamma}\|^2, \quad \text{and} \quad \|\bdd{\theta}\|_1 
	\leq \beta_{\bdd{R}}^{-1} \|\bdd{\gamma}\|_{\rot} = \beta_{\bdd{R}}^{-1} 
	\|\bdd{\gamma}\|.
\end{align*}
\cref{mot:mitc:harmonic-inf-sup-discrete} now follows with $\beta_{\harmonic{H}} 
\geq \beta_{\bdd{R}}$.	\hfill \qedsymbol

\section{Proofs of results in \cref{sec:demist-harm}}

\subsection{Proof of \cref{lem:harmonic-complex-cont}}
\label{sec:proof-harmonic-complex-cont}

\textbf{Step 1: $\circop_{\mathfrak{I}^*} : \harmonic{H}_{\Gamma}(\Omega) \to 
\mathbb{R}^{|\mathfrak{I}^*|}$ is surjective. } Let $\vec{\kappa} \in 
\mathbb{R}^{|\mathfrak{I}^*|}$ be given and let $\{ \Gamma_f^{(j)} \}$, $j \in 
\{1, \ldots, N_f\}$ denote the $N_f$ connected components of $\Gamma_f$. Let 
$\vec{\omega} \in \mathbb{R}^{N_f}$ be chosen so that
\begin{align}
	\label{eq:proof:omega-choice}
	\sum_{ \substack{ 1 \leq j \leq N_f \\ \Gamma_f^{(j)} \subset \partial 
	\Omega_i } } \omega_j = \kappa_i  \quad \forall i \in \mathfrak{I}^*, \quad 
	\sum_{j=1}^{N_f} \omega_i = 0, \quad \text{and} \quad |\vec{\omega}| \leq C 
	|\vec{\kappa}|.
\end{align}
Thanks to \cite[Theorem 8.2]{AinCP24KirchII}, there exists $\bdd{\theta} \in 
\bdd{\Theta}_{\Gamma}(\Omega)$ satisfying
\begin{align}
	\label{eq:proof:invert-circulation-cont}
	\rot \bdd{\theta} \equiv 0, \quad  \int_{\Gamma_f^{(j)}} 
	\unitvec{t}\cdot\bdd{\theta} \d{s} = \omega_j \quad \forall j \in \{1,\ldots, 
	N_f\}, \quad \text{and} \quad \|\bdd{\theta}\|_{1} \leq C |\vec{\omega}|.
\end{align}
Note that $\circop_{\mathfrak{I}^*} \bdd{\theta} = \vec{\kappa}$. Let 
$\harmonic{h} \in \harmonic{H}_{\Gamma}(\Omega)$ be given by
\begin{align*}
	(\harmonic{h}, \harmonic{g}) = (\bdd{\theta}, \harmonic{g}) \qquad \forall 
	\harmonic{g} \in \harmonic{H}_{\Gamma}(\Omega).
\end{align*}
Then, $\circop_{\mathfrak{I}^*} \harmonic{h} = \vec{\kappa}$ since $\harmonic{h} 
- \bdd{\theta} \in \grad H^1_{\Gamma}(\Omega)$, and so $\circop_{\mathfrak{I}^*} 
: \harmonic{H}_{\Gamma}(\Omega) \to \mathbb{R}^{|\mathfrak{I}^*|}$ is surjective.

\textbf{Step 2: $\grad \mathfrak{W}_{\Gamma}(\Omega) = \ker 
\circop_{\mathfrak{I}^*}$. } Let $\harmonic{h} \in \harmonic{H}_{\Gamma}(\Omega)$ 
satisfy $\circop_{\mathfrak{I}^*} \harmonic{h} = \vec{0}$. Then, for $i \in 
\mathfrak{I} \setminus \mathfrak{I}^*$,
\begin{align*}
	\circop_i \harmonic{h} = \int_{\Gamma} \unitvec{t} \cdot \harmonic{h} \d{s} - 
	\sum_{j \in \mathfrak{I}^*} \circop_j \harmonic{h} = \int_{\Omega} \rot 
	\harmonic{h} \d{\bdd{x}} = 0,
\end{align*}
and so $\circop_i \harmonic{h} = 0$ for all $i \in \{0,\ldots, H\}$. Thanks to 
\cite[p. 37 Theorem 3.1]{GiraultRaviart86}, there exists $v \in H^1(\Omega)$ such 
that $\grad v = \harmonic{h}$. Since $\harmonic{h} \in \hrotgamma$, 
$v|_{\Gamma_{cs}^{(i)}} \in \mathbb{R}$ and so $w : = v - v|_{\Gamma_{cs}^{(0)}}$ 
satisfies $\grad w = \harmonic{h}$, $w \in \mathcal{W}_{\Gamma}(\Omega)$, and
\begin{align*}
	(\grad w, \grad u) = (\harmonic{h}, \grad u) = 0 \qquad \forall u \in 
	H^1_{\Gamma}(\Omega).
\end{align*}
Thus, $w \in \mathfrak{W}_{\Gamma}(\Omega)$, and so $\grad 
\mathfrak{W}_{\Gamma}(\Omega) = \ker \circop_{\mathfrak{I}^*}$.

\textbf{Step 3: $\dim \harmonic{H}_{\Gamma}(\Omega) = |\mathfrak{I}^*| + N_{cs} - 
	1$. } Suppose that $w \in \mathfrak{W}_{\Gamma}(\Omega)$ and $\grad w = 
	\bdd{0}$. 
Thanks to Poincar\'{e}'s inequality, $\|w\|_1 \leq C_P \|\grad w\| = 0$ and so $w 
\equiv 0$. Consequently, $\ker (\grad : \mathfrak{W}_{\Gamma}(\Omega) \to 
\harmonic{H}_{\Gamma}(\Omega) ) = \{0\}$, and 
the complex \cref{eq:harmonic-complex-cont} is exact. The splitting lemma 
\cite[p. 147]{Hatcher02} then shows that $\harmonic{H}_{\Gamma}(\Omega)$ is 
isomorphic to the direct sum $\mathfrak{W}_{\Gamma}(\Omega) \oplus 
\mathbb{R}^{|\mathfrak{I}^*|}$. A simple consequence of the trace theorem is that 
$\dim \mathfrak{W}_{\Gamma}(\Omega) = N_{cs} - 1$, and so 
\cref{eq:dim-harmonic-forms} follows. \hfill \qedsymbol

\subsection{Auxiliary results for \cref{thm:harm-conds-harm-inf-sup}}

\begin{lemma}
	\label{lem:invert-circ}
	Suppose that \ref{mitc:red-commute} and 
	\ref{harm:v-fortin}--\ref{harm:r-interp-edge} hold. Then, for every 
	$\vec{\kappa} \in \mathbb{R}^{|\mathfrak{I}^*|}$, 
	there exists $\bdd{\theta} \in \discretev{V}^h_{\Gamma}$ satisfying
	\begin{align}
		\label{eq:invert-circ-alt}
		\rot \bdd{R} \bdd{\theta} \equiv 0, \quad \circop_{\mathfrak{I}^*} 
		\bdd{R} \bdd{\theta} = \vec{\kappa}, \quad \text{and} \quad \| 
		\bdd{\theta} \|_1 \leq C C_F  |\vec{\kappa}|.
	\end{align}
\end{lemma}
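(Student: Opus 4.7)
The plan is to reduce the discrete inversion of the circulation map to the corresponding continuous inversion by applying the Fortin operator $\bdd{\Pi}_F$ from \ref{harm:v-fortin}.

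First, I would invoke the continuous construction used in Step~1 of the proof of \cref{lem:harmonic-complex-cont}: given $\vec{\kappa} \in \mathbb{R}^{|\mathfrak{I}^*|}$, there exists $\bdd{\theta}^* \in \bdd{\Theta}_{\Gamma}(\Omega)$ with
\begin{align*}
\rot \bdd{\theta}^* \equiv 0, \qquad \circop_{\mathfrak{I}^*} \bdd{\theta}^* = \vec{\kappa}, \qquad \|\bdd{\theta}^*\|_1 \leq C |\vec{\kappa}|.
\end{align*}
I would then set $\bdd{\theta} := \bdd{\Pi}_F \bdd{\theta}^* \in \discretev{V}^h_{\Gamma}$ and verify the three conclusions of the lemma in turn.

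The norm bound is immediate from the stability estimate in \ref{harm:v-fortin}: $\|\bdd{\theta}\|_1 \leq C_F \|\bdd{\theta}^*\|_1 \leq C C_F |\vec{\kappa}|$. For the $\rot$-free property of $\bdd{R}\bdd{\theta}$, I would chain the commuting diagram \ref{mitc:red-commute} with the rotation-preservation property from \ref{harm:v-fortin}:
\begin{align*}
\rot \bdd{R} \bdd{\theta} \;=\; P \rot \bdd{\theta} \;=\; P \rot \bdd{\Pi}_F \bdd{\theta}^* \;=\; P \rot \bdd{\theta}^* \;=\; 0.
\end{align*}
For the circulations, since each $\partial \Omega_i$ is resolved by the mesh, the functional $\circop_i$ decomposes as a sum of edge integrals, and I can apply \ref{harm:r-interp-edge} and then \ref{harm:v-fortin} edge-by-edge:
\begin{align*}
\circop_i \bdd{R} \bdd{\theta} = \sum_{e \subset \partial \Omega_i} \int_e \unitvec{t} \cdot \bdd{R} \bdd{\theta} \d{s} = \sum_{e \subset \partial \Omega_i} \int_e \unitvec{t} \cdot \bdd{\theta} \d{s} = \sum_{e \subset \partial \Omega_i} \int_e \unitvec{t} \cdot \bdd{\Pi}_F \bdd{\theta}^* \d{s} = \circop_i \bdd{\theta}^* = \kappa_i.
\end{align*}

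I do not anticipate a substantive obstacle: the argument is essentially a transcription of the continuous construction through $\bdd{\Pi}_F$, and every discrete hypothesis is used exactly once. The only place requiring mild care is recognising that the hypotheses have been carefully tailored so that $\bdd{R} \bdd{\Pi}_F$ transports both the pointwise edge moments (needed for circulations) and the $L^2$-projected rotation (needed for the $\rot$-free condition) from the continuous to the discrete side; this is precisely why the edge-preservation clauses in \ref{harm:v-fortin} and \ref{harm:r-interp-edge} were isolated as separate conditions rather than absorbed into the generic commuting-diagram hypothesis.
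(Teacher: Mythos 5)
Your proof is correct and follows essentially the same route as the paper: take the continuous rot-free field $\bdd{\theta}^*$ with the prescribed circulations (the paper re-derives it via $\vec{\omega}$ and \cite[Theorem 8.2]{AinCP24KirchII}, where you simply cite Step~1 of \cref{lem:harmonic-complex-cont}), apply $\bdd{\Pi}_F$, then use \ref{harm:r-interp-edge} for the circulations and \ref{mitc:red-commute} with \ref{harm:v-fortin} for the rot-free property. The bookkeeping and the order in which the hypotheses are applied also coincide with the paper's argument.
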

\begin{proof}
	Let $\vec{\kappa} \in \mathbb{R}^{|\mathfrak{I}^*|}$ be given. Let 
	$\vec{\omega} \in \mathbb{R}^{N_f}$ be chosen as in 
	\cref{eq:proof:omega-choice}. Thanks to \cite[Theorem 8.2]{AinCP24KirchII}, 
	there exists $\bdd{\psi} \in \bdd{\Theta}_{\Gamma}(\Omega)$ satisfying
	\begin{align*}
		\rot \bdd{\psi} \equiv 0, \quad  \int_{\Gamma_f^{(j)}} 
		\unitvec{t}\cdot\bdd{\psi} \d{s} = \omega_j \quad \forall j \in 
		\{1,\ldots, N_f\}, \quad \text{and} \quad \|\bdd{\psi}\|_{1} \leq C 
		|\vec{\kappa}|.
	\end{align*}
	Let $\bdd{\theta} := \bdd{\Pi}_F \bdd{\psi} \in \discretev{V}^h_{\Gamma}$, 
	where $\bdd{\Pi}_F$ is the operator in \ref{harm:v-fortin}, so that
	\begin{align}
		\label{eq:proof:invert-rot-circ-omega-alt}
		P \rot \bdd{\theta} \equiv 0, \quad \int_{\Gamma_f^{(i)}} \unitvec{t} 
		\cdot \bdd{\theta} \d{s} = \omega_j \quad \forall j \in \{1,\ldots, 
		N_f\}, 
		\quad \text{and} \quad \|\bdd{\theta}\|_1 \leq C C_F |\vec{\kappa}|,
	\end{align}
	and in particular, $\circop_{\mathfrak{I}^*} \bdd{\theta} = \vec{\kappa}$. 
	Thanks to \ref{harm:r-interp-edge}, there holds $\circop_{\mathfrak{I}^*} 
	\bdd{R} \bdd{\theta} = \circop_{\mathfrak{I}^*} \bdd{\theta} = \vec{\kappa}$. 
	The commuting diagram property \ref{mitc:red-commute} then gives $\rot 
	\bdd{R} \bdd{\theta} = P \rot \bdd{\theta} \equiv 0$.
\end{proof}

\begin{lemma}
	\label{lem:invert-trace}
	Suppose that \ref{mitc:red-commute}--\ref{mitc:red-bounded} and 
	\ref{harm:gradient-cond}--\ref{harm:r-interp-edge} hold. For every $\{ c_i 
	\}_{i=2}^{N_{cs}} \subset \mathbb{R}$, there exists $w \in \discrete{W}^h 
	\cap \mathcal{W}_{\Gamma}(\Omega)$ and $\bdd{\theta} \in 
	\discretev{V}_{\Gamma}^h$ satisfying
	\begin{subequations}
		\label{eq:invert-trace-alt}
		\begin{alignat}{2}
			\bdd{R} \bdd{\theta} &= \grad w, \qquad & & \\
			w|_{\Gamma_{cs}^{(i)}} &= c_i, \qquad & &2 \leq i \leq N_{cs}, \\
			\|\bdd{\theta}\|_1 + C_{\bdd{R}}^{-1} \|w\|_1 &\leq C C_F 
			\sum_{i=2}^{N_{cs}} |c_i|. \qquad & &
		\end{alignat}
	\end{subequations}	
\end{lemma}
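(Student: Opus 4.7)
The plan is to lift the scalar data $\{c_i\}_{i=2}^{N_{cs}}$ to a smooth scalar function $u$ on $\bar\Omega$, apply the Fortin operator of \ref{harm:v-fortin} to $\grad u$, and then invoke \ref{harm:gradient-cond} to recognize $\bdd{R} \bdd{\Pi}_F \grad u$ as a discrete gradient $\grad w$ with the prescribed trace values. Setting $c_1 := 0$, the first step is to build $u \in C^\infty(\bar\Omega)$ equal to $c_i$ in a neighborhood of each $\Gamma_{cs}^{(i)}$, using smooth cutoffs supported in disjoint neighborhoods of the components $\{\Gamma_{cs}^{(i)}\}$. The key point of the construction is that making $u$ \emph{locally constant} near $\Gamma_{cs}$ enforces simultaneously $\grad u|_{\Gamma_c} = \bdd{0}$ and $\unitvec{t}\cdot\grad u|_{\Gamma_s} = 0$, so $\bdd{\psi} := \grad u$ lies in $\bdd{\Theta}_{\Gamma}(\Omega)$ with $\|\bdd{\psi}\|_1 \leq C \sum_{i=2}^{N_{cs}} |c_i|$.

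Next, I would set $\bdd{\theta} := \bdd{\Pi}_F \bdd{\psi} \in \discretev{V}^h_{\Gamma}$. The three properties of $\bdd{\Pi}_F$ in \ref{harm:v-fortin} give $\|\bdd{\theta}\|_1 \leq C_F \|\bdd{\psi}\|_1$, $P \rot \bdd{\theta} = P \rot \grad u = 0$, and the matching edgewise tangential moments $\int_e \unitvec{t}\cdot\bdd{\theta} \d{s} = \int_e \unitvec{t}\cdot\grad u \d{s}$ for every $e \in \mathcal{E}_h$. Combining the edge-moment property with \ref{harm:r-interp-edge} and the commuting diagram in \ref{mitc:red-commute} then yields, for every edge $e$ and every boundary component $\partial \Omega_i$,
\[
\rot \bdd{R} \bdd{\theta} = P \rot \bdd{\theta} = 0, \qquad \int_e \unitvec{t} \cdot \bdd{R} \bdd{\theta} \d{s} = \int_e \unitvec{t} \cdot \grad u \d{s}, \qquad \circop_i \bdd{R}\bdd{\theta} = \int_{\partial \Omega_i} \unitvec{t}\cdot\grad u \d{s} = 0,
\]
the last equality because $u$ is single-valued in $H^1$. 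In particular $\circop_{\mathfrak{I}^*} \bdd{R}\bdd{\theta} = \vec{0}$, so hypothesis \ref{harm:gradient-cond} produces $w \in \discrete{W}^h \cap \mathcal{W}_{\Gamma}(\Omega)$ satisfying $\grad w = \bdd{R} \bdd{\theta}$.

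To pin down the trace values, for each $2 \leq i \leq N_{cs}$ I would choose a path $\gamma_i$ in the $1$-skeleton of $\mathcal{T}_h$ joining a vertex of $\Gamma_{cs}^{(1)}$ to a vertex of $\Gamma_{cs}^{(i)}$ (which exists because the skeleton of a conforming triangulation of the connected domain $\Omega$ is connected). Integrating $\grad w = \bdd{R}\bdd{\theta}$ along $\gamma_i$ edge-by-edge and using the edgewise identity above gives $w|_{\Gamma_{cs}^{(i)}} - w|_{\Gamma_{cs}^{(1)}} = u|_{\Gamma_{cs}^{(i)}} - u|_{\Gamma_{cs}^{(1)}} = c_i$, and since $w|_{\Gamma_{cs}^{(1)}} = 0$ by definition of $\mathcal{W}_\Gamma(\Omega)$, the trace identity follows. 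The Fortin step gives $\|\bdd{\theta}\|_1 \leq C C_F \sum_i |c_i|$, while a Poincar\'e inequality on $\mathcal{W}_\Gamma(\Omega)$ (valid since $w$ vanishes on $\Gamma_{cs}^{(1)}$) combined with the $\bdd{L}^2$-boundedness \ref{mitc:red-bounded} yields $\|w\|_1 \leq C \|\grad w\| = C \|\bdd{R} \bdd{\theta}\| \leq C C_{\bdd{R}} \|\bdd{\theta}\| \leq C C_{\bdd{R}} C_F \sum_i |c_i|$, from which the stated estimate follows.

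The only genuine obstacle in the argument is the construction of the lift $u$, which must be smooth enough for $\bdd{\Pi}_F$ to act on $\grad u$ \emph{and} compatible with the essential boundary conditions built into $\bdd{\Theta}_\Gamma(\Omega)$ (namely $\grad u|_{\Gamma_c} = \bdd{0}$ and $\unitvec{t}\cdot\grad u|_{\Gamma_s}=0$); both requirements are resolved cleanly by insisting $u$ be locally constant in a neighborhood of each $\Gamma_{cs}^{(i)}$. Everything else amounts to careful bookkeeping, transporting rot, edge moments and circulations from $\grad u$ through $\bdd{\Pi}_F$ and $\bdd{R}$.
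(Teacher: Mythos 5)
Your proof is correct and follows essentially the same route as the paper's: lift the trace data to a scalar function, apply $\bdd{\Pi}_F$ to its gradient, transport the edge tangential moments through $\bdd{R}$ via \ref{harm:r-interp-edge}, invoke \ref{harm:gradient-cond} to obtain $w$, and identify the traces by telescoping edge integrals along a path in the mesh skeleton. The only difference is cosmetic: you build the lift explicitly as a locally constant smooth cutoff function, whereas the paper cites an $H^2$ extension lemma producing $\tilde{w}$ with $\tilde{w}|_{\Gamma_{cs}^{(i)}}=c_i$ and $\partial_n\tilde{w}|_{\Gamma_{cs}}=0$; both yield $\grad u\in\bdd{\Theta}_{\Gamma}(\Omega)$ with the required bound.
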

\begin{proof}
	Let $\{ c_i \}_{i=2}^{N_{cs}}$ be given and set $c_1 := 0$. Thanks to 
	\cite[Lemma 8.1]{AinCP24KirchII}, there exists $\tilde{w} \in H^2(\Omega)$ 
	satisfying
	\begin{align}
		\label{eq:proof:h2-interp-trace}
		\tilde{w}|_{\Gamma_{cs}^{(i)}} = c_i, \quad i \in \{1,\ldots, N_{cs}\}, 
		\quad \partial_n \tilde{w}|_{\Gamma_{cs}} = 0, \quad \text{and} \quad 
		\|\tilde{w}\|_2 \leq C \sum_{i=2}^{N_{cs}} |c_i|.
	\end{align}
	Let $\bdd{\theta} := \bdd{\Pi}_F \grad \tilde{w} \in 
	\discretev{V}^h_{\Gamma}$, where $\bdd{\Pi}_F$ is the operator in 
	\ref{harm:v-fortin}, so that
	\begin{align*}
		P \rot \bdd{\theta} \equiv 0, \quad \int_{e} \unitvec{t} \cdot 
		(\bdd{\theta} - 
		\grad \tilde{w}) \d{s} = 0 \quad \forall e \in \mathcal{E}_h, \quad 
		\text{and} \quad \|\bdd{\theta}\|_1 \leq C C_{F}  \sum_{i=2}^{N_{cs}} 
		|c_i|.
	\end{align*}
	Thanks to \ref{harm:r-interp-edge} and \ref{mitc:red-commute}, there holds
	\begin{align*}
		\rot \bdd{R} \bdd{\theta} \equiv 0 \quad \text{and} \quad \int_{e} 
		\unitvec{t} \cdot \bdd{R} \bdd{\theta} \d{s} = \int_{e} \unitvec{t} \cdot 
		\grad \tilde{w} \d{s} \qquad \forall e \in \mathcal{E}_h.
	\end{align*}
	Hence, $\circop_{\mathfrak{I}^*} \bdd{R} \bdd{\theta} = \vec{0}$. Thanks to 
	\ref{harm:gradient-cond}, $\bdd{R} \bdd{\theta} = \grad w$ for some $w \in 
	\discrete{W}^h \cap \mathcal{W}_{\Gamma}(\Omega)$.
	
	Now let $a \in \mathcal{V}_h$ lie on $\Gamma_{cs}^{(1)}$ and let $b \in 
	\mathcal{V}_h \setminus \{a\}$. Since the mesh forms a connected graph, there 
	exists a set of edges $\mathcal{E}_h^{ab} \subset \mathcal{E}_h$ that form a 
	path from $a$ to $b$. Moreover, 
	\begin{align*}
		w(b) = w(a) + \sum_{e \in \mathcal{E}_h^{ab}} \int_{e}  \unitvec{t} \cdot 
		\grad w \d{s} = \sum_{e \in \mathcal{E}_h^{ab}} \int_{e} \unitvec{t} 
		\cdot \bdd{R} \bdd{\theta} \d{s} = \sum_{e \in \mathcal{E}_h^{ab}}  
		\int_{e} \unitvec{t} \cdot \grad \tilde{w} \d{s},
	\end{align*}
	and so
	\begin{align*}
		w(b) = \sum_{e \in \mathcal{E}_h^{ab}} \int_{e} \unitvec{t} \cdot \grad 
		\tilde{w} \d{s} = \tilde{w}(a) + \sum_{e \in \mathcal{E}_h^{ab}} \int_{e} 
		\unitvec{t} \cdot \grad \tilde{w} \d{s} = \tilde{w}(b).
	\end{align*}
	For $i \in \{2,\ldots, N_{cs}\}$, we choose $b$ to lie on $\Gamma_{cs}^{(i)}$ 
	so that $w|_{\Gamma_{cs}^{(i)}} = w(b) = \tilde{w}(b) = c_{i}$. 
	\cref{eq:invert-trace-alt} now follows from Poincar\'{e}'s inequality 
	\cref{eq:poincare-inequality} and \ref{mitc:red-bounded}: $\|w\|_1 \leq C_P 
	\|\grad w\| = C_P \|\bdd{R} \bdd{\theta}\| \leq C C_P C_{\bdd{R}} 
	\|\bdd{\theta}\|_1$.
\end{proof}

\subsection{Proof of \cref{thm:harm-conds-harm-inf-sup}}
\label{sec:proof-harm-conds-harm-inf-sup}

\textbf{Step 1: \ref{mitc:stokes-inf-sup-discrete}. }
Let $q \in \discrete{Q}^h_{\Gamma} \subset L^2_{\Gamma}(\Omega)$ be given. Thanks 
to \cite[Lemma A.3]{AinCP23KirchI}, there exists $\bdd{\psi} \in 
\bdd{\Theta}_{\Gamma}(\Omega)$ such that $\rot \bdd{\psi} = q$ and 
$\|\bdd{\psi}\|_1 \leq 
C \|q\|$. Choosing $\bdd{\theta} = \bdd{\Pi}_F \bdd{\psi}$ then gives 
$\beta_{\rot} \geq C/C_F$.

\textbf{Step 2: Exactness of  \cref{eq:harmonic-forms-discrete-def}. } Let 
$\harmonic{h} \in \harmonic{H}^h_{\Gamma}$ satisfy $\circop_{\mathfrak{I}^*} 
\harmonic{h} \equiv 0$. Thanks to \ref{harm:gradient-cond}, $\harmonic{h} = \grad 
w$ for some $w \in \discrete{W}^h \cap \mathcal{W}_{\Gamma}(\Omega)$. Since 
$\harmonic{h} \in \harmonic{H}^h_{\Gamma}$, there holds
\begin{align*}
	(\grad w, \grad v) = (\harmonic{h}, \grad v) = 0 \ \forall v \in 
	\discrete{W}^h_{\Gamma},
\end{align*}
and so $w \in \mathfrak{W}^h_{\Gamma}$. 

Now let $\vec{\kappa} \in \mathbb{R}^{|\mathfrak{I}^*|}$ be given. As shown in 
the proof of \cref{lem:invert-circ}, there exists $\bdd{\theta} \in 
\discretev{V}^h_{\Gamma}$ such that $\circop_{\mathfrak{I}^*} = \vec{\kappa}$. 
$\bdd{\gamma} := \bdd{R} \bdd{\theta} \in \discretev{U}^h_{\Gamma}$ then 
satisfies $\circop_{\mathfrak{I}^*} \bdd{\gamma} =  \circop_{\mathfrak{I}^*} = 
\vec{\kappa}$ thanks to \ref{harm:r-interp-edge}. The exactness of 
\cref{eq:harmonic-forms-discrete-def} then follows. 

\textbf{Step 3: \ref{mitc:harmonic-inf-sup-discrete}. } Let $\harmonic{h} \in 
\harmonic{H}^h_{\Gamma}$ be given. Thanks to \cref{lem:invert-circ}, there exists 
$\bdd{\theta}_{\circop} \in \discretev{V}^h_{\Gamma}$ satisfying
\begin{align*}
	\rot \bdd{R} \bdd{\theta}_{\circop} \equiv 0, \quad \circop_{\mathfrak{I}^*} 
	\bdd{R} \bdd{\theta}_{\circop} = \circop_{\mathfrak{I}^*} \harmonic{h}, \quad 
	\text{and} \quad \|\bdd{\theta}_{\circop}\|_1 \leq C C_{F} \|\harmonic{h}\|,
\end{align*}
where we used the trace theorem. Consequently, \ref{harm:gradient-cond} shows 
that $\harmonic{h} - \bdd{R} \bdd{\theta}_{\circop} = \grad v$ for some $v \in 
\discrete{W}^h \cap \mathcal{W}_{\Gamma}(\Omega)$.  Thanks to 
\cref{lem:invert-trace}, there exists $\bdd{\theta}_{\mathrm{trace}} \in 
\discretev{V}^h_{\Gamma}$ and $w \in \discrete{W}^h \cap 
\mathcal{W}_{\Gamma}(\Omega)$ satisfying
\begin{align*}
	\bdd{R} \bdd{\theta}_{\mathrm{trace}} = \grad w, \quad w|_{\Gamma_{cs}} = 
	v|_{\Gamma_{cs}}, \quad \text{and} \quad \|\bdd{\theta}_{\mathrm{trace}} \|_1 
	\leq C C_{F} \|v\|_1,
\end{align*}
where we again used the trace theorem. 

Let $\bdd{\theta} := \bdd{\theta}_{\circop} + \bdd{\theta}_{\mathrm{trace}}$. By 
construction, $\harmonic{h} - \bdd{R} \bdd{\theta} = \grad (v - w) \in 
\discrete{W}^h_{\Gamma}$, and so
\begin{align*}
	(\bdd{R} \bdd{\theta}, \harmonic{h}) = \|\harmonic{h}\|^2 + (\grad (v-z-w), 
	\harmonic{h}) = \|\harmonic{h}\|^2
\end{align*}
and Poincar\'{e}'s inequality and \cref{eq:reduction-l2-approx} gives
\begin{align*}
	\|\bdd{\theta}\|_1 \leq C C_{F} \left( \|\harmonic{h}\| +  \|v\|_1 \right) 
	\leq C C_{F} C_{\bdd{R}} \|\harmonic{h}\|,
\end{align*}
and so $\beta_{\harmonic{H}} \geq C (C_{F} C_{\bdd{R}})^{-1}$.	 \hfill \qedsymbol

\section{Stability of singularly perturbed saddle point problems}

Let $V$, $Q$, and $W$ be Hilbert spaces with norms 
$\|\cdot\|_V$, $\|\cdot\|_{Q}$, and $\|\cdot\|_{W}$, and suppose that $W 
\subseteq Q$ is dense in $Q$ and continuously embedded in $Q$; i.e., there exists 
$C_W \geq 1$ such that $\|q\|_{Q} \leq C_W \|q\|_W$ for all $q \in Q$. Let $a : V 
\times V \to \mathbb{R}$ and $b : V \times Q \to \mathbb{R}$ be bounded bilinear 
forms with $a(\cdot,\cdot)$ positive semidefinite and consider the following 
singularly perturbed saddle point problem:  Find $(u, p) \in V \times W$ such that
\begin{subequations}
	\label{eq:generic-sing-perturb-saddle}
	\begin{alignat}{2}
		\label{eq:generic-sing-perturb-saddle-1}
		a(u, v) + b(v, p) &= F(v) \qquad & &\forall v \in V, \\
		\label{eq:generic-sing-perturb-saddle-2}
		b(u, q) - t^2 (p, q)_W &= G_1(q) + G_2(q) \qquad & &\forall q \in W,
	\end{alignat}
\end{subequations}
where $F \in \dual{V}$, $G_1 \in \dual{Q}$, $G_2 \in \dual{W}$, and $t \in (0, 
1]$.

Let $B: V \to \dual{Q}$ be the linear operator associated with the 
bilinear form $b$: $Bv = b(v, \cdot)$ for all $v \in V$. One consequence of 
\cite[Theorem 4.3.4]{BoffiBrezziFortin13} is that if there 
exist $\alpha > 0$ and $\beta > 0$ satisfying
\begin{align} \label{eq:generic-sing-perturb-coercive-infsup}
	\alpha \| v \|_{V}^2 \leq a(v, v) + \| Bv \|_{\dual{W}}^2 \qquad \forall v 
	\in V \quad \text{and} \quad \inf_{ 0 \neq q \in Q} \sup_{0 \neq v \in V} 
	\frac{b(v, q)}{ \|v\|_V \|q\|_Q } > \beta,
\end{align}
then there exists a unique solution to \cref{eq:generic-sing-perturb-saddle}, and 
the solution satisfies
\begin{align*}
	\|u\|_V + \|p\|_Q + t\|p\|_W \leq C \left( \|F\|_{\dual{V}} + 
	\|G_1\|_{\dual{Q}} + t^{-1} \|G_2\|_{\dual{W}} \right),
\end{align*}
where $C$ is independent of $t$. The following result gives the precise 
dependence of $C$ on on the constants $\alpha$, $\beta$, and $C_W$.

\begin{theorem}
	\label{thm:sing-perturb-saddle-stable}
	Let $V$, $Q$, and $W$ be Hilbert spaces as above and suppose that the bounded 
	bilinear forms 	$a(\cdot,\cdot)$ and $b(\cdot,\cdot)$ satisfy 
	\cref{eq:generic-sing-perturb-coercive-infsup}. Then, for every $F \in 
	\dual{V}$, $G_1 \in \dual{Q}$, $G_2 \in \dual{W}$, and 
	$t \in (0, 1]$, the unique solution to 
	\cref{eq:generic-sing-perturb-saddle} satisfies
	\begin{align}
		\label{eq:generic-sing-perturb-saddle-stability}
		\|u\|_V + \frac{\beta + t}{\|a\| C_W} \|p\|_{Q} + t\|p\|_{W}  \leq C 
		\frac{\|a\| }{\sqrt{\alpha}} \left( \|F\|_{\dual{V}} + \frac{C_W}{\beta + 
			t} \|G_1\|_{\dual{Q}} + \frac{1}{t} \|G_2\|_{\dual{W}} \right),
	\end{align}
	where $C > 0$ is a universal constant and
	\begin{align*}
		\|a\| := \sup_{0 \neq u \in V} \sup_{0 \neq v \in V} \frac{|a(u, 
			v)|}{\|u\|_V \|v\|_V}.
	\end{align*}
\end{theorem}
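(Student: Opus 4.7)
The plan is to combine three standard ingredients, tracked carefully to obtain the stated dependence on $\alpha$, $\beta$, $\|a\|$, $C_W$, and $t$: an energy identity obtained by testing the saddle system against itself, the inf-sup condition applied to \cref{eq:generic-sing-perturb-saddle-1} to control $\|p\|_{Q}$, and the combined coercivity hypothesis in \cref{eq:generic-sing-perturb-coercive-infsup} to control $\|u\|_{V}$ via the dual norm of $Bu$ extracted from \cref{eq:generic-sing-perturb-saddle-2}. Existence and uniqueness of $(u,p)$ are already granted by the cited Theorem 4.3.4 of Boffi--Brezzi--Fortin, so only the quantitative estimate is at stake.

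First, I would test \cref{eq:generic-sing-perturb-saddle-1} with $v = u$ and \cref{eq:generic-sing-perturb-saddle-2} with $q = p$ and subtract, yielding the energy identity
\begin{equation*}
a(u,u) + t^{2}\|p\|_{W}^{2} = F(u) - G_{1}(p) - G_{2}(p).
\end{equation*}
Next, reading \cref{eq:generic-sing-perturb-saddle-1} as the residual equation $b(v,p) = F(v) - a(u,v)$ and invoking the inf-sup hypothesis gives the preliminary bound $\beta\|p\|_{Q} \leq \|F\|_{\dual{V}} + \|a\|\,\|u\|_{V}$. From \cref{eq:generic-sing-perturb-saddle-2}, the functional $q \mapsto b(u,q)$ coincides with $t^{2}(p,\cdot)_{W} + G_{1} + G_{2}$ on $W$; using the embedding $\|q\|_{Q} \leq C_{W}\|q\|_{W}$ to interpret $G_{1} \in \dual{Q}$ as an element of $\dual{W}$ yields
\begin{equation*}
\|Bu\|_{\dual{W}} \leq t^{2}\|p\|_{W} + C_{W}\|G_{1}\|_{\dual{Q}} + \|G_{2}\|_{\dual{W}}.
\end{equation*}

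Substituting these into $\alpha\|u\|_{V}^{2} \leq a(u,u) + \|Bu\|_{\dual{W}}^{2}$, replacing $a(u,u)$ by its value from the energy identity, and applying Young's inequality with an $\varepsilon$ proportional to $\alpha/\|a\|^{2}$ (so cross terms of the form $\|F\|_{\dual{V}}\|u\|_{V}$ and $G_{i}(p)$ are absorbable into $\alpha\|u\|_{V}^{2}/\|a\|^{2}$ and $t^{2}\|p\|_{W}^{2}$ respectively, after using the inf-sup bound on $\|p\|_{Q}$ from Step~2) leads to a combined estimate of the form
\begin{equation*}
\tfrac{\alpha}{\|a\|^{2}}\|u\|_{V}^{2} + t^{2}\|p\|_{W}^{2} \lesssim \|F\|_{\dual{V}}^{2} + \Bigl(\tfrac{C_{W}}{\beta+t}\Bigr)^{2}\|G_{1}\|_{\dual{Q}}^{2} + \tfrac{1}{t^{2}}\|G_{2}\|_{\dual{W}}^{2},
\end{equation*}
which after taking square roots matches the stated $\|a\|/\sqrt{\alpha}$ prefactor. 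The left-hand-side weight $(\beta+t)\|p\|_{Q}/(\|a\|C_{W})$ is then assembled as the sum of $\beta\|p\|_{Q}/(\|a\|C_{W})$, controlled via the inf-sup estimate of Step~2 divided by $\|a\|C_{W}$, and $t\|p\|_{Q}/(\|a\|C_{W}) \leq t\|p\|_{W}/\|a\|$, controlled by the $t\|p\|_{W}$ bound through the embedding $\|p\|_{Q} \leq C_{W}\|p\|_{W}$.

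The main obstacle is the absorption step: the combined coercivity provides $\|Bu\|_{\dual{W}}^{2}$, which by the estimate above contains a term $t^{4}\|p\|_{W}^{2}$, while the energy identity supplies only $t^{2}\|p\|_{W}^{2}$ with the correct sign. Because $t \in (0,1]$ implies $t^{4} \leq t^{2}$, the offending term is at worst comparable to what is already controlled and can be absorbed by a small Young's inequality; the delicate bookkeeping of how the constants $\alpha$, $\beta$, $\|a\|$, $C_{W}$, and $t$ thread through each absorption is responsible for the explicit weights appearing in \cref{eq:generic-sing-perturb-saddle-stability}, but no new inf-sup argument is required beyond what is hypothesized.
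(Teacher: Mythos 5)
Your sketch has the right ingredients and correctly locates the danger, but the proposed fix for it does not work, and this is precisely where the estimate is won or lost. You substitute $a(u,u) = F(u) - G_1(p) - G_2(p) - t^2\|p\|_W^2$ from the energy identity directly into the coercivity bound $\alpha\|u\|_V^2 \leq a(u,u) + \|Bu\|_{\dual{W}}^2$, obtaining
\begin{equation*}
\alpha\|u\|_V^2 + t^2\|p\|_W^2 \;\leq\; F(u) - G_1(p) - G_2(p) + \|Bu\|_{\dual{W}}^2,
\end{equation*}
and then bound $\|Bu\|_{\dual{W}}^2 \leq (1+\delta)\,t^4\|p\|_W^2 + C(\delta)\bigl(C_W^2\|G_1\|_{\dual{Q}}^2 + \|G_2\|_{\dual{W}}^2\bigr)$. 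Since $(1+\delta)t^4 \leq (1+\delta)t^2$, for $t$ near $1$ the coefficient on the right exceeds the coefficient $t^2$ on the left for every choice of $\delta>0$, so moving the term across the inequality leaves a \emph{non-positive} coefficient on $\|p\|_W^2$. "A small Young's inequality" cannot rescue this: the offending term is not a small perturbation of $t^2\|p\|_W^2$, it equals it (up to the factor $1+\delta$) in the worst case. The paper's proof sidesteps this by never substituting the energy identity into the coercivity. It first derives the two separate inequalities
\begin{equation*}
a(u,u) + \tfrac{t^2}{2}\|p\|_W^2 \leq 2\epsilon\|a\|^2\|u\|_V^2 + \cdots
\qquad\text{and}\qquad
\|Bu\|_{\dual{W}}^2 \leq 2t^2\|p\|_W^2 + \cdots,
\end{equation*}
multiplies the first by a constant large enough (effectively $6$) that the $t^2\|p\|_W^2$ coming from the $\|Bu\|_{\dual{W}}^2$ bound is cancelled with positive margin, \emph{then} invokes $\alpha\|u\|_V^2 \leq a(u,u) + \|Bu\|_{\dual{W}}^2$ on the accumulated left-hand side. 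Building $a(u,u) + \|Bu\|_{\dual{W}}^2 + t^2\|p\|_W^2$ on the left before applying coercivity, rather than applying coercivity first and then eliminating $a(u,u)$, is not cosmetic; it is the step that closes the argument.

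A second, smaller issue: the stated weight $C_W/(\beta+t)$ on $\|G_1\|_{\dual{Q}}$ requires estimating $|G_1(p)|$ in two ways and taking the better one. Absorbing $G_1(p)$ into $t^2\|p\|_W^2$ gives a $C_W/t$ weight; absorbing it via the inf-sup estimate $\beta\|p\|_Q \leq \|F\|_{\dual{V}} + \|a\|\,\|u\|_V$ gives $1/\beta$. The paper computes both and uses $\min\{1/\beta,1/t\}\leq 2/(\beta+t)$. Your parenthetical absorbs $G_i(p)$ into $t^2\|p\|_W^2$ only and so, as written, delivers the weaker $C_W/t$ factor; the combination is what supplies the $\beta$-dependence in the final bound.
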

\begin{proof}
	To show 
	\cref{eq:generic-sing-perturb-saddle-stability}, we follow the arguments in 
	the proof of \cite[Theorem 4.3.4]{BoffiBrezziFortin13} and explicitly track 
	the dependence of terms on the various constants.
	
	Let $(u, p) \in V \times W$ satisfy \cref{eq:generic-sing-perturb-saddle}. 
	Taking $v=u$ in the first equation \cref{eq:generic-sing-perturb-saddle-1} 
	and subtracting the second equation \cref{eq:generic-sing-perturb-saddle-2} 
	with $q = p$, we obtain
	\begin{align*}
		a(u, u) + t^2 \|p\|_{W}^2 = F(v) - G_1(p) - G_2(p).
	\end{align*}
	If $\beta > 0$, then the first equation 
	\cref{eq:generic-sing-perturb-saddle-1} gives
	\begin{align}
		\label{eq:proof:sing-perturb-p-q-bound}
		\beta \|p\|_{Q} \leq \sup_{0 \neq v \in V} \frac{b(v, p)}{\|v\|_V} = 
		\sup_{0 \neq v \in V} \frac{F(v) - a(u, v) }{\|v\|_V} \leq 
		\|F\|_{\dual{V}} + \|a\| \|u\|_{V},
	\end{align}
	and so
	\begin{align*}
		| G_1(p)| \leq 
		\beta^{-1} \|G_1\|_{\dual{Q}} \left( \|F\|_{\dual{V}} + \|a\| \|u\|_{V} 
		\right) \leq \epsilon \|a\|^2 \|u\|_{V}^2 + \frac{1}{2} 
		\|F\|_{\dual{V}}^2 + \frac{2}{\epsilon \beta^2} \|G_1\|_{\dual{Q}}^2,
	\end{align*}
	where $0 < \epsilon < 1$ is to be chosen. Alternatively, we have
	\begin{align*}
		|G_1(p)| \leq \|G_1\|_{\dual{W}} \|p\|_{W} \leq \frac{4}{t^2} 
		\|G_1\|_{\dual{W}} + \frac{t^2}{4} \|p\|_{W} \leq  \frac{4C_W^2}{t^2} 
		\|G_1\|_{\dual{Q}}^2 + \frac{t^2}{4} \|p\|_{W} ,
	\end{align*}
	which then gives the following for $\beta \in [0, 1]$:
	\begin{align*}
		| G_1(p) |  \leq \epsilon \|a\|^2 \|u\|_{V}^2  + \frac{t^2}{4} \|p\|_{W} 
		+ \frac{1}{2} \|F\|_{\dual{V}}^2 + \frac{8 C_W^2}{\epsilon (\beta + 
		t)^2}  \|G_1\|_{\dual{Q}}^2,
	\end{align*}
	where we used
	\begin{align*}
		\min\left\{ \frac{1}{\beta}, \frac{1}{t} \right\} = \frac{1}{\max\{\beta, 
		t\}} \leq \frac{2}{\beta + t}
	\end{align*}
	Thanks to the bound
	\begin{align*}
		| G_2(p)| \leq \|G_2\|_{\dual{W}} \| p \|_W \leq \frac{4}{t^2} 
		\|G_2\|_{\dual{W}}^2 + \frac{t^2}{4} \|p\|_{W}^2,
	\end{align*}
	we obtain
	\begin{align*}
		a(u, u) + \frac{t^2}{2} \|p\|_{W}^2 \leq 2 \epsilon \|a\|^2 \|u\|_{V}^2   
		+ \frac{1}{\epsilon} \|F\|_{\dual{V}}^2 + \frac{8 C_W^2}{\epsilon (\beta 
		+ t)^2}  \|G_1\|_{\dual{Q}}^2 + \frac{4}{t^2} \|G_2\|_{\dual{W}}^2.
	\end{align*}
	The second equation \cref{eq:generic-sing-perturb-saddle-2} means that $p = 
	t^{-2} \mathfrak{R}_{W} ( Bu - G_1 - G_2 )$, where $\mathfrak{R}_W : \dual{W} 
	\to W$ is the Riesz operator, and so
	\begin{align*}
		\| Bu \|_{\dual{W}}^2 &\leq 2 \|Bu - G_1 - G_2 \|_{\dual{W}}^2 + 2\|G_1 + 
		G_2 \|_{\dual{W}}^2 \\
		&\leq 2t^{-2} \|Bu - G_1 - G_2 \|_{\dual{W}}^2  + 4\|G_1\|_{\dual{W}}^2 + 
		4\|G_2\|_{\dual{W}}^2 \\
		&\leq  2 t^2 \|p\|_W^2 + 4C_W^2\|G_1\|_{\dual{Q}}^2 + 
		4\|G_2\|_{\dual{W}}^2.
	\end{align*}
	Collecting results then gives
	\begin{multline*}
		a(u, u) + \| Bu \|_{\dual{W}}^2 + t^2 \|p\|_{W}^2 \\
		\leq 12 \epsilon \|a\|^2 \|u\|_{V}^2 + C \left( \frac{1}{\epsilon} 
		\|F\|_{\dual{V}}^2 + \frac{C_W^2}{\epsilon(\beta + t)^2} 
		\|G_1\|_{\dual{Q}}^2 + \frac{1}{t^2} \|G_2\|_{\dual{W}}^2 \right).  
	\end{multline*}
	Applying \cref{eq:generic-sing-perturb-coercive-infsup} and choosing 
	$\epsilon = 1/(24 \|a\|^2)$, we obtain
	\begin{align*}
		\|u\|_{V} + t \|p\|_{W} \leq C \frac{\|a\|}{\sqrt{\alpha}} \left( 
		\|F\|_{\dual{V}} + \frac{C_W}{\beta + t} \|G_1\|_{\dual{Q}} + \frac{1}{t} 
		\|G_2\|_{\dual{W}}  \right).
	\end{align*}
	Inequality \cref{eq:generic-sing-perturb-saddle-stability} now follows from 
	\cref{eq:proof:sing-perturb-p-q-bound} and that
	\begin{align*}
		\frac{\beta + t}{C_W} \| p \|_{ Q } \leq \beta \| p \|_{Q} +  t \|p\|_{W}.
	\end{align*}
\end{proof}

\section{Fortin operators}
\label{sec:proof-v-fortin}

We begin with an auxiliary interpolant commonly used as the Fortin operator for 
the $\bm{\mathcal{P}}^2 \times \mathcal{P}^0$ Stokes element \cite[Proposition 
8.4.3]{BoffiBrezziFortin13} adapted for mixed boundary conditions.
\begin{lemma}
	\label{lem:v-fortin-p2}
	Let $\discretev{P}_2 := \{ \bdd{\psi} \in \bdd{C}(\Omega) : \bdd{\psi}|_{K} 
	\in \mathcal{P}_{2}(K)^2 \ \forall K \in \mathcal{T}_h \}$.  Then, there 
	exists a linear operator $\bdd{\Pi}_{2} : \bdd{H}^1(\Omega) \to 
	\discretev{P}_2$ and constant $C_2 \geq 1$ independent of $h$ such that for 
	all $\bdd{\psi} \in \bdd{H}^1(\Omega)$:
	\begin{subequations}
		\label{eq:v-fortin-p2}
		\begin{alignat}{2}
			\label{eq:v-fortin-p2-1}
			\int_{e} \bdd{\Pi}_2 \bdd{\psi} \d{s} &= \int_{e} \bdd{\psi} \d{s} 
			\qquad & & \forall e \in 
			\mathcal{E}_h, \\
			\label{eq:v-fortin-p2-2}
			\int_{K} \rot \bdd{\Pi}_2 \bdd{\psi} \d{\bdd{x}} &= \int_{K} \rot 
			\bdd{\psi} \d{\bdd{x}} \qquad & 
			&\forall K \in \mathcal{T}_h, \\
			\label{eq:v-fortin-p2-3}
			\|\bdd{\Pi}_2 \bdd{\psi} \|_1 &\leq C_2 \|\bdd{\psi}\|_{1}, \qquad & 
			& \\
			\label{eq:v-fortin-p2-4}
			\bdd{\psi} \in \bdd{\Theta}_{\Gamma}(\Omega) &\implies \bdd{\Pi}_2 
			\bdd{\psi} \in \discretev{P}_2  \cap \bdd{\Theta}_{\Gamma}(\Omega). 
			\qquad & &
		\end{alignat}
	\end{subequations}   
\end{lemma}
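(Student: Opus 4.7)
The plan is to construct $\bdd{\Pi}_2$ as a Scott--Zhang-type interpolant into $\discretev{P}_2$ using its Lagrange nodal basis (vertex values and edge-midpoint values) and to bake \cref{eq:v-fortin-p2-1} directly into the definition of the edge-midpoint DOFs. Condition \cref{eq:v-fortin-p2-2} then comes for free: Stokes' theorem applied to $\bdd{\psi} \in \bdd{H}^1(\Omega)$ and $\bdd{\Pi}_2\bdd{\psi} \in \discretev{P}_2$ gives $\int_K \rot \bdd{\eta}\,d\bdd{x} = \sum_{e \subset \partial K} \pm \int_e \unitvec{t} \cdot \bdd{\eta}\,ds$ for each, and the edge integrals coincide by \cref{eq:v-fortin-p2-1}.

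First I would define the vertex values $\bdd{\Pi}_2 \bdd{\psi}(v)$ via a boundary-aware local averaging: for an interior vertex $v$, evaluate at $v$ the $L^2$-projection of $\bdd{\psi}$ onto $\mathcal{P}_2(K_v)^2$ on any triangle $K_v \ni v$; for $v \in \overline{\Gamma}_c$, use instead the $L^2(e_v)$-projection of $\bdd{\psi}|_{e_v}$ onto $\mathcal{P}_2(e_v)^2$ for some edge $e_v \subset \Gamma_c$ with $v \in \overline{e}_v$, so that $\bdd{\Pi}_2 \bdd{\psi}(v) = \bdd{0}$ whenever $\bdd{\psi}|_{\Gamma_c} = \bdd{0}$; for $v \in \overline{\Gamma}_s \setminus \overline{\Gamma}_c$, split into tangential and normal components, taking the tangential part from a local projection along an edge $e_v \subset \Gamma_s$ and the normal part from a triangle projection. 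These choices guarantee that the vertex values respect the essential conditions defining $\bdd{\Theta}_{\Gamma}(\Omega)$.

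Next, the edge-midpoint values are fixed to enforce \cref{eq:v-fortin-p2-1}. Since Simpson's rule is exact on $\mathcal{P}_2(e)$, for each edge $e$ with endpoints $v_1,v_2$ and midpoint $m_e$ the identity $\int_e p\,ds = \tfrac{|e|}{6}(p(v_1) + 4 p(m_e) + p(v_2))$ inverts to give
\begin{align*}
\bdd{\Pi}_2 \bdd{\psi}(m_e) := \tfrac{3}{2|e|}\int_e \bdd{\psi}\,ds - \tfrac{1}{4}\bigl( \bdd{\Pi}_2 \bdd{\psi}(v_1) + \bdd{\Pi}_2 \bdd{\psi}(v_2) \bigr),
\end{align*}
which is well-defined for $\bdd{\psi} \in \bdd{H}^1(\Omega)$ by the trace theorem and forces \cref{eq:v-fortin-p2-1} to hold. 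Property \cref{eq:v-fortin-p2-4} then follows edge-by-edge: if $e \subset \Gamma_c$ and $\bdd{\psi}|_{\Gamma_c} = \bdd{0}$ all three terms on the right vanish, and the analogous cancellation holds for the tangential component on edges $e \subset \Gamma_s$.

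Finally, the $H^1$-stability \cref{eq:v-fortin-p2-3} follows by the standard Scott--Zhang scaling argument: on a reference patch each vertex DOF is a bounded linear functional on $\bdd{H}^1$ (via $H^1 \hookrightarrow C^0$ in one dimension), and each edge-midpoint DOF is controlled by the vertex DOFs together with $\int_e \bdd{\psi}$; the local estimate $\|\bdd{\Pi}_2 \bdd{\psi}\|_{1, K} \leq C\|\bdd{\psi}\|_{1,\omega_K}$ on a finite patch $\omega_K$ then follows from Piola/Bramble--Hilbert arguments, and shape-regularity delivers the global bound uniformly in $h$. The main obstacle is the bookkeeping at $\overline{\Gamma}_c \cap \overline{\Gamma}_s$ and at geometric corners: one must choose the averaging edge $e_v$ (or triangle $K_v$) for each boundary vertex so as to simultaneously preserve \emph{all} essential conditions and retain uniform stability. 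This is handled by a priority rule, choosing $e_v \subset \Gamma_c$ whenever possible and otherwise $e_v \subset \Gamma_s$, which mirrors the standard Scott--Zhang construction for mixed Dirichlet data.
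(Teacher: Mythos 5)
Your proposal matches the paper's proof in all essentials: both build $\bdd{\Pi}_2$ by taking the vertex degrees of freedom from a boundary-preserving, Scott--Zhang-type piecewise-linear interpolant and prescribing the edge degrees of freedom so that \cref{eq:v-fortin-p2-1} holds exactly; \cref{eq:v-fortin-p2-2} then follows in both by applying Stokes' theorem element by element, and \cref{eq:v-fortin-p2-3} by the usual local scaling argument. Your explicit Simpson-rule inversion of the midpoint value is equivalent to the paper's implicit choice of the edge-integral as a degree of freedom. The one genuine difference is that the paper simply invokes a Scott--Zhang interpolant $\bdd{\Pi}_1$ with the required BC-preservation property, whereas you reconstruct the vertex stencil from scratch, and in doing so you leave a small gap at an interior vertex $v$ of $\Gamma_s$ where two non-collinear $\Gamma_s$-edges meet: projecting the tangential component of $\bdd{\psi}$ along a single chosen edge $e_v \subset \Gamma_s$ leaves the tangential component along the other $\Gamma_s$-edge at $v$ unconstrained, so \cref{eq:v-fortin-p2-4} can fail there. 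Your priority rule only arbitrates between $\Gamma_c$ and $\Gamma_s$; it does not resolve this $\Gamma_s$--$\Gamma_s$ corner. The remedy is to set $\bdd{\Pi}_2\bdd{\psi}(v)=\bdd{0}$ at such a corner, which is what the intersection of the two tangential constraints forces for a continuous piecewise polynomial even though the $H^1$ field $\bdd{\psi}$ need not vanish at $v$; this is a bounded modification of the stencil and does not disturb the patchwise scaling estimate, so the rest of your argument carries through unchanged.
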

\begin{proof}
	We modify the proof of \cite[Proposition 8.4.3]{BoffiBrezziFortin13} which is 
	restricted to the case that $\Gamma = \Gamma_c$.
	
	Let $\bdd{\psi} \in \bdd{H}^1(\Omega)$ be given. Let $\bdd{\Pi}_1 : 
	\bdd{H}^1(\Omega) \to \discretev{P}_1 := \{ \bdd{\psi} \in \bdd{C}(\Omega) : 
	\bdd{\psi}|_{K} \in \mathcal{P}_{1}(K)^2 \ \forall K \in \mathcal{T}_h \}$ 
	denote the piece-wise linear Scott-Zhang interpolant \cite{Scott90} satisfying
	\begin{alignat*}{2}
		\| \bdd{\Pi}_1 \bdd{\psi} \|_1 &\leq C \|\bdd{\psi}\|_1, \qquad & & \\
		h_K^{-1} \| \bdd{\Pi}_1 \bdd{\psi} - \bdd{\psi} \|_{K} + | \bdd{\Pi}_1 
		\bdd{\psi} - \bdd{\psi} |_{1, K} &\leq C \| \bdd{\psi}\|_{1, K} \qquad & 
		& \forall K \in \mathcal{T}_h, \\
		\bdd{\psi} \in \bdd{\Theta}_{\Gamma}(\Omega) &\implies \bdd{\Pi}_1 
		\bdd{\psi} \in \discretev{P}_1  \cap \bdd{\Theta}_{\Gamma}(\Omega). 
		\qquad & &
	\end{alignat*}
	We then define $\bdd{\Pi}_2 \bdd{\psi}$ by the rule
	\begin{alignat*}{2}
		\bdd{\Pi}_2 \bdd{\psi}(\bdd{a}) &= \bdd{\Pi}_1 \bdd{\psi}(\bdd{a}) \qquad 
		& & \forall \bdd{a} \in \mathcal{V}_h, \\
		\int_{e} \bdd{\Pi}_2 \bdd{\psi} \d{s} &= \int_{e} \bdd{\psi} \d{s} \qquad 
		& &\forall e \in \mathcal{E}_h.
	\end{alignat*}
	Properties \cref{eq:v-fortin-p2-1,eq:v-fortin-p2-4} follow immediately by 
	construction. Moreover, \cref{eq:v-fortin-p2-2} follows from 
	\cref{eq:v-fortin-p2-1} since $(\rot \bdd{\Pi}_2 \bdd{\psi}, 1)_K = 
	(\bdd{\Pi}_2 \bdd{\psi}, \unitvec{t})_{\partial K} = (\bdd{\psi}, 
	\unitvec{t})_{\partial K} = (\rot \bdd{\psi}, 1)_K$. Finally, 
	\cref{eq:v-fortin-p2-3} follows from a standard scaling argument analogous to 
	the proof of \cite[Proposition 8.4.3]{BoffiBrezziFortin13}.
\end{proof}

We now construct a Fortin operator satisfying \ref{harm:v-fortin} for the RT and 
BDM families.
\begin{lemma}
	\label{lem:v-fortin-bubble}
	Let $\discretev{V}^h$ and $\discrete{Q}^h$ be defined as in 
	\cref{eq:rt-family-v} and \cref{eq:rt-family-q} with $p \geq 2$. Then, there 
	exists a linear operator $\bdd{\Pi}_{F} : \bdd{H}^1(\Omega) \to 
	\discretev{V}^h$ and constant $C_F \geq 1$ independent of $h$ such that for 
	all $\bdd{\psi} \in \bdd{H}^1(\Omega)$:
	\begin{subequations}
		\label{peq:v-fortin-bubble}
		\begin{alignat}{2}
			\int_{e} \unitvec{t} \cdot \bdd{\Pi}_F \bdd{\psi} \d{s} &= \int_{e} 
			\unitvec{t} 
			\cdot \bdd{\psi} \d{s} \qquad & &\forall e \in \mathcal{E}_h, \\
			(\rot \bdd{\Pi}_F \bdd{\psi}, q) &= (\rot \bdd{\psi}, q) \qquad & 
			&\forall q \in \discrete{Q}^h, \\
			\bdd{\psi} \in \bdd{\Theta}_{\Gamma}(\Omega) &\implies \bdd{\Pi}_F 
			\bdd{\psi} \in \discretev{V}^h_{\Gamma}, \qquad & &\\
			\|\bdd{\Pi}_F \bdd{\psi} \|_1 &\leq C_F \|\bdd{\psi}\|_{1}.
		\end{alignat}
	\end{subequations}   
\end{lemma}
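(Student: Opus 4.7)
The plan is to build $\bdd{\Pi}_F$ in two stages, following the classical Boffi-Brezzi-Fortin recipe: a first stage that delivers edge averages, per-element rot averages, boundary conditions, and $H^1$-stability, and a second stage using interior bubbles to correct the higher-order rotational moments on each element.

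For the first stage, I would simply set $\bdd{\Pi}_F^{(1)} \bdd{\psi} := \bdd{\Pi}_2 \bdd{\psi}$, using the Scott-Zhang-type operator from \cref{lem:v-fortin-p2}. Since $\mathcal{P}_2 \subset \mathcal{P}_{p+1}$ on triangles and $\mathcal{P}_2 \subset \mathcal{P}_p$ on edges for $p \geq 2$, the image lies in $\discretev{V}^h$. By \cref{eq:v-fortin-p2-1,eq:v-fortin-p2-2,eq:v-fortin-p2-3,eq:v-fortin-p2-4}, $\bdd{\Pi}_F^{(1)}$ already satisfies the edge-integral condition, matches the constant part of $\rot$ on each element, respects the boundary conditions, and is $H^1$-stable.

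For the second stage, I would define an elementwise correction $\bdd{b}_{\bdd{\psi}}$ with $\bdd{b}_{\bdd{\psi}}|_K \in b_K \mathcal{P}_{p-2}(K)^2 \subset \discretev{V}^h$ (where $b_K = \lambda_1 \lambda_2 \lambda_3$), uniquely determined by
\begin{align*}
(\rot \bdd{b}_{\bdd{\psi}}, q)_K = (\rot(\bdd{\psi} - \bdd{\Pi}_F^{(1)} \bdd{\psi}), q)_K \qquad \forall q \in \mathcal{P}_{p-1}(K).
\end{align*}
Since $\bdd{b}_{\bdd{\psi}}$ vanishes on $\partial K$, the constant-$q$ equation is automatic thanks to the first stage, so the correction is really testing against $\mathcal{P}_{p-1}(K) \cap L^2_0(K)$. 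Setting $\bdd{\Pi}_F := \bdd{\Pi}_F^{(1)} + \bdd{b}_{\bdd{\psi}}$ then gives all the required properties: edge integrals are untouched (the bubbles vanish on $\partial K$), the rotational moments agree on every element (hence against all $q \in \discrete{Q}^h$), and boundary conditions are preserved because the bubbles are supported in element interiors.

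The crux of the argument, and the place I would spend the most care, is proving that $\rot : b_K \mathcal{P}_{p-2}(K)^2 \to \mathcal{P}_{p-1}(K) \cap L^2_0(K)$ is surjective with a reference-element inf-sup constant depending only on $p$. The trick I would use is an integration-by-parts identity on $K$: if $q \in \mathcal{P}_{p-1}(K) \cap L^2_0(K)$ is orthogonal to $\rot(b_K \bdd{r})$ for every $\bdd{r} \in \mathcal{P}_{p-2}(K)^2$, then
\begin{align*}
0 = (q, \rot(b_K \bdd{r}))_K = \int_K b_K\, (r_1 \partial_y q - r_2 \partial_x q)\, d\bdd{x},
\end{align*}
and choosing $\bdd{r} = (\partial_y q, -\partial_x q) \in \mathcal{P}_{p-2}(K)^2$ together with $b_K > 0$ on the interior forces $\nabla q \equiv 0$, so $q \equiv 0$ by the mean-zero condition. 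A reference-to-physical scaling then transfers the inf-sup to each $K$ with a constant depending only on shape-regularity. This yields the local bound $\|\bdd{b}_{\bdd{\psi}}\|_{1,K} \leq C \|\rot(\bdd{\psi} - \bdd{\Pi}_F^{(1)}\bdd{\psi})\|_K \leq C \|\bdd{\psi}\|_{1,K}$, and squaring and summing over elements (together with the stability of $\bdd{\Pi}_F^{(1)}$) produces the global bound $\|\bdd{\Pi}_F \bdd{\psi}\|_1 \leq C_F \|\bdd{\psi}\|_1$, completing the proof.
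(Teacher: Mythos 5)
Your proposal is correct and follows essentially the same route as the paper: the paper's proof simply cites Proposition 8.5.9 of Boffi--Brezzi--Fortin with $\tilde{\Pi}_1$ replaced by the operator $\bdd{\Pi}_2$ of \cref{lem:v-fortin-p2}, and what you have written out is precisely that two-stage construction (Scott--Zhang-type operator for edge moments, constant $\rot$ moments, boundary conditions and stability; elementwise bubble correction $b_K\mathcal{P}_{p-2}(K)^2$ for the higher-order $\rot$ moments, with surjectivity of the pairing established by the integration-by-parts identity and the test function $\bdd{r}=(\partial_y q,-\partial_x q)$). One minor wording issue: since $\rot(b_K\mathcal{P}_{p-2}(K)^2)\subset\mathcal{P}_{p}(K)$, the precise claim is that the $L^2(K)$-projection of $\rot$ onto $\mathcal{P}_{p-1}(K)\cap L^2_0(K)$ is surjective (equivalently, the pairing $(\rot(b_K\bdd{r}),q)_K$ is non-degenerate on the $q$ side), which is exactly what your kernel argument establishes.
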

\begin{proof}
	The proof is identical to the proof of \cite[Proposition 
	8.5.9]{BoffiBrezziFortin13} on taking $\tilde{\Pi}_1$ in 
	\cite{BoffiBrezziFortin13} equal to $\bdd{\Pi}_2$ 
	given by \cref{lem:v-fortin-p2}.
\end{proof}

Finally, we construct a Fortin operator satisfying \ref{pmitc:v-fortin} for the 
macro-element family in \cref{sec:macro}.
\begin{lemma}
	\label{lem:v-fortin-macro}
	Let $\discretev{V}^h$ be defined as in \cref{eq:macro-cg-vector} 
	with $p \geq 2$. Then, there exists a linear operator 
	$\bdd{\Pi}_{F} : \bdd{H}^1(\Omega) \to \discretev{V}^h$ and constant 
	$C_F \geq 1$ independent of $h$ and $p$ such that for 
	all $\bdd{\psi} \in \bdd{H}^1(\Omega)$:
	\begin{subequations}
		\label{eq:v-fortin-macro}
		\begin{alignat}{2}
			\label{eq:v-fortin-macro-1}	
			\int_{e} \unitvec{t} \cdot \bdd{\Pi}_F \bdd{\psi} \d{s} &= \int_{e} 
			\unitvec{t} \cdot \bdd{\psi} \d{s} \qquad & &\forall e \in 
			\mathcal{E}_h, \\
			\label{eq:v-fortin-macro-2}
			(\rot \bdd{\Pi}_F \bdd{\psi}, q) &= (\rot \bdd{\psi}, q) \qquad & 
			&\forall q \in \rot \discretev{V}^h, \\
			\label{eq:v-fortin-macro-3}
			\|\bdd{\Pi}_F \bdd{\psi} \|_1 &\leq C_F \|\bdd{\psi}\|_{1}, \qquad & 
			& \\
			\label{eq:v-fortin-macro-4}
			\bdd{\psi} \in \bdd{\Theta}_{\Gamma}(\Omega) &\implies \bdd{\Pi}_F 
			\bdd{\psi} \in \discretev{V}^h_{\Gamma}. \qquad & &
		\end{alignat}
	\end{subequations}   
	Moreover, there holds
	\begin{align}
		\label{eq:v-macro-rot}
		\rot \discretev{V}^h_{\Gamma} = \{q \in L_{\Gamma}^2(\Omega) : q|_{K} 
		\in	\mathcal{P}_{p-1}(K) \ \forall K \in \mathcal{T}_h\}.	
	\end{align}
\end{lemma}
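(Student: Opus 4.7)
The plan is to build $\bdd{\Pi}_F$ by applying the Scott-Zhang type operator $\bdd{\Pi}_2$ from \cref{lem:v-fortin-p2} and then correcting its image with a Scott-Vogelius bubble supported inside each coarse macro-element.

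Since $p \geq 2$ we have $\discretev{P}_2 \subset \discretev{V}^h$, so applying $\bdd{\Pi}_2$ gives $\bdd{\Pi}_2 \bdd{\psi} \in \discretev{V}^h$ satisfying the edge moments \cref{eq:v-fortin-p2-1}, the coarse element mean property \cref{eq:v-fortin-p2-2}, $H^1$-stability \cref{eq:v-fortin-p2-3}, and the boundary condition inheritance \cref{eq:v-fortin-p2-4}. Setting $\bdd{r} := \bdd{\psi} - \bdd{\Pi}_2 \bdd{\psi}$, the residual $\rot \bdd{r}|_K$ has zero mean on each $K \in \mathcal{T}_h$. The second stage exploits the $p$-robust Scott-Vogelius surjectivity on the Alfeld split: for each macro-element $K \in \mathcal{T}_h$ and every $q \in \mathcal{P}_{p-1}(\mathcal{T}_h^*|_K)$ with $\int_K q \d{\bdd{x}} = 0$, there exists $\bdd{b} \in [\mathcal{P}_p(\mathcal{T}_h^*|_K)]^2 \cap \bdd{H}^1_0(K)$ with $\rot \bdd{b} = q$ and $\|\bdd{b}\|_{1,K} \leq C \|q\|_K$, where $C$ depends only on shape regularity. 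Applying this locally with $q = P_{K,p-1} \rot \bdd{r}|_K$ (the $L^2$-projection onto $\mathcal{P}_{p-1}(\mathcal{T}_h^*|_K)$, which inherits the zero mean from \cref{eq:v-fortin-p2-2}), we obtain corrections $\bdd{b}_K$ and define
\begin{equation*}
	\bdd{\Pi}_F \bdd{\psi} := \bdd{\Pi}_2 \bdd{\psi} + \sum_{K \in \mathcal{T}_h} \bdd{b}_K.
\end{equation*}
Because each $\bdd{b}_K$ vanishes on $\partial K$, properties \cref{eq:v-fortin-macro-1} and \cref{eq:v-fortin-macro-4} carry over from $\bdd{\Pi}_2$, while the local bound $\|\bdd{b}_K\|_{1,K} \leq C\|\rot \bdd{r}\|_K \leq C\|\bdd{\psi}\|_{1,K}$ together with \cref{eq:v-fortin-p2-3} yields \cref{eq:v-fortin-macro-3}. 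A direct calculation using that $\rot \bdd{\Pi}_2 \bdd{\psi}|_K$ is already in $\mathcal{P}_{p-1}(\mathcal{T}_h^*|_K)$ shows that $\rot \bdd{\Pi}_F \bdd{\psi}|_K = P_{K,p-1}(\rot \bdd{\psi}|_K)$; since $\rot \discretev{V}^h \subset \mathcal{P}_{p-1}(\mathcal{T}_h^*)$, the orthogonality \cref{eq:v-fortin-macro-2} follows element-by-element from the projection identity.

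For the characterization \cref{eq:v-macro-rot}, the inclusion $\subseteq$ is immediate since rotations of $\bdd{\psi} \in \discretev{V}^h_\Gamma$ are piecewise $\mathcal{P}_{p-1}$ on $\mathcal{T}_h^*$, and when $|\Gamma_f|=0$ integration by parts yields $\int_\Omega \rot \bdd{\psi} = \int_\Gamma \unitvec{t}\cdot\bdd{\psi}\d{s} = 0$. For the reverse inclusion, given $q$ in the target space, I would invoke \cite[Lemma A.3]{AinCP23KirchI} to lift $q$ to some $\bdd{\psi} \in \bdd{\Theta}_\Gamma(\Omega)$ with $\rot \bdd{\psi} = q$, and then apply the just-constructed $\bdd{\Pi}_F$; since $q|_K \in \mathcal{P}_{p-1}(\mathcal{T}_h^*|_K)$ is invariant under $P_{K,p-1}$, the resulting $\bdd{\Pi}_F \bdd{\psi} \in \discretev{V}^h_\Gamma$ satisfies $\rot \bdd{\Pi}_F \bdd{\psi} = q$.

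The main obstacle is the $p$-robust Scott-Vogelius right inverse on Alfeld macro-elements with vanishing boundary trace: existence for fixed $p \geq 2$ is classical (Arnold-Qin, Zhang), but a bound independent of $p$ requires a more delicate construction based on explicit polynomial extensions in the spirit of prior work on $p$-robust Stokes stability. Verifying this $p$-uniform bound, and threading it through the local-to-global summation without picking up shape-regularity-dependent overlap constants that scale in $p$, is the technical crux.
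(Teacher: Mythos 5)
Your construction is essentially identical to the paper's: apply the operator $\bdd{\Pi}_2$ of \cref{lem:v-fortin-p2}, then correct the rotation residual element-by-element with a divergence-free-type bubble on each Alfeld-split macro-element, and obtain \cref{eq:v-macro-rot} by lifting $q$ to $\bdd{\Theta}_{\Gamma}(\Omega)$ and applying $\bdd{\Pi}_F$. The one ingredient you flag as unverified --- the $p$-uniform local right inverse of $\rot$ from $[\mathcal{P}_p(\mathcal{T}_h^*|_K)]^2 \cap \bdd{H}^1_0(K)$ onto mean-zero piecewise $\mathcal{P}_{p-1}$ --- is exactly what the paper supplies by citation (Arnold--Qin for $p=2$, Qin's thesis for $p=3$, and \cite[Lemma A.1]{AinCP23KirchI} for $p\geq 4$, the last of which gives the $p$-independent constant), so no new argument is needed there.
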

\begin{proof}
	For $K \in \mathcal{T}_h$, we define the local spaces
	\begin{align*}
		\discretev{V}_0(K) &:= \{ \bdd{\psi} \in \bdd{C}(K) 
		\cap \bdd{H}^1_0(K) : \bdd{\eta}|_{L} \in \mathcal{P}_{p}(L) \ \forall L 
		\in \mathcal{T}_h^* 
		\text{ with }  L \subset K \}, \\
		\discrete{Q}_0(K) &:= \{ q \in 
		L_0^2(K) : q|_{L} \in \mathcal{P}_{p-1}(L) \ \forall L \in 
		\mathcal{T}_h^* 
		\text{ with }  L \subset K \}.
	\end{align*}
	
	Let $\bdd{\psi} \in \bdd{H}^1(\Omega)$ be given. Then, the function 
	$\bdd{\phi} := \bdd{\psi} - \bdd{\Pi}_2 \bdd{\psi}$, where $\bdd{\Pi}_2$ is 
	given by \cref{lem:v-fortin-p2}, satisfies $\rot \bdd{\phi}|_{K} \in 
	L^2_0(K)$ for all $K \in \mathcal{T}_h$.
	
	For each $K \in \mathcal{T}_h$, let $q_K \in \discrete{Q}_0(K)$ denote the 
	$L^2(K)$-projection of $\rot \bdd{\phi}$ onto $\discrete{Q}_0(K)$. We then 
	apply \cite{Arn92} if $p=2$, \cite[Theorem 6.4.1]{Qin94} if $p=3$, or 
	\cite[Lemma 
	A.1]{AinCP23KirchI} if $p \geq 4$ to find $\bdd{\psi}_K \in 
	\discretev{V}_0(K)$ satisfying
	\begin{align*}
		\rot \bdd{\psi}_K = q_K \quad \text{and} \quad |\bdd{\psi}_K|_{1, K} 
		\leq C \|q_K\|_K \leq C \|\bdd{\phi}\|_{K}.
	\end{align*}
	Wed define $\bdd{\Pi}_F \bdd{\psi}$ by the rule 
	$\bdd{\Pi}_F \bdd{\psi}|_{K} = \bdd{\Pi}_2 \bdd{\psi}|_{K} + \bdd{\psi}_K$ 
	on $K \in \mathcal{T}_h$. Since $\bdd{\psi}_K|_{\partial K} = \bdd{0}$, 
	\cref{eq:v-fortin-macro-1,eq:v-fortin-macro-4} follow from 
	\cref{lem:v-fortin-p2}. Property \cref{eq:v-fortin-macro-2} follows by 
	construction on noting that $\rot \discretev{V}^h = \{ 
	q \in L^2(\Omega) : q|_{L} \in \mathcal{P}_{p-1}(L) \ \forall L \in 
	\mathcal{T}_h^*\}$:
	\begin{align*}
		\int_{K} \rot \bdd{\Pi}_F \bdd{\psi} \d{\bdd{x}} = \int_{K} \rot 
		\bdd{\Pi}_2 \bdd{\psi} \d{\bdd{x}} = \int_{K} \rot \bdd{\psi} \d{\bdd{x}} 
		\qquad \forall K \in \mathcal{T}_h,
	\end{align*}
	and for all $r \in \discrete{Q}_0(K)$ and $K \in \mathcal{T}_h$, there holds
	\begin{align*}
		\int_{K} (\rot \bdd{\Pi}_F \bdd{\psi}) r \d{\bdd{x}} = \int_{K} (\rot 
		\bdd{\Pi}_2 \bdd{\psi} + q_K) 
		r \d{\bdd{x}}   = \int_{K} (\rot \bdd{\psi}) r \d{\bdd{x}}.
	\end{align*}
	Finally, we have
	\begin{align*}
		| \bdd{\Pi}_F \bdd{\psi}|_{K}^2 \leq 2 (| \bdd{\Pi}_2 \bdd{\psi}|_{1, 
			K}^2 + | \bdd{\psi}_K |_{1, K}^2 ) \leq C(| \bdd{\Pi}_2 
			\bdd{\psi}|_{1, 
			K}^2 + \|\bdd{\psi}\|_{1, K}^2  ),
	\end{align*}
	and so \cref{eq:v-fortin-macro-3} follows from summing over the elements and 
	applying Poincar\'{e}'s inequality.
	
	Now let $q \in \{q \in L_{\Gamma}^2(\Omega) : q|_{K} 
	\in \mathcal{P}_{p-1}(K) \ \forall K \in \mathcal{T}_h\}$. Thanks to 
	\cite[Lemma A.3]{AinCP24KirchII}, there 
	exists $\bdd{\psi} \in \bdd{\Theta}_{\Gamma}(\Omega)$ such that $\rot 
	\bdd{\psi} = 
	q$ and the arguments above show that $\rot \bdd{\Pi}_F \bdd{\psi} = q$. 	 
	\Cref{eq:v-macro-rot} then follows.
\end{proof}

\subsection{Proof of \cref{thm:macro-high-order-conds}}
\label{sec:proof-macro-high-order-conds}

We first show that \ref{pmitc-nored:gradient-cond} holds. Let $\bdd{\theta} \in 
\discretev{V}^h_{\Gamma}$ satisfy $\rot \bdd{\theta} \equiv 0$ and 
$\circop_{\mathfrak{I}^*} \bdd{\theta} = \vec{0}$. Arguing as in the proof of 
\cref{thm:rt-family-satisfies-conds}, we have that $\bdd{\theta} = \grad w$ for 
some $w \in \mathcal{W}_{\Gamma}(\Omega)$. Since $\bdd{\theta}|_{K} \in 
\mathcal{P}_{p-1}(K)^2$, $w|_{K} \in \mathcal{P}_p(K)$ for all $K \in 
\mathcal{T}_h^{*}$, and so $w \in \discrete{W}^h \cap 
\mathcal{W}_{\Gamma}(\Omega)$. 

Thanks to \cref{lem:v-fortin-macro}, condition \ref{pmitc-nored:v-fortin} holds 
with $C_F$ independent of $h$ and $p$.

It remains to show \cref{eq:macro-high-order-im-xi}. Thanks to 
\cref{thm:harm-conds-harm-inf-sup} and condition \ref{mitc:red-commute}, there 
holds
\begin{align*}
	\dim \image \bdd{\Xi}^h &= \dim \grad \discrete{W}^h_{\Gamma} + \dim 
	\harmonic{H}^h_{\Gamma} + \dim \rot \discretev{V}^h_{\Gamma} \\
	&= \dim \grad 
	\discrete{W}^h_{\Gamma} + \dim \harmonic{W}^h_{\Gamma} + 
	|\mathfrak{I}^*| + \dim \rot \discretev{V}^h_{\Gamma}.
\end{align*}
Now let  
\begin{align*}
	\tilde{\discretev{U}}^h_{\Gamma} &:= \{ \bdd{\gamma} \in \hrotgamma : 
	\bdd{\gamma}|_{K} 
	\in 
	\mathcal{P}_{p}(K)^2 \ \forall K \in \mathcal{T}_h^{*} \} \\
	\tilde{\harmonic{H}}^h_{\Gamma} &:= \{ \harmonic{h} \in 
	\tilde{\discretev{U}}^h_{\Gamma} : (\harmonic{h}, \grad w) = 0 \ \forall w 
	\in \discrete{W}^h_{\Gamma} \}.
\end{align*}
Then, \cref{thm:bdm-family-satisfies-conds,thm:harm-conds-harm-inf-sup} show that
\begin{align*}
	\dim \tilde{\discretev{U}}^h_{\Gamma} &= \dim \grad \discrete{W}^h_{\Gamma} + 
	\dim 
	\tilde{\harmonic{H}}^h_{\Gamma} + \dim \rot \discretev{U}^h_{\Gamma} \\
	&= \dim \grad 
	\discrete{W}^h_{\Gamma} + \dim \harmonic{W}^h_{\Gamma} + |\mathfrak{I}^*| + 
	\dim \rot \tilde{\discretev{U}}^h_{\Gamma}.
\end{align*}
Thanks to \cref{remark:consequences-sequence-r,lem:v-fortin-macro}, $\rot 
\discretev{V}^h_{\Gamma} = \rot \tilde{\discretev{U}}^h_{\Gamma}$, and so $\dim 
\image \bdd{\Xi}^h =  \dim \tilde{\discretev{U}}^h_{\Gamma}$. 
\Cref{eq:macro-high-order-im-xi} now follows since $\image \bdd{\Xi}^h 
\subseteq  \tilde{\discretev{U}}^h_{\Gamma}$. \hfill \qedsymbol

%% If you have bib database file and want bibtex to generate the
%% bibitems, please use
%%
%\bibliographystyle{elsarticle-num} 
%\bibliography{../../references.bib}

\end{document}